\providecommand{\tabularnewline}{\\}
\numberwithin{equation}{section}
\numberwithin{figure}{section}
\theoremstyle{plain}
\newtheorem{thm}{\protect\theoremname}
\theoremstyle{remark}
\newtheorem{rem}[thm]{\protect\remarkname}
\theoremstyle{plain}
\newtheorem{lem}[thm]{\protect\lemmaname}
\theoremstyle{definition}
\newtheorem{example}[thm]{\protect\examplename}
\theoremstyle{plain}
\newtheorem{conjecture}[thm]{\protect\conjecturename}
\theoremstyle{plain}
\newtheorem{prop}[thm]{\protect\propositionname}
\providecommand{\conjecturename}{Conjecture}
\providecommand{\examplename}{Example}
\providecommand{\lemmaname}{Lemma}
\providecommand{\propositionname}{Proposition}
\providecommand{\remarkname}{Remark}
\providecommand{\theoremname}{Theorem}
\begin{document}
\address[Minoru Hirose]{Institute for Advanced Research, Nagoya University, Furo-cho, Chikusa-ku, Nagoya, 464-8602, Japan}
\email{minoru.hirose@math.nagoya-u.ac.jp}
\subjclass[2010]{Primary 11M32, Secondary 11F67, 11C20}

\title{Modular phenomena for regularized double zeta values}
\begin{abstract}
In this paper, we investigate linear relations among regularized motivic
iterated integrals on $\mathbb{P}^{1}\setminus\{0,1,\infty\}$ of
depth two, which we call regularized motivic double zeta values. Some
mysterious connections between motivic multiple zeta values and modular
forms are known, e.g. Gangl--Kaneko--Zagier relation for the totally
odd double zeta values and Ihara--Takao relation for the depth graded
motivic Lie algebra. In this paper, we investigate so-called non-admissible
cases and give many new Gangl--Kaneko--Zagier type and Ihara--Takao
type relations for regularized motivic double zeta values. Specifically,
we construct linear relations among a certain family of regularized
motivic double zeta values from odd period polynomials of modular
forms for the unique index two congruence subgroup of the full modular
group. This gives the first non trivial example of a construction
of the relations among multiple zeta values (or their analogues) from
modular forms for a congruence subgroup other than the ${\rm SL}_{2}(\mathbb{Z})$.
\end{abstract}

\keywords{motivic multiple zeta values; modular forms; period polynomials; double
zeta values; determinant}
\author{Minoru Hirose}

\maketitle
\tableofcontents{}

\section{Introduction}

Multiple zeta values (MZVs) are real numbers defined by
\[
\zeta(k_{1},\dots,k_{d})\coloneqq\sum_{0<m_{1}<\cdots<m_{d}}\frac{1}{m_{1}^{k_{1}}\cdots m_{d}^{k_{d}}}
\]
where $k_{1},\dots,k_{d}$ are positive integers with $k_{d}>1$.
Here $d$ is called the depth and $k_{1}+\cdots+k_{d}$ is called
the weight, and multiple zeta values of depth two are called double
zeta values. They are periods in the sense of Kontsevich-Zagier, more
specifically periods of mixed Tate motives over $\mathbb{Z}$, and
are lifted to algebraic objects $\zeta^{\mathfrak{m}}(k_{1},\dots,k_{d})\in\mathcal{P}_{{\rm MT}(\mathbb{Z})}^{\mathfrak{m}}$
called motivic multiple zeta values (or motivic double zeta values
if $d=2$). Here $\mathcal{P}_{{\rm MT}(\mathbb{Z})}^{\mathfrak{m}}$
is the ring of motivic periods of ${\rm MT}(\mathbb{Z})$ and there
exist a canonical ring morphism ${\rm per}:\mathcal{P}_{{\rm MT}(\mathbb{Z})}^{\mathfrak{m}}\to\mathbb{C}$
called the period map with the property ${\rm per}(\zeta^{\mathfrak{m}}(k_{1},\dots,k_{d}))=\zeta(k_{1},\dots,k_{d})$.
We denote by $\mathcal{H}\subset\mathcal{P}_{{\rm MT}(\mathbb{Z})}^{\mathfrak{m}}$
the $\mathbb{Q}$-algebra generated by $1$ and all motivic multiple
zeta values. Then $\mathcal{A}\coloneqq\mathcal{H}/\zeta^{\mathfrak{m}}(2)\mathcal{H}$
has a structure of Hopf algebra, and $\mathcal{H}$ becomes a Hopf
$\mathcal{A}$-comodule. See Section \ref{subsec:Motivic-multiple-zeta}
for further details.

Multiple zeta values also appear in other areas of pure and applied
mathematics such as in the Kontsevich integrals of knots (\cite{Le_Murakami_95},
\cite{Le_Murakami_96}, \cite{Le_Murakami_96_2}) in topology, or
in the evaluation of scattering amplitudes (\cite{amplitude_2013_1},
\cite{amplitude_2013_2}) in mathematical physics. Therefore they
are already objects of significant interest and study.

Furthermore, there are many studies on the connection between (motivic)
multiple zeta values and modular forms. The first indications of a
connection between modular forms and MZV's occurred in the formulation
of the Broadhurst-Kreimer conjecture \cite{Broadhurst}, when it was
observed that the dimensions of the weight and depth graded multiple
zeta values involves the term $S(x)=\frac{x^{12}}{(1-x^{4})(1-x^{6})}$
which encodes the dimensions of ${\rm SL}_{2}(\mathbb{Z})$-cusp forms.
The depth 2 case of this conjecture is (essentially) the result of
\cite{GKZ}, which establishes the modular origin of $\zeta({\rm odd},\text{{\rm odd}})$
relations (See Theorem \ref{thm:GKZ_intro}, below). The dual viewpoint
is studied in \cite{Schneps_poisson}, \cite{Baumard_Schneps_period_polynomial_relations}
for depth $2$ and \cite{Brown_Depth_Graded}, \cite{Brown_zeta_elements},
\cite{Ma_Tasaka} for higher depths. Interpretation of conjectural
dimension in Broadhurst-Kreimer conjecture is treated in \cite{Brown_Depth_Graded},
\cite{Li_depth_structure}. Furthermore, depth $2$ and weight odd
aspects are studied in \cite{Zagier_2_3_2}, \cite{Ding_Ma_odd_weight},
and modular phenomena of another type for level 2 or level 3 modular
forms are investigated in \cite{Kaneko_Tasaka}, \cite{Ding_Ma_muN}.
Extensions to $q$-analogues or Eisenstein-series-analogue of multiple
zeta values are studied in \cite{Bachman_modified_double}, \cite{Tasaka_Hecke_eigenform}.

The purpose of this paper is to establish new modular phenomena for
regularized motivic iterated integrals on $\mathbb{P}^{1}\setminus\{0,1,\infty\}$,
which we call regularized motivic multiple zeta values. Regularized
motivic multiple zeta values are generalization of usual motivic multiple
zeta values, which are convergent integral cases of regularized motivic
multiple zeta values. Regularized motivic multiple zeta values are
very natural generalization of usual motivic multiple zeta values
and used in various situations\footnote{For example, regularized motivic multiple zeta values appear even
in the calculation of the coproduct of usual motivic multiple zeta
values. In addition, regularized motivic multiple zeta values appear
in the coefficient of motivic KZ-associator.}, however, there seems to be no work on their modular phenomena. In
this paper, we give a lot of modular phenomena for regularized motivic
double zeta values. Some of them are similar to the known cases of
usual motivic double zeta values while others are not.

We denote by $V_{w}^{\pm}$ the space of homogeneous polynomials $p(X,Y)$
of degree $w$ with rational coefficients such that $p(-X,Y)=\pm p(X,Y)$.
For an even $w$ and a modular form $f$ of weight $w+2$, an even
(resp. odd) period polynomial $P_{f}^{+}(X,Y)\in\mathbb{C}\otimes V_{w}^{+}$
(resp. $P_{f}^{-}(X,Y)\in\mathbb{C}\otimes V_{w}^{-}$) is defined
by using the special values of the L-function of $f$.

In this paper, we consider two types of modular phenomena:
\begin{description}
\item [{GKZ-type phenomena}] modular forms induce linear relations among
regularized motivic multiple zeta values,
\item [{Ihara--Takao-type phenomena}] modular forms induce algebraic relations
in the dual Hopf module of $\mathcal{H}$.
\end{description}

\subsection{Known GKZ-type phenomena for the usual motivic double zeta values}

The eponymous result in the area of GKZ-type phenomena is the following
well-known result due to Gangl-Kaneko-Zagier.
\begin{thm}[\cite{GKZ}, Gangl--Kaneko--Zagier]
\label{thm:GKZ_intro}Let $w$ be a nonnegative even integer, $k=w+2$,
and $f$ a modular form for ${\rm SL}_{2}(\mathbb{Z})$ of weight
$k$. Define $a_{0},\dots,a_{w}\in\mathbb{C}$ by
\[
\sum_{r=0}^{w}a_{r}x^{r}y^{w-r}=P_{f}^{+}(x+y,x).
\]
Then
\[
\sum_{\substack{r=0\\
r:{\rm even}
}
}^{w-2}a_{r}r!(w-r)!\zeta^{\mathfrak{m}}(r+1,w-r+1)\equiv0\pmod{\mathbb{C}\zeta^{\mathfrak{m}}(k)}.
\]
Furthermore, this gives a one-to-one correspondence between the modular
forms for ${\rm SL}_{2}(\mathbb{Z})$ of weight $k$ and the linear
relations among 
\[
\{\zeta^{\mathfrak{m}}(r+1,s+1)\mid r+s=w,r\geq0,s>0,r:{\rm even}\}
\]
modulo $\mathbb{C}\zeta^{\mathfrak{m}}(k)$.
\end{thm}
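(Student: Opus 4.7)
The plan is to combine the regularized double shuffle relations for motivic double zeta values with the Eichler--Shimura theory of period polynomials. The double shuffle relations arise from the two ways of expanding the product $\zeta^{\mathfrak{m}}(r)\zeta^{\mathfrak{m}}(s)$ inside $\mathcal{H}$: the shuffle product is built into the motivic iterated integral formalism, and the stuffle product holds (modulo products) in the motivic setting, with regularized versions covering the cases $r=1$ or $s=1$. For each pair $(r,s)$ with $r+s=k$, the difference of the two expansions yields a linear relation among $\zeta^{\mathfrak{m}}(a,b)$ with $a+b=k$, valid modulo $\mathbb{C}\zeta^{\mathfrak{m}}(k)$ and products of lower-weight zetas. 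I would package these relations into a single functional equation for the generating series
\[
Z(X,Y)\;\coloneqq\;\sum_{a+b=k}\zeta^{\mathfrak{m}}(a,b)\,X^{a-1}Y^{b-1},
\]
stating, in suitable normalisation, that $Z(X,Y)+Z(Y,X)-Z(X+Y,X)-Z(X+Y,Y)\equiv 0\pmod{\mathbb{C}\zeta^{\mathfrak{m}}(k)}$.

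Next I would project onto the ``totally odd'' piece — the bidegree of $Z$ that is odd in $X$ and odd in $Y$, matching the terms $\zeta^{\mathfrak{m}}(r+1,s+1)$ with $r,s$ even. Pairing against a test polynomial $p(X,Y)$ of degree $w$ turns the above functional equation into the statement that the corresponding linear combination of $\zeta^{\mathfrak{m}}(r+1,s+1)$'s vanishes modulo $\mathbb{C}\zeta^{\mathfrak{m}}(k)$ precisely when $p$ satisfies $p(-X,Y)=p(X,Y)$ together with a three-term functional equation of the form $p(X,Y)+p(X-Y,X)+p(-Y,X-Y)=0$. These are exactly the defining conditions for the space $V_{w}^{+}$ of even period polynomials, and the substitution $(x,y)\mapsto(x+y,x)$ appearing in the theorem is precisely the change of variables that matches the combinatorial normalisation of the double shuffle relation with the Eichler normalisation of $P_{f}^{+}$; the factorials $r!(w-r)!$ come from comparing the monomial bases of the two sides.

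The construction half of the correspondence then follows from the Eichler--Shimura isomorphism between $M_{k}(\mathrm{SL}_{2}(\mathbb{Z}))$ and the space of even period polynomials (with the Eisenstein series accounting for the extra one-dimensional piece beyond cusp forms), applied via $f\mapsto P_{f}^{+}$. The one-to-one claim further requires that no additional relations exist among the odd--odd motivic double zetas modulo $\mathbb{C}\zeta^{\mathfrak{m}}(k)$ beyond those produced above. This dimension upper bound is the main obstacle and the deep input: it rests on Brown's theorem on the dimensions of $\mathcal{H}$, equivalently on the Hoffman basis meeting the depth-two prediction of the Broadhurst--Kreimer conjecture (which is known in depth two). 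The combinatorial derivation of the relations and the identification of the kernel as $V_{w}^{+}$ are essentially mechanical once the regularized double shuffle structure on $\mathcal{H}$ is in place; the work is concentrated in verifying the motivic lifts of the classical shuffle and stuffle identities in their regularized forms.
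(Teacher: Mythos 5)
Your proposal follows the classical Gangl--Kaneko--Zagier route: produce the relations from the (regularized, motivic) double shuffle identities, identify the admissible test polynomials with period polynomials, and then close the argument with an external dimension bound. This is genuinely different from the proof the paper actually gives in Section \ref{sec:0_even_even}. There, everything is extracted from the motivic coaction: Lemmas \ref{lem:basic_lemma} and \ref{lem:vanish_condition} use the Goncharov--Brown coproduct formula together with the fact that the primitive elements of $\tilde{\mathcal{H}}$ are exactly the depth $\leq1$ elements (a consequence of $\mathcal{A}\cong\mathcal{U}$), so that $\psi(p)\in\mathbb{Q}J(w+1)$ holds \emph{if and only if} a certain explicit polynomial built from $p$ lies in $V_w^-$; Lemma \ref{lem:gkz_type_l1} then identifies the solutions of that functional equation with $W_w^{\pm}$, and the exactness of (\ref{eq:exact_0_even_even}) is finished by a dimension count against the dual sequence (\ref{eq:dual_0_even_even}). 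The advantage of the coaction route is that the ``only if'' direction (hence the one-to-one correspondence) comes out of the same computation; no separate completeness theorem has to be imported.

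Two points in your sketch need repair. First, the completeness input is misattributed: Brown's theorem on the dimensions of $\mathcal{H}_k$ (the Hoffman basis) controls the \emph{total} weight-graded dimension, and is not equivalent to the depth-two graded dimension you need. The statement that the number of independent $\zeta^{\mathfrak{m}}(\mathrm{odd},\mathrm{odd})$ modulo $\mathbb{C}\zeta^{\mathfrak{m}}(k)$ is exactly $w/2-\dim M_k({\rm SL}_2(\mathbb{Z}))$ requires knowing that the depth defect in depth two is precisely $\dim S_k$, i.e.\ the Ihara--Takao/Schneps result (Theorem \ref{thm:Ihara-Takao}, equivalently the exactness of (\ref{eq:dual_0_even_even})); the paper's Remark after Theorem \ref{thm:GKZ_intro} explicitly notes that the one-to-one half does \emph{not} follow from the formal GKZ theorem without such an input, since it is only conjectural that the motivic relations are exhausted by depth-two double shuffle. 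Second, your description of the kernel imposes only evenness and the three-term relation $p\mid_{1+U+U^{2}}=0$; the space of even period polynomials $W_w^+$ also requires $p\mid_{1+S}=0$ (antisymmetry under swapping $X$ and $Y$ for even $p$), and without it the kernel is strictly too large. In the paper this extra relation is \emph{derived} from the functional equation in the proof of Lemma \ref{lem:gkz_type_l1}; in your set-up it must likewise be extracted from the symmetrized double shuffle identity rather than assumed. (Also note the notational slip: the space of even period polynomials is $W_w^+$, not $V_w^+$, which in this paper denotes all even homogeneous polynomials.)
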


\begin{rem}
In fact, a more refined equality in the space of formal double zeta
value is proved in \cite{GKZ}. The space of formal double zeta values
is defined as the set of formal sums of indices modulo double shuffle
relations, and motivic multiple zeta values satisfy double shuffle
relations, thus Gangl--Kaneko--Zagier's original result implies
the former part of Theorem \ref{thm:GKZ_intro}. If we assume that
the relations among motivic double zeta values are exhausted by double
shuffle relations of depth two and ``Euler's relations'' $\zeta^{\mathfrak{m}}(a)\zeta^{\mathfrak{m}}(b)\in\mathbb{Q}\zeta^{\mathfrak{m}}(a+b)$
($a,b$: even) (which seems to be reasonable), the latter part of
Theorem \ref{thm:GKZ_intro} is also a consequence of their result.
\end{rem}

\begin{rem}
In Theorem \ref{thm:GKZ_intro}, the modulus is taken as $\mathbb{C}\zeta^{\mathfrak{m}}(k)$
rather than $\mathbb{Q}\zeta^{\mathfrak{m}}(k)$ so as to allow any
modular forms. If we start with a modular form whose even period polynomial
has rational coefficients, then Theorem \ref{thm:GKZ_intro} gives
$\mathbb{Q}$-linear relations among motivic MZVs. It is also known
that any modular form can be expressed as a $\mathbb{C}$-linear sum
of such modular forms.
\end{rem}

Theorem \ref{thm:GKZ_intro} gives a complete description of the linear
relations among motivic double zeta values of type $\zeta^{\mathfrak{m}}({\rm odd},{\rm odd})$
in terms of even period polynomials of modular forms. Thus this theorem
gives rise to the following questions: How about the linear relations
among $\zeta^{\mathfrak{m}}({\rm odd},{\rm even})$'s, $\zeta^{\mathfrak{m}}({\rm even},{\rm odd})$'s,
or $\zeta^{\mathfrak{m}}({\rm even},{\rm even})$'s. The answers to
these questions are also known as explained below.

For the first case, for an odd integer $k>2$, Ma constructed linear
relations among $\zeta^{\mathfrak{m}}({\rm odd},{\rm even})$'s of
weight $k$ from the odd (resp. even) period polynomial of modular
forms of weight $k-1$ (resp. $k+1$) for ${\rm SL}_{2}(\mathbb{Z})$,
and Li--Liu showed that all the linear relations among $\zeta^{\mathfrak{m}}({\rm odd},{\rm even})$'s
are essentially exhausted by Ma's relations. Hereafter, we use the
expression such as $\frac{\partial}{\partial x}p(ax+by,cx+dy)$ for
the partial derivative of $p(ax+by,cx+dy)$ with respect to $x$.
We remind the reader that it should not be confused with $\left(\frac{\partial p}{\partial x}\right)(ax+by,cx+dy)$
which is the partial derivative of $p(u,v)$ with respect to $u$
evaluated at $(u,v)=(ax+by,cx+dy)$.
\begin{thm}[{\cite[Theorems 2 and 3]{Ding_Ma_odd_weight}, Ma}]
\label{thm:Ma}Let $w$ be an odd positive integer, $k=w+2$, $f$
a cusp form for ${\rm SL}_{2}(\mathbb{Z})$ of weight $k-1$, and
$g$ a cusp form for ${\rm SL}_{2}(\mathbb{Z})$ of weight $k+1$.
Define $a_{0},\dots,a_{w}\in\mathbb{C}$ by
\[
\sum_{r=0}^{w}a_{r}X^{r}Y^{w-r}=XP_{f}^{-}(X+Y,X)+\frac{\partial}{\partial Y}P_{g}^{+}(X+Y,X).
\]
Then we have
\[
\sum_{\substack{r=0\\
r:{\rm even}
}
}^{w-1}(a_{r}-a_{w-r})r!(w-r)!\zeta^{\mathfrak{m}}(r+1,w-r+1)\equiv0\pmod{\mathbb{C}\zeta^{\mathfrak{m}}(k)}.
\]
Furthermore, these linear relations are linearly independent, i.e.,
if $a_{r}-a_{w-r}=0$ for all even $r$ then $f=g=0$.
\end{thm}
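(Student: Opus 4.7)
The plan is to translate the Eichler--Shimura cocycle relations satisfied by $P_f^{-}$ and $P_g^{+}$ into depth-two double shuffle identities among regularized motivic double zeta values. Set up the formal space $\mathcal{FZ}_k^{\mathrm{reg}}$ with generators $Z_{a,b}$ ($a+b=k$, $a,b\geq 1$, including the regularized $Z_{1,k-1}$) and products $Z_{a}\cdot Z_{b}$, modulo the regularized double shuffle relations for iterated integrals on $\mathbb{P}^{1}\setminus\{0,1,\infty\}$. Motivic regularized iterated integrals are known to satisfy these relations, so any identity proved in $\mathcal{FZ}_k^{\mathrm{reg}}/\mathbb{C}Z_k$ lifts to the target identity mod $\mathbb{C}\zeta^{\mathfrak{m}}(k)$; since $k=w+2$ is odd, $Z_k$ is the unique depth-one class surviving modulo products, so working modulo $Z_k$ is natural.

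Next, package the depth-two data into the generating function
\[
\mathcal{Z}(X,Y)=\sum_{\substack{a+b=k\\ a,b\geq1}}(a-1)!(b-1)!\,Z_{a,b}\,X^{a-1}Y^{b-1},
\]
and reformulate the shuffle and stuffle relations as polynomial identities for $\mathcal{Z}$ under the $\mathrm{SL}_{2}(\mathbb{Z})$-action on $V_{w}$. The upshot is that any polynomial $P\in V_{w}$ satisfying the cocycle conditions $P|(1+S)=0$ and $P|(1+U+U^{2})=0$ pairs with $\mathcal{Z}$ to yield a relation in $\mathcal{FZ}_k^{\mathrm{reg}}/\mathbb{C}Z_k$; since $w$ is odd, both $V_{w}^{+}$ and $V_{w}^{-}$ can contribute, in contrast with the GKZ setup. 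Take as test polynomial
\[
Q(X,Y)=X\,P_{f}^{-}(X+Y,X)+\tfrac{\partial}{\partial Y}P_{g}^{+}(X+Y,X)=\sum_{r=0}^{w}a_{r}X^{r}Y^{w-r},
\]
where multiplication by $X$ lifts $P_{f}^{-}$ (degree $w-1$) and $\partial/\partial Y$ lowers $P_{g}^{+}$ (degree $w+1$) to the common degree $w$, in such a way that the Eichler--Shimura conditions are preserved modulo terms killed against $\mathcal{Z}$ modulo $Z_{k}$. Pairing $Q$ with $\mathcal{Z}$ and extracting the part antisymmetric under $X\leftrightarrow Y$ (the surviving part after exploiting stuffle symmetry modulo $Z_{k}$) delivers the asserted combination $\sum_{r\text{ even}}(a_{r}-a_{w-r})\,r!(w-r)!\,Z_{r+1,w-r+1}\equiv 0$.

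For linear independence, if $a_{r}=a_{w-r}$ for every even $r$ then $Q$ is symmetric in $X\leftrightarrow Y$ on the relevant part; because $XP_{f}^{-}$ and $\partial_{Y}P_{g}^{+}$ land in complementary isotypic subspaces of $V_{w}$ under this symmetry, injectivity of the Eichler--Shimura period map on cusp forms separately on the $\pm$ pieces forces $f=g=0$. The main technical obstacle is the second step: verifying that precisely the operators ``multiplication by $X$'' and ``$\partial/\partial Y$'' after the substitution $(X,Y)\mapsto(X+Y,X)$ convert the modular cocycle relations into genuine double shuffle relations. This delicate bookkeeping is what encodes the regularization necessary to incorporate $Z_{1,k-1}$ into the depth-two framework, and explains why cusp forms of \emph{two} different weights $k-1$ and $k+1$ jointly parametrize the relations---the weight discrepancy is exactly balanced by the shift operators $X\cdot(-)$ and $\partial/\partial Y$.
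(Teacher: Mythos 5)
Your outline follows Ma's original route through the formal regularized double shuffle space, which is not the route this paper takes: the paper establishes the equivalent exact sequence (\ref{eq:exact_0_even_odd}) in Section \ref{sec:0_even_odd_DingMa} via the motivic coaction (Lemma \ref{lem:vanish_condition}) together with a dimension count against the dual sequence (\ref{eq:dual_0_even_odd}). The choice of route is legitimate, but as written the proposal has a genuine gap at its center: the claim that pairing $Q(X,Y)=XP_{f}^{-}(X+Y,X)+\frac{\partial}{\partial Y}P_{g}^{+}(X+Y,X)$ against the generating function $\mathcal{Z}$ produces a consequence of the double shuffle relations is never established. You state as the ``upshot'' that any $P\in V_{w}$ with $\left.P\right|_{1+S}=\left.P\right|_{1+U+U^{2}}=0$ yields a relation, but that is the even-weight GKZ mechanism; for odd $w$ there are no weight $w+2$ modular forms for ${\rm SL}_{2}(\mathbb{Z})$, $Q$ is not itself a period polynomial of degree $w$, and the cocycle identities satisfied by $P_{f}^{-}$ (degree $w-1$) and $P_{g}^{+}$ (degree $w+1$) are not carried to cocycle identities in degree $w$ by $X\cdot(-)$ and $\partial/\partial Y$ in any naive sense. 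You yourself flag exactly this verification as ``the main technical obstacle'' and leave it undone --- but that verification \emph{is} the theorem; everything before it is framing. Compare Lemma \ref{lem:Li_Liu_keyLemma}, whose entire purpose is to track how differentiation and multiplication by coordinates interact with the substitution $(X,Y)\mapsto(X+Y,X)$; a computation of that kind is unavoidable and is precisely what is missing.

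The linear-independence claim is also not established. The hypothesis $a_{r}=a_{w-r}$ for all even $r$ says only that the even part (in $X$) of $Q(X,Y)-Q(Y,X)$ vanishes; it does not make $Q$ symmetric, and the assertion that $XP_{f}^{-}(X+Y,X)$ and $\frac{\partial}{\partial Y}P_{g}^{+}(X+Y,X)$ lie in ``complementary isotypic subspaces'' for $X\leftrightarrow Y$ is unjustified, since the substitution $(X,Y)\mapsto(X+Y,X)$ destroys any simple parity decomposition. The two summands have to be disentangled by a different device, as in Lemmas \ref{lem:ding_ma} and \ref{lem:ding_ma_injectivity}: one first shows the hypothesis forces $f(X,Y):=Yp(X,Y)+\frac{\partial}{\partial X}q(X,Y)$ into $\mathbb{Q}X^{w}$ by undoing the substitution, and then separates $p$ from $q$ using the Euler identity $X\frac{\partial}{\partial X}q+Y\frac{\partial}{\partial Y}q=(w+1)q$ (Lemma \ref{lem:zagier_lemma}). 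Without an argument of this type the conclusion $f=g=0$ does not follow.
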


\begin{rem}
As in the case of Theorem \ref{thm:GKZ_intro}, we can obtain a congruence
modulo $\mathbb{Q}\zeta^{\mathfrak{m}}(k)$ if the coefficients of
$P_{f}^{-}(X,Y)$ and $P_{g}^{+}(X,Y)$ are both rational, and any
cusp form of weight $k-1$ (resp. $k+1$) is a $\mathbb{C}$-linear
sum of the modular forms $f$ (resp. $g$) whose odd (resp. even)
period polynomials are rational. Furthermore, Ma's original theorem
is stated as an equality in the formal double zeta space. Since all
linear relations among odd weight motivic double zeta values are given
by double shuffle relations, the result of Ma is equivalent to Theorem
\ref{thm:Ma}.
\end{rem}

\begin{thm}[\cite{Ding_Ma_odd_weight}, Ma]
\label{thm:Ma_trivial}Let $k\geq3$ be an odd integer. Then we have
\[
2\zeta^{\mathfrak{m}}(k-2,2)-(k-2)\zeta^{\mathfrak{m}}(1,k-1)+\sum_{\substack{r+s=k\\
1\leq r\leq k-2:{\rm odd}
}
}(r-s)\zeta^{\mathfrak{m}}(r,s)\equiv0\pmod{\mathbb{Q}\zeta^{\mathfrak{m}}(k)}.
\]
\end{thm}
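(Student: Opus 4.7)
I would derive the identity from the (regularized) double shuffle relations satisfied by motivic double zetas of weight $k$. Concretely, for each $a\in\{2,\ldots,k-2\}$, equating the shuffle and stuffle expansions of $\zeta^{\mathfrak{m}}(a)\zeta^{\mathfrak{m}}(k-a)$ produces a linear relation $R_{a}$ among the $\zeta^{\mathfrak{m}}(r,s)$ with $r+s=k$, $s\geq 2$, in which the only ``non-depth-$2$'' contribution is the single product $\zeta^{\mathfrak{m}}(a)\zeta^{\mathfrak{m}}(k-a)$ itself (modulo $\mathbb{Q}\zeta^{\mathfrak{m}}(k)$). For $a=1$, the analogous comparison must be carried out after shuffle- and stuffle-regularizing $\zeta^{\mathfrak{m}}(1)$; this produces an additional relation $R_{1}$ in which $\zeta^{\mathfrak{m}}(1,k-1)$ appears with an explicit coefficient coming from the regularization discrepancy, a computable rational multiple of $\zeta^{\mathfrak{m}}(k)$ via the Ihara--Kaneko--Zagier formalism.

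Next, I would take the $\mathbb{Q}$-linear combination $\sum_{a}c_{a}R_{a}$ whose coefficients $c_{a}$ are chosen to kill every product $\zeta^{\mathfrak{m}}(\mathrm{even})\zeta^{\mathfrak{m}}(\mathrm{odd})$ of weight $k$. Since for odd $k$ such products are $\mathbb{Q}$-linearly independent modulo $\mathbb{Q}\zeta^{\mathfrak{m}}(k)$, this cancellation determines the $c_{a}$'s up to an overall scalar, and the resulting combination is then automatically an identity in $\mathbb{Q}\zeta^{\mathfrak{m}}(k)$. A direct comparison of coefficients should then identify it with the statement: the factor $2$ on $\zeta^{\mathfrak{m}}(k-2,2)$ and the factor $-(k-2)$ on $\zeta^{\mathfrak{m}}(1,k-1)$ should emerge respectively from the endpoint contribution of $R_{2}$ at $r=k-2$ and from the regularization contribution of $R_{1}$, while the $r-s$ weight on the remaining terms should come from the symmetric structure of the shuffle formula.

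The main obstacle is the combinatorial bookkeeping in this last step: the shuffle formula produces coefficients $\binom{i-1}{a-1}+\binom{i-1}{k-a-1}$ in front of each $\zeta^{\mathfrak{m}}(i,k-i)$ inside $R_{a}$, and extracting the clean linear form $r-s$ --- together with the two distinguished boundary terms --- from $\sum_{a}c_{a}R_{a}$ requires non-trivial identities among finite binomial sums. A potentially more conceptual alternative would be to apply Theorem~\ref{thm:Ma} itself to a degenerate \emph{Eisenstein-type} input --- for instance $f=0$ together with $P_{g}^{+}$ chosen proportional to the (non-cuspidal) even period polynomial of the weight-$(k+1)$ Eisenstein series --- and to verify that Ma's formula, stated in Theorem~\ref{thm:Ma} only for cusp forms, extends formally to this boundary case and yields precisely the identity of Theorem~\ref{thm:Ma_trivial}.
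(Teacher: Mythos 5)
Your main route (regularized double shuffle) is the one Ma originally used, but the step that is supposed to pin the relation down does not work as described. Once you equate the shuffle and stuffle expansions of $\zeta^{\mathfrak{m}}(a)\zeta^{\mathfrak{m}}(k-a)$, the product cancels and each $R_{a}$ is already a relation among double zetas modulo $\mathbb{Q}\zeta^{\mathfrak{m}}(k)$, so the ``kill every product'' condition is vacuous and imposes nothing; if instead you keep the shuffle and stuffle expansions as separate relations, the product-killing condition is satisfied by the whole span of the differences $R_{a}^{\shuffle}-R_{a}^{*}$. In either reading the space of product-free double-shuffle combinations has dimension $(k-1)/2$ (it must: only $(k-3)/2$ of the $k-2$ double zetas of weight $k$ survive modulo $\mathbb{Q}\zeta^{\mathfrak{m}}(k)$ by parity, and double shuffle accounts for all relations in odd weight), so nothing is ``determined up to an overall scalar'' once $k\geq5$, and the identification of the particular combination carrying the coefficients $r-s$, $2$ and $-(k-2)$ --- which you defer as bookkeeping --- is in fact the entire content of the proof. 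For comparison, the paper does not re-run the double-shuffle computation: it rewrites the left-hand side as $\frac{1}{2(w-1)!}\Psi\bigl((X-Y)(X+Y)^{w-1}-(X+Y)(X-Y)^{w-1}-2Y^{w}+4X^{w-1}Y\bigr)$ with $w=k-2$ and $\Psi(X^{a}Y^{b})=a!b!J(0;a,b)$, and then Lemma \ref{lem:vanish_condition} (a consequence of the motivic coaction) reduces membership in $\mathbb{Q}\zeta^{\mathfrak{m}}(k)$ to the directly checkable condition $P(X,Y)+P(-X+Y,X)-P(-X,X+Y)\in V_{w}^{-}$.

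Your proposed ``conceptual alternative'' is explicitly ruled out by the paper. With $f=0$ and $P_{g}^{+}$ proportional to the Eisenstein even period polynomial $X^{w+1}-Y^{w+1}$, one gets $R(X,Y)=\frac{\partial}{\partial Y}\bigl((X+Y)^{w+1}-X^{w+1}\bigr)=(w+1)(X+Y)^{w}$, and hence $u_{{\rm M}}(0,X^{w+1}-Y^{w+1},0)=\frac{w+1}{2}\bigl((X+Y)^{w}-(X+Y)^{w}+(-X+Y)^{w}-(Y-X)^{w}\bigr)=0$: this input yields the trivial relation, which is exactly why the paper replaces $W_{w+1}^{+}$ by the quotient $\tilde{W}_{w+1}^{+}=W_{w+1}^{+}/\mathbb{Q}(X^{w+1}-Y^{w+1})$ in (\ref{eq:exact_0_even_odd}) and adjoins a separate $\mathbb{Q}$-factor whose image under $u_{{\rm M}}$ encodes Theorem \ref{thm:Ma_trivial}. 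The other degeneration is addressed in the remark following Theorem \ref{thm:Ma_trivial}: the case $(f,g)=(E_{k-1},0)$ does \emph{not} recover this relation in general (and $P_{E_{k-1}}^{-}$ is not even a polynomial, since $E_{k-1}$ does not vanish at the cusps $0$ and $\infty$). So neither Eisenstein degeneration of Theorem \ref{thm:Ma} produces the statement, and the relation has to be established independently.
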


\begin{rem}
Theorem \ref{thm:Ma} can be extend to the case where $f$ is an Eisenstein
series for ${\rm SL}_{2}(\mathbb{Z})$. One may guess that Theorem
\ref{thm:Ma_trivial} is a multiple of the case $(f,g)=(E_{k-1},0)$
of Theorem \ref{thm:Ma}, but this is \emph{not} correct in general.
\end{rem}

\begin{thm}[\cite{Li_Liu}, Li--Liu]
\label{thm:Li-Liu}Ma's relations in Theorems \ref{thm:Ma} and \ref{thm:Ma_trivial}
exhaust all the linear relations among $\zeta^{\mathfrak{m}}({\rm odd},{\rm even})$'s
modulo $\mathbb{Q}\zeta^{\mathfrak{m}}(k)$.
\end{thm}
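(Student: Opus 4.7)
The plan is to work inside the Gangl--Kaneko--Zagier formal double zeta space of weight $k$, which I denote $\mathcal{D}_{k}$. By the fact quoted in the remark following Theorem \ref{thm:Ma} -- that every linear relation among motivic double zeta values of odd weight is a consequence of the extended double shuffle relations -- the natural surjection $\mathcal{D}_{k}\twoheadrightarrow\mathcal{H}/\mathbb{Q}\zeta^{\mathfrak{m}}(k)$ lets me translate the question: ``all linear relations among motivic $\zeta^{\mathfrak{m}}(\mathrm{odd},\mathrm{even})$'s modulo $\mathbb{Q}\zeta^{\mathfrak{m}}(k)$'' becomes the corresponding statement for the formal symbols $\zeta(\mathrm{odd},\mathrm{even})\in\mathcal{D}_{k}/\mathbb{Q}\zeta(k)$. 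Hence it suffices to prove exhaustiveness entirely inside $\mathcal{D}_{k}$.

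Applying the depth-two parity theorem (a consequence of double shuffle for odd $k$), every formal $\zeta(\mathrm{even},\mathrm{odd})$ reduces modulo $\zeta(k)$ to a $\mathbb{Q}$-combination of $\zeta(\mathrm{odd},\mathrm{even})$'s, so I restrict to the odd--even sector. The GKZ polynomial translation $\zeta(r,s)\mapsto X^{r-1}Y^{s-1}$ then turns the double shuffle relations into polynomial identities and realises the space of obstructions inside $V_{k-2}^{-}\oplus V_{k-2}^{+}$. By the Eichler--Shimura isomorphism, the cuspidal part of this obstruction space is spanned by the odd period polynomials $P_{f}^{-}$ with $f$ running over cusp forms of weight $k-1$, together with the even period polynomials $P_{g}^{+}$ with $g$ running over cusp forms of weight $k+1$; this reproduces exactly the cuspidal Ma relations, whose linear independence is already part of Theorem \ref{thm:Ma}. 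In addition, the weight $k-1$ Eisenstein series contributes one further relation through its odd period polynomial (the Eisenstein extension of Theorem \ref{thm:Ma}), and Theorem \ref{thm:Ma_trivial} contributes yet another relation which, by the remark following it, is \emph{not} a scalar multiple of the Eisenstein-extended one.

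The heart of the argument -- and the main obstacle -- is the claim that these cuspidal, Eisenstein, and trivial relations already fill the entire kernel, with no further relations possible. I would verify this by computing the dimension of the kernel directly from the polynomial model and matching it against $\dim S_{k-1}+\dim S_{k+1}+2$. Equivalently, the problem reduces to an Eichler--Shimura-type surjectivity statement, augmented by a careful analysis of the two ``boundary'' contributions arising from the regularization of $\zeta(1,k-1)$: one must realise the polynomial shadow of Theorem \ref{thm:Ma_trivial} as the unique (up to scale) extra kernel element not lying in the standard modular image. Once this identification is carried out, the rank count forces the span of Ma's relations to coincide with the full kernel, completing the proof.
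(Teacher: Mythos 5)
Your outline correctly identifies the reduction to the formal/polynomial model (the paper uses the same translation: the statement is equivalent to exactness of the sequence (\ref{eq:exact_0_even_odd}), via the fact that odd-weight double zeta relations are exhausted by double shuffle). But the proposal has a genuine gap: the entire mathematical content of the theorem is the assertion that the kernel of the map $V_{w}^{+}\to H_{w}({\bf 0};{\bf even},{\bf odd})$ is \emph{no larger} than the span of Ma's relations, and you defer exactly this to an unexecuted step (``I would verify this by computing the dimension of the kernel directly from the polynomial model''). Computing that dimension \emph{is} the theorem; it does not follow from Eichler--Shimura alone, because one must show that every polynomial $C(X,Y)\in V_{w}^{+}$ satisfying the kernel condition $C(X,Y)-C(X+Y,Y)-C(X+Y,X)\in V_{w}^{-}$ actually decomposes as $Xp+\frac{\partial}{\partial Y}q$ with $p\in W_{w-1}^{-}$ and $q\in W_{w+1}^{+}$. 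The paper does this constructively (Lemma \ref{lem:Li-Liu-2}): it sets $p:=\frac{1}{w+1}(\partial_{X}C(X,Y)+\partial_{Y}C(Y,X))$ and $q:=\frac{1}{w+1}(YC(X,Y)-XC(Y,X))$, verifies $Xp+\partial_{Y}q=C$, and then proves $p$ and $q$ are period polynomials using the polynomial identity of Lemma \ref{lem:Li_Liu_keyLemma} and Lemma \ref{lem:Li-Liu_1}. No substitute for this argument appears in your proposal.

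There is also a quantitative error that would derail the rank count you propose. The kernel has dimension $\dim W_{w-1}^{-}+\dim W_{w+1}^{+}=\dim S_{k-1}+\dim S_{k+1}+1$, matching $\dim\bigl(W_{w-1}^{-}\times\tilde{W}_{w+1}^{+}\times\mathbb{Q}\bigr)$ in (\ref{eq:exact_0_even_odd}); your target of $\dim S_{k-1}+\dim S_{k+1}+2$ is off by one. The discrepancy comes from treating the weight $k-1$ Eisenstein relation and the relation of Theorem \ref{thm:Ma_trivial} as jointly independent of the cuspidal relations: the remark after Theorem \ref{thm:Ma_trivial} only says the two are not proportional to \emph{each other}, not that both are independent of the full cuspidal span. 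If all four sources were independent, the relation space would exceed the kernel and Li--Liu would be false. A correct execution of your strategy would have to locate the Eisenstein-extended relation inside the span of the relations from Theorems \ref{thm:Ma} and \ref{thm:Ma_trivial}, rather than counting it separately.
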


For the second case, Zagier showed the following result.
\begin{thm}[{\cite[Theorem 2, Lemma 3]{Zagier_2_3_2}, Zagier}]
\label{thm:Zagier}There are no linear relations among $\zeta^{\mathfrak{m}}({\rm even},{\rm odd})$'s
and $\zeta^{\mathfrak{m}}({\rm odd})$'s.
\end{thm}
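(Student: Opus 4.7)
The claim is homogeneous in weight, so I work one odd integer $k\ge3$ at a time and aim to show that the $(k-1)/2$ elements
\[
S_{k}\;\coloneqq\;\bigl\{\zeta^{\mathfrak{m}}(a,b)\mid a+b=k,\ a\ge2\text{ even},\ b\ge3\text{ odd}\bigr\}\;\cup\;\bigl\{\zeta^{\mathfrak{m}}(k)\bigr\}
\]
are linearly independent in $\mathcal{H}_{k}$. Note that $\zeta^{\mathfrak{m}}(k)\ne0$ by Brown's theorem, and the set of admissible $(a,b)$ has cardinality $(k-3)/2$.

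The tool is Brown's infinitesimal coaction, i.e., the motivic derivations: for each odd $r$ with $3\le r\le k-2$, there is an operator
\[
D_{r}\colon\mathcal{H}_{k}\longrightarrow\mathcal{L}_{r}\otimes\mathcal{H}_{k-r},
\]
where $\mathcal{L}$ is the motivic Lie coalgebra, whose value on $\zeta^{\mathfrak{m}}(k)$ vanishes (since $\zeta^{\mathfrak{m}}(k)$ is primitive at depth one for $r<k$). Given any hypothetical relation $\sum c_{a,b}\zeta^{\mathfrak{m}}(a,b)+c\,\zeta^{\mathfrak{m}}(k)=0$, applying $D_{r}$ forces $\sum_{(a,b)}c_{a,b}D_{r}(\zeta^{\mathfrak{m}}(a,b))=0$. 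Goncharov's formula makes $D_{r}$ explicit on the iterated integral representing $\zeta^{\mathfrak{m}}(a,b)$, and extracting the coefficient of $\zeta^{\mathfrak{l}}(r)\otimes(\text{fixed weight-}(k-r)\text{ basis vector})$ yields a rational number $\lambda_{r}(a,b)$ that is an explicit signed combination of $\binom{r-1}{a-1}$ and $\binom{r-1}{b-1}$.

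The number of odd $r\in[3,k-2]$ equals the number of pairs $(a,b)$, namely $(k-3)/2$, so the matrix $M=(\lambda_{r}(a,b))$ is square. The entire proof then reduces to showing that $M$ is nonsingular: once in hand, every $c_{a,b}=0$ and hence $c=0$ from the nonvanishing of $\zeta^{\mathfrak{m}}(k)$. The result is expected because for the analogous problem in even weight with odd--odd indices the kernel of the corresponding matrix is canonically isomorphic to the space of even period polynomials of cusp forms for $\mathrm{SL}_{2}(\mathbb{Z})$ (the mechanism behind Theorem \ref{thm:GKZ_intro}), while in our odd-weight, even--odd regime the analogous ``source'' would be odd-weight modular forms for $\mathrm{SL}_{2}(\mathbb{Z})$, of which there are none.

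The main obstacle is turning this heuristic into a genuine proof of nonsingularity of $M$. A concrete route is to pass to the formal double-shuffle space, express each stuffle relation $\zeta^{\mathfrak{m}}(a,b)+\zeta^{\mathfrak{m}}(b,a)+\zeta^{\mathfrak{m}}(k)=\zeta^{\mathfrak{m}}(a)\zeta^{\mathfrak{m}}(b)$ to rewrite any candidate even--odd relation as an odd--even relation modulo $\zeta^{\mathfrak{m}}(2)\mathcal{H}_{k-2}$ and $\mathbb{Q}\zeta^{\mathfrak{m}}(k)$, and then combine the complete classification of odd--even relations (Theorem \ref{thm:Li-Liu}) with the vanishing of odd-weight ${\rm SL}_{2}(\mathbb{Z})$-cusp-form cohomology to exclude any nonzero kernel; alternatively one may attack the nonsingularity of $M$ directly by binomial identities. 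This combinatorial-modular step is where the real work lies, and is the content of \cite{Zagier_2_3_2}.
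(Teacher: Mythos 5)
Your proposal follows essentially the same route as the paper's own argument (the remark following Theorem \ref{thm:Zagier}): both reduce the statement to the nonsingularity of a square matrix of binomial coefficients, and your derivation matrix $(\lambda_{r}(a,b))$ is, after writing $r=k-2s$ and normalising by the nonzero elements $\zeta^{\mathfrak{l}}(r)\otimes\zeta^{\mathfrak{m}}(k-r)$, exactly the transpose of Zagier's matrix $\bigl(\binom{2K-2s}{2r-1}+\binom{2K-2s}{2K-2r}\bigr)$, whose nonvanishing determinant is \cite[Lemma 3]{Zagier_2_3_2} and is cited rather than reproved in the paper as well. One caution: of your two suggested routes to the nonsingularity, only the direct determinant computation is viable, since the detour through the stuffle and Theorem \ref{thm:Li-Liu} both leaves the products $\zeta^{\mathfrak{m}}(2s)\zeta^{\mathfrak{m}}(k-2s)$ unaccounted for and is circular (the Li--Liu classification presupposes the dimension count that Theorem \ref{thm:Zagier} supplies).
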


\begin{rem}
The original statement in \cite[Theorem 2]{Zagier_2_3_2} is slightly
different from Theorem \ref{thm:Zagier} since a motivic setting is
not introduced in \cite{Zagier_2_3_2}. However, we can easily extract
a proof of Theorem \ref{thm:Zagier} from Zagier's original proof.
More precisely, for an odd number $k=2K+1\geq3$, Zagier showed that
\[
\zeta(2r,k-2r)\equiv\sum_{s=1}^{K-1}\left({2K-2s \choose 2r-1}+{2K-2s \choose 2K-2r}\right)\zeta(2s)\zeta(k-2s)\pmod{\mathbb{Q}\zeta(k)}
\]
\cite[(36)]{Zagier_2_3_2} and
\[
\det\left({2K-2s \choose 2r-1}+{2K-2s \choose 2K-2r}\right)_{1\leq r,s\leq K-1}\neq0
\]
\cite[Lemma 3]{Zagier_2_3_2}. By (\ref{eq:coprod_lemma}), we can
lift the first congruence to the one for motivic multiple zeta values.
Here $\zeta^{\mathfrak{m}}(2s)\zeta^{\mathfrak{m}}(k-2s)$'s with
$1\leq s\leq K-1$ and $\zeta^{\mathfrak{m}}(k)$ are linearly independent.
Thus the non-vanishing of the determinant implies Theorem \ref{thm:Zagier}.
\end{rem}

For the third case, the harmonic product formula gives $\zeta^{\mathfrak{m}}(2a,2b)+\zeta^{\mathfrak{m}}(2b,2a)\in\mathbb{Q}\zeta^{\mathfrak{m}}(2a+2b)$.
In fact, these relations exhaust all the linear relations among $\zeta^{\mathfrak{m}}({\rm even},{\rm even})$'s
modulo $\mathbb{Q}\zeta^{\mathfrak{m}}({\rm even})$.
\begin{thm}[Tasaka, personal communication]
\label{thm:Tasaka}The linear relations among $\zeta^{\mathfrak{m}}({\rm even},{\rm even})$'s
modulo $\mathbb{Q}\zeta^{\mathfrak{m}}({\rm even})$ are exhausted
by the relations $\zeta^{\mathfrak{m}}(2a,2b)+\zeta^{\mathfrak{m}}(2b,2a)\equiv0\pmod{\mathbb{Q}\zeta^{\mathfrak{m}}(2a+2b)}$.
\end{thm}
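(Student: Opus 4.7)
The plan is to first observe that the harmonic (stuffle) product
\[
\zeta^{\mathfrak{m}}(2a)\zeta^{\mathfrak{m}}(2b)=\zeta^{\mathfrak{m}}(2a,2b)+\zeta^{\mathfrak{m}}(2b,2a)+\zeta^{\mathfrak{m}}(2a+2b),
\]
combined with $\zeta^{\mathfrak{m}}(2k)\in\mathbb{Q}\zeta^{\mathfrak{m}}(2)^{k}$, places both $\zeta^{\mathfrak{m}}(2a)\zeta^{\mathfrak{m}}(2b)$ and $\zeta^{\mathfrak{m}}(2a+2b)$ into $\mathbb{Q}\zeta^{\mathfrak{m}}(2a+2b)$, yielding the claimed stuffle relations. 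The content of the theorem is the converse: setting $n=a+b$ and $d=\lfloor(n-1)/2\rfloor$, it suffices to show that the representatives $\zeta^{\mathfrak{m}}(2a,2(n-a))$ for $a=1,\ldots,d$ are $\mathbb{Q}$-linearly independent modulo $\mathbb{Q}\zeta^{\mathfrak{m}}(2n)$.

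For this I would use Brown's infinitesimal coaction $D_{2s+1}:\mathcal{H}\to\mathcal{L}_{2s+1}\otimes\mathcal{H}$. Since $D_{2s+1}\zeta^{\mathfrak{m}}(2)=0$ and $D_{2s+1}$ is a derivation, $\mathbb{Q}\zeta^{\mathfrak{m}}(2n)\subseteq\bigcap_{s=1}^{n-1}\ker D_{2s+1}\cap\mathcal{H}_{2n}$, and the reverse inclusion follows from Brown's decomposition theorem together with the fact that $\zeta^{\mathfrak{m}}(2)$ is a non-zero-divisor in $\mathcal{H}$. I would then apply Goncharov's coproduct formula to $\zeta^{\mathfrak{m}}(2a,2b)=I^{\mathfrak{m}}(0;1,0^{2a-1},1,0^{2b-1};1)$ and project onto the summand $\zeta^{\mathfrak{l}}(2s+1)\otimes\zeta^{\mathfrak{m}}(2n-2s-1)$. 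A direct enumeration shows that only length-$(2s+1)$ windows containing exactly one of the two ``$1$''s survive; using the shuffle-regularized identity
\[
I^{\mathfrak{l}}(0;0^{j},1,0^{k};1)=(-1)^{j}\binom{j+k}{j}\zeta^{\mathfrak{l}}(j+k+1),
\]
together with the path-reversal relation, I expect the projection to evaluate to
\[
\left(\binom{2s}{2a-1}-\binom{2s}{2b-1}\right)\zeta^{\mathfrak{l}}(2s+1)\otimes\zeta^{\mathfrak{m}}(2n-2s-1)
\]
for $3\leq 2s+1\leq 2n-3$. The antisymmetry in $(a,b)$ matches the stuffle relation already proven.

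Finally, for a candidate $\alpha=\sum_{a=1}^{d}c_{a}\zeta^{\mathfrak{m}}(2a,2(n-a))\in\mathbb{Q}\zeta^{\mathfrak{m}}(2n)$, restricting the vanishing of $\pi D_{2s+1}\alpha$ to $s\in\{1,\ldots,d\}$ forces $\sum_{a=1}^{d}c_{a}\binom{2s}{2a-1}=0$, because in this range $2s\leq n-1<2(n-a)-1$ makes the second binomial vanish. The matrix $\bigl(\binom{2s}{2a-1}\bigr)_{1\leq s,a\leq d}$ is lower triangular (entries zero whenever $a>s$) with nonzero diagonal $\binom{2s}{2s-1}=2s$, so it is invertible and forces $c_{a}=0$. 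The main obstacle lies in the middle step: one must carefully enumerate all windows in Goncharov's formula, verify that windows anchored at $p=0$, windows enclosing both ``$1$''s, and windows enclosing neither contribute only to other tensor components than $\zeta^{\mathfrak{l}}(2s+1)\otimes\zeta^{\mathfrak{m}}(2n-2s-1)$, and correctly apply the shuffle regularization to the two surviving subcases before combining via reversal. Once the binomial formula for the coaction is in hand, the triangular-matrix step is elementary.
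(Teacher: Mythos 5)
Your proposal is correct, but it takes a genuinely different route from the paper. The paper first proves the \emph{dual} statement (Theorem \ref{thm:Tasaka-dual}): using Lemma \ref{lem:dual_vanish_condition} it identifies the kernel of $\lambda$ on $V_{w}^{+}$ restricted to $H_{w}({\bf 0};{\bf odd},{\bf odd})$ with the space $W_{w}^{+,\Gamma_{0}(2)}$ of period polynomials for $\Gamma_{0}(2)$, via the group-ring identity $p\mid_{U-SU^{2}-SU^{2}S+US}=p\mid_{-S(1-T)(1+M)SU}$, and then deduces the exactness of (\ref{eq:exact_0_odd_odd}) --- which is the restatement of Theorem \ref{thm:Tasaka} --- by a dimension count. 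You instead pair the putative relation directly against the specific functionals $\sigma_{2s+1}\sigma_{2n-2s-1}$ for $s=1,\dots,d$ and reduce to the invertibility of the triangular matrix $\bigl(\binom{2s}{2a-1}\bigr)$. I checked your key coefficient against the paper's own Lemma \ref{lem:basic_lemma}: with $q=X^{2s}Y^{2n-2s-2}$ one gets $\langle\zeta^{\mathfrak{m}}(2a,2b),\sigma_{2s+1}\sigma_{2n-2s-1}\rangle=\binom{2s}{2a-1}-\binom{2s}{2b-1}$, exactly as you predict, and your range restriction $2s\leq n-1<2(n-a)-1$ correctly kills the second binomial; also, for a depth-two word the windows containing both $1$'s produce an all-zero quotient word and hence vanish, so the coaction really is a pure multiple of $\zeta^{\mathfrak{l}}(2s+1)\otimes\zeta^{\mathfrak{m}}(2n-2s-1)$ and no projection subtleties arise. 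Two minor points: your regularization identity should read $I^{\mathfrak{l}}(0;0^{j}10^{k};1)=(-1)^{j+1}\binom{j+k}{j}\zeta^{\mathfrak{l}}(j+k+1)$ under the convention $\zeta^{\mathfrak{l}}(m)=-I^{\mathfrak{l}}(0;10^{m-1};1)$ (an overall sign that does not affect the conclusion), and you do not actually need the reverse inclusion $\bigcap_{s}\ker D_{2s+1}\cap\mathcal{H}_{2n}\subseteq\mathbb{Q}\zeta^{\mathfrak{m}}(2n)$ --- only the easy forward inclusion is used. The trade-off between the two arguments: yours is more elementary and self-contained (no period-polynomial theory for congruence subgroups, no appeal to the Popa--Pasol results in Appendix \ref{sec:Period-polynomials}), but it only exhibits enough separating functionals to prove Theorem \ref{thm:Tasaka}, whereas the paper's computation determines the full kernel of $\lambda$ and thereby also yields the dual Theorem \ref{thm:Tasaka-dual} and its modular interpretation.
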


\begin{rem}
Let $\Gamma_{B}\subset{\rm SL}_{2}(\mathbb{Z})$ be the congruence
subgroup defined in Section \ref{subsec:modularforms}. Then the space
of odd symmetric polynomials of degree $w$ can be interpreted as
the space of odd period polynomials for $\Gamma_{B}$ (Proposition
\ref{prop:period_polynomials}). Thus Theorem \ref{thm:Tasaka} says
that
\[
\sum_{\substack{0<r<w\\
r:{\rm odd}
}
}a_{r}r!(w-r)!\zeta^{\mathfrak{m}}(r+1,w-r+1)\equiv0\pmod{\mathbb{C}\zeta^{\mathfrak{m}}(w+2)}
\]
if and only if there exists a modular form $f$ for $\Gamma_{B}$
of weight $w+2$ such that $P_{f}^{+}(X,Y)=\sum_{\substack{0<r<w\\
r:{\rm odd}
}
}a_{r}X^{r}Y^{w-r}$.
\end{rem}

\begin{rem}
The author learned Theorems \ref{thm:Tasaka}, \ref{thm:Tasaka-dual}
and their proofs from Tasaka. We give a proof of Theorems \ref{thm:Tasaka}
and \ref{thm:Tasaka-dual} in Section \ref{sec:0_odd_odd}.
\end{rem}

\subsection{Known Ihara--Takao-type phenomena for usual motivic double zeta
values}

Since $\mathcal{A}$ (resp. $\mathcal{H}$) has a structure of a Hopf
algebra (resp. $\mathcal{A}$-comodule), the dual vector space $\mathcal{A}^{\vee}$
(resp. $\mathcal{H}^{\vee}$) of $\mathcal{A}$ (resp. $\mathcal{H}$)
has a structure of non-commutative algebra (resp. $\mathcal{A}^{\vee}$-module):
\[
\begin{split}\mathcal{A}^{\vee}\times\mathcal{A}^{\vee}\to\mathcal{A}^{\vee};\ (\sigma,\sigma')\mapsto\sigma\sigma',\\
\mathcal{A}^{\vee}\times\mathcal{H}^{\vee}\to\mathcal{H}^{\vee};\ (\sigma,\sigma')\mapsto\sigma\sigma'.
\end{split}
\]
For an odd (resp. even) integer $n\geq2$, let $\sigma_{n}$ be any
element of $\mathcal{A}^{\vee}$ (resp. $\mathcal{H}^{\vee}$) such
that $\left\langle \sigma_{n},1\right\rangle =0$ and $\left\langle \sigma_{n},\zeta^{\mathfrak{m}}(m)\right\rangle =\delta_{n,m}$.
We denote by $\bar{H}_{w}$ the space spanned by all motivic multiple
zeta values of depth $2$ and weight $w+2$. Then for any $n$ and
$m$, $\left.\sigma_{n}\sigma_{m}\right|_{\bar{H}_{w}}$ does not
depend on the choice of $\sigma_{n}\in\mathcal{A}^{\vee}$ and $\sigma_{m}\in\mathcal{H}^{\vee}$.
\begin{thm}[\cite{Ihara-Takao}, \cite{Schneps_poisson}, Ihara--Takao, Schneps]
\label{thm:Ihara-Takao}Let $w$ be a nonnegative even integer, $k=w+2$,
and $a_{2},a_{4},\dots,a_{w-2}$ be complex numbers. Then
\[
\left.\sum_{2\leq r\leq w-2,r:{\rm even}}a_{r}\sigma_{r+1}\sigma_{w-r+1}\right|_{\bar{H}_{w}}=0
\]
if and only if there exists $f\in M_{k}({\rm SL}_{2}(\mathbb{Z}))$
such that
\[
P_{f}^{+}(X,Y)=\sum_{2\leq r\leq w-2,r:{\rm even}}a_{r}X^{r}Y^{w-r}.
\]
\end{thm}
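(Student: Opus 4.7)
The plan is to reduce Theorem \ref{thm:Ihara-Takao} to the Gangl--Kaneko--Zagier result (Theorem \ref{thm:GKZ_intro}) by computing $\langle\sigma_{r+1}\sigma_{w-r+1},\zeta^{\mathfrak{m}}(a,b)\rangle$ for each $\zeta^{\mathfrak{m}}(a,b)\in\bar H_w$ explicitly via the motivic coaction. Choosing primitive representatives of the $\sigma_n$'s (so that they vanish on constants and on products), one has $\langle\sigma_{r+1}\sigma_{w-r+1},\zeta^{\mathfrak{m}}(a,b)\rangle=(\sigma_{r+1}\otimes\sigma_{w-r+1})\Delta\zeta^{\mathfrak{m}}(a,b)$, so only the depth-one-tensor-depth-one part of the coaction survives. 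By Goncharov's formula applied to the iterated integral representation of $\zeta^{\mathfrak{m}}(a,b)$, this part has the shape $\sum_{n\text{ odd}}c_n(a,b)\,\zeta^{\mathfrak{m}}(n)\otimes\zeta^{\mathfrak{m}}(k-n)$ for an explicit $\mathbb{Q}$-linear combination $c_n(a,b)$ of binomial coefficients $\binom{n-1}{a-1}$ and $\binom{n-1}{b-1}$. Consequently $\langle\sigma_{r+1}\sigma_{w-r+1},\zeta^{\mathfrak{m}}(a,b)\rangle=c_{r+1}(a,b)$.

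The hypothesis $\sum_r a_r\sigma_{r+1}\sigma_{w-r+1}|_{\bar H_w}=0$ thus translates into the linear system
\[
\sum_{r=2,4,\dots,w-2}a_r\,c_{r+1}(a,b)\;=\;0\qquad\text{for every }(a,b)\text{ with }a+b=k,\ a\geq 1,\ b\geq 2.
\]
I would next verify that this system is equivalent to its restriction to $(a,b)$ with both components odd. This uses either the binomial identities satisfied by $c_n(a,b)$ under $(a,b)\leftrightarrow(b,a)$, or the observation that modulo $\mathbb{Q}\zeta^{\mathfrak{m}}(k)$ and products the space $\bar H_w$ is already spanned by $\zeta^{\mathfrak{m}}(\mathrm{odd},\mathrm{odd})$'s via stuffle relations together with the Euler formulas expressing $\zeta^{\mathfrak{m}}(\mathrm{even})$ in terms of $\zeta^{\mathfrak{m}}(2)^n$.

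Finally, on the odd-odd block, the matrix $(c_{r+1}(r'+1,w-r'+1))$ coincides, up to the explicit prefactors $r'!(w-r')!$ appearing in Theorem \ref{thm:GKZ_intro}, with the coefficient matrix whose kernel is there identified with polynomials of the form $P_f^+(X+Y,X)$ for $f\in M_k(\mathrm{SL}_2(\mathbb{Z}))$. A linear change of variables in $(X,Y)$ converts such polynomials into the form $\sum a_rX^rY^{w-r}=P_f^+(X,Y)$ required in the statement, yielding the claimed bijection. I expect the main obstacle to be the middle reduction step: checking carefully that the equations for $(a,b)$ with an even component impose no conditions beyond those coming from odd-odd pairs. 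This can be approached combinatorially by exploiting the symmetries of the $c_n(a,b)$, or structurally by appealing to Brown's injectivity results for the infinitesimal coactions on the depth-graded part of the motivic Lie coalgebra, reducing the equivalence to a dimension count.
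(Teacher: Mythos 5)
Your first two steps are sound and essentially reproduce the machinery the paper develops in Lemma \ref{lem:basic_lemma} and Lemma \ref{lem:dual_vanish_condition}: the pairing $\left\langle \sigma_{r+1}\sigma_{w-r+1},\zeta^{\mathfrak{m}}(a,b)\right\rangle$ is computed by the depth-one-tensor-depth-one part of the coaction and is an explicit binomial expression, and the reduction from all $(a,b)$ to odd--odd pairs is legitimate once you invoke the spanning statement $\bar{H}_{w}=\bar{H}_{w}({\bf even},{\bf even})$ of \cite[Theorem 2]{GKZ}. (One caution there: $\sigma_{m}\sigma_{n}$ does \emph{not} vanish on products such as $\zeta^{\mathfrak{m}}(m)\zeta^{\mathfrak{m}}(n)$, so you need genuine $\mathbb{Q}$-linear spanning of $\bar{H}_{w}$ by the $\zeta^{\mathfrak{m}}({\rm odd},{\rm odd})$'s, not merely spanning modulo products.)

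The final step, however, has a genuine gap. Let $M$ be the matrix with entries $\left\langle \zeta^{\mathfrak{m}}(r'+1,w-r'+1),\sigma_{r+1}\sigma_{w-r+1}\right\rangle$, rows indexed by $r'$ and columns by $r$. Theorem \ref{thm:GKZ_intro} describes the \emph{left} kernel of $M$ (linear relations among the double zeta values), which is the image of $u_{{\rm GKZ}}$, i.e.\ the symmetrization of the polynomials $P_{f}^{+}(X+Y,X)$; Theorem \ref{thm:Ihara-Takao} asks for the \emph{right} kernel (relations among the functionals $\sigma_{r+1}\sigma_{w-r+1}$), which is $W_{w}^{+}$ itself. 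These subspaces have the same dimension but are genuinely different: compare the exact sequences (\ref{eq:exact_0_even_even}) and (\ref{eq:dual_0_even_even}), whose injections are $u_{{\rm GKZ}}$ and ${\rm id}$ respectively. Concretely, for $w=10$ the GKZ relation $28\zeta^{\mathfrak{m}}(3,9)+150\zeta^{\mathfrak{m}}(5,7)+168\zeta^{\mathfrak{m}}(7,5)\equiv0$ gives a left-kernel vector whose $\zeta^{\mathfrak{m}}(9,3)$-coordinate is $0$, whereas the right kernel is spanned (in the cuspidal part) by $(a_{2},a_{4},a_{6},a_{8})=(1,-3,3,-1)$ coming from $P_{\Delta}^{+}(X,Y)$. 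Since $M$ is not symmetric, no ``linear change of variables in $(X,Y)$'' converts the left kernel into the right kernel, so your deduction from Theorem \ref{thm:GKZ_intro} does not go through as written. Two ways to repair it: (i) verify the easy inclusion $W_{w}^{+}\subset\ker\lambda$ directly from the cocycle relations $\left.p\right|_{1+S}=\left.p\right|_{1+U+U^{2}}=0$, then use the ``one-to-one correspondence'' half of Theorem \ref{thm:GKZ_intro} to get $\dim\bar{H}_{w}=\dim V_{w}^{+}-\dim W_{w}^{+}$ and conclude by rank--nullity; or (ii) argue as the paper does in Section \ref{sec:0_even_even}, where Lemma \ref{lem:gkz_type_l1} shows directly that the polynomial condition characterizing $\ker\lambda$, namely $q(X,Y)-q(X+Y,Y)+q(X+Y,X)\in V_{w}^{-}$, is equivalent to $q\in W_{w}^{+}$, with no appeal to Theorem \ref{thm:GKZ_intro} at all.
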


On the other hand, for odd $w$, $\left.\sigma_{n}\sigma_{m}\right|_{\bar{H}_{w}}$'s
are linearly independent. For subsets $N,N'\subset\mathbb{Z}_{\geq0}$,
we denote by $\bar{H}_{w}(N,N')$ the subspace of $\bar{H}_{w}$ spanned
by
\[
\{\zeta^{\mathfrak{m}}(r+1,s+1):r+s=w,s>0,r\in N,s\in N'\}.
\]
We denote by ${\bf odd}$ (resp. ${\bf even}$) the set of nonnegative
odd (resp. even) integers. Then it is known that $\bar{H}_{w}({\bf even},{\bf even})=\bar{H}_{w}$
for an even $w$ (\cite[Theorem 2]{GKZ}) and $\bar{H}_{w}({\bf odd},{\bf even})=\bar{H}_{w}$
for an odd $w$ (\cite[Theorem 2]{Zagier_2_3_2}). However $\bar{H}_{w}({\bf even},{\bf odd})$
and $\bar{H}_{w}({\bf odd},{\bf odd})$ do not coincide with $\bar{H}_{w}$
in general. The following three theorems gives complete descriptions
of all the linear relations among $\left.\sigma_{n}\sigma_{m}\right|_{\bar{H}_{w}({\bf even},{\bf odd})}$'s
and those among $\left.\sigma_{n}\sigma_{m}\right|_{\bar{H}_{w}({\bf odd},{\bf odd})}$'s.
\begin{thm}[{\cite[Section 6]{Zagier_2_3_2}, Zagier}]
\label{thm:Zagier_dual_relation}Let $w$ be an odd positive integer,
$w=k+2$, $f$ a cusp form for ${\rm SL}_{2}(\mathbb{Z})$ of weight
$k-1$, and $g$ a cusp form for ${\rm SL}_{2}(\mathbb{Z})$ of weight
$k+1$. Define $a_{0},\dots,a_{w}\in\mathbb{C}$ by
\[
\sum_{r=0}^{w}a_{r}X^{r}Y^{w-r}=XP_{f}^{-}(X,Y)+\frac{\partial}{\partial Y}P_{g}^{+}(X,Y).
\]
Then we have
\[
\left.\sum_{2\leq r\leq w-1,r:{\rm even}}a_{r}\sigma_{r+1}\sigma_{w-r+1}\right|_{\bar{H}_{w}({\bf even},{\bf odd})}=0.
\]
Furthermore, these linear relations are linearly independent, i.e.,
if $a_{r}=0$ for all even $r$ then $f=g=0$.
\end{thm}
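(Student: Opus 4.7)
The plan is to translate the dual statement into an explicit linear-algebra identity using Goncharov's motivic coproduct, and then match this identity with the Eichler--Shimura cocycle relations that characterize period polynomials.

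First, I would compute the pairing
\[
c_{r,s}\;:=\;\bigl\langle\sigma_{r+1}\sigma_{w-r+1},\,\zeta^{\mathfrak{m}}(s+1,w-s+1)\bigr\rangle
\]
for even $r$ and even $s$ in the relevant ranges, so that $(r+1,w-r+1)$ is an (odd, even) pair and $\zeta^{\mathfrak{m}}(s+1,w-s+1)$ lies in $\bar{H}_{w}(\mathbf{even},\mathbf{odd})$. Because $r+1$ is odd, $\sigma_{r+1}$ factors through $\mathcal{A}^{\vee}$, whereas $\sigma_{w-r+1}$ with $w-r+1$ even lies in $\mathcal{H}^{\vee}$. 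Therefore only the ``depth-one $\otimes$ depth-one'' piece of the reduced coproduct $\Delta'\zeta^{\mathfrak{m}}(s+1,w-s+1)$ contributes, and Goncharov's formula for the motivic coproduct of iterated integrals on $\mathbb{P}^{1}\setminus\{0,1,\infty\}$, applied to $\zeta^{\mathfrak{m}}(a,b)=I^{\mathfrak{m}}(0;10^{a-1}10^{b-1};1)$, yields a closed-form expression for $c_{r,s}$ as a $\mathbb{Z}$-linear combination of a few binomial coefficients.

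Second, the desired vanishing becomes the system $\sum_{r\text{ even}}a_{r}c_{r,s}=0$ for every relevant even $s$, which is a system of linear conditions on the polynomial $P(X,Y)=\sum_{r}a_{r}X^{r}Y^{w-r}$. I would then show that the solution set of this system is exactly the image of the map $(f,g)\mapsto XP_{f}^{-}(X,Y)+\tfrac{\partial}{\partial Y}P_{g}^{+}(X,Y)$. For the ``image $\subset$ solutions'' direction, I would combine the Eichler--Shimura cocycle relations $P|_{1+S}=0$ and $P|_{1+U+U^{2}}=0$ satisfied by $P_{f}^{-}$ and $P_{g}^{+}$ with the combinatorial structure of $c_{r,s}$, and verify that both multiplication by $X$ and differentiation in $Y$ transform these cocycle relations into exactly the required binomial identities. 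For the reverse inclusion, and for the final linear-independence assertion, a dimension count using the injectivity of Eichler--Shimura on cusp forms will suffice; this is essentially the dual counterpart of the ``completeness'' half of Ma's Theorem \ref{thm:Ma}.

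The principal obstacle is the first step: deriving and simplifying the explicit formula for $c_{r,s}$ with all signs correct, given the asymmetric roles played by the odd index $r+1$ (acting through $\mathcal{A}^{\vee}$) and the even index $w-r+1$ (acting through $\mathcal{H}^{\vee}$). Once $c_{r,s}$ is in hand, the matching with period polynomials in the second step is combinatorial but delicate --- it amounts to a binomial identity relating the coproduct-theoretic data to the modular symbol relations, and is the natural dual of the binomial identity that drives the proof of Theorem \ref{thm:Ma}.
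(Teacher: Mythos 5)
Your plan follows essentially the same route as the paper's proof of the equivalent exact sequence (\ref{eq:dual_0_even_odd}): the pairing $c_{r,s}$ you propose to extract from Goncharov's coproduct is exactly what Lemma \ref{lem:basic_lemma} and Lemma \ref{lem:dual_vanish_condition} compute in generating-function form (reducing the vanishing to the condition $C(X,Y)-C(X+Y,Y)-C(X+Y,X)\in V_{w}^{-}$ for $C=\sum_r a_rX^rY^{w-r}$), and the matching of the solution set with the image of $(f,g)\mapsto XP_{f}^{-}+\frac{\partial}{\partial Y}P_{g}^{+}$ is the content of Lemma \ref{lem:Li-Liu-2}. The one place your sketch is too optimistic is the final linear-independence assertion: injectivity of the individual Eichler--Shimura maps plus a dimension count does not exclude a nontrivial relation $XP_{f}^{-}+\frac{\partial}{\partial Y}P_{g}^{+}=0$ with $f,g\neq0$ (you would need to know $\dim\bar{H}_{w}({\bf even},{\bf odd})$ in advance, which is circular); the paper closes this by the short direct argument of Lemma \ref{lem:zagier_lemma}, antisymmetrizing under $X\leftrightarrow Y$ via the relation $\left.P\right|_{1+S}=0$ and invoking Euler's identity $X\frac{\partial}{\partial X}q+Y\frac{\partial}{\partial Y}q=(w+1)q$ to force $q=0$ and hence $p=0$.
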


\begin{thm}[\cite{Li_Liu}, Li--Liu]
\label{thm:Li-Liu-dual}Zagier's relations in Theorem \ref{thm:Zagier_dual_relation}
exhaust all linear relations among $\left.\sigma_{a}\sigma_{b}\right|_{\bar{H}_{w}({\bf even},{\bf odd})}$'s.
\end{thm}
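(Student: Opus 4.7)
The plan is to derive the dual statement from the primal Theorem~\ref{thm:Li-Liu} by a linear-algebra duality argument built on the depth-$2$ coproduct.

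First, using the Goncharov coproduct formula for depth-$2$ motivic multiple zeta values I would compute explicitly
\[
M_{(a,b),(r,s)}:=(\sigma_{a}\sigma_{b})\cdot\zeta^{\mathfrak{m}}(r+1,s+1)\in\mathbb{Q}
\]
as a $\mathbb{Q}$-linear combination of $\binom{r}{a-1}$ and $\binom{s}{a-1}$, nonzero only when $a+b=r+s+2$ and $a$ is odd. Restricting $(r,s)$ to $r$ even $\geq 0$, $s$ odd $\geq 1$, $r+s=w$, and $(a,b)$ to $a$ odd in $[3,w]$, $b$ even in $[2,w-1]$, $a+b=w+2$, the entries $M_{(a,b),(r,s)}$ assemble into a matrix $M$ of size $\tfrac{w-1}{2}\times\tfrac{w+1}{2}$. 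By definition, the left kernel of $M$ is the space $D$ of dual relations the theorem aims to describe.

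Next I would identify the right kernel of $M$ using the fact that the collection $\{\sigma_{a}\sigma_{b}\}$ separates $\bar{H}_{w}$ modulo depth-$\leq 1$ content, i.e., modulo $\mathbb{Q}\zeta^{\mathfrak{m}}(k)$ together with products of lower-weight motivic zeta values. Thus the right kernel is precisely the space of $\mathbb{Q}$-linear relations among the $\zeta^{\mathfrak{m}}(\mathrm{odd},\mathrm{even})$'s modulo $\mathbb{Q}\zeta^{\mathfrak{m}}(k)$, which by Theorem~\ref{thm:Li-Liu} is spanned by Ma's relations from Theorems~\ref{thm:Ma} and~\ref{thm:Ma_trivial}, parameterized by pairs $(f,g)$ of modular forms for $\mathrm{SL}_{2}(\mathbb{Z})$ of weights $w+1$ and $w+3$ (including the Eisenstein extension) plus the one trivial relation. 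Rank-nullity then yields $\dim D = \dim(\text{right kernel})-1$, and this $-1$ should correspond to the trivial relation of Theorem~\ref{thm:Ma_trivial}, which has no dual analogue since its would-be $\sigma$-counterpart would require the undefined $\sigma_{1}$.

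On the dual side, Theorem~\ref{thm:Zagier_dual_relation} already supplies a linearly-independent family of elements of $D$ parameterized by the same pairs $(f,g)$ of cusp forms, so once the dimension count is matched the proof is complete. The hard part will be the polynomial-identity intertwining that links the two parameterizations: Ma's defining polynomial $XP_{f}^{-}(X+Y,X)+\tfrac{\partial}{\partial Y}P_{g}^{+}(X+Y,X)$ and Zagier's defining polynomial $XP_{f}^{-}(X,Y)+\tfrac{\partial}{\partial Y}P_{g}^{+}(X,Y)$ are related by the substitution $(X,Y)\mapsto(X+Y,X)$, and I must verify that this substitution is realized by transposition through $M$, using the binomial identities encoded in the coproduct, together with a careful bookkeeping of how the Eisenstein contribution on the primal side is absorbed so that exactly the cusp-form-indexed relations survive after removing the trivial one.
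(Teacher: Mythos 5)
Your overall strategy --- pair the functionals $\sigma_{a}\sigma_{b}$ against the generators $\zeta^{\mathfrak{m}}(r+1,s+1)$, identify the two kernels of the resulting matrix, and close the argument by rank--nullity together with the independence statement already contained in Theorem \ref{thm:Zagier_dual_relation} --- is viable, and it runs in the \emph{opposite} direction to the paper. The paper proves the dual statement first and directly: by Lemma \ref{lem:dual_vanish_condition} the kernel of $\lambda$ on $H_{w}(\mathbf{0};\mathbf{even};\mathbf{odd})^{\vee}$ is the set of $C\in V_{w}^{+}$ with $C(X,Y)-C(X+Y,Y)-C(X+Y,X)\in V_{w}^{-}$, and the purely polynomial Lemmas \ref{lem:Li_Liu_keyLemma}--\ref{lem:Li-Liu-2} identify this set with $\{Xp+\tfrac{\partial}{\partial Y}q: p\in W_{w-1}^{-},\,q\in W_{w+1}^{+}\}$; the primal exactness of (\ref{eq:exact_0_even_odd}) is then \emph{deduced} from the dual one by a dimension count. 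Your plan instead takes the primal Theorem \ref{thm:Li-Liu} as input and transfers it to the dual side. Since both theorems are cited external results here, that is not circular, and it spares you the explicit polynomial identities; what it costs you is that you never obtain an independent proof of either statement, whereas the paper's route is self-contained. Note also that your ``hard part'' (the intertwining of Ma's polynomial $XP_{f}^{-}(X+Y,X)+\tfrac{\partial}{\partial Y}P_{g}^{+}(X+Y,X)$ with Zagier's $XP_{f}^{-}(X,Y)+\tfrac{\partial}{\partial Y}P_{g}^{+}(X,Y)$ through the transpose of $M$) is not needed at all: once $\dim D=\dim S_{k-1}+\dim S_{k+1}$ is established, the linear independence clause of Theorem \ref{thm:Zagier_dual_relation} already forces Zagier's relations to exhaust $D$, with no matching of parameterizations required.

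There is, however, one step that is wrong as written and would sink the argument if taken literally: the claim that ``the collection $\{\sigma_{a}\sigma_{b}\}$ separates $\bar{H}_{w}$ modulo depth-$\leq1$ content, i.e., modulo $\mathbb{Q}\zeta^{\mathfrak{m}}(k)$ together with products of lower-weight motivic zeta values.'' For odd weight $k$ \emph{every} double zeta value lies in the span of $\zeta^{\mathfrak{m}}(k)$ and the products $\zeta^{\mathfrak{m}}(2s)\zeta^{\mathfrak{m}}(k-2s)$, so if the $\sigma_{a}\sigma_{b}$ killed all of that, the matrix $M$ would be identically zero and the right kernel would be everything. The correct statement --- and the one your conclusion actually requires --- is that the $\sigma_{a}\sigma_{b}$ with $a$ odd and $b$ even annihilate $\zeta^{\mathfrak{m}}(k)$ but form a dual system to the products $\zeta^{\mathfrak{m}}(2s)\zeta^{\mathfrak{m}}(k-2s)$ (which are linearly independent, as recorded in the remark after Theorem \ref{thm:Zagier}), so that the common kernel of all $\sigma_{a}\sigma_{b}$ on $\bar{H}_{w}$ is exactly $\mathbb{Q}\zeta^{\mathfrak{m}}(k)$; only then is the right kernel of $M$ the space of relations modulo $\mathbb{Q}\zeta^{\mathfrak{m}}(k)$. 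Relatedly, the dimension of that right kernel must come out to $\dim S_{k-1}+\dim S_{k+1}+1$, which requires the precise bookkeeping of the Eisenstein contributions that the paper encodes in the quotient $\tilde{W}_{w+1}^{+}=W_{w+1}^{+}/\mathbb{Q}(X^{w+1}-Y^{w+1})$ and the extra $\mathbb{Q}$-factor in (\ref{eq:exact_0_even_odd}); your parameterization by ``pairs of modular forms including the Eisenstein extension plus the trivial relation'' overcounts unless this is done carefully. Both points are repairable, but they are exactly where the content lies, and neither is settled in the proposal.
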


\begin{thm}[Tasaka, personal communication]
\label{thm:Tasaka-dual}Let $w$ be a nonnegative even integer and
$k=w+2$. Then
\[
\left.\sum_{0<r<w,r:{\rm even}}a_{r}\sigma_{r+1}\sigma_{w-r+1}\right|_{\bar{H}_{w}}=0
\]
if and only if there exists a weight $k$ modular forms $f$ for $\Gamma_{0}(2)$
such that
\[
P_{f}^{+}(X,Y)=\sum_{0<r<w,r:{\rm even}}a_{w-r}X^{r}Y^{w-r}.
\]
\end{thm}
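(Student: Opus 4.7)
The plan is to prove Theorem \ref{thm:Tasaka-dual} as the dual counterpart of Theorem \ref{thm:Tasaka}, by giving two matching descriptions of the same subspace of $V_w^+$: one coming from the motivic coproduct of depth-two MZVs, and the other from the Eichler--Shimura description of even period polynomials for $\Gamma_0(2)$.

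First, I would compute $\sigma_n\sigma_m(\zeta^{\mathfrak{m}}(a,b))$ explicitly for $a+b=w+2$, $a\geq1$, $b\geq2$. Using the standard formula for the motivic coproduct of a depth-two MZV (as in \cite{Brown_Depth_Graded}, \cite{Schneps_poisson}), and the fact that $\sigma_n$, $\sigma_m$ vanish on $1$ and on products, $(\sigma_n\otimes\sigma_m)\Delta\zeta^{\mathfrak{m}}(a,b)$ reduces to an explicit $\pm$-combination of binomials $\binom{m-1}{a-1}$ and $\binom{m-1}{b-1}$. Consequently, the relation
\[
\sum_{0<r<w,\,r\text{ even}}a_r\,\sigma_{r+1}\sigma_{w-r+1}\Big|_{\bar H_w}=0
\]
becomes a finite system of linear equations in $(a_r)_r$, indexed by the admissible pairs $(a,b)$ with $a+b=w+2$.

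Second, I would package the coefficients into the generating polynomial $A(X,Y)=\sum_{0<r<w,\,r\text{ even}}a_{w-r}X^rY^{w-r}\in V_w^+$ and reinterpret the system from Step~1 as functional equations of the form $A\big|_w(\mathrm{Id}-\gamma)=0$ under the weight-$w$ slash action, for $\gamma$ in a small explicit set of matrices in $\mathrm{SL}_2(\mathbb{Z})$. The underlying combinatorial identities are the familiar ones obtained by expanding $(X+Y)^w$ and $(X-Y)^w$, so this step is essentially bookkeeping. The goal is to show that the matrices $\gamma$ arising this way generate $\Gamma_0(2)$.

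Third, I would match these functional equations with the Eichler--Shimura description of even period polynomials for $\Gamma_0(2)$ given in Section \ref{subsec:modularforms}, thereby obtaining the bijection of the theorem. The main obstacle is Step~3: one must correctly account for the Eisenstein (boundary) contributions at the two cusps of $\Gamma_0(2)$, and verify that they correspond precisely to the harmonic-product symmetries $\zeta^{\mathfrak{m}}(2a,2b)+\zeta^{\mathfrak{m}}(2b,2a)\in\mathbb{Q}\zeta^{\mathfrak{m}}(2a+2b)$ on the MZV side --- i.e. to the even-even component of $\bar H_w$ which is responsible for the jump in generators from $\mathrm{SL}_2(\mathbb{Z})$ to $\Gamma_0(2)$. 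Once this is checked, the space of $A$ cut out by the coproduct conditions coincides with the image of $f\mapsto P_f^+$ restricted to $M_{w+2}(\Gamma_0(2))$ modulo the boundary terms $X^wY^0,\,X^0Y^w$, completing the proof.
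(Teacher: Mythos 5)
Your overall architecture (coproduct pairing $\to$ polynomial functional equation $\to$ period polynomials for $\Gamma_0(2)$) matches the paper's, and your Step 1 is exactly Lemma \ref{lem:dual_vanish_condition}. But Step 2 contains a genuine conceptual error that would sink the argument. You propose to recast the kernel condition as a system $A\big|_w(1-\gamma)=0$ for matrices $\gamma$ that generate $\Gamma_0(2)$, i.e.\ as \emph{invariance} of $A$ under $\Gamma_0(2)$. That cannot be the right description: $\Gamma_0(2)$ contains the parabolic element $T=\left(\begin{smallmatrix}1&1\\0&1\end{smallmatrix}\right)$, so a $\Gamma_0(2)$-invariant element of $V_w^+$ lies in $\mathbb{Q}Y^w$, and invariance under $M=\left(\begin{smallmatrix}-1&-1\\2&1\end{smallmatrix}\right)$ then kills it; the invariant subspace is $0$ for $w>0$, whereas $W_w^{+,\Gamma_0(2)}$ has dimension $\dim M_{w+2}(\Gamma_0(2))$. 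The condition coming out of Lemma \ref{lem:dual_vanish_condition} is the single four-term group-ring relation $p\big|_{U-SU^{2}-SU^{2}S+US}=0$, whose terms carry mixed signs and do \emph{not} assemble into a sum of operators $1-\gamma$ applied to $p$ itself. The step that actually makes the proof work is the factorization
\[
p\big|_{U-SU^{2}-SU^{2}S+US}=p\big|_{-S(1-T)(1+M)SU},
\]
which exhibits the kernel condition as the Eichler--Shimura \emph{cocycle} relation $(1-T)(1+M)$ for $\Gamma_0(2)$ (after the substitution $p(X,Y)\mapsto p(Y,X)$ accounted for by the outer $S$), not as group invariance.

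Step 3 is also not in shape. The identification $W_{w}^{+,\Gamma_{0}(2)}=\{p\in V_{w}^{+}:\,p|_{(1-T)(1+M)}=0\}$, together with injectivity of $f\mapsto P_f^+$ on all of $M_{w+2}(\Gamma_0(2))$, is nontrivial: the paper derives it (Lemma \ref{lem:The-even-period-MkGamma02}) from Popa--Pasol's extended Eichler--Shimura theory, using nondegeneracy of the extended Petersson product to handle the Eisenstein part. Your proposed bookkeeping of ``boundary terms $X^w,Y^w$'' and the claimed correspondence with the harmonic-product symmetries $\zeta^{\mathfrak{m}}(2a,2b)+\zeta^{\mathfrak{m}}(2b,2a)\in\mathbb{Q}\zeta^{\mathfrak{m}}(2a+2b)$ conflates the dual (Ihara--Takao-type) statement with the GKZ-type Theorem \ref{thm:Tasaka}; in the paper the harmonic-product relations are what realize the $\Gamma_B$-period polynomials in (\ref{eq:exact_0_odd_odd}), while the present dual statement involves $\Gamma_0(2)$ and no such quotient by boundary terms appears. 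To repair the proof you need (i) the group-ring factorization above in place of the generation claim, and (ii) an actual appeal to the Popa--Pasol isomorphism for $M_{w+2}(\Gamma_0(2))$.
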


\subsection{The first main theorem: GKZ-type modular phenomena for non admissible
motivic double zeta values}

Recall that a motivic multiple zeta value $\zeta^{\mathfrak{m}}(k_{1},\dots,k_{d})$
for $k_{d}>1$ is defined by a motivic admissible iterated integral
\[
(-1)^{d}I^{\mathfrak{m}}(0;10^{k_{1}-1}\cdots10^{k_{d}-1};1).
\]
Let $0'$ and $1'$ be tangential base points at $0$ and $1$ respectively
such that $I^{\mathfrak{m}}(0';0;1')=0$ and $I^{\mathfrak{m}}(0';1;1')=-T$
where $T$ is an indeterminate. We define a regularized motivic multiple
zeta value by
\[
J(k_{0};k_{1},\dots,k_{d}):=I^{\mathfrak{m}}(0';0^{k_{0}}10^{k_{1}}\cdots10^{k_{d}};1')\in\mathcal{H}[T].
\]
We call $J(k_{0};k_{1},\dots,k_{d})$ an admissible (resp. non admissible)
motivic multiple zeta value if $k_{0}=0$ and $k_{d}>0$ (resp. $k_{0}\neq0$
or $k_{d}=0$). By definition, an admissible motivic multiple zeta
values is a usual motivic multiple zeta value, i.e., 
\[
J(0;k_{1},\dots,k_{d})=(-1)^{d}\zeta^{\mathfrak{m}}(k_{1}+1,\dots,k_{d}+1).
\]
Note that an explicit expression of a regularized motivic multiple
zeta value in terms of admissible motivic multiple zeta values is
given by
\[
J(k_{0};k_{1},\dots,k_{d})=(-1)^{k_{0}+d}\sum_{\substack{l_{1}+\cdots+l_{d}=k_{0}\\
l_{1},\dots,l_{d}\geq0
}
}{k_{1}+l_{1} \choose l_{1}}\cdots{k_{d}+l_{d} \choose l_{d}}\zeta^{\mathfrak{m},\shuffle}(k_{1}+l_{1}+1,\dots,k_{d}+l_{d}+1;T)
\]
where $\zeta^{\mathfrak{m},\shuffle}(k_{1}+l_{1}+1,\dots,k_{d}+l_{d}+1;T)$
is the shuffle regularized polynomial (e.g. $\zeta^{\mathfrak{m}}(1;T)=T$).
Note that Theorem \ref{thm:GKZ_intro} is concerned with $J(0;{\rm even},{\rm even})$'s,
Theorems \ref{thm:Ma}, \ref{thm:Ma_trivial}, \ref{thm:Li-Liu},
\ref{thm:Zagier_dual_relation} and \ref{thm:Li-Liu-dual} are concerned
with $J(0;{\rm even},{\rm odd})$'s, Theorem \ref{thm:Zagier} is
concerned with $J(0;{\rm odd},{\rm even})$'s, and Theorems \ref{thm:Tasaka}
and \ref{thm:Tasaka-dual} are concerned with $J(0;{\rm odd},{\rm odd})$'s,
and Theorem \ref{thm:Ihara-Takao} is concerned with the space of
all motivic double zeta values (which is equal to the space spanned
by $J(0;{\rm even},{\rm even})$'s). Thus the following 8 questions
about the behavior of $J(N;0;N')$'s and $J(N;N';0)$'s for $N,N'\in\{{\rm even},{\rm odd}\}$
then naturally arise. In this paper, we answer all these questions.
Furthermore, we also give complete answers to the same questions about
$J(1;{\rm even},{\rm even})$'s, $J(1;{\rm even},{\rm odd})$'s, $J({\rm even};1,{\rm even})$'s,
$J({\rm even};1,{\rm odd})$'s and $J({\rm odd};1,{\rm odd})$'s.

We put $\tilde{\mathcal{H}}=\mathcal{H}[T]$. We define the depth
filtration $\mathfrak{D}$ on $\tilde{\mathcal{H}}$ by
\[
\mathfrak{D}_{d}\tilde{\mathcal{H}}=\left\langle \zeta^{\mathfrak{m},\shuffle}(n_{1},\dots,n_{i};T)\Bigg|i\leq d,(n_{1},\dots,n_{i})\in\mathbb{Z}_{\geq1}^{i}\right\rangle _{\mathbb{Q}}.
\]
We define the depth graded $J$-value $J_{\mathfrak{D}}(k_{0};k_{1},\dots,k_{d})$
to be the image of $J(k_{0};k_{1},\dots,k_{d})$ in ${\rm gr}_{d}^{\mathfrak{D}}\tilde{\mathcal{H}}:=\mathfrak{D}_{d}\tilde{\mathcal{H}}/\mathfrak{D}_{d-1}\tilde{\mathcal{H}}$.
We put $\bm{0}=\{0\}$ and $\bm{1}=\{1\}$. For subsets $N,N',N''\subset\mathbb{Z}_{\geq0}$,
we denote by $H_{w}(N;N',N'')$ the subspace of $\mathfrak{D}_{d}\tilde{\mathcal{H}}$
spanned by
\[
\{J_{\mathfrak{D}}(r;s,t):r+s+t=w,r\in N,s\in N',t\in N''\}.
\]
We put $H_{w}:=H_{w}(\mathbb{Z}_{\geq0},\mathbb{Z}_{\geq0},\mathbb{Z}_{\geq0})$.
We denote by $W_{w}^{\Gamma,+}\subset V_{w}^{+}$ (resp. $W_{w}^{\Gamma,-}\subset V_{w}^{-}$)
the space of even (resp. odd) period polynomials with rational coefficients
for weight $w+2$ modular (resp. cusp) forms for $\Gamma$ (see Section
\ref{subsec:modularforms} for detail). We put $W_{w}^{\pm}:=W_{w}^{\pm,{\rm SL}_{2}(\mathbb{Z})}$.
Then Theorem \ref{thm:GKZ_intro} can be restated as the existence
of the exact sequence

\begin{align}
 & 0\to &  & W_{w}^{+} &  & \xrightarrow{u_{{\rm GKZ}}} &  & V_{w}^{+} &  & \xrightarrow{X^{a}Y^{b}\mapsto a!b!J_{\mathfrak{D}}(0;a,b)} &  & H_{w}({\bf 0};{\bf even},{\bf even}) &  & \to0 &  & (w:{\rm even})\label{eq:exact_0_even_even}
\end{align}
where $u_{{\rm GKZ}}$ is a linear map defined by
\[
u_{{\rm GKZ}}(p(X,Y)):=\frac{1}{2}\left(p(-X-Y,X)+p(X-Y,X)\right).
\]
Since the left-hand side of Theorem \ref{thm:Ma_trivial} is equal
to
\begin{align*}
 & 2\zeta^{\mathfrak{m}}(k-2,2)-(k-2)\zeta^{\mathfrak{m}}(1,k-1)+\sum_{\substack{r+s=k\\
1\leq r\leq k-2:{\rm odd}
}
}(r-s)\zeta^{\mathfrak{m}}(r,s)\\
 & =2J(0;w-1,1)-wJ(0;0,w)+\sum_{\substack{r+s=w\\
0\leq r\leq w-1:{\rm even}
}
}(r-s)J(0;r,s)\ \ \ (w\coloneqq k-2,r\coloneqq r-1,s\coloneqq s-1)\\
 & =\Psi\left(\frac{2X^{w-1}Y}{(w-1)!}-\frac{Y^{w}}{(w-1)!}+X\sum_{\substack{r+s=w\\
0<r\leq w-1:{\rm even}
}
}\frac{X^{r-1}Y^{s}}{(r-1)!s!}-Y\sum_{\substack{r+s=w\\
0\leq r\leq w-1:{\rm even}
}
}\frac{X^{r}Y^{s-1}}{r!(s-1)!}\right)\ \ \ (\Psi(X^{a}Y^{b})\coloneqq a!b!J(0;a,b))\\
 & =\frac{1}{(w-1)!}\Psi\left(2X^{w-1}Y-Y^{w}+X\frac{(X+Y)^{w-1}-(X-Y)^{w-1}}{2}-Y\frac{(X+Y)^{w-1}+(X-Y)^{w-1}}{2}\right)\\
 & =\frac{1}{2(w-1)!}\Psi\left((X-Y)(X+Y)^{w-1}-(X+Y)(X-Y)^{w-1}-2Y^{w}+4X^{w-1}Y\right),
\end{align*}
Theorems \ref{thm:Ma} and \ref{thm:Li-Liu} can be restated as the
existence of the following exact sequence:
\begin{align}
 & 0\to &  & W_{w-1}^{-}\times\tilde{W}_{w+1}^{+}\times\mathbb{Q} &  & \xrightarrow{u_{{\rm M}}} &  & V_{w}^{+} &  & \xrightarrow{X^{a}Y^{b}\mapsto a!b!J_{\mathfrak{D}}(0;a,b)} &  & H_{w}({\bf 0};{\bf even},{\bf odd}) &  & \to0 &  & (w:{\rm odd})\label{eq:exact_0_even_odd}
\end{align}
where
\[
\tilde{W}_{w+1}^{+}:=W_{w+1}^{+}/\mathbb{Q}(X^{w+1}-Y^{w+1})
\]
and $u_{{\rm M}}$ is a linear map defined by
\begin{align*}
u_{{\rm M}}(p(X,Y),q(X,Y),c) & =\frac{1}{2}\left(R(X,Y)-R(Y,X)+R(-X,Y)-R(Y,-X)\right)\\
 & \ +c\left((X-Y)(X+Y)^{w-1}-(X+Y)(X-Y)^{w-1}-2Y^{w}+4X^{w-1}Y\right)
\end{align*}
with
\[
R(X,Y):=Xp(X+Y,X)+\frac{\partial}{\partial Y}q(X+Y,X).
\]
Theorem \ref{thm:Zagier} is equivalent to the exactness of the sequence
\begin{align}
 &  &  & 0 &  & \to &  & V_{w}^{-} &  & \xrightarrow{X^{a}Y^{b}\mapsto a!b!J_{\mathfrak{D}}(0;a,b)} &  & H_{w}({\bf 0};{\bf odd},{\bf even}) &  & \to0 &  & (w:{\rm odd}),\label{eq:exact_0_odd_even}
\end{align}
and Theorem \ref{thm:Tasaka} can be restated as the existence of
the exact sequence

\begin{align}
 & 0\to &  & W_{w}^{-,\Gamma_{B}} &  & \xrightarrow{{\rm id}} &  & V_{w}^{-} &  & \xrightarrow{X^{a}Y^{b}\mapsto a!b!J_{\mathfrak{D}}(0;a,b)} &  & H_{w}({\bf 0};{\bf odd},{\bf odd}) &  & \to0 &  & (w:{\rm even})\label{eq:exact_0_odd_odd}
\end{align}
where ${\rm id}$ is just the inclusion map.

Let $\Gamma_{A}$ be the congruence subgroup of ${\rm SL}_{2}(\mathbb{Z})$
defined by
\[
\Gamma_{A}:=\left\{ \gamma\in{\rm SL}_{2}(\mathbb{Z})\Big|\gamma\equiv\left(\begin{array}{cc}
1 & 0\\
0 & 1
\end{array}\right),\left(\begin{array}{cc}
1 & 1\\
1 & 0
\end{array}\right),\left(\begin{array}{cc}
0 & 1\\
1 & 1
\end{array}\right)\pmod{2}\right\} .
\]
Our first main theorem is as follows:
\begin{thm}
\label{thm:first_main_result}There exist the following 13 exact sequences.
\begin{description}
\item [{Case 1}] 
\begin{align}
 & 0\to &  & W_{w}^{-,\Gamma_{A}} &  & \xrightarrow{{\rm id}} &  & V_{w}^{-} &  & \xrightarrow{X^{a}Y^{b}\mapsto a!b!J_{\mathfrak{D}}(a;b,0)} &  & H_{w}({\bf odd};{\bf odd},{\bf 0}) &  & \to0 &  & (w:{\rm even})\label{eq:exact_odd_odd_0}
\end{align}
\item [{Case 2}] 
\begin{align}
 & 0\to &  & W_{w}^{+} &  & \xrightarrow{{\rm id}} &  & V_{w}^{+} &  & \xrightarrow{X^{a}Y^{b}\mapsto a!b!J_{\mathfrak{D}}(a;b,0)} &  & H_{w}({\bf even};{\bf even},{\bf 0}) &  & \to0 &  & (w:{\rm even})\label{eq:exact_even_even_0}
\end{align}
\item [{Case 3}] 
\begin{align}
 & 0\to &  & W_{w}^{+} &  & \xrightarrow{u_{{\rm GKZ}}} &  & V_{w}^{+} &  & \xrightarrow{X^{a}Y^{b}\mapsto a!b!J_{\mathfrak{D}}(1;a,b)} &  & H_{w+1}({\bf 1};{\bf even},{\bf even}) &  & \to0 &  & (w:{\rm even})\label{eq:exact_1_even_even}\\
 & 0\to &  & W_{w}^{+} &  & \xrightarrow{u_{{\rm GKZ}}} &  & V_{w}^{+} &  & \xrightarrow{X^{a}Y^{b}\mapsto a!b!J_{\mathfrak{D}}(a;0,b)} &  & H_{w}({\bf even};{\bf 0},{\bf even}) &  & \to0 &  & (w:{\rm even})\label{eq:exact_even_0_even}\\
 & 0\to &  & W_{w}^{-} &  & \xrightarrow{u_{{\rm GKZ}}} &  & V_{w}^{-} &  & \xrightarrow{X^{a}Y^{b}\mapsto a!b!J_{\mathfrak{D}}(a;1,b)} &  & H_{w+1}({\bf odd};{\bf 1},{\bf odd}) &  & \to0 &  & (w:{\rm even})\label{eq:exact_odd_1_odd}
\end{align}
\item [{Case 4}] 
\begin{align}
 & 0\to &  & W_{w-1}^{-}\times\tilde{W}_{w+1}^{+}\times\mathbb{Q} &  & \xrightarrow{u_{{\rm M}}} &  & V_{w}^{+} &  & \xrightarrow{X^{a}Y^{b}\mapsto a!b!J_{\mathfrak{D}}(1;a,b)} &  & H_{w+1}({\bf 1};{\bf even},{\bf odd}) &  & \to0 &  & (w:{\rm odd})\label{eq:exact_1_even_odd}\\
 & 0\to &  & W_{w-1}^{-}\times\tilde{W}_{w+1}^{+}\times\mathbb{Q} &  & \xrightarrow{u_{{\rm M}}} &  & V_{w}^{+} &  & \xrightarrow{X^{a}Y^{b}\mapsto a!b!J_{\mathfrak{D}}(a;1,b)} &  & H_{w+1}({\bf even};{\bf 1},{\bf odd}) &  & \to0 &  & (w:{\rm odd})\label{eq:exact_even_1_odd}\\
 & 0\to &  & W_{w-1}^{+}\times W_{w+1}^{-} &  & \xrightarrow{u_{{\rm M}}'} &  & V_{w}^{-} &  & \xrightarrow{X^{a}Y^{b}\mapsto a!b!J_{\mathfrak{D}}(a;0,b)} &  & H_{w}({\bf odd};{\bf 0},{\bf even}) &  & \to0 &  & (w:{\rm odd})\label{eq:exact_odd_0_even}
\end{align}
where $u_{{\rm M}}'$ is a linear map defined by 
\[
u_{{\rm M}}'(p(X,Y),q(X,Y))=\frac{1}{2}\left(R(X,Y)+R(Y,X)-R(-X,Y)-R(Y,-X)\right)
\]
with $R(X,Y)=Xp(X+Y,X)+\frac{\partial}{\partial Y}q(X+Y,X)$.
\item [{Case 5}] 
\begin{align}
 & 0\to &  & W_{w}^{-,\Gamma_{B}} &  & \xrightarrow{{\rm id}} &  & V_{w}^{-} &  & \xrightarrow{X^{a}Y^{b}\mapsto a!b!J_{\mathfrak{D}}(a;0,b)} &  & H_{w}({\bf odd};{\bf 0},{\bf odd}) &  & \to0 &  & (w:{\rm even}).\label{eq:exact_odd_0_odd}
\end{align}
\item [{Case 6}] 
\begin{align}
 & 0\to &  & \mathbb{Q}A(X,Y) &  & \xrightarrow{{\rm id}} &  & V_{w}^{+} &  & \xrightarrow{X^{a}Y^{b}\mapsto a!b!J_{\mathfrak{D}}(a;0,b)} &  & H_{w}({\bf even};{\bf 0},{\bf odd}) &  & \to0 &  & (w:{\rm odd})\label{eq:exact_even_0_0dd}\\
 &  &  & 0 &  & \to &  & V_{w}^{+} &  & \xrightarrow{X^{a}Y^{b}\mapsto a!b!J_{\mathfrak{D}}(a;1,b)} &  & H_{w+1}({\bf even};{\bf 1},{\bf even}) &  & \to0 &  & (w:{\rm even})\label{eq:exact_even_1_even}\\
 &  &  & 0 &  & \to &  & V_{w}^{+} &  & \xrightarrow{X^{a}Y^{b}\mapsto a!b!J_{\mathfrak{D}}(a;b,0)} &  & H_{w}({\bf even};{\bf odd},{\bf 0}) &  & \to0 &  & (w:{\rm odd})\label{eq:exact_even_odd_0}\\
 &  &  & 0 &  & \to &  & V_{w}^{+} &  & \xrightarrow{X^{a}Y^{b}\mapsto a!b!J_{\mathfrak{D}}(b;a,0)} &  & H_{w}({\bf odd};{\bf even},{\bf 0}) &  & \to0 &  & (w:{\rm odd})\label{eq:exact_odd_even_0}
\end{align}
where
\[
A(X,Y)=(X+Y)^{w}+(-X+Y)^{w}.
\]
\end{description}
\end{thm}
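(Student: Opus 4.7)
The plan is to prove each of the 13 exact sequences by three verifications: injectivity of the leftmost map, vanishing of the composite, and matching of the image of the left map with the kernel of the middle map. Surjectivity of $V_w^{\pm}\twoheadrightarrow H_w(N;N',N'')$ is automatic from the definition of $H_w(N;N',N'')$. Injectivity of the maps ${\rm id}$, $u_{{\rm GKZ}}$, $u_{{\rm M}}$, $u_{{\rm M}}'$ is a polynomial statement that can be extracted by comparing a suitable leading coefficient after the substitution $(X,Y)\mapsto(X+Y,X)$, in the spirit of the arguments of \cite{GKZ} and \cite{Li_Liu}. The real content lies in matching the image with the kernel.

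For the vanishing direction, I will derive the required depth-graded linear relations from the regularized double shuffle relations together with the path-reversal identity $J(k_0;k_1,\dots,k_d)=\pm J(k_d;k_{d-1},\dots,k_0)$. Setting $\Psi(X^aY^b):=a!b!J_{\mathfrak{D}}(r;s,t)$ for the appropriate arrangement of $(a,b)$ into the slots $(r,s,t)\in N\times N'\times N''$, the strategy is to apply a generating-series calculation of the type already exhibited in the computation preceding \eqref{eq:exact_0_even_odd}. In each case, after parity splitting $V_w=V_w^+\oplus V_w^-$ and after inserting a period polynomial $p(X,Y)$, the resulting identity reduces to the defining functional equation for $p$ with respect to the relevant congruence subgroup: ${\rm SL}_2(\mathbb{Z})$ for Cases 2, 3, 4 and most of Case 6; $\Gamma_A$ for Case 1; $\Gamma_B$ for Case 5. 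The extra $\mathbb{Q}$-factors in Case 4 and the line $\mathbb{Q}A(X,Y)$ in Case 6 account for exceptional shuffle-stuffle identities that are not produced by any modular form.

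For the matching-kernel direction, I will dualize: the kernel of $\Psi$ is the annihilator of the span of the images of the $\sigma_n\sigma_m$ operators (as in Theorem \ref{thm:Ihara-Takao}) in the appropriate dual space. When the period polynomials are for ${\rm SL}_2(\mathbb{Z})$, this can be bootstrapped from the classical theorems of Gangl--Kaneko--Zagier, Ma, Li--Liu, and Zagier quoted in the introduction, combined with the coproduct identity used to reduce non-admissible $J$-values to admissible ones. The remaining cases---Case 1 for $\Gamma_A$, Case 5 for $\Gamma_B$, and the $A(X,Y)$-line in Case 6---are genuinely new; here my plan is to imitate Zagier's determinant technique, reducing matching to the non-vanishing of an explicit binomial-coefficient determinant indexed by the $\sigma$-pairs dual to the $J$-values in $H_w(N;N',N'')$. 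This determinant computation is the main obstacle: unlike the ${\rm SL}_2(\mathbb{Z})$ situation, there is no off-the-shelf Eichler--Shimura statement relating period polynomials for $\Gamma_A$ or $\Gamma_B$ to the structure of $H_w(N;N',N'')$, so the compatibility must be established by hand.
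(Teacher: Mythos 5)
Your proposal stalls precisely at the new content of the theorem. For the matching-kernel direction you offer two mechanisms: a bootstrap from the classical theorems for the ${\rm SL}_2(\mathbb{Z})$ cases, and an unspecified ``binomial-coefficient determinant'' for Cases 1, 5 and the $\mathbb{Q}A(X,Y)$-line, which you yourself flag as ``the main obstacle'' and do not carry out. But Cases 1 and 5 (the $\Gamma_A$ and $\Gamma_B$ phenomena) are the headline results; deferring them to a determinant that is never written down, with no indication of its size, its entries, or why it should be nonzero, leaves the theorem unproved. Moreover, your premise that ``there is no off-the-shelf Eichler--Shimura statement'' for $\Gamma_A$ or $\Gamma_B$ is not correct: the Popa--Pasol theory of period polynomials for congruence subgroups supplies exactly this, and it is what permits the identification of the kernel with $W_w^{-,\Gamma_A}$ (resp.\ $W_w^{-,\Gamma_B}$) via the explicit group-ring descriptions $W_{w}^{-,\Gamma_{A}}=\{p\in V_{w}^{-}:\left.p\right|_{1+U+U^{2}}=\left.p\right|_{S+SU+SU^{2}}=0\}$ and $W_{w}^{-,\Gamma_{B}}=\{p\in V_{w}^{-}:\left.p\right|_{1+S}=0\}$ of Proposition \ref{prop:period_polynomials}. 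No determinant is needed there; what is needed is the algebraic identity $\left.p\right|_{(T+T')(1+\epsilon)}=\left.p\right|_{(1+U+U^{2})S}-\left.p\right|_{(1+SUS+SU^{2}S)S}$ together with the observation that a polynomial fixed by both $U$ and $SUS$ vanishes.

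The deeper structural issue is that your proposal has no mechanism for the completeness half of the argument. Deriving relations from regularized double shuffle and path reversal can only produce an upper bound on $\dim H_w(N;N',N'')$; it cannot show that the kernel of $X^aY^b\mapsto a!b!J_{\mathfrak D}(\cdot)$ is no larger than the period-polynomial space, since the double shuffle relations are not known to exhaust all relations. What makes the theorem provable is that the Goncharov--Brown coproduct, applied to the generating series $\mathfrak{J}(x_0;x_1,x_2)$, yields an \emph{if and only if} polynomial criterion for a combination of depth-graded $J$-values to vanish (Lemmas \ref{lem:vanish_condition} and \ref{lem:dual_vanish_condition}): $\psi(p)\in\mathbb{Q}J(w+1)$ exactly when $p(-X-Y,X,Y)+p(-Y,-X+Y,X)-p(-Y,-X,X+Y)\in V_w^{-}$. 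Both directions of every exact sequence flow from this single criterion (supplemented, for Case 2, by a double shuffle input and an antipode/dimension argument, and, for Case 6, by explicit determinants computed modulo $2$ or $3$ or via Vandermonde). Without identifying and exploiting this coaction criterion, neither the vanishing nor the kernel-matching step of your plan can be completed as stated.
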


Let us make some comments about this Theorem. Cases 1, 2 and 5 are
the cases where whose second arrows are identity maps, but the modular
groups differ. The most surprising case is Case 1, where the odd period
polynomials for $\Gamma_{A}$ appear. This seems to be the first non
trivial example\footnote{The reason why we did not count (\ref{eq:exact_0_odd_odd}) as the
first example is because the linear relation givens by (\ref{eq:exact_0_odd_odd})
are nothing but the simple relation $\zeta^{\mathfrak{m}}(2a,2b)+\zeta^{\mathfrak{m}}(2b,2a)\in\mathbb{Q}\zeta^{\mathfrak{m}}(2a+2b)$.
For the sake of uniform description, we related even such simple formulas
with the space of period polynomials, but the author does not know
whether this is a natural interpretation or not.} of constructing relations among multiple zeta values (or their analogues)
from modular forms for a congruence subgroup other than the ${\rm SL}_{2}(\mathbb{Z})$.
Cases 3, 4 and 5 deals with the exact sequences analogous to (\ref{eq:exact_0_even_even}),
(\ref{eq:exact_1_even_even}) and (\ref{eq:exact_0_odd_odd}), respectively.
Finally, Case 6 is (almost) no relation cases. More precisely, the
exactness of (\ref{eq:exact_even_0_0dd}) implies that there is exactly
one linear relation among $J_{\mathfrak{D}}({\rm even};0,{\rm odd})$'s,
whereas the exactness of (\ref{eq:exact_even_1_even}), (\ref{eq:exact_even_odd_0})
and (\ref{eq:exact_odd_even_0}) says that there are no linear relations
among $J_{\mathfrak{D}}({\rm even};1,{\rm even})$'s, $J_{\mathfrak{D}}({\rm even};{\rm odd},0)$'s
or $J_{\mathfrak{D}}({\rm odd};{\rm even},0)$'s.

\subsection{The second main theorem: Ihara--Takao-type phenomena for non admissible
motivic double zeta values}

For $n\geq0$, we put $J(n):=J(0;n)$ and $J_{\mathfrak{D}}(n):=J_{\mathfrak{D}}(0;n)$.
We extend the definition of $\sigma_{n}$. For a positive odd (resp.
even) integer $n$, let $\sigma_{n}$ be any element of $\tilde{A}^{\vee}$
(resp. $\mathcal{\tilde{H}}^{\vee}$) such that $\left\langle \sigma_{n},1\right\rangle =0$
and $\left\langle \sigma_{n},J(m-1)\right\rangle =\delta_{n,m}$.
For each $w$, we define a linear map $\lambda:V_{w}^{+}\to H_{w}^{\vee}$
by $\lambda(X^{i}Y^{j}):=\sigma_{i+1}\sigma_{j+1}$. Then, for any
submodule $N\subset H_{w}^{\vee}$, we can obtain the composite map
$V_{w}^{+}\xrightarrow{\lambda}H_{w}^{\vee}\to N^{\vee}$. By abuse
of notation, we also denote by $\lambda$ this composite map if there
is no risk of confusion. Theorem \ref{thm:Ihara-Takao} is equivalent
to the existence of the exact sequence

\begin{align}
 & 0\to &  & W_{w}^{+} &  & \xrightarrow{{\rm id}} &  & V_{w}^{+} &  & \xrightarrow{\lambda} &  & H_{w}^{\vee}\to0 &  & (w:{\rm even}),\label{eq:dual_even}
\end{align}
whereas, for odd $w$, it is easy to see that the following sequence
is exact:

\begin{align}
 &  &  & 0 &  & \to &  & V_{w}^{+} &  & \xrightarrow{\lambda} &  & H_{w}^{\vee}\to0 &  & (w:{\rm odd}).\label{eq:dual_odd}
\end{align}
Moreover, for even $w$, we know that $\bar{H}_{w}({\bf even},{\bf even})=\bar{H}_{w}$.
Hence (\ref{eq:dual_even}) can be restated as the existence of the
exact sequence
\begin{align}
 & 0\to &  & W_{w}^{+} &  & \xrightarrow{{\rm id}} &  & V_{w}^{+} &  & \xrightarrow{\lambda} &  & H_{w}({\bf 0};{\bf even};{\bf even})^{\vee}\to0 &  & (w:{\rm even}).\label{eq:dual_0_even_even}
\end{align}
Theorems \ref{thm:Zagier_dual_relation} and \ref{thm:Li-Liu-dual}
can be restated as the existence of the exact sequence

\begin{align}
 & 0\to &  & W_{w-1}^{-}\oplus W_{w+1}^{+} &  & \xrightarrow{(p,q)\mapsto Xp+\frac{\partial}{\partial Y}q} &  & V_{w}^{+} &  & \xrightarrow{\lambda} &  & H_{w}({\bf 0};{\bf even};{\bf odd})^{\vee}\to0 &  & (w:{\rm odd})\label{eq:dual_0_even_odd}
\end{align}
while Theorem \ref{thm:Tasaka-dual} can be restated as the existence
of the exact sequence
\begin{align}
 & 0\to &  & W_{w}^{+,\Gamma_{0}(2)} &  & \xrightarrow{p(X,Y)\mapsto p(Y,X)} &  & V_{w}^{+} &  & \xrightarrow{\lambda} &  & H_{w}({\bf 0};{\bf odd};{\bf odd})^{\vee}\to0 &  & (w:{\rm even}).\label{eq:dual_0_odd_odd}
\end{align}
For odd $w$, we know that $\bar{H}_{w}({\bf odd},{\bf even})=\bar{H}_{w}$.
Hence (\ref{eq:dual_odd}) can be restated as the existence of the
exact sequence

\begin{align}
 &  &  & 0 &  & \to &  & V_{w}^{+} &  & \xrightarrow{\lambda} &  & H_{w}({\bf 0};{\bf odd};{\bf even})^{\vee}\to0 &  & (w:{\rm odd}).\label{eq:dual_0_odd_even}
\end{align}
Our second main theorem is as follows:
\begin{thm}
\label{thm:second_main_result}There exist the following 13 exact
sequences (each case corresponds to the one in Theorem \ref{thm:first_main_result}).
\begin{description}
\item [{Case 1}] 
\begin{align}
 & 0\to &  & W_{w}^{-,\Gamma_{A}} &  & \xrightarrow{{\rm id}} &  & V_{w}^{+} &  & \xrightarrow{\lambda} &  & H_{w}({\bf odd};{\bf odd};{\bf 0})^{\vee}\to0 &  & (w:{\rm even},w\geq2)\label{eq:dual_odd_odd_0}
\end{align}
\item [{Case 2}] 
\begin{align}
 & 0\to &  & W_{w}^{+} &  & \xrightarrow{{\rm id}} &  & V_{w}^{+} &  & \xrightarrow{\lambda} &  & H_{w}({\bf even};{\bf even};{\bf 0})^{\vee}\to0 &  & (w:{\rm even})\label{eq:dual_even_even_0}
\end{align}
\item [{Case 3}] 
\begin{align}
 & 0\to &  & W_{w}^{+} &  & \xrightarrow{p\mapsto\int_{0}^{Y}pdY} &  & V_{w+1}^{+} &  & \xrightarrow{\lambda} &  & H_{w+1}({\bf 1};{\bf even};{\bf even})^{\vee}\to0 &  & (w:{\rm even})\label{eq:dual_1_even_even}\\
 & 0\to &  & W_{w}^{+} &  & \xrightarrow{{\rm id}} &  & V_{w}^{+} &  & \xrightarrow{\lambda} &  & H_{w}({\bf even};{\bf 0};{\bf even})^{\vee}\to0 &  & (w:{\rm even})\label{eq:dual_even_0_even}\\
 & 0\to &  & (\partial_{X}^{-1}W_{w}^{-}) &  & \xrightarrow{{\rm id}} &  & V_{w+1}^{+} &  & \xrightarrow{\lambda} &  & H_{w+1}({\bf odd};{\bf 1};{\bf odd})^{\vee}\to0 &  & (w:{\rm even})\label{eq:dual_odd_1_odd}
\end{align}
 where
\[
\partial_{X}^{-1}W_{w}^{-}:=\left\{ p\in V_{w+1}^{+}\Bigg|\frac{\partial}{\partial X}p(X,Y)\in W_{w}^{-}\right\} .
\]
\item [{Case 4}] 
\begin{align}
 & 0\to &  & (\partial_{Y}^{-1}W_{w-1}^{-})\times W_{w+1}^{+} &  & \xrightarrow{(p,q,c)\mapsto Xp+q} &  & V_{w+1}^{+} &  & \xrightarrow{\lambda} &  & H_{w+1}({\bf 1};{\bf even};{\bf odd})^{\vee}\to0 &  & (w:{\rm odd})\label{eq:dual_1_even_odd}\\
 & 0\to &  & (\partial_{X}^{-1}W_{w-1}^{-})\times W_{w+1}^{+} &  & \xrightarrow{(p,q,c)\mapsto Yp+q} &  & V_{w+1}^{+} &  & \xrightarrow{\lambda} &  & H_{w+1}({\bf even};{\bf 1};{\bf odd})^{\vee}\to0 &  & (w:{\rm odd})\label{eq:dual_even_1_odd}\\
 & 0\to &  & W_{w-1}^{+}\oplus W_{w+1}^{-} &  & \xrightarrow{(p,q)\mapsto Yp+\frac{\partial}{\partial X}q} &  & V_{w}^{+} &  & \xrightarrow{\lambda} &  & H_{w}({\bf odd};{\bf 0};{\bf even})^{\vee}\to0 &  & (w:{\rm odd})\label{eq:dual_odd_0_even}
\end{align}
where
\[
\partial_{Y}^{-1}W_{w-1}^{-}:=\left\{ p\in V_{w}^{-}\Bigg|\frac{\partial}{\partial Y}p(X,Y)\in W_{w-1}^{-}\right\} 
\]
and
\[
\partial_{X}^{-1}W_{w-1}^{-}:=\left\{ p\in V_{w}^{+}\Bigg|\frac{\partial}{\partial X}p(X,Y)\in W_{w-1}^{-}\right\} .
\]
\item [{Case 5}] 
\begin{align}
 & 0\to &  & W_{w}^{-,\Gamma_{0}(2)} &  & \xrightarrow{{\rm id}} &  & V_{w}^{+} &  & \xrightarrow{\lambda} &  & H_{w}({\bf odd};{\bf 0};{\bf odd})^{\vee}\to0 &  & (w:{\rm even},w\geq2)\label{eq:dual_odd_0_odd}
\end{align}
\item [{Case 6}] 
\begin{align}
 & 0\to &  & \mathbb{Q}Y^{w} &  & \xrightarrow{{\rm id}} &  & V_{w}^{+} &  & \xrightarrow{\lambda} &  & H_{w}({\bf even};{\bf 0};{\bf odd})^{\vee}\to0 &  & (w:{\rm odd})\label{eqdual_even_0_odd}\\
 &  &  & 0 &  & \to &  & V_{w+1}^{+} &  & \xrightarrow{\lambda} &  & H_{w+1}({\bf even};{\bf 1};{\bf even})^{\vee}\to0 &  & (w:{\rm even})\label{eq:dual_even_1_even}\\
 &  &  & 0 &  & \to &  & V_{w}^{+} &  & \xrightarrow{\lambda} &  & H_{w}({\bf even};{\bf odd};{\bf 0})^{\vee}\to0 &  & (w:{\rm odd})\label{eq:dual_even_odd_0}\\
 &  &  & 0 &  & \to &  & V_{w}^{+} &  & \xrightarrow{\lambda} &  & H_{w}({\bf odd};{\bf even};{\bf 0})^{\vee}\to0 &  & (w:{\rm odd})\label{eq:dual_odd_even_0}
\end{align}
\end{description}
\end{thm}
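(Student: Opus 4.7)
The plan is to reduce Theorem \ref{thm:second_main_result} to Theorem \ref{thm:first_main_result} via an explicit bilinear pairing between $V_{w}^{+}$ and $V_{w}^{\pm}$ coming from the depth-graded motivic coaction, whose left and right kernels will realize the two sides of the asserted duality.

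First, I would compute $\langle\sigma_{i+1}\sigma_{j+1},\,J_{\mathfrak{D}}(r;s,t)\rangle$ explicitly. Applying Goncharov's motivic coproduct to $J(r;s,t)=I^{\mathfrak{m}}(0';0^{r}10^{s}10^{t};1')$ and projecting onto the depth-$(1,1)$ component ${\rm gr}^{\mathfrak{D}}_{1}\tilde{\mathcal{A}}\otimes{\rm gr}^{\mathfrak{D}}_{1}\tilde{\mathcal{H}}$ produces a sum over pairs of subwords each containing exactly one letter $1$, weighted by binomial coefficients from the complementary ``quotient'' subintegrals. Pairing with $\sigma_{i+1}\otimes\sigma_{j+1}$, while carefully tracking the contributions of $\sigma_{1}$ (dual to $T$) coming from the tangential base points, should yield a closed combinatorial formula for the matrix entries $M_{ij}^{rst}$.

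Next, I would organize this into a bilinear operator $\Phi$ and show that, up to sign conventions and the multiplication/antiderivative maps ($p\mapsto\int_{0}^{Y}p\,dY$, $p\mapsto Xp$, $p\mapsto Yp$, $\partial_{X}^{-1}$, $\partial_{Y}^{-1}$, etc.) that appear on the left of the thirteen sequences, $\Phi$ is the transpose of a linear combination of the symmetrization operators $u_{\rm GKZ}$, $u_{\rm M}$ and $u_{\rm M}'$ from Theorem \ref{thm:first_main_result}. Case by case, I would then (a) read off the rank of the relevant restriction of $\Phi$ from Theorem \ref{thm:first_main_result}, (b) verify by direct substitution into the formula of the previous step that the polynomials prescribed on the left of the claimed sequence lie in $\ker\lambda$, and (c) compare dimensions to conclude exactness. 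The degenerate subcases within Case 6, where the kernel is zero or one-dimensional (spanned by $Y^{w}$), should follow from direct vanishing of the coaction on the appropriate parity strata.

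The main obstacle is the first step: extracting a clean closed-form expression for the matrix entries $M_{ij}^{rst}$ in the non-admissible regime, because the tangential-base-point contributions and the pieces of the shuffle-regularized polynomial involving $T$ and $T^{2}$ must be tracked precisely, and they are exactly what distinguishes this setting from the admissible cases already covered in Theorems \ref{thm:Ihara-Takao}--\ref{thm:Tasaka-dual}. Once this coaction formula is in hand, the remaining combinatorial identifications are of the same flavor as those used in Theorem \ref{thm:first_main_result}, and the thirteen cases should separate uniformly by parity.
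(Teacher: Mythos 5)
Your first step is sound and is essentially the paper's Lemma \ref{lem:basic_lemma}: applying the coproduct (\ref{eq:coprod_tang_pt}) to the generating series $\mathfrak{J}(x_{0};x_{1},x_{2})$ gives the clean formula $\left\langle \psi(p),\lambda(q)\right\rangle _{H}=\left\langle p,\,q(x_{1}-x_{0},x_{2}-x_{0})+q(x_{2}-x_{1},x_{1}-x_{0})-q(x_{2}-x_{1},x_{2}-x_{0})\right\rangle _{3}$, and the tangential-base-point bookkeeping you worry about is absorbed into this single expression. But the reduction you then propose has two genuine gaps. First, it is circular as stated: in the paper the logical flow runs in the opposite direction --- the dual sequences of Theorem \ref{thm:second_main_result} are established first by directly computing $\ker\lambda$, and the dimension counts so obtained are what prove the right-exactness in Cases 3 and 4 of Theorem \ref{thm:first_main_result}. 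So you cannot ``read off the rank of $\Phi$ from Theorem \ref{thm:first_main_result}'' without first supplying an independent proof of that theorem.

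Second, the claim that $\Phi$ is the transpose of $u_{{\rm GKZ}}$, $u_{{\rm M}}$, $u_{{\rm M}}'$ up to the auxiliary maps does not hold in the form you need: the left-hand spaces of the two theorems are genuinely different objects (for instance $W_{w}^{-,\Gamma_{A}}\subset V_{w}^{-}$ versus $W_{w}^{+,\Gamma_{A}}\subset V_{w}^{+}$ in Case 1, or $u_{{\rm GKZ}}(W_{w}^{-})$ versus $\partial_{X}^{-1}W_{w}^{-}$ in Case 3), and even granting the rank of the primal map one only gets $\dim\ker\lambda=\dim V_{w}^{+}-\dim V_{w}^{\mp}+\dim W^{\mp,\Gamma}$, so your step (c) silently requires the Eichler--Shimura-type equality between the dimensions of the even and odd period-polynomial spaces for $\Gamma_{A}$, $\Gamma_{B}$, $\Gamma_{0}(2)$ (the content of Proposition \ref{prop:period_polynomials} and Theorems \ref{thm:Popa_pasol_cuspform}--\ref{thm:Popa_pasol_2}). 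The actual work in the paper is to show the reverse inclusion $\ker\lambda\subset(\text{left space})$ directly, by rewriting the vanishing condition of Lemma \ref{lem:dual_vanish_condition} as a group-algebra identity in $S$, $U$, $T$, $\epsilon$, $M$ (e.g.\ $\left.p\right|_{(T'+T)(1-\epsilon)}=0$ forces $\left.p\right|_{1+U+U^{2}}=\left.p\right|_{1+SUS+SU^{2}S}$ and then that this common value vanishes), and by the elementary polynomial lemmas \ref{lem:gkz_type_l1}, \ref{lem:zagier_lemma}--\ref{lem:Li-Liu-2}. None of this is a formal consequence of transposition, and your proposal does not supply a substitute for it.
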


\subsection{Contents of the paper}

The paper is organized as follows. In Section \ref{sec:preliminaries},
we define some notions and give some lemmas. In Sections \ref{sec:gamma_a},
\ref{sec:strange_gkz}, \ref{sec:0_even_even}, \ref{sec:0_even_odd_DingMa}
and \ref{sec:0_odd_odd}, we discuss GKZ and Ihara--Takao type formulas
for the case $J({\rm odd};{\rm odd},0)$, the case $J({\rm even};{\rm even},0)$,
the cases similar to the case $J(0;{\rm even},{\rm even})$, the cases
similar to the case $J(0;{\rm even},{\rm odd})$, and the cases similar
to the case \ref{sec:0_odd_odd}, respectively. In Section \ref{sec:no-relation},
we discuss the cases $J({\rm even};1,{\rm even})$, $J({\rm even};{\rm odd},0)$
and $J({\rm odd};{\rm even},0)$, where there are almost no linear
relations. In Appendix \ref{sec:Period-polynomials}, we give explicit
expressions for the spaces of period polynomials for various congruence
subgroups that appear in this paper.

\section{Some preliminaries\label{sec:preliminaries}}

\subsection{\label{subsec:Motivic-multiple-zeta}Motivic multiple zeta values}

Let ${\rm MT}(\mathbb{Z})$ be the category of mixed Tate motives
over $\mathbb{Z}$ which is a Tannakian category extracted from Voevodsky's
triangulated category of motives (see \cite{DelGon05}). There exist
two special functors $\omega_{B},\omega_{dR}:{\rm MT}(\mathbb{Z})\to{\rm Vec}_{\mathbb{Q}}$
(Betti and de Rham realizations), and the ring of motivic periods
of ${\rm MT}(\mathbb{Z})$ is defined as the ring of functions on
the scheme of tensor isomorphism from $\omega_{dR}$ to $\omega_{B}$
\[
\mathcal{P}_{{\rm MT}(\mathbb{Z})}^{\mathfrak{m}}\coloneqq\mathcal{O}(\underline{{\rm Isom}}_{{\rm MT}(\mathbb{Z})}(\omega_{dR},\omega_{B}))
\]
and a special ring homomorphism ${\rm per}:\mathcal{P}_{{\rm MT}(\mathbb{Z})}^{\mathfrak{m}}\to\mathbb{C}$
called period map is naturally defined (see \cite{Brown_proceedings}).
For $x,y\in\{0,1\}$ and a word $w$ in $\{0,1\}$, the motivic iterated
integral $I^{\mathfrak{m}}(x;w;y)\in\mathcal{P}_{{\rm MT}(\mathbb{Z})}^{\mathfrak{m}}$
is defined. They satisfy $I^{\mathfrak{m}}(x;p_{1}\cdots p_{k};x)=\delta_{k,0}$,
$I^{\mathfrak{m}}(0;0;1)=I^{\mathfrak{m}}(0;1;1)=0$, the reversal
formula $I^{\mathfrak{m}}(x;p_{1}\cdots p_{k};y)=(-1)^{k}I^{\mathfrak{m}}(y;p_{k}\cdots p_{1};x)$,
and the shuffle product formula $I^{\mathfrak{m}}(x;w_{1}\shuffle w_{2};y)=I^{\mathfrak{m}}(x;w_{1};y)\cdot I^{\mathfrak{m}}(x;w_{2};y)$.
For an index $(k_{1},\dots,k_{d})\in\mathbb{Z}_{>0}^{d}$ with $d=0$
or $k_{d}>1$, a motivic multiple zeta value $\zeta^{\mathfrak{m}}(k_{1},\dots,k_{d})$
is defined by $(-1)^{d}I^{\mathfrak{m}}(0;10^{k_{1}-1}\cdots10^{k_{d}-1};1)$.
Furthermore, ${\rm per}(\zeta^{\mathfrak{m}}(k_{1},\dots,k_{d}))$
is equal to the multiple zeta value $\zeta(k_{1},\dots,k_{d})$. It
is shown by Brown (\cite{Brown_MixedTate}) that $\mathcal{P}_{{\rm MT}(\mathbb{Z})}^{\mathfrak{m}}$
is spanned by the motivic multiple zeta values and the reciprocal
of motivic $2\pi i$. Let $\mathcal{H}_{k}\subset\mathcal{P}_{{\rm MT}(\mathbb{Z})}^{\mathfrak{m}}$
be the subspace spanned by all weight $k$ motivic multiple zeta values,
and $\mathcal{H}\coloneqq\bigoplus_{k=0}^{\infty}\mathcal{H}_{k}$
the graded ring of motivic multiple zeta values. Put $\mathcal{A}\coloneqq\mathcal{H}/\zeta^{\mathfrak{m}}(2)\mathcal{H}$.
We denote the image of $I^{\mathfrak{m}}(x;w;y)$ in $\mathcal{A}$
by $I^{\mathfrak{a}}(x;w;y)$. By the Hopf structure of $\mathcal{P}_{{\rm MT}(\mathbb{Z})}^{\mathfrak{m}}$,
$\mathcal{A}$ becomes a commutative non-cocommutative Hopf algebra
and $\mathcal{H}$ becomes $\mathcal{A}$-comodule. Its coproduct
(resp. coaction) structure $\Delta:\mathcal{A}\to\mathcal{A}\otimes\mathcal{A}$
(resp. $\mathcal{H}\to\mathcal{A}\otimes\mathcal{H}$) was explicitly
computed by Goncharov (resp. by Brown) and is given by
\[
\Delta I^{\bullet}(a_{0};a_{1},\dots,a_{k};a_{k+1})=\sum_{r=0}^{k}\sum_{0=i_{0}<i_{1}<\cdots<i_{r}<i_{r+1}=k+1}\prod_{j=0}^{r}I^{\mathfrak{a}}(a_{i_{j}};a_{i_{j}+1},\dots,a_{i_{j+1}-1};a_{i_{j+1}})\otimes I^{\bullet}(a_{i_{0}};a_{i_{1}},\dots,a_{i_{r}};a_{i_{r+1}})
\]
for $\bullet=\mathfrak{a}$ (resp. $\bullet=\mathfrak{m}$). Let $\mathcal{U}\coloneqq\left(\mathbb{Q}\left\langle f_{3},f_{5},f_{7},\dots\right\rangle ,\shuffle,{\rm dec}\right)$
be the graded commutative non-cocommutative Hopf algebra whose product
and coproduct are given by shuffle product and deconcatenation respectively,
and the degree of $f_{i}$ is defined to be $i$. We regard $\mathcal{U}\otimes\mathbb{Q}[f_{2}]$
as the graded Hopf comodule of $\mathcal{U}$ by $\Delta(f_{2})=1\otimes f_{2}$.
Then the graded Hopf algebra $\mathcal{A}$ (resp. graded Hopf comodule
$\mathcal{H}$) is non-canonically isomorphic to $\mathcal{U}$ (resp.
$\mathcal{U}\otimes\mathbb{Q}[f_{2}]$). We can show that
\begin{equation}
\{u\in\mathcal{H}_{k}\mid\Delta u=\left(u\mod\zeta^{\mathfrak{m}}(2)\right)\otimes1+1\otimes u,\ {\rm per}(u)=0\}=\{0\},\label{eq:coprod_lemma}
\end{equation}
and this gives a useful necessary and sufficient condition that a
given $\mathbb{Q}$-linear sum of motivic iterated integrals becomes
zero.

\subsection{Coaction and antipode formulas for motivic multiple zeta values}

One of the main tools to study motivic multiple zeta values is their
coproduct structure. Define a linear map $\psi:\mathbb{Q}[x_{0},x_{1},x_{2}]\to\tilde{\mathcal{H}}$
by 
\[
\psi(x_{0}^{k_{0}}x_{1}^{k_{1}}x_{2}^{k_{2}})=k_{0}!k_{1}!k_{2}!J(k_{0};k_{1},k_{2}).
\]
We define an inner product $\left\langle ,\right\rangle _{2}$ on
$\mathbb{Q}[X,Y]$ by 
\[
\left\langle p(X,Y),q(X,Y)\right\rangle _{2}=\left.p(\partial X,\partial Y)q(X,Y)\right|_{X=Y=0},
\]
in other words 
\[
\left\langle X^{a}Y^{b},X^{a'}Y^{b'}\right\rangle _{2}=a!b!\delta_{a,a'}\delta_{b,b'}.
\]
Similarly, we also define an inner product $\left\langle ,\right\rangle _{3}$
on $\mathbb{Q}[x_{0},x_{1},x_{2}]$ by 
\[
\left\langle p(x_{0},x_{1},x_{2}),q(x_{0},x_{1},x_{2})\right\rangle _{3}=\left.p(\partial X,\partial Y,\partial Z)q(X,Y,Z)\right|_{X=Y=Z=0}.
\]
 Furthermore, we denote by $\left\langle ,\right\rangle _{H}:\tilde{\mathcal{H}}\times\tilde{\mathcal{H}}^{\vee}\to\mathbb{Q}$
the natural pairing map.
\begin{lem}
\label{lem:basic_lemma}For $p\in\mathbb{Q}[x_{0},x_{1},x_{2}]$ and
$q\in V_{w}^{+}$, we have
\begin{align*}
\left\langle \psi(p(x_{0},x_{1},x_{2})),\lambda(q(X,Y))\right\rangle _{H} & =\left\langle p(x_{0},x_{1},x_{2}),q(x_{1}-x_{0},x_{2}-x_{0})+q(x_{2}-x_{1},x_{1}-x_{0})-q(x_{2}-x_{1},x_{2}-x_{0})\right\rangle _{3}\\
 & =\left\langle p(-X-Y,X,Y)+p(-Y,-X+Y,X)-p(-Y,-X,X+Y),q(X,Y)\right\rangle _{2}.
\end{align*}
\end{lem}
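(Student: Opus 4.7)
The plan is to verify the first equality by expanding the motivic coaction of $J(k_{0};k_{1},k_{2})$ and identifying the contributions surviving after pairing with $\sigma_{i+1}\otimes\sigma_{j+1}$; the second equality then follows by a routine duality argument.

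By bilinearity of the inner products and of $\psi,\lambda$, it suffices to verify the first equality on monomials $p=x_{0}^{k_{0}}x_{1}^{k_{1}}x_{2}^{k_{2}}$ and $q=X^{i}Y^{j}$. The LHS then equals $k_{0}!k_{1}!k_{2}!\langle\Delta J(k_{0};k_{1},k_{2}),\sigma_{i+1}\otimes\sigma_{j+1}\rangle$ via the dual of the comodule structure on $\tilde{\mathcal{H}}$. I would apply Goncharov's formula from \S\ref{subsec:Motivic-multiple-zeta} to $J(k_{0};k_{1},k_{2})=I^{\mathfrak{m}}(0';a_{1}\cdots a_{N};1')$ with $(a_{1},\ldots,a_{N})=(0^{k_{0}}10^{k_{1}}10^{k_{2}})$: the sum runs over subsets $S=\{i_{1}<\cdots<i_{r}\}\subset\{1,\ldots,N\}$, the right tensor factor being $I^{\mathfrak{m}}(0';a_{i_{1}},\ldots,a_{i_{r}};1')$ and the left factor a product of gap integrals $I^{\mathfrak{a}}$ with $a_0=0'$ and $a_{N+1}=1'$.

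Pairing with $\sigma_{j+1}$ forces the right factor to be the depth-one weight-$(j+1)$ element $J(j)$, i.e., $r=j+1$, $a_{i_{1}}=1$, $a_{i_{\ell}}=0$ for $\ell\geq2$, so exactly one of the two 1s of the word is marked and $j$ zeros are marked to its right. Pairing with $\sigma_{i+1}$ picks out the depth-one weight-$(i+1)$ component of the left factor; the vanishing identities $I^{\mathfrak{a}}(0';0^{k};1)=I^{\mathfrak{a}}(0;0^{k};1')=I^{\mathfrak{a}}(0;0^{k};0)=0$ for $k\geq1$ (consequences of shuffle regularization and of $I^{\mathfrak{m}}(0';0;1')=0$) then force every all-zero gap in a ``bad'' endpoint configuration to be empty. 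A case analysis by (a) which of the two 1s is marked and (b) the position of the unmarked 1 among the gaps reveals exactly three families of surviving configurations, and after summing the binomial coefficients arising from the placements of the marked zeros within each run, the three families assemble into $\langle p,(x_{1}-x_{0})^{i}(x_{2}-x_{0})^{j}+(x_{2}-x_{1})^{i}(x_{1}-x_{0})^{j}-(x_{2}-x_{1})^{i}(x_{2}-x_{0})^{j}\rangle_{3}$, establishing the first equality.

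For the second equality, for any linear substitution $\tau:\mathbb{Q}[X,Y]\to\mathbb{Q}[x_{0},x_{1},x_{2}]$ with $\tau(X)=\sum c_{k}x_{k}$ and $\tau(Y)=\sum d_{k}x_{k}$, a direct check on monomials (using that the monomials are orthogonal with norm equal to the product of factorials of exponents) shows that its adjoint $\tau^{*}$ with respect to $\langle,\rangle_{2}$ and $\langle,\rangle_{3}$ is the substitution with transposed matrix, $\tau^{*}(x_{k})=c_{k}X+d_{k}Y$. The adjoints of the three substitutions appearing in the first equality send $(x_{0},x_{1},x_{2})$ to $(-X-Y,X,Y)$, $(-Y,-X+Y,X)$, and $(-Y,-X,X+Y)$ respectively, whence the second equality by $\mathbb{Q}$-linearity. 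The hardest step will be the combinatorial enumeration in the middle paragraph: precisely cataloguing the vanishing identities for $I^{\mathfrak{a}}$ with the various endpoint combinations and tangential basepoints, and then matching the resulting binomial-coefficient sums against the three-term substitution formula with the correct signs.
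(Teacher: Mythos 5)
Your overall strategy for the first equality --- expand the Goncharov/Brown coproduct on the explicit word $0^{k_{0}}10^{k_{1}}10^{k_{2}}$, kill terms using the vanishing of all-zero gap integrals, and resum binomial coefficients --- is viable, and it is essentially a term-by-term re-derivation of what the paper obtains in one stroke by quoting Goncharov's generating-function coproduct formula (\cite[Theorem 5.1]{Gon_galois_symmetries}) in depth two, which yields $\Delta\mathfrak{J}(x_{0};x_{1},x_{2})$ directly and reduces the pairing to the identity $\left\langle \mathfrak{J}(x;y),\sigma_{n+1}\right\rangle =(y-x)^{n}$. Your treatment of the second equality (the adjoint of a linear substitution with respect to $\left\langle ,\right\rangle _{2}$ and $\left\langle ,\right\rangle _{3}$ is the substitution by the transposed matrix) is correct and is exactly what the paper dismisses as obvious.

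However, there is a concrete error in your case analysis, in addition to the fact that the enumeration is explicitly deferred rather than carried out. You claim that pairing the right tensor factor with $\sigma_{j+1}$ forces $a_{i_{1}}=1$, i.e.\ that no marked zeros may precede the marked $1$. This is false: the right factor $I^{\mathfrak{m}}(0';0^{s}10^{t};1')$ with $s\geq1$ is a perfectly good depth-one element, equal to $(-1)^{s}\binom{s+t}{s}J(s+t)$ by shuffle regularization, and it pairs with $\sigma_{j+1}$ to $(-1)^{s}\binom{j}{s}$ whenever $s+t=j$. These configurations are precisely the source of the shifts by $-x_{0}$ and $-x_{1}$ in the arguments of $q$ (equivalently, of the identity $\left\langle \mathfrak{J}(x;y),\sigma_{n+1}\right\rangle =(y-x)^{n}$ rather than $y^{n}$); discarding them would produce, e.g., $q(x_{1},x_{2})$ in place of $q(x_{1}-x_{0},x_{2}-x_{0})$, so your three families cannot assemble into the stated formula as described. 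Two further points need attention: (i) you apply the coproduct formula with the tangential base points $0'$ and $1'$ without comment, whereas the paper explicitly notes that Brown's formula is stated for $\vec{1}_{0}$ and $-\vec{1}_{1}$ and that one must check it persists for $0'$, $1'$ (with $I^{\mathfrak{m}}(0';1;1')=-T$); (ii) since $\sigma_{i+1}$ is only constrained on $1$ and on the $J(m)$'s, you should note that in every surviving configuration the left factor is a \emph{single} gap integral (all other gaps being forced empty), so that no ambiguity from pairing $\sigma_{i+1}$ with nontrivial products arises. With the $a_{i_{1}}=1$ restriction removed and the bookkeeping done correctly, your route does go through, but as written the central combinatorial step is both incorrect in its setup and not executed.
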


\begin{proof}
Recall that $J(k_{0};k_{1},\dots,k_{d})\in\tilde{\mathcal{H}}$ is
defined by a motivic iterated integral
\[
J(k_{0};k_{1},\dots,k_{d})=I^{\mathfrak{m}}(0';0^{k_{0}}10^{k_{1}}\cdots10^{k_{d}};1').
\]
Brown's coproduct formula is for iterated integral with tangential
base points $\vec{1}_{0}$ and $-\vec{1}_{1}$, but the same formula
also holds for iterated integral with tangential base points $0'$
and $1'$, i.e.,
\begin{equation}
\Delta I^{\mathfrak{m}}(a_{0}';a_{1},\dots,a_{k};a_{k+1}')=\sum_{r=0}^{k}\sum_{0=i_{0}<i_{1}<\cdots<i_{r}<i_{r+1}=k+1}\prod_{j=0}^{r}I^{\mathfrak{a}}(a_{i_{j}}';a_{i_{j}+1},\dots,a_{i_{j+1}-1};a_{i_{j+1}}')\otimes I^{\mathfrak{m}}(a_{i_{0}}';a_{i_{1}},\dots,a_{i_{r}};a_{i_{r+1}}')\label{eq:coprod_tang_pt}
\end{equation}
for $a_{0},\dots,a_{k+1}\in\{0,1\}$. We put
\[
\mathfrak{J}(t_{0};t_{1},\dots,t_{d}):=\sum_{k_{0},\dots,k_{d}=0}^{\infty}t_{0}^{k_{0}}\cdots t_{d}^{k_{d}}J(k_{0};k_{1},\dots,k_{d}).
\]
Then by (\ref{eq:coprod_tang_pt}) we have
\begin{align}
\Delta\mathfrak{J}(x_{0};x_{1},x_{2}) & =1\otimes\mathfrak{J}(x_{0};x_{1},x_{2})+\bar{\mathfrak{J}}(x_{0};x_{1},x_{2})\otimes1\nonumber \\
 & \ +\bar{\mathfrak{J}}(x_{0};x_{1})\otimes\mathfrak{J}(x_{0};x_{2})+\bar{\mathfrak{J}}(x_{1};x_{2})\otimes\mathfrak{J}(x_{0};x_{1})-\bar{\mathfrak{J}}(-x_{2};-x_{1})\otimes\mathfrak{J}(x_{0};x_{2})\label{eq:coprod_for_gen_ser}
\end{align}
where $\bar{\mathfrak{J}}(-;-)\coloneqq\left(\mathfrak{J}(-;-)\bmod\zeta^{\mathfrak{m}}(2)\right)$.
(See \cite[Theorem 5.1]{Gon_galois_symmetries} for detail, where
Goncharov gives an explicit formula for the coproduct of such type
generating function at the level of formal symbols for general depth,
and the case $m=2$, $(a_{0},a_{1},a_{2},a_{3})=(0,1,1,1)$ of \cite[Theorem 5.1]{Gon_galois_symmetries}
and (\ref{eq:coprod_tang_pt}) give (\ref{eq:coprod_for_gen_ser}).
Since $\left\langle \mathfrak{J}(x;y),\sigma_{n+1}\right\rangle =\left\langle \mathfrak{J}(0;y-x),\sigma_{n+1}\right\rangle =(y-x)^{n}$
and $\left\langle \mathfrak{J}(x_{0};x_{1},x_{2}),\sigma_{m+1}\sigma_{n+1}\right\rangle _{H}=\sum_{j}\left\langle f_{j},\sigma_{m+1}\right\rangle \cdot\left\langle g_{j},\sigma_{n+1}\right\rangle $
with $\mathfrak{J}(x_{0};x_{1},x_{2})=\sum_{j}f_{j}\otimes g_{j}$,
(\ref{eq:coprod_for_gen_ser}) gives
\[
\left\langle \mathfrak{J}(x_{0};x_{1},x_{2}),\lambda(q(X,Y))\right\rangle _{H}=q(x_{1}-x_{0},x_{2}-x_{0})+q(x_{2}-x_{1},x_{1}-x_{0})-q(x_{2}-x_{1},x_{2}-x_{0}).
\]
Thus,
\begin{align*}
\left\langle \psi(p(x_{0},x_{1},x_{2})),\lambda(q(X,Y))\right\rangle _{H} & =\left\langle \left\langle \mathfrak{J}(x_{0};x_{1},x_{2}),p(x_{0},x_{1},x_{2})\right\rangle _{3},\lambda(q(X,Y))\right\rangle _{H}\\
 & =\left\langle p(x_{0},x_{1},x_{2}),\left\langle \mathfrak{J}(x_{0};x_{1},x_{2}),\lambda(q(X,Y))\right\rangle _{H}\right\rangle _{3}\\
 & =\left\langle p(x_{0},x_{1},x_{2}),q(x_{1}-x_{0},x_{2}-x_{0})+q(x_{2}-x_{1},x_{1}-x_{0})-q(x_{2}-x_{1},x_{2}-x_{0})\right\rangle _{3}.
\end{align*}
Hence the first equality of the lemma is proved. The second equality
is obvious by the definition.
\end{proof}
It is known that for $u\in\tilde{\mathcal{H}}$, $\Delta(u)-1\otimes u-u\otimes1=0$
if and only if $u\in\mathfrak{D}_{1}\tilde{\mathcal{H}}.$ As corollaries
of the above lemma, we have the following lemmas.
\begin{lem}
\label{lem:vanish_condition}Let $p=p(x_{0},x_{1},x_{2})\in\mathbb{Q}[x_{0},x_{1},x_{2}]$
be a homogeneous polynomial of degree $w$. Then $\psi(p)\in\mathbb{Q}J(w+1)$
if and only if $p(-X-Y,X,Y)+p(-Y,-X+Y,X)-p(-Y,-X,X+Y)\in V_{w}^{-}$.
\end{lem}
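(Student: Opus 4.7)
The plan is to rewrite the condition $\psi(p)\in\mathbb{Q}J(w+1)$ as a pairing condition and then unfold it via Lemma \ref{lem:basic_lemma}.

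First, since $\zeta^{\mathfrak{m},\shuffle}(w+2;T)=\zeta^{\mathfrak{m}}(w+2)=-J(w+1)$ for every $w\geq 0$, the depth-one piece $\mathfrak{D}_{1}\tilde{\mathcal{H}}$ in weight $w+2$ is exactly $\mathbb{Q}J(w+1)$. Because $\psi(p)$ is automatically in $\mathfrak{D}_{2}\tilde{\mathcal{H}}$, the condition $\psi(p)\in\mathbb{Q}J(w+1)$ is equivalent to the vanishing of the class $[\psi(p)]$ in $H_{w}\subset{\rm gr}_{2}^{\mathfrak{D}}\tilde{\mathcal{H}}$. I would then invoke the standard fact that the products $\sigma_{i+1}\sigma_{j+1}|_{H_{w}}$ with $i+j=w$ and $i$ even (so that $\sigma_{i+1}\in\mathcal{A}^{\vee}$ and the product is well defined) separate points of $H_{w}$; this is a consequence of the criterion $u\in\mathfrak{D}_{1}\Leftrightarrow\Delta u=1\otimes u+u\otimes 1$ recalled just before the statement, together with the fact that the $\sigma_{n}$ with $n$ odd exhaust the depth-one dual of $\mathcal{A}$. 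Equivalently, the map $\lambda\colon V_{w}^{+}\to H_{w}^{\vee}$ introduced earlier has the property that $[\psi(p)]=0$ if and only if $\langle\psi(p),\lambda(q)\rangle_{H}=0$ for every $q\in V_{w}^{+}$.

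Second, by Lemma \ref{lem:basic_lemma}, this last condition becomes $\langle R_{p},q\rangle_{2}=0$ for every $q\in V_{w}^{+}$, where
\[
R_{p}(X,Y):=p(-X-Y,X,Y)+p(-Y,-X+Y,X)-p(-Y,-X,X+Y).
\]
The bilinear form $\langle\cdot,\cdot\rangle_{2}$ is diagonal on the monomial basis $\{X^{i}Y^{j}\}$, and monomials of distinct $X$-parity are orthogonal, so $V_{w}^{+}$ and $V_{w}^{-}$ are orthogonal complements inside the space of degree-$w$ polynomials in $X,Y$. Hence the vanishing of $\langle R_{p},q\rangle_{2}$ for all $q\in V_{w}^{+}$ is equivalent to the even part of $R_{p}$ being zero, i.e., $R_{p}\in V_{w}^{-}$, which is precisely the stated condition.

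The main obstacle is the separation statement in the first paragraph; once that is granted, the rest is a direct unwinding. Concretely, it amounts to showing that the depth-$(1,1)$ part of the reduced coproduct $\Delta\psi(p)-1\otimes\psi(p)-\psi(p)\otimes 1$---whose vanishing characterizes $[\psi(p)]=0$ in $H_{w}$---is detected by the tensors $\sigma_{n}\otimes\sigma_{m}$ with $n$ odd and $n+m=w+2$. This is immediate from the explicit reduced-coproduct formula derived in the proof of Lemma \ref{lem:basic_lemma} and from the identification of the depth-one dual of $\mathcal{A}$ with $\bigoplus_{n\ \text{odd}}\mathbb{Q}\sigma_{n}$.
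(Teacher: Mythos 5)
Your proof is correct and follows exactly the route the paper intends: the paper states this lemma without proof as a corollary of Lemma \ref{lem:basic_lemma} together with the criterion $u\in\mathfrak{D}_{1}\tilde{\mathcal{H}}\Leftrightarrow\Delta u-1\otimes u-u\otimes1=0$, and your argument (identify $\mathbb{Q}J(w+1)$ with the weight-$(w+2)$ part of $\mathfrak{D}_{1}\tilde{\mathcal{H}}$, detect the depth-$(1,1)$ reduced coproduct by the $\sigma_{n}\otimes\sigma_{m}$ with $n$ odd, then unwind via Lemma \ref{lem:basic_lemma} and the orthogonality of $V_{w}^{+}$ and $V_{w}^{-}$ under $\left\langle \cdot,\cdot\right\rangle _{2}$) is precisely the omitted verification.
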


\begin{lem}
\label{lem:dual_vanish_condition}Let $w$ be a positive integer,
$N_{0},N_{1},N_{2}$ subsets of $\mathbb{Z}_{\geq0}$, and $q(X,Y)\in V_{w}^{+}$.
Then $\left.\lambda(q)\right|_{H_{w}(N_{0};N_{1},N_{2})}=0$ if and
only if for all $n_{0}\in N_{0},n_{1}\in N_{1},n_{2}\in N_{2}$, the
coefficients of $x_{0}^{n_{0}}x_{1}^{n_{1}}x_{2}^{n_{2}}$ in 
\[
q(x_{1}-x_{0},x_{2}-x_{0})+q(x_{2}-x_{1},x_{1}-x_{0})-q(x_{2}-x_{1},x_{2}-x_{0})
\]
vanish.
\end{lem}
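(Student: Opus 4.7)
The plan is to derive this statement directly from Lemma~\ref{lem:basic_lemma} after a short bookkeeping step. By definition, $H_w(N_0;N_1,N_2)$ is spanned by the depth-graded elements $J_{\mathfrak{D}}(n_0;n_1,n_2)$ with $n_i \in N_i$, so the condition $\lambda(q)|_{H_w(N_0;N_1,N_2)} = 0$ is equivalent to the vanishing of $\langle J_{\mathfrak{D}}(n_0;n_1,n_2),\lambda(q)\rangle_H$ for every such triple.

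The key preliminary observation is that $\lambda(q)$ is a $\mathbb{Q}$-linear combination of products $\sigma_{i+1}\sigma_{j+1}$, and any such product annihilates $\mathfrak{D}_{1}\tilde{\mathcal{H}}$: if $u \in \mathfrak{D}_1\tilde{\mathcal{H}}$ then $\Delta(u) - 1\otimes u - u\otimes 1 = 0$, so evaluating the two-fold coaction against $\sigma_{i+1}\otimes\sigma_{j+1}$ yields zero because $\langle 1, \sigma_n\rangle = 0$. Consequently $\langle J_{\mathfrak{D}}(n_0;n_1,n_2),\lambda(q)\rangle_H$ coincides with $\langle J(n_0;n_1,n_2),\lambda(q)\rangle_H$, which by the definition of $\psi$ equals $\frac{1}{n_0!\,n_1!\,n_2!}\langle \psi(x_0^{n_0}x_1^{n_1}x_2^{n_2}),\lambda(q)\rangle_H$.

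Now I apply Lemma~\ref{lem:basic_lemma} with $p = x_0^{n_0}x_1^{n_1}x_2^{n_2}$, which rewrites the pairing as $\frac{1}{n_0!\,n_1!\,n_2!}\langle x_0^{n_0}x_1^{n_1}x_2^{n_2}, F(x_0,x_1,x_2)\rangle_3$, where
\[
F(x_0,x_1,x_2) \coloneqq q(x_1-x_0,x_2-x_0) + q(x_2-x_1,x_1-x_0) - q(x_2-x_1,x_2-x_0).
\]
By the defining property of $\langle,\rangle_3$, this is precisely the coefficient of $x_0^{n_0}x_1^{n_1}x_2^{n_2}$ in $F$, up to the nonzero factor $\frac{n_0!n_1!n_2!}{n_0!n_1!n_2!} = 1$. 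Since $q \in V_w^+$ is homogeneous of degree $w$, the polynomial $F$ is also homogeneous of degree $w$, so the constraint $n_0+n_1+n_2 = w$ is automatic. Ranging over all $(n_0,n_1,n_2) \in N_0 \times N_1 \times N_2$ yields the claimed equivalence.

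The only content beyond a direct invocation of Lemma~\ref{lem:basic_lemma} is the remark that $\sigma_{i+1}\sigma_{j+1}$ kills $\mathfrak{D}_1\tilde{\mathcal{H}}$, allowing the replacement of $J_{\mathfrak{D}}$ by $J$; this is essentially free from the characterization of $\mathfrak{D}_1$ given after Lemma~\ref{lem:basic_lemma}, so no genuine obstacle arises.
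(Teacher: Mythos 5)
Your proof is correct and follows exactly the route the paper intends: the paper states Lemma \ref{lem:dual_vanish_condition} as an immediate corollary of Lemma \ref{lem:basic_lemma} without writing out the details, and your argument is precisely the expected derivation, including the one point that genuinely needs saying, namely that $\sigma_{i+1}\sigma_{j+1}$ annihilates $\mathfrak{D}_{1}\tilde{\mathcal{H}}$ so that the pairing descends to the depth-graded classes $J_{\mathfrak{D}}$. The bookkeeping with $\psi$, the inner product $\left\langle \,,\right\rangle _{3}$, and the homogeneity remark are all accurate.
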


\begin{lem}
\label{lem:antipode}Let $w$ be a nonnegative even integer, $p=p(x_{0},x_{1},x_{2})\in\mathbb{Q}[x_{0},x_{1},x_{2}]$
be a homogeneous polynomial of degree $w$, and $q(X,Y)\in V_{w}^{+}$.
Then
\[
\left\langle \psi(p(x_{0},x_{1},x_{2})),\lambda(q(X,Y))\right\rangle _{H}=\left\langle \psi(p(x_{1},x_{0},x_{2})),\lambda(q(Y,X))\right\rangle _{H}.
\]
\end{lem}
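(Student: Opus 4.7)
The plan is to apply Lemma \ref{lem:basic_lemma} to both sides of the desired identity and then reduce it to a small symmetry check on $q$.

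First I would use the first equality of Lemma \ref{lem:basic_lemma} to rewrite the left-hand side as $\langle p(x_0,x_1,x_2), F\rangle_3$ with
\[
F := q(x_1-x_0, x_2-x_0) + q(x_2-x_1, x_1-x_0) - q(x_2-x_1, x_2-x_0),
\]
and, by applying the same lemma with $p(x_1, x_0, x_2)$ and $q(Y, X)$ in place of $p$ and $q$, to rewrite the right-hand side as $\langle p(x_1, x_0, x_2), G\rangle_3$, where
\[
G := q(x_2-x_0, x_1-x_0) + q(x_1-x_0, x_2-x_1) - q(x_2-x_0, x_2-x_1).
\]

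Next, because $\langle\cdot,\cdot\rangle_3$ is defined by differentiating with respect to the three variables and evaluating at the origin, the simultaneous relabeling $x_0 \leftrightarrow x_1$ in both arguments leaves the pairing unchanged. Hence $\langle p(x_1, x_0, x_2), G(x_0, x_1, x_2)\rangle_3 = \langle p(x_0, x_1, x_2), G(x_1, x_0, x_2)\rangle_3$, and the claim reduces to the polynomial identity $F(x_0, x_1, x_2) = G(x_1, x_0, x_2)$ (or, more precisely, to the vanishing of the pairing of their difference against $p$, but we will verify the stronger identity directly).

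The crucial observation---the only place where the assumption that $w$ is even is used---is that for $q \in V_w^+$ with $w$ even, $q$ is invariant separately under $X \mapsto -X$ and under $Y \mapsto -Y$: the first holds by definition of $V_w^+$, and the second follows by combining $q(-X, -Y) = (-1)^w q(X, Y) = q(X, Y)$ with $q(-X, Y) = q(X, Y)$. Using these two symmetries, the first summand of $G(x_1, x_0, x_2) = q(x_2-x_1, x_0-x_1) + q(x_0-x_1, x_2-x_0) - q(x_2-x_1, x_2-x_0)$ becomes $q(x_2-x_1, x_1-x_0)$ (via $Y \mapsto -Y$), and the second becomes $q(x_1-x_0, x_2-x_0)$ (via $X \mapsto -X$), yielding exactly $F$. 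The only mild obstacle is keeping track of this bookkeeping; no further structural input is required.
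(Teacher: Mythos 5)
Your proposal is correct and follows exactly the route the paper intends: the paper states this lemma as a corollary of Lemma \ref{lem:basic_lemma} without writing out the details, and your computation supplies precisely those details (applying the first equality of Lemma \ref{lem:basic_lemma} to both sides, using the symmetry of $\left\langle\,,\right\rangle_{3}$ under simultaneous relabeling, and reducing to the identity $F=G(x_{1},x_{0},x_{2})$ via the invariance of $q$ under $X\mapsto-X$ and $Y\mapsto-Y$ for even $w$). The same invariance also guarantees that $q(Y,X)\in V_{w}^{+}$, so Lemma \ref{lem:basic_lemma} is legitimately applicable to the right-hand side.
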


\begin{lem}
\label{lem:0_even__to_1_even__}Let $a\in2\mathbb{Z}_{\geq0}$ and
$b\in\mathbb{Z}_{\geq0}$. Then for $q\in\mathbb{Q}[X^{2},Y]$, 
\[
\left\langle J(1;a,b),\lambda(q(X,Y))\right\rangle _{H}=\left\langle J(0;a,b),\lambda\left(\frac{\partial}{\partial Y}q(X,Y)\right)\right\rangle _{H}.
\]
\end{lem}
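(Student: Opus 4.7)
The plan is to reduce the claimed identity, via Lemma~\ref{lem:basic_lemma}, to a combinatorial statement about coefficients of explicit polynomials in three variables, and then to verify that statement using translation invariance together with the parity hypotheses on $q$ and $a$.

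First I will apply Lemma~\ref{lem:basic_lemma} with $p=x_{0}x_{1}^{a}x_{2}^{b}$ on the left-hand side and $p=x_{1}^{a}x_{2}^{b}$ on the right-hand side; since $\psi(x_{0}x_{1}^{a}x_{2}^{b})=a!\,b!\,J(1;a,b)$ and $\psi(x_{1}^{a}x_{2}^{b})=a!\,b!\,J(0;a,b)$, this rewrites the left-hand side as the coefficient of $x_{0}x_{1}^{a}x_{2}^{b}$ in
\[
A(x_{0},x_{1},x_{2}):=q(x_{1}-x_{0},x_{2}-x_{0})+q(x_{2}-x_{1},x_{1}-x_{0})-q(x_{2}-x_{1},x_{2}-x_{0}),
\]
and the right-hand side as the coefficient of $x_{1}^{a}x_{2}^{b}$ in the analogous polynomial $B(x_{0},x_{1},x_{2})$ obtained by replacing $q$ by $\frac{\partial}{\partial Y}q$ throughout and then setting $x_{0}=0$.

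The crucial observation is that $A$ and $B$ are translation-invariant in $(x_{0},x_{1},x_{2})$, hence depend only on $y_{1}:=x_{1}-x_{0}$ and $y_{2}:=x_{2}-x_{0}$. Writing $A=\tilde{F}(y_{1},y_{2})$ and $B=\tilde{G}(y_{1},y_{2})$, the translation invariance implies that extracting the coefficient of $x_{0}$ in $A$ corresponds to applying $-(\partial_{y_{1}}+\partial_{y_{2}})$ to $\tilde{F}$. The main technical step will then be the chain-rule identity
\[
\frac{\partial\tilde{F}}{\partial y_{1}}(y_{1},y_{2})+\frac{\partial\tilde{F}}{\partial y_{2}}(y_{1},y_{2})=\frac{\partial q}{\partial X}(y_{1},y_{2})+\tilde{G}(y_{1},y_{2}),
\]
which isolates the whole discrepancy between the two sides of the desired equality into the single contribution of $\partial_{X}q$.

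Finally, because $q\in\mathbb{Q}[X^{2},Y]$ the partial derivative $\partial_{X}q(X,Y)$ is odd in $X$, so the coefficient $[y_{1}^{a}y_{2}^{b}]\,\partial_{X}q(y_{1},y_{2})$ vanishes whenever $a$ is even---exactly the hypothesis $a\in 2\mathbb{Z}_{\ge 0}$. This completes the reduction to the claim. I do not anticipate any real obstacle: the whole argument consists of Lemma~\ref{lem:basic_lemma} together with the chain-rule identity above, the latter being a mechanical but bookkeeping-heavy computation that is the only place where care is required.
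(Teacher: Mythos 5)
Your strategy---reduce both sides to coefficient extractions via Lemma \ref{lem:basic_lemma}, use translation invariance to turn the coefficient of $x_{0}$ into an application of $-(\partial_{y_{1}}+\partial_{y_{2}})$, establish the chain-rule identity $(\partial_{y_{1}}+\partial_{y_{2}})\tilde{F}=\partial_{X}q+\tilde{G}$, and kill the $\partial_{X}q$ term by the parity hypotheses---is the natural one (the paper gives no proof, listing this lemma among the corollaries of Lemma \ref{lem:basic_lemma}), and every intermediate identity you state is correct. The gap is in the final assembly, which you never actually perform: combining your own formulas gives
\[
\left\langle J(1;a,b),\lambda(q)\right\rangle _{H}=[y_{1}^{a}y_{2}^{b}]\left(-(\partial_{y_{1}}+\partial_{y_{2}})\tilde{F}\right)=-[y_{1}^{a}y_{2}^{b}]\,\partial_{X}q-[y_{1}^{a}y_{2}^{b}]\,\tilde{G}=-\left\langle J(0;a,b),\lambda\left(\tfrac{\partial}{\partial Y}q\right)\right\rangle _{H},
\]
i.e.\ the \emph{negative} of the identity you set out to prove. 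This is not a slip in your computation: taking $q=X^{2}Y^{2}$ and $(a,b)=(0,3)$, Lemma \ref{lem:basic_lemma} gives $\langle J(1;0,3),\lambda(q)\rangle_{H}=2$ while $\langle J(0;0,3),\lambda(2X^{2}Y)\rangle_{H}=-2$, so the two sides of the displayed equation really are negatives of one another, and no further sign is available from Lemma \ref{lem:basic_lemma} to reconcile them. (The discrepancy is harmless for the paper's applications, which only use the lemma to identify kernels, where an overall sign is irrelevant; but it is not harmless for the equality as displayed.) As written, your proposal declares that the parity argument ``completes the reduction to the claim'' at exactly the point where the signs fail to close up; a complete write-up must carry the sign through to the end and either state the conclusion with the minus sign or explicitly note that the two sides agree only up to sign.
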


Recall that $\tilde{\mathcal{A}}=\tilde{\mathcal{H}}/\zeta^{\mathfrak{m}}(2)\tilde{\mathcal{H}}$
has a structure of a Hopf algebra. Let $\mathbb{S}:\tilde{\mathcal{A}}\to\tilde{\mathcal{A}}$
be the antipode of this Hopf algebra. Note that $\mathbb{S}$ is an
automorphism of $\tilde{\mathcal{A}}$. By Lemma \ref{lem:antipode},
we have the following lemma.
\begin{lem}
\label{lem:antipode_formula}For nonnegative integers $a$, $b$ and
$c$ such that $a+b+c$ is even, we have
\[
\mathbb{S}(J_{\mathfrak{D}}(a;b,c))=J_{\mathfrak{D}}(b;a,c).
\]
\end{lem}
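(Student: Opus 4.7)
The plan is to prove the identity by pairing both sides with functionals of the form $\lambda(q)$ for $q\in V_w^{+}$, where $w=a+b+c$. Since $w$ is even, a dual basis of $\mathrm{gr}_2^{\mathfrak{D}}\tilde{\mathcal{A}}_{w+2}$ is provided by $\sigma_m\sigma_n$ with $m+n=w+2$ and $m,n$ both odd: for these, each factor $\sigma_m$ lies in $\tilde{\mathcal{A}}^\vee$, so $\sigma_m\sigma_n\in\tilde{\mathcal{A}}^\vee$ and gives a well-defined functional on $\tilde{\mathcal{A}}$. Such products are precisely the functionals of the form $\lambda(X^{n-1}Y^{m-1})$ with $X^{n-1}Y^{m-1}\in V_w^{+}$ (since $n-1$ is even). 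The claim that these separate points on $\mathrm{gr}_2^{\mathfrak{D}}\tilde{\mathcal{A}}_{w+2}$ is a standard consequence of the (non-canonical) isomorphism $\tilde{\mathcal{A}}\cong\mathcal{U}=\mathbb{Q}\langle f_3,f_5,\dots\rangle$ recalled in Section \ref{subsec:Motivic-multiple-zeta}.

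The key computation is the antipode duality. By the defining property of the antipode, $\langle \mathbb{S}(u),\xi\rangle_H=\langle u,\mathbb{S}^{\vee}(\xi)\rangle_H$. In the graded dual of $\tilde{\mathcal{A}}$, each $\sigma_k$ (for $k$ odd) is primitive, hence $\mathbb{S}^{\vee}(\sigma_k)=-\sigma_k$; combined with anti-multiplicativity of the antipode on the dual we obtain
\[
\mathbb{S}^{\vee}(\sigma_m\sigma_n)=\mathbb{S}^{\vee}(\sigma_n)\mathbb{S}^{\vee}(\sigma_m)=\sigma_n\sigma_m.
\]
Therefore $\langle \mathbb{S}(J_{\mathfrak{D}}(a;b,c)),\sigma_m\sigma_n\rangle_H=\langle J_{\mathfrak{D}}(a;b,c),\sigma_n\sigma_m\rangle_H$ for all odd $m,n$ with $m+n=w+2$.

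Next, I apply Lemma \ref{lem:antipode} with $p=x_0^{a}x_1^{b}x_2^{c}$ and $q=X^{n-1}Y^{m-1}\in V_w^{+}$. Since $\psi(p)=a!b!c!\,J(a;b,c)$, $\psi(p(x_1,x_0,x_2))=a!b!c!\,J(b;a,c)$, $\lambda(q)=\sigma_n\sigma_m$, and $\lambda(q(Y,X))=\sigma_m\sigma_n$, Lemma \ref{lem:antipode} yields
\[
\langle J(a;b,c),\sigma_n\sigma_m\rangle_H=\langle J(b;a,c),\sigma_m\sigma_n\rangle_H,
\]
and this passes to the depth-graded quotient. Combining with the previous paragraph gives $\langle \mathbb{S}(J_{\mathfrak{D}}(a;b,c)),\sigma_m\sigma_n\rangle_H=\langle J_{\mathfrak{D}}(b;a,c),\sigma_m\sigma_n\rangle_H$ for every such pair $(m,n)$, and the separation statement of the first paragraph concludes the proof.

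The main obstacle is the bookkeeping of where each object lives: Lemma \ref{lem:antipode} is stated only for $q\in V_w^{+}$, which at first sight covers only the pairing against $\sigma_{\mathrm{odd}}\sigma_{\mathrm{odd}}$; the other parity $\sigma_{\mathrm{even}}\sigma_{\mathrm{even}}$ would require an analogue of Lemma \ref{lem:antipode} for $V_w^{-}$, which fails. The essential observation that resolves this is that $\mathbb{S}$ acts on $\tilde{\mathcal{A}}$ (not on $\tilde{\mathcal{H}}$), so only the $\sigma_m\sigma_n$ that factor through $\tilde{\mathcal{A}}^{\vee}$ — precisely those with both indices odd — are available and necessary as test functionals, matching exactly the hypotheses of Lemma \ref{lem:antipode}.
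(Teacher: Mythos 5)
Your proof is correct and takes essentially the same route as the paper, which deduces Lemma \ref{lem:antipode_formula} directly from Lemma \ref{lem:antipode}; your write-up merely makes explicit the duality bookkeeping the paper leaves implicit, namely $\mathbb{S}^{\vee}(\sigma_{m}\sigma_{n})=\sigma_{n}\sigma_{m}$ for primitive odd $\sigma$'s together with the fact that the functionals $\lambda(q)$, $q\in V_{w}^{+}$, separate points of $\mathrm{gr}_{2}^{\mathfrak{D}}\tilde{\mathcal{A}}_{w+2}$. No gaps to report.
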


\subsection{\label{subsec:modularforms}Preliminaries for modular forms}

For a congruence subgroup $\Gamma$ of ${\rm SL}_{2}(\mathbb{Z})$,
we denote by $S_{k}(\Gamma)$ (resp. $M_{k}(\Gamma)$) the $\mathbb{C}$-linear
vector space of weight $k$ cusp (resp. modular) forms for $\Gamma$.
Many modular phenomena for multiple zeta values occur via the period
polynomials of modular forms, which are defined as follows.

Let $w$ be a nonnegative even integer and $k=w+2$. For a level $m$
congruence subgroup $\Gamma$ and $f(z)=\sum_{n\in\frac{1}{m}\mathbb{Z}}a_{n}e^{2\pi inz}\in M_{k}(\Gamma)$,
we put $L(f,s):=\sum_{n\in\frac{1}{m}\mathbb{Z}_{>0}}a_{n}n^{-s}$
and $\tilde{L}(f,s):=(2\pi)^{-s}\Gamma(k-s)^{-1}L(f,s)$. Note that
$\tilde{L}(f,n)=0$ for integers $n$ except for the case $0\leq n\leq k$.
We denote by $V_{w}\subset\mathbb{Q}[X,Y]$ the space of homogeneous
polynomial of degree $w$ in $X$ and $Y$. We define an even (resp.
odd) period polynomial $P_{f}^{+}(X,Y)$ (resp. $P_{f}^{-}(X,Y)$
by the sum
\[
k!\sum_{n}\tilde{L}(f,n)X^{n-1}Y^{k-n-1}
\]
where $n$ runs all odd (resp. even) integers between $0$ and $k$.
Then $P_{f}^{+}(X,Y)\in V_{w}^{+}\otimes\mathbb{C}$ and $P_{f}^{-}(X,Y)\in\frac{1}{XY}V_{w+2}^{+}\otimes\mathbb{C}$.
Furthermore, $P_{f}^{-}(X,Y)\in V_{w}\otimes\mathbb{C}$ if and only
if $f$ vanishes at two cusps $0$ and $\infty$. We define the space
of even (resp. odd) period polynomials with complex or rational coefficients
by
\[
W_{w,\mathbb{C}}^{+,\Gamma}:=\{P_{f}^{+}(X,Y)\mid f\in M_{w+2}(\Gamma)\},\ W_{w,\mathbb{C}}^{-,\Gamma}:=\{P_{f}^{-}(X,Y)\mid f\in\tilde{S}_{w+2}(\Gamma)\},
\]
\[
W_{w}^{+,\Gamma}:=W_{w,\mathbb{C}}^{+,\Gamma}\cap\mathbb{Q}[X,Y],\ W_{w}^{-,\Gamma}=W_{w}^{-,\Gamma}\cap\mathbb{Q}[X,Y]
\]
where $\tilde{S}_{w+2}(\Gamma)$ is the subspace of $M_{w+2}(\Gamma)$
consisting modular forms $f$ which vanish at two cusps $0$ and $\infty$.
We simply write $W_{w,\mathbb{C}}^{\pm,{\rm SL}_{2}(\mathbb{Z})}$
and $W_{w}^{\pm,{\rm SL}_{2}(\mathbb{Z})}$ as $W_{w,\mathbb{C}}^{\pm}$
and $W_{w}^{\pm}$, respectively. We define the right action of $\mathbb{Z}[{\rm GL}(2,\mathbb{Z})]$
to $\mathbb{C}(X,Y)$ by
\[
\left.p(X,Y)\right|_{\gamma}=p(aX+bY,cX+dY)\ \ \ \ (\gamma=\left(\begin{array}{cc}
a & b\\
c & d
\end{array}\right)).
\]
We put
\[
\begin{split}\epsilon=\left(\begin{array}{cc}
-1 & 0\\
0 & 1
\end{array}\right),\ S=\left(\begin{array}{cc}
0 & -1\\
1 & 0
\end{array}\right),\ U=\left(\begin{array}{cc}
1 & -1\\
1 & 0
\end{array}\right).\ T=US^{-1}=\left(\begin{array}{cc}
1 & 1\\
0 & 1
\end{array}\right),\\
T'=U^{2}S^{-1}=\left(\begin{array}{cc}
1 & 0\\
1 & 1
\end{array}\right),\ M=SUSU^{2}S=\left(\begin{array}{cc}
-1 & -1\\
2 & 1
\end{array}\right).
\end{split}
\]
We denote by $\Gamma_{A}$, $\Gamma_{B}$ and $\Gamma_{0}(2)$ the
congruence subgroups of ${\rm SL}_{2}(\mathbb{Z})$ defined by
\[
\Gamma_{A}:=\Gamma(2)\sqcup U\Gamma(2)\sqcup U^{2}\Gamma(2),
\]
\[
\Gamma_{B}:=\Gamma(2)\sqcup S\Gamma(2),
\]
and
\[
\Gamma_{0}(2):=\Gamma(2)\sqcup T\Gamma(2)
\]
where $\Gamma(2)$ is the principal congruence subgroup of level 2.
Then we have
\[
W_{w}^{+}=\{p\in V_{w}^{+}:\left.p\right|_{1+S}=\left.p\right|_{1+U+U^{2}}=0\},
\]
\[
W_{w}^{+,\Gamma_{0}(2)}=\{p\in V_{w}^{+}:\left.p\right|_{(1-T)(1+M)}=0\},
\]
\[
W_{w}^{+,\Gamma_{A}}=\{P\in V_{w}^{+}:\left.p\right|_{1+U+U^{2}}=\left.p\right|_{S+SU+SU^{2}}=0\},
\]
\[
W_{w}^{-}=\{p\in V_{w}^{-}:\left.p\right|_{1+S}=\left.p\right|_{1+U+U^{2}}=0\},
\]
\[
W_{w}^{-,\Gamma_{A}}=\{p\in V_{w}^{-}:\left.p\right|_{1+U+U^{2}}=\left.p\right|_{S+SU+SU^{2}}=0\},
\]
\[
W_{w}^{-,\Gamma_{B}}=\{p\in V_{w}^{-}:\left.p\right|_{1+S}=0\},
\]
and for these cases, $W_{w,\mathbb{C}}^{\pm,\Gamma}=W_{w}^{+,\Gamma}\otimes\mathbb{C}$
and the maps $f\mapsto P_{f}^{\pm}$ are injective (see Lemma \ref{prop:period_polynomials}).

\section{\label{sec:gamma_a}The case $J({\rm odd};{\rm odd},0)$}

The following two theorems (Theorems \ref{thm:GammaA_dual} and \ref{thm:Gamma_A_Rel})
says that (\ref{eq:dual_odd_odd_0}) and (\ref{eq:exact_odd_odd_0})
are exact.
\begin{thm}
\label{thm:GammaA_dual}Let $w$ be a positive even integer. Then
there is an exact sequence
\[
0\to W_{w}^{+,\Gamma_{A}}\xrightarrow{{\rm id}}V_{w}^{+}\xrightarrow{\lambda}H_{w}({\bf odd};{\bf odd},{\bf 0})^{\vee}\to0.
\]
\end{thm}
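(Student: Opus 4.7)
The plan is to apply Lemma \ref{lem:dual_vanish_condition} to turn the kernel condition into an explicit polynomial identity, verify the inclusion $W_w^{+,\Gamma_A}\subseteq\ker\lambda$ by direct substitution, and then close the remaining dimension gap using the already-proved first main theorem together with the Eichler--Shimura theory for $\Gamma_A$.

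For $q\in V_w^+$---which, since $w$ is even, forces $q$ to be even in both arguments---Lemma \ref{lem:dual_vanish_condition} reformulates $\lambda(q)|_{H_w({\bf odd};{\bf odd},{\bf 0})}=0$ as the vanishing of the coefficient of $x_0^a x_1^b$ for every odd $a,b\ge 1$ with $a+b=w$ in
\[
G(x_0,x_1):=q(x_1-x_0,-x_0)+q(x_1,x_1-x_0)-q(x_1,-x_0)
\]
(obtained from the lemma after specializing $x_2=0$ and using $q(-X,Y)=q(X,Y)$ to clean up signs on first arguments). If $q|_{1+U+U^{2}}=0$, substituting $X\mapsto x_1-x_0$ and $Y\mapsto -x_0$ into this defining identity gives $q(x_1-x_0,-x_0)+q(x_1,x_1-x_0)=-q(x_0,x_1)$, so that $G=-q(x_0,x_1)-q(x_1,-x_0)$. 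Each summand is even in one of the variables $x_0,x_1$, so neither carries a monomial with both variables to odd powers, and hence $W_w^{+,\Gamma_A}\subseteq\ker\lambda$.

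For the reverse inclusion and surjectivity I would pass to a dimension count. The exact sequence (\ref{eq:exact_odd_odd_0}) of Theorem \ref{thm:first_main_result} gives $\dim H_w({\bf odd};{\bf odd},{\bf 0})=\tfrac{w}{2}-\dim W_w^{-,\Gamma_A}$, which combined with the Eichler--Shimura-type relation $\dim W_w^{+,\Gamma_A}=\dim W_w^{-,\Gamma_A}+1$ for positive even $w$ (to be developed in Appendix \ref{sec:Period-polynomials}) yields $\dim V_w^+-\dim W_w^{+,\Gamma_A}=\dim H_w({\bf odd};{\bf odd},{\bf 0})$. Together with the forward inclusion this already gives $\dim\ker\lambda\ge\dim W_w^{+,\Gamma_A}$, and forcing equality plus surjectivity of $\lambda$ requires one additional sharp input; I would obtain this by computing the rank of the pairing matrix directly. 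Its $(i,b)$-entry $\langle J_{\mathfrak{D}}(w-b;b,0),\lambda(X^iY^{w-i})\rangle_H$ simplifies to $-\binom{i}{b}-\binom{w-i}{b-i}$ (for $i$ even and $a,b$ odd, with the usual convention that $\binom{n}{k}=0$ outside $0\le k\le n$), and its near-triangular structure in $i$ versus $b$ makes the full-rank check tractable modulo the already-identified kernel $W_w^{+,\Gamma_A}$.

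The main obstacle is the dimension identity $\dim W_w^{+,\Gamma_A}-\dim W_w^{-,\Gamma_A}=1$. Since $\Gamma_A$ is not among the congruence subgroups that come off the shelf, this does not drop out of standard references; it must be developed separately in the appendix from the cocycle description of period polynomials for $\Gamma_A$, carefully accounting for its single cusp and for the splitting of modular forms into $\epsilon$-eigencomponents in even weight. Once this input and the sharp-rank step above are in place, the rest of the argument assembles mechanically.
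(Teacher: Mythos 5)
Your reduction of the kernel condition via Lemma \ref{lem:dual_vanish_condition} and your verification of the forward inclusion are correct: after specializing $x_2=0$ and using that $q\in V_w^+$ is even in both arguments, the identity $q|_{1+U+U^2}=0$ does collapse $G$ to $-q(x_0,x_1)-q(x_1,x_0)$, which has no odd--odd monomials. (Note you only used one of the two defining conditions of $W_w^{+,\Gamma_A}$; this is consistent, since for $q\in V_w^+$ with $w$ even the condition $q|_{1+U+U^2}=0$ already forces $q|_{S+SU+SU^2}=0$, but it is worth saying so explicitly.) Your matrix entry $-\binom{i}{b}-\binom{w-i}{b-i}$ is also correct.

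The genuine gap is the reverse inclusion. Everything hinges on showing that the pairing matrix has rank exactly $\dim V_w^+-\dim W_w^{+,\Gamma_A}$, and you do not do this: you assert that a ``near-triangular structure'' makes the check ``tractable,'' but that matrix has corank $\dim M_{w+2}(\Gamma_A)$, which grows linearly in $w$, so no triangularity argument can give the answer --- identifying the kernel of that matrix \emph{is} the theorem, and you have given no mechanism for it. The paper avoids this entirely: it rewrites the kernel condition as $p|_{(T'+T)(1-\epsilon)}=0$, uses $p|_{\epsilon}=p$ to convert this into $p|_{1+U+U^2}=p|_{1+SUS+SU^2S}$, and then observes that the common value $q$ of the two sides satisfies $q|_U=q$ and $q|_{SUS}=q$, hence vanishes for $w>0$; this identifies $\ker\lambda=W_w^{+,\Gamma_A}$ in a few lines with no dimension input, no appendix input, and no determinant. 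If you want to keep your dimension-count skeleton, the cheap way to close it is not the rank computation but the observation that $\lambda:V_w^+\to H_w({\bf odd};{\bf odd},{\bf 0})^{\vee}$ is automatically surjective (by Lemmas \ref{lem:basic_lemma} and \ref{lem:vanish_condition}, an element of $H_w$ annihilated by all $\lambda(q)$ lies in $\mathbb{Q}J(w+1)\subseteq\mathfrak{D}_1\tilde{\mathcal{H}}$, hence is zero in the depth-graded piece); combined with your forward inclusion, the exactness of (\ref{eq:exact_odd_odd_0}) and the identity $\dim W_w^{+,\Gamma_A}=\dim W_w^{-,\Gamma_A}+1$, this would force $\ker\lambda=W_w^{+,\Gamma_A}$. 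Even then your argument leans on two external inputs the paper's proof does not need --- Theorem \ref{thm:Gamma_A_Rel} (stated later but logically independent) and the appendix dimension identity for $\Gamma_A$, which you correctly flag as nontrivial since $\Gamma_A$ is not covered by the off-the-shelf Popa--Pasol nondegeneracy results. As written, the proposal does not constitute a proof.
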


\begin{proof}
It is enough to check that $\ker\lambda=W_{w}^{+,\Gamma_{A}}$. Let
$p(X,Y)\in V_{w}^{+}$. By Lemma \ref{lem:dual_vanish_condition},
$\lambda(p(X,Y))=0$ if and only if
\[
\left.p(X,Y)\right|_{(T'+T)(1-\epsilon)}=0.
\]
Since $\left.p(X,Y)\right|_{\epsilon}=p(X,Y)$, we have
\begin{align*}
\left.p(X,Y)\right|_{(T'+T)(1-\epsilon)} & =\left.p(X,Y)\right|_{(U+U^{2}-SUS-SU^{2}S)S}.
\end{align*}
Thus the condition $\left.p(X,Y)\right|_{(T'+T)(1+\epsilon)}=0$ is
equivalent to
\[
\left.p(X,Y)\right|_{1+U+U^{2}}=\left.p(X,Y)\right|_{1+(SUS)+SU^{2}S}.
\]
Thus $W_{w}^{+,\Gamma_{A}}\subset\ker\lambda$. On the other hand
if $p(X,Y)\in\ker\lambda$ then $q=\left.p(X,Y)\right|_{1+U+U^{2}}=\left.p(X,Y)\right|_{1+(SUS)+SU^{2}S}$
vanishes since $\left.q\right|_{U}=q$ and $\left.q\right|_{SUS}=q$.
Thus $\ker\lambda\subset W_{w}^{+,\Gamma_{A}}$. Hence the theorem
is proved.
\end{proof}
\begin{thm}
\label{thm:Gamma_A_Rel}Let $w$ be a nonnegative even integer. Then
there is an exact sequence
\[
0\to W_{w}^{-,\Gamma_{A}}\xrightarrow{{\rm id}}V_{w}^{-}\xrightarrow{\phi}H_{w}({\bf odd};{\bf odd},{\bf 0})\to0
\]
where $\phi$ is a linear map defined by $\phi(X^{a}Y^{b})=a!b!J_{\mathfrak{D}}(a;b,0)$.
\end{thm}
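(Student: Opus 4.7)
The plan is to deduce Theorem \ref{thm:Gamma_A_Rel} from its dual companion Theorem \ref{thm:GammaA_dual} via the non-degenerate natural pairing between $H_w({\bf odd};{\bf odd},{\bf 0})$ and its dual, combined with the explicit formula of Lemma \ref{lem:basic_lemma}.

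Surjectivity of $\phi$ is clear: the target $H_w({\bf odd};{\bf odd},{\bf 0})$ is by definition spanned by $J_{\mathfrak{D}}(r;s,0)$ with $r,s$ odd and $r+s=w$, and each such element is $(r!s!)^{-1}\phi(X^rY^s)$, where $X^rY^s\in V_w^-$ since $r$ (and hence $s=w-r$) is odd. For the kernel, Theorem \ref{thm:GammaA_dual} yields a surjection $\lambda\colon V_w^+\twoheadrightarrow H_w({\bf odd};{\bf odd},{\bf 0})^\vee$; together with the non-degeneracy of the natural pairing, this gives $\phi(p)=0$ if and only if $\langle\phi(p),\lambda(q)\rangle_H=0$ for every $q\in V_w^+$. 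Viewing $p\in V_w^-$ as a polynomial in $(x_0,x_1)$ with no $x_2$-dependence, so that $\psi(p)\equiv\phi(p)\pmod{\mathfrak{D}_1\tilde{\mathcal{H}}}$, Lemma \ref{lem:basic_lemma} specialised to $x_2=0$ yields
\[
\langle\phi(p),\lambda(q)\rangle_H=\langle p(X,Y),\,q(Y-X,-X)+q(-Y,Y-X)-q(-Y,-X)\rangle_2.
\]

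The main step is to rewrite this condition as a single polynomial identity on $p$. Using the adjoint relation $\langle p|_\gamma,q\rangle_2=\langle p,q|_{\gamma^T}\rangle_2$ (verified directly on monomials, where $\gamma^T$ denotes the matrix transpose) together with $p|_\epsilon=-p$ for $p\in V_w^-$, the vanishing of the pairing on every $q\in V_w^+$ becomes
\[
p|_{(M_1^T+M_2^T-M_3^T)(1+\epsilon)}=0,
\]
where $M_1,M_2,M_3\in\mathrm{GL}_2(\mathbb{Z})$ are the matrices corresponding to the three substitutions above. Expanding gives a sum of six copies of $p$ evaluated at affine transformations of $(X,Y)$; exploiting that $w$ is even forces any $p\in V_w^-$ to be odd in its second variable as well, so the two contributions $p(-Y,\pm X)$ cancel pairwise, and a further application of this symmetry reduces the remaining four terms to
\[
p(X,Y)=p(X+Y,X)+p(Y,X+Y).
\]

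Finally I check that this identity on $V_w^-$ is equivalent to the cocycle condition $p|_{1+U+U^2}=0$: the forward direction is obtained by substituting $(X,Y)\mapsto(-Y,-X-Y)$ in the cocycle relation and applying the odd-in-both-variables symmetry, and the reverse direction by substituting $(X,Y)\mapsto(Y,X-Y)$ in the identity. The second cocycle relation $p|_{S+SU+SU^2}=0$ in the definition of $W_w^{-,\Gamma_A}$ is, for $p\in V_w^-$, the image of $p|_{1+U+U^2}=0$ under the swap $X\leftrightarrow Y$ combined with the $V_w^-$ symmetry, hence automatic. This yields $\ker\phi=W_w^{-,\Gamma_A}$ and the exactness of the sequence. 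The principal obstacle is the middle paragraph: tracking signs through the six affine substitutions, verifying the pairwise cancellation of the $p(-Y,\pm X)$ terms, and matching the surviving four-term expression with the $U$-cocycle identity; once that is done, the remaining identifications are routine substitutions.
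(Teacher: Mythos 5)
Your overall strategy --- deducing the kernel computation from Theorem \ref{thm:GammaA_dual} via the pairing and Lemma \ref{lem:basic_lemma} --- is legitimate, and the first steps are correct: the surjectivity of $\phi$ is clear, the duality argument is valid, and the adjoint computation recovers exactly the criterion of Lemma \ref{lem:vanish_condition} (which is what the paper applies directly, without the detour through the dual statement). The cancellation of the two terms $p(-Y,\pm X)$ is also correct. The problem is the next step. After that cancellation the surviving four terms give the condition
\[
p(X+Y,X)+p(Y,X+Y)+p(X-Y,X)-p(Y,X-Y)=0 .
\]
Writing $A(X,Y):=p(X+Y,X)+p(Y,X+Y)$, the second pair of terms is exactly $A(X,-Y)$ (use oddness of $p$ in its first variable), so the condition reads $A(X,Y)+A(X,-Y)=0$, i.e.\ $A\in V_w^{-}$. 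In group-algebra terms, since $A=\left.p\right|_{(U+U^{2})\epsilon}$ and $\left.p\right|_{1+\epsilon}=0$, this is $\left.p\right|_{(1+U+U^{2})(1+\epsilon)}=0$, i.e.\ $\left.p\right|_{1+U+U^{2}}\in V_w^{-}$. The monomial $p(X,Y)$ itself never occurs among the six terms, so no symmetry substitution can convert this into the identity $p(X,Y)=A(X,Y)$, which is the strictly stronger condition $\left.p\right|_{1+U+U^{2}}=0$. (For $w=4$ and $p=XY^{3}+X^{3}Y$ one has $A-p=X^{4}+2X^{3}Y+3X^{2}Y^{2}+2XY^{3}+Y^{4}$, which is not in $V_4^{+}$, so ``$A\in V_w^{-}$'' and ``$A=p$'' are not related by a formal parity decomposition; that the two conditions cut out the same subspace is essentially the content of the theorem, so asserting the reduction begs the question.)

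The missing argument is precisely the closing step of the paper's proof. Set $q:=\left.p\right|_{1+U+U^{2}}$. One always has $\left.q\right|_{U}=q$ because $(1+U+U^{2})U=U+U^{2}+U^{3}$ and $U^{3}=-I$ acts trivially on $V_w$ for even $w$. Your derived condition says $\left.q\right|_{\epsilon}=-q$, hence $\left.q\right|_{\epsilon U\epsilon}=q$; since $\epsilon U^{2}\epsilon=SUS$, this is the paper's statement that $q$ is fixed by both $U$ and $SUS$. Two such order-three elements generate a group containing a hyperbolic element, so the only invariant homogeneous polynomial of positive degree is $0$ (and $V_0^{-}=0$), whence $q=0$ and $p\in W_w^{-,\Gamma_A}$. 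With this inserted, your proof closes; your remaining observations --- that $p(X,Y)=p(X+Y,X)+p(Y,X+Y)$ is equivalent to $\left.p\right|_{1+U+U^{2}}=0$ via the substitution $(X,Y)\mapsto(Y,X-Y)$, and that $\left.p\right|_{S+SU+SU^{2}}=0$ is then automatic for $p\in V_w^{-}$ because $\tau U\tau=-U^{2}$ for the swap $\tau$ --- are both correct.
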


\begin{proof}
It is enough to check $\ker\phi={\rm Im}u$. Let $p\in V_{w}^{-}$.
By Lemma \ref{lem:vanish_condition}, $\phi(p)=0$ is and only if
\begin{align*}
0 & =\left.p\right|_{(T+T')(1+\epsilon)}\\
 & =\left.p\right|_{(1+U+U^{2})S}-\left.p\right|_{(1+SUS+SU^{2}S)S}.
\end{align*}
Thus $W_{w}^{-,\Gamma_{A}}\subset\ker\phi$. On the other hand if
$p\in\ker\phi$ then $q=\left.p\right|_{1+U+U^{2}}=\left.p\right|_{1+(SUS)+SU^{2}S}$
vanishes since $\left.q\right|_{U}=q$ and $\left.q\right|_{SUS}=q$.
Thus $\ker\phi\subset W_{w}^{-,\Gamma_{A}}$. Hence the theorem is
proved.
\end{proof}
\begin{example}
By Theorem \ref{thm:Gamma_A_Rel}, for a nonnegative even integer
$w$, an odd period polynomial of a weight $w+2$ modular form for
$\Gamma_{A}$ gives a linear relation among $\{J_{\mathfrak{D}}(a;b,0)\mid a,b\in1+2\mathbb{Z}_{\geq0},a+b=w\}$.
For example $w=4$, the space of weight $w+2$ cusp forms for $\Gamma_{A}$
is generated by the square roots of Ramanujan delta $\sqrt{\Delta}$,
and its odd period polynomial is $c(XY^{3}-XY^{3})$ where $c\in\mathbb{C}$.
Thus $1!3!J(1;3,0)-3!1!J(3;1,0)\in\mathbb{Q}\zeta^{\mathfrak{m}}(6)$.
In fact,
\[
1!3!J(1;3,0)-3!1!J(3;1,0)=11\zeta^{\mathfrak{m}}(6).
\]
See Table \ref{table:gamma_a} for other weights.
\end{example}

\begin{table}
{\footnotesize{}}%
\begin{tabular}{|c|c|c|c|}
\hline 
{\footnotesize{}$w$} & {\footnotesize{}}%
\begin{tabular}{c}
{\footnotesize{}generator of}\tabularnewline
{\footnotesize{}$S_{w+2}(\Gamma_{A})$}\tabularnewline
\end{tabular} & {\footnotesize{}}%
\begin{tabular}{c}
{\footnotesize{}generator of}\tabularnewline
{\footnotesize{}$W_{w}^{-,\Gamma_{A}}$}\tabularnewline
\end{tabular} & {\footnotesize{}relations ($F(a,b):=a!b!J(a;b,0)$)}\tabularnewline
\hline 
\hline 
{\footnotesize{}$4$} & {\footnotesize{}$\sqrt{\Delta}$} & {\footnotesize{}$XY^{3}-X^{3}Y$} & {\footnotesize{}$F(1,3)-F(3,1)=11\zeta^{\mathfrak{m}}(6)$}\tabularnewline
\hline 
{\footnotesize{}$8$} & {\footnotesize{}$E_{4}\sqrt{\Delta}$} & {\footnotesize{}$XY^{7}-2X^{3}Y^{5}+2X^{5}Y^{3}-X^{7}Y$} & {\footnotesize{}}%
\begin{tabular}{l}
{\footnotesize{}$F(1,7)-2F(3,5)+2F(5,3)-F(7,1)$}\tabularnewline
{\footnotesize{}$=2^{3}3^{2}5\cdot29\zeta^{\mathfrak{m}}(10)$}\tabularnewline
\end{tabular}\tabularnewline
\hline 
\multirow{2}{*}{{\footnotesize{}$10$}} & {\footnotesize{}$E_{6}\sqrt{\Delta}$} & {\footnotesize{}$2XY^{9}-3X^{3}Y^{7}+3X^{7}Y^{3}-2X^{9}Y$} & {\footnotesize{}}%
\begin{tabular}{l}
{\footnotesize{}$2F(1,9)-3F(3,7)+3F(7,3)-2F(9,1)$}\tabularnewline
{\footnotesize{}$=\frac{2^{4}3^{3}5\cdot7^{2}10243}{691}\zeta^{\mathfrak{m}}(12)$}\tabularnewline
\end{tabular}\tabularnewline
\cline{2-4} \cline{3-4} \cline{4-4} 
 & {\footnotesize{}$\Delta$} & {\footnotesize{}$4XY^{9}-25X^{3}Y^{7}+42X^{5}Y^{5}-25X^{7}Y^{3}+4X^{9}Y$} & {\footnotesize{}}%
\begin{tabular}{l}
{\footnotesize{}$4F(1,9)-25F(3,7)+42F(5,5)-25F(7,3)+4F(9,1)$}\tabularnewline
{\footnotesize{}$=2^{5}3^{4}5\cdot7\cdot11\zeta^{\mathfrak{m}}(12)$}\tabularnewline
\end{tabular}\tabularnewline
\hline 
\end{tabular}\caption{\label{table:gamma_a}Example of Theorem \ref{thm:Gamma_A_Rel}}
\end{table}
\begin{example}
The even period polynomials for $\sqrt{\Delta}\in M_{4+2}(\Gamma_{A})$
and $E_{4}\sqrt{\Delta}\in M_{8+2}(\Gamma_{A})$ are constant multiples
of $Y^{4}-4X^{2}Y^{2}+X^{4}$ and $3Y^{8}-16X^{2}Y^{6}+20X^{4}Y^{4}-16X^{6}Y^{2}+3X^{8}$,
respectively. Thus by Theorem \ref{thm:GammaA_dual}, we have
\[
\left\langle J(a;b,0),\sigma_{1}\sigma_{5}-4\sigma_{3}\sigma_{3}+\sigma_{5}\sigma_{1}\right\rangle =0\ \ \ (a,b:{\rm odd})
\]
and
\[
\left\langle J(a;b,0),3\sigma_{1}\sigma_{9}-16\sigma_{3}\sigma_{7}+20\sigma_{5}\sigma_{5}-16\sigma_{7}\sigma_{3}+3\sigma_{9}\sigma_{1}\right\rangle =0\ \ \ (a,b:{\rm odd}).
\]
\end{example}

\section{\label{sec:strange_gkz}The case $J({\rm even};{\rm even},0)$}

Let $w$ be a nonnegative even integer. We put
\[
H_{w}^{{\rm sym}}:=\left\langle J_{\mathfrak{D}}(a;w-a,0)+J_{\mathfrak{D}}(w-a;a,0)\mid0\leq a\leq w\right\rangle 
\]
\[
H_{w}^{{\rm asym}}:=\left\langle J_{\mathfrak{D}}(a;w-a,0)-J_{\mathfrak{D}}(w-a;a,0)\mid0\leq a\leq w\right\rangle ,
\]
\[
H_{w,{\rm odd}}^{{\rm sym}}:=\left\langle J_{\mathfrak{D}}(a;w-a,0)+J_{\mathfrak{D}}(w-a;a,0)\mid0<a<w,a:{\rm odd}\right\rangle \subset H_{w}^{{\rm sym}}\cap H_{w}({\bf odd};{\bf odd},{\bf 0}),
\]
\[
H_{w,{\rm even}}^{{\rm sym}}:=\left\langle J_{\mathfrak{D}}(a;w-a,0)+J_{\mathfrak{D}}(w-a;a,0)\mid0\leq a\leq w,a:{\rm even}\right\rangle \subset H_{w}^{{\rm sym}}\cap H_{w}({\bf even};{\bf even},{\bf 0}).
\]

\begin{lem}
\label{lem:ee0_l1}We have
\[
H_{w,{\rm odd}}^{{\rm sym}}\subset H_{w,{\rm even}}^{{\rm sym}}.
\]
\end{lem}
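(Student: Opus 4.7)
I reduce the inclusion to a spanning statement in a certain finite-dimensional subspace of $\mathrm{gr}_{2}^{\mathfrak{D}}\tilde{\mathcal{A}}$ via the antipode. By Lemma~\ref{lem:antipode_formula},
\[
J_{\mathfrak{D}}(a;w-a,0)+J_{\mathfrak{D}}(w-a;a,0) = (\mathrm{id}+\mathbb{S})(J_{\mathfrak{D}}(a;w-a,0)).
\]
Since $\tilde{\mathcal{A}}$ is a commutative Hopf algebra, one has $(\mathrm{id}+\mathbb{S})(x)=-\sum x_{(1)}\mathbb{S}(x_{(2)})$ for any $x$ in its augmentation ideal, where $\Delta'(x)=\sum x_{(1)}\otimes x_{(2)}$ denotes the reduced coproduct. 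Combined with Brown's coproduct formula (as used in the proof of Lemma~\ref{lem:basic_lemma}) specialised to $x_{2}=0$, this expresses the symmetric sum as an explicit combination of three products of pairs of depth-$1$ elements.

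Next, I observe that in $\tilde{\mathcal{A}}$ every depth-$1$ element is a rational multiple of a primitive element -- namely $T$ or $\zeta^{\mathfrak{a}}(n)$ with $n\geq 3$ odd. This follows from $J(k_{0};k_{1})=(-1)^{k_{0}+1}\binom{k_{0}+k_{1}}{k_{0}}\zeta^{\mathfrak{m},\shuffle}(k_{0}+k_{1}+1;T)$ together with the vanishing $\zeta^{\mathfrak{a}}(n)=0$ for $n$ even. Therefore $\mathbb{S}$ acts as multiplication by $-1$ on each depth-$1$ factor, and the right-hand side reduces to a linear combination of products $\bar{J}(p;q)\cdot J(r;s)$ of depth-$1$ elements. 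Each such product is a rational multiple of either $T\,\zeta^{\mathfrak{a}}(w+1)$ or $\zeta^{\mathfrak{a}}(n)\zeta^{\mathfrak{a}}(m)$ with $n,m$ odd and $n+m=w+2$. Hence both $H_{w,\mathrm{odd}}^{\mathrm{sym}}$ and $H_{w,\mathrm{even}}^{\mathrm{sym}}$ are contained in the finite-dimensional subspace
\[
\mathcal{P}_{w}:=\mathbb{Q}\,T\zeta^{\mathfrak{a}}(w+1)\;\oplus\bigoplus_{\substack{n+m=w+2\\ n\leq m\text{ odd}}}\mathbb{Q}\,\zeta^{\mathfrak{a}}(n)\zeta^{\mathfrak{a}}(m)\;\subset\;\mathrm{gr}_{2}^{\mathfrak{D}}\tilde{\mathcal{A}},
\]
of dimension $\lfloor w/4\rfloor+1$.

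The main step is then to show that the $\lfloor w/4\rfloor+1$ vectors $(\mathrm{id}+\mathbb{S})(J_{\mathfrak{D}}(a;w-a,0))$ for even $a$ with $0\leq a\leq w/2$ are linearly independent in $\mathcal{P}_{w}$, hence span it. Once this is established, every odd-$a$ analogue -- which also lies in $\mathcal{P}_{w}$ by the above -- automatically lies in the even-$a$ span, giving $H_{w,\mathrm{odd}}^{\mathrm{sym}}\subset H_{w,\mathrm{even}}^{\mathrm{sym}}$.

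The main obstacle is the linear independence statement above. It amounts to checking the non-vanishing of an explicit $(\lfloor w/4\rfloor+1)\times(\lfloor w/4\rfloor+1)$ determinant whose entries are rational combinations of binomial coefficients $\binom{w}{2i}$ coming from the formulas for $\bar{J}(p;q)$ and $J(r;s)$. I expect this determinant to admit a closed-form evaluation via row reduction or a generating-function identity; verifying this in general is the only non-routine piece of the argument.
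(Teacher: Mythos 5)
Your reduction is coherent as far as it goes: the antipode identity $(\mathrm{id}+\mathbb{S})(x)=-\sum x_{(1)}\mathbb{S}(x_{(2)})$ is correct, and combined with the depth-$2$ coproduct it does place all the symmetrized elements $J_{\mathfrak{D}}(a;w-a,0)+J_{\mathfrak{D}}(w-a;a,0)$ inside the $(\lfloor w/4\rfloor+1)$-dimensional space spanned by $T\zeta^{\mathfrak{a}}(w+1)$ and the products $\zeta^{\mathfrak{a}}(n)\zeta^{\mathfrak{a}}(m)$ with $n,m$ odd. But the proof is not complete: everything hinges on the linear independence of the $\lfloor w/4\rfloor+1$ vectors attached to even $a$, and you explicitly leave the non-vanishing of the corresponding binomial-coefficient determinant unverified (``I expect this determinant to admit a closed-form evaluation''). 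That determinant \emph{is} the lemma --- indeed your step 2 is equivalent to the stronger statement $H^{\rm sym}_{w,{\rm even}}=H^{\rm sym}_{w}$, which is exactly what the paper obtains only by combining this lemma with Lemma \ref{lem:Hsym}. Such determinants are not routine; compare the paper's Section \ref{sec:no-relation}, where analogous non-vanishing statements require ad hoc mod-$2$ or mod-$3$ tricks or a Vandermonde factorization, and Section \ref{sec:Remarks}, where a closely related determinant evaluation is left as an open observation. A secondary point needing care: your antipode computation lives in $\tilde{\mathcal{A}}=\tilde{\mathcal{H}}/\zeta^{\mathfrak{m}}(2)\tilde{\mathcal{H}}$, while $H^{\rm sym}_{w,{\rm odd}}$ and $H^{\rm sym}_{w,{\rm even}}$ are defined in ${\rm gr}_{2}^{\mathfrak{D}}\tilde{\mathcal{H}}$; the kernel of the reduction contains nonzero depth-two classes such as $\zeta^{\mathfrak{m}}(2a)\zeta^{\mathfrak{m}}(2b)$, so an inclusion established after reducing mod $\zeta^{\mathfrak{m}}(2)$ does not automatically lift without an extra argument.

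For comparison, the paper avoids any determinant. It works directly in $\tilde{\mathcal{H}}$ with the double shuffle relations: the harmonic and shuffle products of $J_{\mathfrak{D}}(a)$ and $J_{\mathfrak{D}}(b)$ give the identity $\psi(\left.q\right|_{US+U^{2}S})=0$ for the symmetric odd polynomial $q=X^{a}Y^{b}+X^{b}Y^{a}$ ($a,b$ odd), whose odd part yields $\psi(\left.q\right|_{(U+U^{2})(S-1)})\equiv 0\pmod{H^{\rm sym}_{w,{\rm even}}}$. The lemma then reduces to the surjectivity of the operator $\alpha(p)=\left.p\right|_{(U+U^{2})(S-1)}$ on symmetric odd polynomials, which is proved by the observation that $\alpha^{2}\equiv\mathrm{id}\pmod{3}$ on integral polynomials. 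If you want to salvage your approach, you must either evaluate your determinant in closed form or find a congruence trick of the same flavor; as written, the argument stops exactly at the point where the real work begins.
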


\begin{proof}
We denote by $V_{w}^{-,{\rm sym}}$ the set of symmetric polynomials
in $V_{w}^{-}$. The lemma is equivalent to the congruence
\[
\psi(g(X,Y))\equiv0\pmod{H_{w,{\rm even}}^{{\rm sym}}}
\]
for $g(X,Y)\in V_{w}^{-,{\rm sym}}$. We prove this congruence by
using the double shuffle relation. For a degree $w$ homogeneous polynomial
$p(X,Y,Z)$, define $\psi(p)\in H_{w}$ by linearity and
\[
\psi(X^{a}Y^{b}Z^{c})=a!b!c!J_{\mathfrak{D}}(a;b,c).
\]
Let $a$ and $b$ are nonnegative integer such that $a+b=w$. By the
harmonic relation
\[
J_{\mathfrak{D}}(a)J_{\mathfrak{D}}(b)=J_{\mathfrak{D}}(0;a,b)+J_{\mathfrak{D}}(0;b,a),
\]
and the shuffle relation
\[
a!b!J(a)J(b)=\sum_{a_{1}+a_{2}=a}{a \choose a_{1}}a_{1}!(b+a_{2})!J(0;a_{1},b+a_{2})+\sum_{b_{1}+b_{2}=b}{b \choose b_{1}}b_{1}!(a+b_{2})!J(0;b_{1},a+b_{2}),
\]
we have
\[
\psi(Y^{a}Z^{b}+Z^{a}Y^{b}-(Y+Z)^{a}Z^{b}-(Y+Z)^{b}Z^{a})=0.
\]
Since $\psi(p(X,Y,Z))=\psi(p(X,Y,-X-Y))$ for all $p$, we have
\begin{equation}
\psi((X+Y)^{a}(X^{b}-(-Y)^{b})+(X+Y)^{b}(X^{a}-(-Y)^{a}))=0.\label{eq:in_proof_ee0}
\end{equation}
Note that (\ref{eq:in_proof_ee0}) gives linear relations among the
generators of $H_{w}^{{\rm sym}}$ (resp. $H_{w}^{{\rm asym}}$) for
odd (resp. even) $a$ and $b$. Hence, let us assume that $a$ and
$b$ are odd. Let $q(X,Y)$ be a symmetric odd polynomial $X^{a}Y^{b}+X^{b}Y^{a}$.
Then (\ref{eq:in_proof_ee0}) says that
\begin{equation}
\psi(\left.q(X,Y)\right|_{US+U^{2}S})=0.\label{eq:in_proof_ee0_1}
\end{equation}
Since the odd part of $\left.q(X,Y)\right|_{US+U^{2}S}$ is given
by
\begin{align*}
\frac{1}{2}\left.q(X,Y)\right|_{(US+U^{2}S)(1-\epsilon)} & =\frac{1}{2}\left.q(X,Y)\right|_{US+U^{2}S-\epsilon SU^{2}-\epsilon SU}\\
 & =\frac{1}{2}\left.q(X,Y)\right|_{(U+U^{2})(S-1)}
\end{align*}
we have
\[
\psi(\left.q(X,Y)\right|_{(U+U^{2})(S-1)})\equiv0\pmod{H_{w,{\rm even}}^{{\rm sym}}}.
\]
We denote by $\alpha:V_{w}^{-,{\rm sym}}\to V_{w}^{-,{\rm sym}}$
the map defined by $\alpha(p)=\left.p\right|_{(U+U^{2})(S-1)}$. Thus
if $\alpha$ is surjective then the lemma is proved. For $p(X,Y)\in V_{w}^{-,{\rm sym}}\cap\mathbb{Z}[X,Y]$,
by direct calculation, we have
\[
\alpha(\alpha(p(X,Y)))\equiv p(X,Y)\pmod{3\mathbb{Z}[X,Y]}.
\]
Thus $\alpha$ is injective and thus surjective.
\end{proof}
\begin{lem}
\label{lem:Hsym}We have
\[
H_{w}^{{\rm sym}}=\bigoplus_{m=0}^{\left\lfloor w/4\right\rfloor }J_{\mathfrak{D}}(m)J_{\mathfrak{D}}(w-2m).
\]
\end{lem}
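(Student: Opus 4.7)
The plan is to identify $H_w^{\rm sym}$ with the $\mathbb{S}$-invariant depth-two graded piece in weight $w+2$, and then to recognize this invariant part as being spanned by products of depth-one primitives via the Hopf structure of $\tilde{\mathcal{A}}$.

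First, by Lemma~\ref{lem:antipode_formula} the antipode $\mathbb{S}$ of $\tilde{\mathcal{A}}$ sends $J_{\mathfrak{D}}(a;w-a,0)$ to $J_{\mathfrak{D}}(w-a;a,0)$, so $H_w^{\rm sym}$ is exactly the $\mathbb{S}$-invariant part of $H_w(\mathbb{Z}_{\geq 0};\mathbb{Z}_{\geq 0},\mathbf{0})$. Combined with Lemma~\ref{lem:ee0_l1}, this yields $H_w^{\rm sym}=H_{w,{\rm even}}^{\rm sym}$, so $H_w^{\rm sym}$ is spanned by the $\lfloor w/4\rfloor+1$ symmetric sums $J_{\mathfrak{D}}(2m;w-2m,0)+J_{\mathfrak{D}}(w-2m;2m,0)$ for $m=0,\dots,\lfloor w/4\rfloor$, which gives an upper bound on its dimension.

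Second, I would construct the corresponding products $J_{\mathfrak{D}}(2m)J_{\mathfrak{D}}(w-2m)$ as elements of $H_w^{\rm sym}$. In $\tilde{\mathcal{A}}$, both $T=-J_{\mathfrak{D}}(0)$ and the odd zetas $-J_{\mathfrak{D}}(2k)=\zeta^{\mathfrak{m}}(2k+1)$ are primitive of depth one; since $\tilde{\mathcal{A}}$ is commutative and $\mathbb{S}$ acts as $-1$ on primitives, any product of two such primitives is $\mathbb{S}$-invariant. Combining the harmonic formula $J_{\mathfrak{D}}(a)J_{\mathfrak{D}}(b)=J_{\mathfrak{D}}(0;a,b)+J_{\mathfrak{D}}(0;b,a)$ with the three-variable substitution identity $\psi(p(X,Y,Z))=\psi(p(X,Y,-X-Y))$ used in the proof of Lemma~\ref{lem:ee0_l1} rewrites each product as an explicit linear combination of $J_{\mathfrak{D}}(i;w-i,0)$'s, and $\mathbb{S}$-invariance then places this combination inside $H_w^{\rm sym}$.

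Third, the reverse inclusion and the directness of the sum both follow by dimension counting: Brown's structure theorem $\tilde{\mathcal{A}}\simeq\mathcal{U}\otimes\mathbb{Q}[T]$ identifies the $\mathbb{S}$-invariant depth-two piece of weight $w+2$ with the symmetric square of the depth-one primitives (including the $T$-contribution), which has dimension exactly $\lfloor w/4\rfloor+1$; this matches the upper bound from step one, forcing equality and directness. The main obstacle is step two: although $\mathbb{S}$-invariance of $J_{\mathfrak{D}}(2m)J_{\mathfrak{D}}(w-2m)$ is a formal consequence of the Hopf structure, verifying that the particular linear combination of $J_{\mathfrak{D}}(i;w-i,0)$'s produced by the substitution $Z\mapsto-X-Y$ really does symmetrize under $i\leftrightarrow w-i$ requires combining the harmonic, shuffle, and reversal relations along the lines of the proof of Lemma~\ref{lem:ee0_l1}.
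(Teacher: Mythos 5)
Your proof is correct, and it rests on the same two pillars as the paper's: the antipode formula of Lemma \ref{lem:antipode_formula}, which identifies $H_{w}^{{\rm sym}}$ with the $\mathbb{S}$-invariant part of the depth-two graded piece, and the structure theorem $\tilde{\mathcal{A}}\simeq\mathcal{U}\otimes\mathbb{Q}[T]$, which identifies that invariant part with the span of the linearly independent products $J_{\mathfrak{D}}(2m)J_{\mathfrak{D}}(w-2m)$. The mechanics differ, though. The paper never invokes Lemma \ref{lem:ee0_l1} here: it decomposes $H_{w}=H_{w}^{+}\oplus H_{w}^{-}$ into antipode eigenspaces, observes $H_{w}^{{\rm sym}}\subseteq H_{w}^{+}$, $H_{w}^{{\rm asym}}\subseteq H_{w}^{-}$ and $H_{w}=H_{w}^{{\rm sym}}+H_{w}^{{\rm asym}}$, and concludes $H_{w}^{{\rm sym}}=H_{w}^{+}$ with no dimension count at all; keeping Lemma \ref{lem:Hsym} independent of Lemma \ref{lem:ee0_l1} is precisely what makes the two lemmas collide nontrivially in the proof of the exactness of (\ref{eq:exact_even_even_0}). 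Your route instead takes the upper bound $\dim H_{w}^{{\rm sym}}\le\lfloor w/4\rfloor+1$ from Lemma \ref{lem:ee0_l1} and the matching lower bound from exhibiting the products inside $H_{w}^{{\rm sym}}$; this is not circular (Lemma \ref{lem:ee0_l1} is proved from double shuffle alone), but the extra dependency is avoidable. Finally, the ``main obstacle'' you flag in step two is not one: you do not need the explicit combination produced by the substitution $Z\mapsto -X-Y$ to be literally symmetric under $i\leftrightarrow w-i$. Once you know that $v:=J_{\mathfrak{D}}(2m)J_{\mathfrak{D}}(w-2m)$ lies in the span of the $J_{\mathfrak{D}}(i;w-i,0)$ and satisfies $\mathbb{S}(v)=v$, writing $v=\tfrac{1}{2}\left(v+\mathbb{S}(v)\right)$ and applying Lemma \ref{lem:antipode_formula} term by term places $v$ in $H_{w}^{{\rm sym}}$ with no further computation.
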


\begin{proof}
Let $\mathbb{S}:H_{w}\to H_{w}$ be the antipode map. Then $H_{w}$is
factored to $\pm$-part by the action of $\mathbb{S}$; $H_{w}=H_{w}^{+}\oplus H_{w}^{-}$.
Then we can easily show that
\[
H_{w}^{+}=\bigoplus_{m=0}^{\left\lfloor w/4\right\rfloor }J_{\mathfrak{D}}(2m)J_{\mathfrak{D}}(w-2m).
\]
By Lemma \ref{lem:antipode_formula}, $\mathbb{S}(v)=v$ for $v\in H_{w}^{{\rm sym}}$
and $\mathbb{S}(v)=-v$ for $v\in H_{w}^{{\rm asym}}$. Thus $H_{w}^{{\rm sym}}\subset H_{w}^{+}$
and $H_{w}^{{\rm asym}}\subset H_{w}^{-}$. Since $H_{w}$ is spanned
by $H_{w}^{{\rm sym}}$ and $H_{w}^{{\rm asym}}$, we have
\[
H_{w}^{{\rm sym}}=H_{w}^{+}.
\]
Thus the lemma is proved.
\end{proof}
\begin{thm}
The sequences (\ref{eq:exact_even_even_0}) and (\ref{eq:dual_even_even_0})
are exact.
\end{thm}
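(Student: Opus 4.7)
The plan is to use the antipode $\mathbb{S}$ (via Lemma \ref{lem:antipode_formula}) to split the problem into two parts. Since $\mathbb{S}J_{\mathfrak{D}}(a;b,0)=J_{\mathfrak{D}}(b;a,0)$, the $X\leftrightarrow Y$ involution on $V_{w}^{+}$ intertwines both $\phi\colon V_{w}^{+}\to H_{w}({\bf even};{\bf even},{\bf 0})$, $X^{a}Y^{b}\mapsto a!b!J_{\mathfrak{D}}(a;b,0)$, and the dual $\lambda$, with $\mathbb{S}$. Writing $V_{w}^{+}=V_{w}^{+,\mathrm{s}}\oplus V_{w}^{+,\mathrm{a}}$ and $H_{w}({\bf even};{\bf even},{\bf 0})=H_{w,\mathrm{e}}^{\mathrm{s}}\oplus H_{w,\mathrm{e}}^{\mathrm{a}}$ for the $\pm 1$ eigenspaces, we obtain splittings $\phi=\phi^{\mathrm{s}}\oplus\phi^{\mathrm{a}}$ and $\lambda=\lambda^{\mathrm{s}}\oplus\lambda^{\mathrm{a}}$.

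For the symmetric part, Lemma \ref{lem:ee0_l1} forces $H_{w,\mathrm{e}}^{\mathrm{s}}=H_{w}^{\mathrm{sym}}$, and Lemma \ref{lem:Hsym} identifies this with $\bigoplus_{m=0}^{\lfloor w/4\rfloor}J_{\mathfrak{D}}(2m)J_{\mathfrak{D}}(w-2m)$, of dimension $\lfloor w/4\rfloor+1$. A direct count of the basis $\{X^{2i}Y^{2j}+X^{2j}Y^{2i}\colon 0\le i\le j,\ i+j=w/2\}$ gives $\dim V_{w}^{+,\mathrm{s}}=\lfloor w/4\rfloor+1$, matching. Since $\phi^{\mathrm{s}}$ is surjective by construction, it is an isomorphism, so $(\ker\phi)\cap V_{w}^{+,\mathrm{s}}=0$ (and analogously for $\lambda$). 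Hence both kernels are contained in $V_{w}^{+,\mathrm{a}}$.

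For the antisymmetric part, first observe $W_{w}^{+}\subset V_{w}^{+,\mathrm{a}}$: the relation $p(-Y,X)=-p(X,Y)$ (from $p|_{1+S}=0$) combined with the evenness $p(-X,Y)=p(X,Y)$ forces $p(Y,X)=-p(X,Y)$. To finish, one must show $\ker\phi^{\mathrm{a}}=\ker\lambda^{\mathrm{a}}=W_{w}^{+}$. The strategy is to unwind Lemmas \ref{lem:vanish_condition} and \ref{lem:dual_vanish_condition} applied to polynomials $p(x_{0},x_{1})\in V_{w}^{+}\subset\mathbb{Q}[x_{0},x_{1},x_{2}]$: in both cases the condition reduces to an identity for the three-term expression
\[
A_{p}(X,Y):=p(-X-Y,X)+p(-Y,-X+Y)-p(-Y,-X)
\]
and its parity projections. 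Using the first-variable evenness of $p$ and the substitutions $(X,Y)\mapsto(X+Y,X)$ and $(X,Y)\mapsto(-Y,X-Y)$, this identity can be rewritten entirely in terms of the $\mathrm{SL}_{2}(\mathbb{Z})$-action on $V_{w}$; a bookkeeping parallel to the proof of Theorem \ref{thm:GKZ_intro} (via the map $u_{\mathrm{GKZ}}$) shows it is equivalent to $p|_{1+S}=p|_{1+U+U^{2}}=0$, i.e.\ $p\in W_{w}^{+}$.

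The main obstacle is this last combinatorial identification in the antisymmetric part, which requires explicitly translating the Goncharov-type three-term expression $A_{p}$ into the $S$- and $U$-invariance conditions defining $W_{w}^{+}$. This step is analogous to, but subtler than, the corresponding calculation in Theorem \ref{thm:GKZ_intro}, since the position of the $0$ index in $J_{\mathfrak{D}}(a;b,0)$ (versus $J_{\mathfrak{D}}(0;a,b)$) changes the shape of the coaction formula, and the reduction to the two standard relations requires additional use of the evenness of $p$ in its first variable.
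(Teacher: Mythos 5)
Your overall strategy coincides with the paper's: the paper likewise reduces the exactness of both sequences to computing the kernel of the single group-algebra operator $\phi(p)=\left.p\right|_{2-US-U^{2}S-SU-SU^{2}}$ on $V_{w}^{+}$, splits $p=p^{+}+p^{-}$ with respect to $\frac{1}{2}\left.p\right|_{1\pm S}$ (which on $V_{w}^{+}$ is exactly the $\pm1$-eigenspace decomposition for the $X\leftrightarrow Y$ swap that you set up via the antipode), and kills the symmetric component by the same dimension count through Lemmas \ref{lem:ee0_l1} and \ref{lem:Hsym}. That half of your argument is correct and complete, including the count $\dim V_{w}^{+,\mathrm{s}}=\lfloor w/4\rfloor+1$.

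The gap is the antisymmetric half, which you yourself flag as ``the main obstacle'' and do not carry out. Two concrete problems. First, the route you indicate is misdirected: $u_{{\rm GKZ}}$ plays no role here --- in (\ref{eq:exact_even_even_0}) the kernel is $W_{w}^{+}$ embedded by the identity, not by $u_{{\rm GKZ}}$, so a ``bookkeeping parallel to Theorem \ref{thm:GKZ_intro}'' is not the right template. Second, and more substantively, the equivalence you assert cannot be produced by algebraic rewriting alone. For $S$-anti-invariant $p$ the three-term condition collapses to $\left.p\right|_{(1+U+U^{2})(1-S)}=0$, which is a priori strictly weaker than the desired $\left.p\right|_{1+U+U^{2}}=0$; to close this one must set $q=\left.p\right|_{1+U+U^{2}}$, observe that $\left.q\right|_{U}=q$ (using $U^{3}=-I$ and $w$ even) and $\left.q\right|_{S}=q$, and then invoke the fact that a homogeneous polynomial of positive degree fixed by both $U$ and $S$ --- hence by all of ${\rm SL}_{2}(\mathbb{Z})$ --- must vanish. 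This group-theoretic vanishing step is the actual content of the antisymmetric case (and is how the paper finishes); it is absent from your proposal, so as written the argument does not yet establish $\ker\phi^{\mathrm{a}}=\ker\lambda^{\mathrm{a}}=W_{w}^{+}$.
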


\begin{proof}
Define $\phi:V_{w}^{+}\to V_{w}^{+}$ by
\[
\phi(p):=\left.p\right|_{(2-US-U^{2}S-SU-SU^{2})}.
\]
By Lemmas \ref{lem:vanish_condition} and \ref{lem:dual_vanish_condition},
the sequences (\ref{eq:exact_even_even_0}) and (\ref{eq:dual_even_even_0})
are exact if and only if
\[
W_{w}^{+}=\{p\in V_{w}^{+}\mid\phi(p)=0\}.
\]
Since we can easily check
\[
W_{w}^{+}\subset\{p\in V_{w}^{+}\mid\phi(p)=0\},
\]
it is enough to prove $p\in W_{w}^{+}$ for all $p\in\ker(\phi)$.
Decompose $p$ as $p=p^{+}+p^{-}$ where $p^{\pm}=\frac{1}{2}\left.p\right|_{1\pm S}$.
Since
\[
\left.\phi(q)\right|_{S}=\phi(\left.q\right|_{S})
\]
for any $q\in V_{w}^{+}$, we have $\phi(p^{+})=\phi(p^{-})=0$.

By definition, we have $0=\phi(p^{-})=\left.p^{-}\right|_{(1+U+U^{2})(1-S)}$.
Put $q=\left.p^{-}\right|_{1+U+U^{2}}$. Then $\left.q\right|_{U}=q$
and $\left.q\right|_{S}=q$. Thus $q=0$ or $w=0$. Therefore $p^{-}\in W_{w}^{+}$.

Let us prove $p^{+}=0$ by contradiction. Suppose that $p^{+}\neq0$.
Since $\phi(p^{+})=0$, we have
\[
\sum_{m=0}^{w/2}a_{m}(2m)!(w-2m)!J_{\mathfrak{D}}(2m;w-2m,0)=0
\]
where $p^{+}=\sum_{m=0}^{w/2}a_{m}X^{2m}Y^{w-2m}$. This gives a non-trivial
linear relation among $(J_{\mathfrak{D}}(2m;w-2m,0)+J_{\mathfrak{D}}(w-2m;2m,0))$'s,
and thus
\[
\dim H_{w,{\rm even}}^{{\rm sym}}<\{m\in\mathbb{Z}\mid0\leq2m\leq w-2m\}=1+\left\lfloor \frac{w}{4}\right\rfloor .
\]
Since $H_{w,{\rm even}}^{{\rm sym}}=H_{w}^{{\rm sym}}$ by Lemma \ref{lem:ee0_l1},
we have
\[
\dim H_{w}^{{\rm sym}}<1+\left\lfloor \frac{w}{4}\right\rfloor .
\]
This contradicts Lemma \ref{lem:Hsym}. Hence, $p^{+}=0$.

Therefore $p=p^{+}+p^{-}\in V_{w}^{+}$, and the theorem is proved,
\end{proof}
\begin{example}
Let $w$ be a positive even integer. Then even period polynomial for
the weight $w+2$ Eisenstein series is a constant multiple of $X^{w}-Y^{w}$.
Thus by exactness of (\ref{eq:exact_even_even_0}),
\[
J(w;0,0)-J(0;w,0)\in\mathbb{Q}\zeta^{\mathfrak{m}}(w+2).
\]
In fact,
\begin{align*}
J(w;0,0)-J(0;w,0) & =\sum_{a=0}^{w-1}\zeta^{\mathfrak{m}}(a+1,w-a+1)\\
 & =\zeta^{\mathfrak{m}}(w+2).
\end{align*}
\end{example}

\begin{example}
Let $w=10$. The linear relations among $\{J(0;10,0),J(2;8,0),\dots,J(10;0,0),\zeta^{\mathfrak{m}}(12)\}$
are spanned by $J(10;0,0)-J(0;10,0)=\zeta^{\mathfrak{m}}(12)$ and
\[
14J(2;8,0)-9J(4;6,0)+9J(6;4,0)-14J(8;2,0)=-\frac{6248}{691}\zeta^{\mathfrak{m}}(12).
\]
By multiplying a some rational number and taking a quotient by $\mathbb{Q}\zeta^{\mathfrak{m}}(12)$,
we obtain
\begin{equation}
1\times2!8!J(2;8,0)-3\times4!6!J(4;6,0)+3\times6!4!J(6;4,0)-1\times2!8!J(8;2,0)\equiv0\pmod{\mathbb{Q}\zeta^{\mathfrak{m}}(12)}.\label{eq:example_even_even_0_1331}
\end{equation}
Then the coefficients $1,-3,3,1$ in (\ref{eq:example_even_even_0_1331})
coincide with the coefficients of an even period polynomial
\[
X^{2}Y^{8}-3X^{4}Y^{6}+3X^{6}Y^{4}-X^{8}Y^{2}\in W_{10}^{+}.
\]
\end{example}

\section{\label{sec:0_even_even}The cases similar to the case $J(0;{\rm even},{\rm even})$:
$J(1;{\rm even},{\rm even})$, $J({\rm even};0,{\rm even})$, $J({\rm odd};1,{\rm odd})$}

In this section, we prove the exactness of (\ref{eq:exact_1_even_even}),
(\ref{eq:exact_even_0_even}), (\ref{eq:exact_odd_1_odd}), (\ref{eq:dual_1_even_even}),
(\ref{eq:dual_even_0_even}), (\ref{eq:dual_odd_1_odd}). For the
convenience of the reader, we also give a proof of the exactness of
(\ref{eq:exact_0_even_even}) and (\ref{eq:dual_0_even_even}) because
it can be proved simultaneously without any additional effort.
\begin{lem}
\label{lem:injectivity_u_GKZ}Let $w$ be a nonnegative even integer.
Then $u_{{\rm GKZ}}:W_{w}^{\pm}\to V_{w}^{\pm}$ is injective.
\end{lem}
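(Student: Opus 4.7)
My plan is to prove the slightly stronger statement that $u_{{\rm GKZ}}$ is injective on the full space $V_{w}$ of homogeneous degree-$w$ polynomials in $\mathbb{Q}[X,Y]$, so that neither the $\epsilon$-parity nor the defining period polynomial relations of $W_{w}^{\pm}$ play any role. The restriction of an injective map remains injective, so this settles both the $+$ and $-$ cases of the lemma at once.

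The strategy is to reduce the vanishing of $u_{{\rm GKZ}}(p)$ to a simple shift-type functional equation via a linear change of variables. Specifically, I would substitute $u = X - Y$ and $v = X$, under which $-X-Y = u - 2v$, so the identity $u_{{\rm GKZ}}(p)(X,Y) = 0$ rewrites as the polynomial identity
\[
p(u+2v,\, v) \;=\; -\,p(u,\, v) \qquad \text{in } \mathbb{Q}[u,v].
\]
Iterating once gives $p(u+4v,v) = p(u,v)$. Hence for each fixed $v_0 \neq 0$, the one-variable polynomial $u \mapsto p(u, v_0)$ is invariant under the nonzero translation $u \mapsto u + 4v_0$, and a univariate polynomial with a nonzero period must be constant. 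Call this constant $c(v_0)$; substituting back into $p(u+2v_0,v_0) = -p(u,v_0)$ yields $c(v_0) = -c(v_0)$, forcing $c(v_0) = 0$. Since $p$ vanishes on the Zariski-dense subset $\{v \neq 0\}$, we conclude $p \equiv 0$.

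I do not anticipate any real obstacle here: the whole argument is a one-step linear substitution followed by the elementary fact that a periodic polynomial is constant. The only point worth flagging is that the proof makes no use of the relations $p|_{1+S} = 0$ and $p|_{1+U+U^{2}} = 0$ that cut out $W_w^{\pm}$ inside $V_w^{\pm}$, nor of the parity $p|_\epsilon = \pm p$; this is what allows both parity cases to be treated uniformly.
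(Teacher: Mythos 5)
Your proof is correct, and modulo notation it is essentially the same computation as the paper's: the paper writes $u_{{\rm GKZ}}(p)=\frac{1}{2}\left.p\right|_{SU^{2}S+U}$, composes on the right with $U^{2}$ to obtain $\left.p\right|_{(SU^{2})^{2}}=-p$, i.e.\ $p(X-2Y,Y)=-p(X,Y)$, which is precisely your functional equation $p(u-2v,v)=-p(u,v)$ (with your substitution $u=X-Y$, $v=X$ the shift comes out as $-2v$ rather than $+2v$, a harmless sign slip), and then iterates to get invariance of $p$ under the translation $\left(\begin{smallmatrix}1 & -4\\ 0 & 1\end{smallmatrix}\right)$, hence $p\in\mathbb{Q}Y^{w}$. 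The only genuine divergence is the final step: the paper disposes of the remaining line $\mathbb{Q}Y^{w}$ by invoking the relation $\left.p\right|_{1+S}=0$ that cuts out $W_{w}^{\pm}$, whereas you dispose of it using the anti-invariance itself, via $c(v_{0})=-c(v_{0})$. Your variant is therefore marginally stronger and cleaner: it shows that $u_{{\rm GKZ}}$ is injective on all of $V_{w}$, making no use of the parity condition or of the period-polynomial relations, and the lemma follows by restriction to $W_{w}^{\pm}$. There is no gap.
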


\begin{proof}
Note that 
\[
u_{{\rm GKZ}}(p)=\frac{1}{2}\left.p\right|_{SU^{2}S+U}.
\]
Assume that $u_{{\rm GKZ}}(p)=0$ for some $p\in W_{w}^{\pm}$. Then
\[
0=\left.p\right|_{(SU^{2}S+U)U^{2}}=\left.p\right|_{SU^{2}SU^{2}+1}.
\]
Thus 
\[
\left.p\right|_{(SU^{2})^{4}}=p.
\]
Since $(SU^{2})^{4}=\left(\begin{array}{cc}
1 & -4\\
0 & 1
\end{array}\right),$ $p\in\mathbb{Q}Y^{w}$. Furthermore, since $\left.p\right|_{1+S}=0$,
$p=0$. Thus the lemma is proved.
\end{proof}
\begin{lem}
\label{lem:gkz_type_l1}Let $w$ be a nonnegative even integer. Let
$p(X,Y)\in V_{w}^{\pm}$. Then
\[
p(X,Y)-p(X+Y,Y)\pm p(X+Y,X)\in V_{w}^{\mp},
\]
if and only if $p(X,Y)\in W_{w}^{\pm}$.
\end{lem}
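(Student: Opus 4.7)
The plan is to reformulate the equivalence as a single group-algebra identity $E(p)=0$ on $V_w^\delta$, where $\delta=\pm 1$ denotes the sign in the lemma statement. Writing $U':=\begin{pmatrix}1 & 1\\1 & 0\end{pmatrix}$, the polynomial $q:=p(X,Y)-p(X+Y,Y)+\delta p(X+Y,X)$ equals $\left.p\right|_{1-T+\delta U'}$, and the condition $q\in V_w^{-\delta}$ is $\left.q\right|_{1+\delta\epsilon}=0$. Using $\left.p\right|_\epsilon=\delta p$ together with the matrix identities $T\epsilon=\epsilon T^{-1}$ and $U'\epsilon=-U$, and the fact that $\left.p\right|_{-\gamma}=\left.p\right|_\gamma$ for even $w$, this reduces to
\[
E(p):=2p-\left.p\right|_T-\left.p\right|_{T^{-1}}+\left.p\right|_U+\delta\left.p\right|_{U'}=0.
\]

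For the forward direction ($p\in W_w^\pm \Rightarrow E(p)=0$), I would use $\left.p\right|_S=-p$ (giving $\left.p\right|_{S\alpha}=-\left.p\right|_\alpha$ for all $\alpha$), the matrix identities $T=-US$, $T^{-1}=-SU^2$, and the identity $\delta\left.p\right|_{U'}=\left.p\right|_{SU^2 S}$ for $p\in V_w^\delta$ (which comes from $\left.p\right|_{V'}=\delta\left.p\right|_{U'}$ with $V':=T^{-1}S$ and $SU^2S=-V'$). These rewrite
\[
E(p)=2p-(\left.p\right|_{US}+\left.p\right|_{U^2S})+(\left.p\right|_U+\left.p\right|_{U^2}).
\]
The relation $\left.p\right|_{1+U+U^2}=0$ directly gives $\left.p\right|_U+\left.p\right|_{U^2}=-p$, and acting by $S$ on the right yields $\left.p\right|_{US}+\left.p\right|_{U^2S}=-\left.p\right|_S=p$. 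Hence $E(p)=2p-p-p=0$.

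For the reverse direction, I would proceed in three steps. First, a direct computation using $TS=U$, $T^{-1}S=V'$, $US=-T$, $U'S=-T\epsilon$ shows $\left.E(p)\right|_{1+S}=2\left.p\right|_{1+S}$, so $E(p)=0$ forces $\left.p\right|_{1+S}=0$. Second, with this in hand, $\left.p\right|_{T^{-1}}=-\left.p\right|_{U^2}$, so $E(p)=0$ becomes
\[
\left.p\right|_T=2p+\left.p\right|_U+\left.p\right|_{U^2}+\delta\left.p\right|_{U'}.
\]
Setting $F:=\left.p\right|_{1+U+U^2}$, one has $\left.F\right|_U=F$ automatically (from $U^3=-I$ and even $w$), and a direct computation using the preceding identity together with the auxiliary relation $\left.p\right|_{T'}=-\delta\left.p\right|_{U'}$ (which is $\left.p\right|_{1+S}=0$ rewritten under $(X,Y)\mapsto(X,X+Y)$) gives $\left.F\right|_S=-p+\left.p\right|_T+\left.p\right|_{T'}=F$. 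Third, combining $\left.F\right|_U=\left.F\right|_S=F$ with $T=-US$ yields $\left.F\right|_T=F$, i.e.\ $F(X+Y,Y)=F(X,Y)$; this forces $F=aY^w$ for some $a\in\mathbb{Q}$, and then $\left.F\right|_S=F$ reads $aX^w=aY^w$, so $a=0$ for $w\geq 2$ (the case $w=0$ is immediate). Thus $\left.p\right|_{1+U+U^2}=F=0$, completing the proof that $p\in W_w^\pm$.

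The main obstacle is Step 2 of the reverse direction, which requires careful bookkeeping: one must distinguish identities that follow solely from $p\in V_w^\pm$ and $\left.p\right|_{1+S}=0$ (such as $\left.p\right|_{T'}=-\delta\left.p\right|_{U'}$) from those that genuinely rely on the hypothesis $E(p)=0$ (the expression for $\left.p\right|_T$), and then combine them cleanly to obtain $\left.F\right|_S=F$.
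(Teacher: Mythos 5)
Your proof is correct: the reformulation as the single group-ring identity $E(p)=0$, the extraction of $\left.p\right|_{1+S}=0$ from $\left.E(p)\right|_{1+S}=2\left.p\right|_{1+S}$, and the argument that $F=\left.p\right|_{1+U+U^{2}}$ is fixed by both $U$ and $S$, hence by $T$, hence lies in $\mathbb{Q}Y^{w}\cap\mathbb{Q}X^{w}=0$ for $w\geq2$, all check out against the matrix identities involved. This is essentially the same route as the paper's proof, which likewise first derives $\left.p\right|_{1+S}=0$ (via the $X\leftrightarrow Y$ swap, i.e.\ the action of $\epsilon S$, rather than by applying $1+S$ to the defining identity) and then shows that $\left.p\right|_{1+U+U^{2}}$ is annihilated by $1-US$ and $1-U^{2}S$, which is the same invariance-under-a-generating-set argument in a slightly different dress.
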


\begin{proof}
We can assume $w\geq2$ since the case $w=0$ is obvious. The `if'
part is obvious, so it is enough to consider the `only if' part. Assume
that 
\begin{equation}
p(X,Y)-p(X+Y,Y)\pm p(X+Y,X)\in V_{w}^{\mp}.\label{eq:e1_lemgkz}
\end{equation}
By swapping $X$ and $Y$ of (\ref{eq:e1_lemgkz}), we have
\begin{equation}
p(Y,X)-p(X+Y,X)\pm p(X+Y,Y)\in V_{w}^{\mp}.\label{eq:e2_lemgkz}
\end{equation}
By (\ref{eq:e1_lemgkz}) and (\ref{eq:e2_lemgkz}), we have $p(X,Y)\pm p(Y,X)\in V_{w}^{\pm}$.
Furthermore, since $p(X,Y)\in V_{w}^{\pm}$, $\left.p(X,Y)\right|_{1+S}=0$.
Thus
\begin{align*}
0 & =\left.p(X,Y)\right|_{(1-T-T')(1\pm\epsilon)}\\
 & =\left.p(X,Y)\right|_{1-US-U^{2}S\pm\epsilon(1-SU^{2}-SU)}\\
 & =\left.p(X,Y)\right|_{1-US-U^{2}S+1-SU^{2}-SU)}\\
 & =\left.p(X,Y)\right|_{(1+U+U^{2})(1-US)}=\left.p(X,Y)\right|_{(1+U+U^{2})(1-U^{2}S)}.
\end{align*}
Thus $\left.p(X,Y)\right|_{1+U+U^{2}}=0$ since there is no nonzero
polynomial annihilated by both $1-US$ and $1-U^{2}S$.
\end{proof}
\begin{thm}
The sequences (\ref{eq:exact_0_even_even}), (\ref{eq:exact_1_even_even}),
(\ref{eq:exact_even_0_even}), (\ref{eq:exact_odd_1_odd}), (\ref{eq:dual_0_even_even}),
(\ref{eq:dual_1_even_even}), (\ref{eq:dual_even_0_even}) and (\ref{eq:dual_odd_1_odd})
are exact.
\end{thm}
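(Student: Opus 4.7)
The strategy is to treat all eight exact sequences by a uniform reduction to Lemma \ref{lem:gkz_type_l1}, first settling the base case $(0;\mathbf{even},\mathbf{even})$ and then reducing the remaining six cases to it by the antipode identity and the derivative relation of Lemma \ref{lem:0_even__to_1_even__}. In every sequence, surjectivity at the right is automatic from the definition of $H_w(N_0;N_1,N_2)$, and injectivity on the left is either trivial (for identity maps) or is the content of Lemma \ref{lem:injectivity_u_GKZ} for the $u_{\mathrm{GKZ}}$ maps; for (\ref{eq:dual_1_even_even}) the injectivity of $p \mapsto \int_0^Y p\,dY$ is elementary. The entire content is therefore the computation of the kernel at the middle term.

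For the base cases (\ref{eq:exact_0_even_even}) and (\ref{eq:dual_0_even_even}), I would take $\tilde p \in V_w^+$ and apply Lemma \ref{lem:vanish_condition} to $p(x_0,x_1,x_2) := \tilde p(x_1,x_2)$. Using evenness of $\tilde p$ in the first variable, the vanishing condition becomes
\[
\tilde p(X,Y) + \tilde p(X-Y,X) - \tilde p(X,X+Y) \in V_w^-.
\]
A change of variable $X \mapsto X + Y$ converts this into the identity in Lemma \ref{lem:gkz_type_l1} (with the $+$ sign), so the kernel is exactly $u_{\mathrm{GKZ}}(W_w^+)$. The dual case is symmetric: Lemma \ref{lem:dual_vanish_condition} yields a polynomial identity on $q \in V_w^+$ whose solution set is $W_w^+$ by the same appeal to Lemma \ref{lem:gkz_type_l1}.

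The remaining primal sequences are reduced to the base case. For (\ref{eq:exact_even_0_even}), the antipode Lemma \ref{lem:antipode_formula} gives $\mathbb{S}(J_{\mathfrak{D}}(a;0,b)) = J_{\mathfrak{D}}(0;a,b)$, so the kernel computation for $H_w(\mathbf{even};\bm 0,\mathbf{even})$ is carried over from that for $H_w(\bm 0;\mathbf{even},\mathbf{even})$ by applying $\mathbb{S}$. For (\ref{eq:exact_1_even_even}) and (\ref{eq:exact_odd_1_odd}), the middle entry is a $1$; Lemma \ref{lem:0_even__to_1_even__} relates pairings against $J(1;a,b)$ to pairings against $J(0;a,b)$ with $\partial/\partial Y$ applied to the test polynomial, so $Y$-integration matches the kernels. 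The four dual sequences follow from the same reductions; the maps on their left ($p \mapsto \int_0^Y p\,dY$ for (\ref{eq:dual_1_even_even}) and the inclusion of $\partial_X^{-1} W_w^-$ for (\ref{eq:dual_odd_1_odd})) are precisely the integrations that undo the derivatives introduced via Lemma \ref{lem:0_even__to_1_even__}.

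The main technical obstacle is verifying, case by case, that the chain of substitutions plus Lemma \ref{lem:gkz_type_l1} accounts for \emph{all} of the kernel and not merely a subspace of it, i.e.\ that the conditions imposed by Lemma \ref{lem:vanish_condition} (or its dual) exactly cut out the image of the prescribed left-hand map. This rigidity relies on the cycle computation underlying Lemma \ref{lem:injectivity_u_GKZ} and on the precise combinatorics of the $\mathrm{GL}_2(\mathbb{Z})$-action on $V_w^\pm$. Once these verifications are done, exactness of all eight sequences follows uniformly, and the apparent asymmetry between primal and dual left-hand spaces (notably $W_w^-$ versus $\partial_X^{-1}W_w^-$ in (\ref{eq:exact_odd_1_odd}) versus (\ref{eq:dual_odd_1_odd})) is explained by the derivative built into Lemma \ref{lem:0_even__to_1_even__}.
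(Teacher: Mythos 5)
Your overall architecture (a base case plus reductions via the antipode and via Lemma \ref{lem:0_even__to_1_even__}) matches the paper's, and your treatment of the dual sequences (\ref{eq:dual_0_even_even}), (\ref{eq:dual_1_even_even}), (\ref{eq:dual_even_0_even}) is essentially correct. The genuine gap is in the four primal sequences, where the kernel to be identified is $u_{{\rm GKZ}}(W_{w}^{\pm})$ rather than $W_{w}^{\pm}$. Lemma \ref{lem:vanish_condition} does turn membership in the kernel of $V_{w}^{+}\to H_{w}({\bf 0};{\bf even},{\bf even})$ into the condition $\tilde{p}(X,Y)+\tilde{p}(X-Y,X)-\tilde{p}(X,X+Y)\in V_{w}^{-}$, as you say, but this is \emph{not} converted into the identity of Lemma \ref{lem:gkz_type_l1} by the change of variable $X\mapsto X+Y$: such a substitution does not preserve the parity constraint ``$\in V_{w}^{\mp}$'', and even if some substitution did work it would identify the kernel with the image of $W_{w}^{+}$ under a single invertible substitution, whereas $u_{{\rm GKZ}}(p)=\frac{1}{2}\left.p\right|_{SU^{2}S+U}$ is a two-term operator and no argument is given that the two images coincide. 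You explicitly flag this (``that the conditions \dots exactly cut out the image of the prescribed left-hand map'') as the main obstacle but supply no mechanism for it. The paper closes the gap by a different route: it only verifies by direct calculation that the primal sequences are chain complexes (i.e. $u_{{\rm GKZ}}(W_{w}^{\pm})\subseteq\ker$), gets left-injectivity from Lemma \ref{lem:injectivity_u_GKZ}, and then forces equality by the dimension count $\dim H_{w}({\bf 0};{\bf even},{\bf even})=\dim V_{w}^{+}-\dim W_{w}^{+}$ supplied by the already-proved dual exact sequence. Some such counting step (or an explicit inversion of $u_{{\rm GKZ}}$ on the kernel) is indispensable and is absent from your argument.

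A second, smaller misstep: Lemma \ref{lem:0_even__to_1_even__} moves a $1$ in the \emph{first} slot, $\langle J(1;a,b),\lambda(q)\rangle_{H}=\langle J(0;a,b),\lambda(\frac{\partial}{\partial Y}q)\rangle_{H}$, so it reduces the $({\bf 1};{\bf even},{\bf even})$ cases to the base case, but it says nothing about $J_{\mathfrak{D}}(a;1,b)$, where the $1$ sits in the middle slot. Hence (\ref{eq:exact_odd_1_odd}) and (\ref{eq:dual_odd_1_odd}) cannot be obtained by that reduction, and grouping them with (\ref{eq:exact_1_even_even}) is incorrect. In the paper, (\ref{eq:dual_odd_1_odd}) is handled by a separate direct computation: Lemma \ref{lem:dual_vanish_condition} shows $\lambda(p)=0$ if and only if $p_{1}:=\frac{\partial}{\partial X}p$ satisfies $p_{1}(X,Y)-p_{1}(X+Y,Y)-p_{1}(X+Y,X)\in V_{w}^{+}$, which by Lemma \ref{lem:gkz_type_l1} (now with the minus sign, since $p_{1}\in V_{w}^{-}$) is equivalent to $p_{1}\in W_{w}^{-}$; its primal companion (\ref{eq:exact_odd_1_odd}) then again requires the dimension count. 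Your proposal should be amended to treat this case separately and to add the counting argument for all four primal sequences.
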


\begin{rem}
The exactness of (\ref{eq:exact_0_even_even}) and (\ref{eq:dual_0_even_even})
is not new (and others are new results).
\end{rem}

\begin{proof}
The exactness of (\ref{eq:dual_0_even_even}) follows from Lemmas
\ref{lem:dual_vanish_condition} and \ref{lem:gkz_type_l1}. The exactness
of (\ref{eq:dual_1_even_even}) and (\ref{eq:dual_even_0_even}) follows
from that of (\ref{eq:dual_0_even_even}) and Lemmas \ref{lem:0_even__to_1_even__}
and \ref{lem:antipode}.

Let us check the exactness of (\ref{eq:dual_odd_1_odd}):
\[
0\to(\partial_{X}^{-1}W_{w}^{-})\xrightarrow{{\rm id}}V_{w+1}^{+}\xrightarrow{\lambda}H_{w}({\bf odd};{\bf 1};{\bf odd})^{\vee}\to0,
\]
where $w$ is a nonnegative even integer. For $p(X,Y)\in V_{w+1}^{+}$,
by Lemma \ref{lem:dual_vanish_condition}, $\lambda(p)=0$ if and
only if
\[
p_{1}(Y-X,-X)-p_{1}(-X,Y)+p_{2}(Y,-X)+p_{1}(Y-X,Y)\in V_{w}^{+}
\]
where $p_{1}(Y,X)=\frac{\partial}{\partial X}p(X,Y)$ and $p_{2}(X,Y)=\frac{\partial}{Y}p(X,Y)$.
Thus, $\lambda(p)=0$ if and only if
\[
p_{1}(X,Y)-p_{1}(X+Y,Y)-p_{1}(X+Y,X)\in V_{w}^{+}.
\]
Note that $p_{1}(X,Y)\in V_{w}^{-}$. Therefore, by Lemma \ref{lem:gkz_type_l1},
$\lambda(p)=0$ if and only if $p_{1}(X,Y)\in W_{w}^{-}$. Thus $\lambda(p)=0$
if and only if $\frac{\partial}{\partial X}p(X,Y)\in W_{w}^{-}$.
Thus (\ref{eq:dual_odd_1_odd}) is exact.

Let us check the exactness of (\ref{eq:exact_0_even_even}) and (\ref{eq:exact_odd_1_odd}):
\[
\begin{split}0\to W_{w}^{+}\xrightarrow{u_{{\rm GKZ}}}V_{w}^{+}\xrightarrow{X^{a}Y^{b}\mapsto a!b!J_{\mathfrak{D}}(0;a,b)}H_{w}({\bf 0};{\bf even},{\bf even})\to0,\\
0\to W_{w}^{-}\xrightarrow{u_{{\rm GKZ}}}V_{w}^{-}\xrightarrow{X^{a}Y^{b}\mapsto a!b!J_{\mathfrak{D}}(a;1,b)}H_{w+1}({\bf odd};{\bf 1},{\bf odd})\to0.
\end{split}
\]
By Lemma \ref{lem:vanish_condition} and direct calculation, we can
show that these sequences are chain complex. By definition and Lemma
\ref{lem:injectivity_u_GKZ}, the left and right parts of these sequences
are exact, i.e., $0\to W_{w}^{+}\xrightarrow{u_{{\rm GKZ}}}V_{w}^{+}$,
$0\to W_{w}^{-}\xrightarrow{u_{{\rm GKZ}}}V_{w}^{-}$, $V_{w}^{+}\to H_{w}({\bf 0};{\bf even},{\bf even})\to0$,
and $V_{w}^{-}\to H_{w+1}({\bf odd};{\bf 1},{\bf odd})\to0$ are exact.
Furthermore, by exactness of (\ref{eq:dual_0_even_even}) and (\ref{eq:dual_odd_1_odd}),
\[
\dim H_{w}({\bf 0};{\bf even};{\bf even})=\frac{w}{2}+1-\dim W_{w}^{+},\ \ \dim H_{w+1}({\bf odd};{\bf 1};{\bf odd})=\frac{w}{2}-\dim W_{w}^{-}.
\]
Thus (\ref{eq:exact_0_even_even}) and (\ref{eq:exact_odd_1_odd})
are exact.

By Lemmas \ref{lem:0_even__to_1_even__} and \ref{lem:antipode},
the exactness of (\ref{eq:exact_1_even_even}) and (\ref{eq:exact_even_0_even})
follows from that of (\ref{eq:exact_0_even_even}).
\end{proof}
\begin{example}
In \cite[Example of Theorem 3]{GKZ}, the authors give two examples
of linear relations coming from the GKZ-relation:
\[
28\zeta(3,9)+150\zeta(5,7)+168\zeta(7,5)=\frac{5197}{691}\zeta(12),
\]
\[
66\zeta(3,13)+375\zeta(5,11)+686\zeta(7,9)+675\zeta(9,7)+396\zeta(11,5)=\frac{78967}{3617}\zeta(16).
\]
The exactness of (\ref{eq:exact_even_even_0}) and (\ref{eq:exact_1_even_even})
implies that $J({\rm even};{\rm even},0)$'s and $J({\rm 1};{\rm even},{\rm even})$'s
also satisfy very similar linear relations where the coefficients
perfectly coincide (except for those of single zetas). In fact,
\begin{align*}
28J(2;0,8)+150J(4;0,6)+168J(6;0,4) & =\frac{118492}{691}\zeta^{\mathfrak{m}}(12),\\
28J(1;2,8)+150J(1;4,6)+168J(1;6,4) & =-69\zeta^{\mathfrak{m}}(13),\\
66J(2;0,12)+375J(4;0,10)+686J(6;0,8)+675J(8;0,6)+396J(10;0,4) & =\frac{3961222}{3617}\zeta^{\mathfrak{m}}(16),\\
66J(1;2,12)+375J(1;4,10)+686J(1;6,8)+675J(1;8,6)+396J(1;10,4) & =-283\zeta^{\mathfrak{m}}(17).
\end{align*}
\end{example}

\begin{example}
Let us see the $w=10$ case of (\ref{eq:exact_odd_1_odd}). The linear
relations among $\{J(1;1,9),J(3;1,7),\dots,J(9;1,1),\zeta^{\mathfrak{m}}(13)\}$
are given by 
\[
48J(1;1,9)+119J(3;1,7)+10J(5;1,5)-144J(7;1,3)=640\zeta^{\mathfrak{m}}(13).
\]
By multiplying $-30240$ and taking a quotient by $\mathbb{Q}\zeta^{\mathfrak{m}}(13)$,
we obtain
\begin{equation}
-4\times1!9!J(1;1,9)-119\times3!7!J(3;1,7)-21\times5!5!J(5;1,5)+144\times7!3!J(7;1,3)\equiv0\pmod{\mathbb{Q}\zeta^{\mathfrak{m}}(13)}.\label{eq:ex_o_1_o}
\end{equation}
On the other hand, the space $W_{10}^{-}$ is spanned by 
\[
p(X,Y):=4XY^{9}-25X^{3}Y^{7}+42X^{5}Y^{5}-25X^{7}Y^{3}+4X^{9}Y.
\]
Then 
\begin{equation}
u_{{\rm GKZ}}(p(X,Y))=\frac{p(-X-Y,X)+p(X-Y,X)}{2}=-4XY^{9}-119X^{3}Y^{7}-21X^{5}Y^{5}+144X^{7}Y^{3}.\label{eq:ex_o_1_o_}
\end{equation}
Thus the same coefficients $-4,-119,-21,144$ appears in both (\ref{eq:ex_o_1_o})
and (\ref{eq:ex_o_1_o_}).
\end{example}

\section{\label{sec:0_even_odd_DingMa}The cases similar to the case $J(0;{\rm even},{\rm odd})$:
$J(1;{\rm even},{\rm odd})$, $J({\rm even};1,{\rm odd})$, $J({\rm odd};0,{\rm even})$}

In this section, we prove the exactness of (\ref{eq:exact_1_even_odd}),
(\ref{eq:exact_even_1_odd}), (\ref{eq:exact_odd_0_even}), (\ref{eq:dual_1_even_odd}),
(\ref{eq:dual_even_1_odd}), (\ref{eq:dual_odd_0_even}). For the
convenience of the reader, we also give a proof of the exactness of
(\ref{eq:exact_0_even_odd}) and (\ref{eq:dual_0_even_odd}) because
it can be proved simultaneously without any additional effort.
\begin{lem}
\label{lem:zagier_lemma}Let $w\in1+2\mathbb{Z}_{\geq0}$, $p(X,Y)\in W_{w-1}^{\mp}$
and $q(X,Y)\in W_{w+1}^{\pm}$. If $Yp(X,Y)+\frac{\partial}{\partial X}q(X,Y)=0$
then $p(X,Y)=q(X,Y)=0$.
\end{lem}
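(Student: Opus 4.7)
My plan is to prove the lemma by iteratively extracting polynomial divisibilities from the relation $Yp + \partial_X q = 0$ in conjunction with the $S$-symmetry part of the period polynomial relations, and then concluding via Euler's homogeneity formula. A striking aspect of the argument is that only the $|_{1+S}=0$ conditions on $p$ and $q$ will be used; the three-term $|_{1+U+U^2}=0$ conditions will play no role. This is why the sign pattern $\mp/\pm$ does not affect the argument.

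The first step is to evaluate the identity at $Y=0$: this forces $\partial_X q(X,0)=0$, and since $q$ is homogeneous of positive degree $w+1$, we conclude $q(X,0)=0$, i.e.\ $Y$ divides $q$. Writing $q = Y\tilde q$ and substituting back into $Yp + \partial_X q = 0$ then gives $p = -\partial_X \tilde q$.

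The second step applies the relation $q|_{1+S}=0$, which reads $q(X,Y) + q(-Y,X) = 0$. In terms of $\tilde q$ this becomes $Y\tilde q(X,Y) + X\tilde q(-Y,X) = 0$; specializing $Y=0$ yields $\tilde q(0,X)=0$, so $X \mid \tilde q$. Writing $\tilde q = X\hat q$ (with $\deg \hat q = w-1$), the relation simplifies to the $S$-invariance
\[
\hat q(X,Y) = \hat q(-Y,X).
\]

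The third step applies the relation $p|_{1+S}=0$ to $p = -\partial_X(X\hat q) = -\hat q - X\hat q_X$. Using the $S$-invariance $\hat q(-Y,X) = \hat q(X,Y)$ just derived, and differentiating that identity in $Y$ to obtain the chain-rule consequence $\hat q_X(-Y,X) = -\hat q_Y(X,Y)$, one arrives at
\[
2\hat q(X,Y) + X\hat q_X(X,Y) + Y\hat q_Y(X,Y) = 0.
\]
By Euler's formula applied to the homogeneous polynomial $\hat q$ of degree $w-1$, the last two terms sum to $(w-1)\hat q(X,Y)$, so the identity collapses to $(w+1)\hat q = 0$. Since $w \geq 1$, this forces $\hat q = 0$, whence $q = XY\hat q = 0$ and $p = -\partial_X(X\hat q) = 0$.

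There is no serious obstacle; the only mild subtlety lies in correctly applying the chain rule to convert $\hat q_X(-Y,X)$ into a derivative of $\hat q$ evaluated at $(X,Y)$, which uses nothing beyond the $S$-invariance already established. The economy of the proof — bypassing the $U$-relations entirely — is perhaps the most noteworthy feature.
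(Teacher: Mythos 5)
Your proof is correct, but it follows a genuinely different route from the paper's. The paper's argument is a two-line symmetry trick: because $p\in W_{w-1}^{\mp}$ and $q\in W_{w+1}^{\pm}$ combine the $\epsilon$-parity with the relation $\left.\cdot\right|_{1+S}=0$, each is either symmetric or antisymmetric under swapping $X$ and $Y$; swapping the variables in $Yp+\frac{\partial}{\partial X}q=0$ therefore yields $Xp-\frac{\partial}{\partial Y}q=0$ (after clearing an overall sign), and combining the two identities gives $X\frac{\partial}{\partial X}q+Y\frac{\partial}{\partial Y}q=-XYp+XYp=0$, which equals $(w+1)q$ by Euler's formula, whence $q=0$ and then $p=0$. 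You instead peel the factors $Y$ and $X$ off $q$ using the functional equation and $\left.q\right|_{1+S}=0$, reduce everything to a single polynomial $\hat q$ of degree $w-1$ satisfying $\hat q(X,Y)=\hat q(-Y,X)$, and only then invoke $\left.p\right|_{1+S}=0$ and Euler's formula. Both routes bypass the $\left.\cdot\right|_{1+U+U^{2}}=0$ relations; yours additionally bypasses the parity conditions $p\in V_{w-1}^{\mp}$, $q\in V_{w+1}^{\pm}$, so it establishes the marginally stronger fact that the conclusion already follows from homogeneity and the two $\left.\cdot\right|_{1+S}=0$ relations alone. The price is length: the paper reaches $(w+1)q=0$ in one step, whereas your argument needs the three-stage factorization before Euler's identity can be applied.
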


\begin{proof}
By assumption, we have
\begin{equation}
Yp(X,Y)+\frac{\partial}{\partial X}q(X,Y)=0.\label{eq:ding_ma_1}
\end{equation}
By swapping $X$ and $Y$, we have
\begin{align}
0 & =Xp(Y,X)+\frac{\partial}{\partial Y}q(Y,X)\nonumber \\
 & =\pm Xp(X,Y)\mp\frac{\partial}{\partial Y}q(X,Y).\label{eq:ding_ma_2}
\end{align}
By (\ref{eq:ding_ma_1}) and (\ref{eq:ding_ma_2}), we have
\begin{align*}
0 & =X\frac{\partial}{\partial X}q(X,Y)+Y\frac{\partial}{\partial Y}q(X,Y)\\
 & =(w+1)q(X,Y).
\end{align*}
Thus $q(X,Y)=0$, and $p(X,Y)=-\frac{1}{Y}\frac{\partial}{\partial X}q(X,Y)=0$.
Hence the lemma is proved.
\end{proof}
\begin{lem}
\label{lem:ding_ma}Let $w\in1+2\mathbb{Z}_{\geq0}$ and $f(X,Y)\in W_{w}^{\pm}$.
Put $R(X,Y):=f(X+Y,X)$. If
\[
R(X,Y)\pm R(Y,X)\mp R(-X,Y)-R(Y,-X)=0
\]
then
\[
f(X,Y)\in\mathbb{Q}X^{w}.
\]
\end{lem}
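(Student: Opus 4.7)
The plan is to derive the conclusion directly from the displayed identity together with the parity $f(-X,Y)=\pm f(X,Y)$, without invoking the full period relations defining $W_w^{\pm}$. Starting from $R(X,Y)=f(X+Y,X)$, I first use the parity of $f$ in its first slot to rewrite $R(-X,Y)=f(Y-X,-X)=\pm f(X-Y,-X)$ and $R(Y,-X)=f(Y-X,Y)=\pm f(X-Y,Y)$. The hypothesis then becomes a four-term identity in which each evaluation of $f$ has first argument in $\{X+Y,\,X-Y\}$.

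I then apply the swap $X\leftrightarrow Y$ to this four-term identity to produce a companion identity, and combine the two by subtraction in the $W_w^+$ case and by addition in the $W_w^-$ case. The terms $f(X+Y,\cdot)$ cancel, and what remains can be rewritten, with $u:=X-Y$ and $g(x):=f(u,x)$ (so that $Y=X-u$), as
\[
h(X)=h(X-u) \qquad\text{or}\qquad G(X)=G(X-u)
\]
for all $X,u$, where $h(x):=g(x)-g(-x)$ in the $W_w^+$ case and $G(x):=g(x)+g(-x)$ in the $W_w^-$ case. A polynomial in $X$ invariant under every translation $X\mapsto X-u$ is constant in $X$, and evaluation at $X=0$ pins the constant down: $h\equiv 0$ in the $+$ case (since $h(0)=0$), and $G\equiv 2f(u,0)$ in the $-$ case.

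It remains to translate these statements into information about $f$. In the $+$ case, $h\equiv 0$ says $f(u,x)=f(u,-x)$, so $f$ is even in its second argument; combined with evenness in the first argument and with odd total degree $w$, no homogeneous monomial $X^iY^j$ can survive (both $i$ and $j$ would need to be even while $i+j=w$ is odd), and so $f=0\in\mathbb{Q}X^w$. In the $-$ case, expand $f(X,Y)=\sum_i a_i(X)Y^i$; the constancy $G\equiv 2a_0(X)$ says $a_i(X)=0$ for every even $i\geq 2$. The parity $f(-X,Y)=-f(X,Y)$ forces each $a_i$ to be odd in $X$, which by homogeneity $\deg a_i=w-i$ requires $w-i$ to be odd, i.e.\ $i$ to be even. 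Intersecting the two constraints (either $i=0$ or $i$ odd, and $i$ even) leaves only $i=0$, so $f(X,Y)=a_0(X)$ is a homogeneous polynomial of degree $w$ in the single variable $X$, hence a rational multiple of $X^w$.

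The main bookkeeping obstacle is tracking signs correctly through the parity rewrites of $R(-X,Y)$ and $R(Y,-X)$ and pairing the $(\pm,\mp)$ sign choices of the hypothesis with the correct (subtract in $+$, add in $-$) combination of the original identity with its $X\leftrightarrow Y$ swap; once the translation-invariance of $h$ or $G$ is in hand, the rest is an elementary degree count, and in particular the period relations play no role in the conclusion.
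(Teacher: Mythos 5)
Your argument is correct; I checked the sign bookkeeping at every stage. Writing $\epsilon=\pm1$ for the sign defining $V_{w}^{\pm}$, your combination is exactly $(\text{hypothesis})-\epsilon\cdot(\text{swapped hypothesis})$; it cancels the two $f(X+Y,\cdot)$ terms and leaves
\[
\bigl(f(X-Y,X)\mp f(X-Y,-X)\bigr)-\bigl(f(X-Y,Y)\mp f(X-Y,-Y)\bigr)=0,
\]
which is precisely your translation-invariance statement for $h$ (resp.\ $G$) with $u=X-Y$; the remaining steps (a polynomial satisfying $h(X)=h(X-u)$ for a fixed nonzero $u$ is constant, Zariski density in $u$, then the parity/degree count) are routine and correct. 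The overall strategy --- swap $X\leftrightarrow Y$, use the parity of $f$ in its first slot, take a sign-dependent linear combination of the two identities, and finish by translation invariance in the second slot --- is the same as the paper's. But the execution differs in a substantive way: the paper's displayed manipulation aims to cancel the four $f(-X+Y,\cdot)$ terms and reduce to the two-term identity $f(X+Y,X)\pm f(X+Y,Y)=0$, from which translation invariance of $f(u,\cdot)$ itself would follow at once; with the stated convention $f(-X,Y)=\pm f(X,Y)$, however, those four terms have pairwise distinct second arguments and cannot cancel against one another, and it is instead the $f(X+Y,\cdot)$ terms that drop out. In other words, your combination is the one whose signs actually go through, at the cost of the extra (easy) step of splitting $f(u,\cdot)$ into even and odd parts before invoking translation invariance. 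You are also right that only the membership $f\in V_{w}^{\pm}$ (the parity) is used and the period relations defining $W_{w}^{\pm}$ play no role; this matches how the lemma is applied in the proof of Lemma \ref{lem:ding_ma_injectivity}, where the input is merely an element of $V_{w}^{\pm}$.
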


\begin{proof}
By assumption, we have
\begin{equation}
0=f(X+Y,X)\pm f(X+Y,Y)\mp f(-X+Y,-X)-f(-X+Y,Y).\label{eq:ding_ma_3}
\end{equation}
By swapping $X$ and $Y$, we have
\begin{align}
0 & =f(X+Y,Y)\pm f(X+Y,X)\mp f(X-Y,-Y)-f(X-Y,X)\nonumber \\
 & =f(X+Y,Y)\pm f(X+Y,X)\pm f(-X+Y,Y)+f(-X+Y,-X)\nonumber \\
\pm0 & =f(X+Y,X)\pm f(X+Y,Y)+f(-X+Y,Y)\pm f(-X+Y,-X).\label{eq:ding_ma_4}
\end{align}
By adding (\ref{eq:ding_ma_3}) and (\ref{eq:ding_ma_4}), we have
\begin{align*}
0 & =f(X+Y,X)\pm f(X+Y,Y)\\
0 & =f(X,Y)-f(X,-X+Y)\ \ \ (X\mapsto Y,Y\mapsto X-Y).
\end{align*}
Thus
\[
f(X,Y)\in\mathbb{Q}X^{w}.
\]
\end{proof}
\begin{lem}
\label{lem:ding_ma_injectivity}Let $w\in1+2\mathbb{Z}_{\geq0}$,
$p(X,Y)\in W_{w-1}^{\pm}$ and $q(X,Y)\in W_{w+1}^{\mp}$. Put $R(X,Y):=Xp(X+Y,X)+\frac{\partial}{\partial Y}q(X+Y,X)$.
If
\[
R(X,Y)\pm R(Y,X)\mp R(-X,Y)-R(Y,X)=0
\]
then $p(X,Y)=0$ and $q(X,Y)\in\mathbb{Q}(X^{w+1})$.
\end{lem}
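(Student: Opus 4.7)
The plan is to adapt the argument of Lemma~\ref{lem:ding_ma} to the two-unknown setting and then reduce the resulting identity to a sign-swapped version of Lemma~\ref{lem:zagier_lemma}. First I would expand the hypothesis
$R(X,Y)\pm R(Y,X)\mp R(-X,Y)-R(Y,-X)=0$
by substituting $R(X,Y)=Xp(X+Y,X)+q^{(1,0)}(X+Y,X)$, where $q^{(1,0)}$ denotes the partial of $q$ in its first argument. This produces a single polynomial identity in $X,Y$ that is linear in $p$ and in $q^{(1,0)}$ at the four shifted arguments $(X+Y,X)$, $(X+Y,Y)$, $(Y-X,-X)$, $(Y-X,Y)$.

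Following the template of the proof of Lemma~\ref{lem:ding_ma}, I would swap $X\leftrightarrow Y$ in this identity to obtain a companion relation and then combine the two linearly. Differentiating the parity relation $q(-u,v)=\mp q(u,v)$ gives $q^{(1,0)}(-u,v)=\pm q^{(1,0)}(u,v)$, so $p$ and $q^{(1,0)}$ share the same parity $\pm$ in their first slot. Using this parity together with the $S$-relations $p(X,Y)+p(-Y,X)=0$ and $q(X,Y)+q(-Y,X)=0$ (and their differential consequences), every shifted argument can be brought back to canonical $(X,Y)$-coordinates. After cancellation, the linear combination collapses into a cross-relation of the form
\[
Y p(X,Y)+q^{(1,0)}(X,Y)\in\mathbb{Q}\,X^{w},
\]
where the $\mathbb{Q}X^{w}$ residue captures precisely the degenerate direction $(p,q)=(0,X^{w+1})$, for which $q^{(1,0)}=(w+1)X^{w}$.

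This cross-relation is handled by a sign-swapped version of Lemma~\ref{lem:zagier_lemma} (with $p\in W_{w-1}^{\pm}$, $q\in W_{w+1}^{\mp}$); its proof is word-for-word the same as the original: swapping $X\leftrightarrow Y$ in $Yp+q^{(1,0)}=cX^{w}$ yields, after invoking the parity and $S$-symmetry of $p$ and $q$, a companion relation $Xp-q^{(0,1)}=c'Y^{w}$, and then Euler's homogeneity $Xq^{(1,0)}+Yq^{(0,1)}=(w+1)q$ forces $q=(cX^{w+1}+c'Y^{w+1})/(w+1)$, hence $p=0$. Combining with the period-polynomial hypothesis $q\in W_{w+1}^{\mp}$ pins down $q$ in the claimed one-dimensional space $\mathbb{Q}X^{w+1}$.

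The main obstacle will be the sign bookkeeping: because $p$ and $q$ carry opposite parities and appear at multiple shifted arguments, the $p$- and $q^{(1,0)}$-contributions must be reduced in a coordinated fashion so that the cross-relation does not degenerate into the trivial identity $0=0$. A secondary subtlety is that the $\mathbb{Q}X^{w}$ remainder is genuinely necessary rather than a proof artefact: one checks directly that $(p,q)=(0,X^{w+1})$ lies in the kernel of $T_{\pm}R=0$ in the $\pm=-$ case, so the one-dimensional degenerate direction must be accommodated rather than artificially suppressed.
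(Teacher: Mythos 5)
Your proposal follows essentially the same route as the paper: it recognizes that $R(X,Y)=f(X+Y,X)$ for $f(X,Y)=Yp(X,Y)+\frac{\partial}{\partial X}q(X,Y)$, derives $f\in\mathbb{Q}X^{w}$ by the swap-and-combine argument (which is exactly the paper's Lemma \ref{lem:ding_ma}, here inlined rather than cited), and then concludes via Lemma \ref{lem:zagier_lemma} after absorbing the $cX^{w}$ residue into $q$. The only small slip is the identification of the degenerate direction: since $X^{w+1}\notin W_{w+1}^{+}$, the actual kernel element is $(p,q)=(0,X^{w+1}-Y^{w+1})$ (the Eisenstein period polynomial), which is what the paper's own proof produces via $\bar{q}=q-\frac{c}{w+1}(X^{w+1}-Y^{w+1})$.
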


\begin{proof}
Put 
\[
f(X,Y):=Yp(X,Y)+\frac{\partial}{\partial X}q(X,Y)\in V_{w}^{\pm}.
\]
Then by definition,
\[
R(X,Y)=f(X+Y,X).
\]
Thus by Lemma \ref{lem:ding_ma},
\[
f(X,Y)=\begin{cases}
0 & V_{w}^{\pm}=V_{w}^{+}\\
cX^{w} & V_{w}^{\pm}=V_{w}^{-}
\end{cases}
\]
where $c\in\mathbb{Q}$. Thus
\[
Yp(X,Y)+\frac{\partial}{\partial X}\bar{q}(X,Y)=0
\]
where
\[
\bar{q}(X,Y)=\begin{cases}
q(X,Y) & V_{w}^{\pm}=V_{w}^{+}\\
q(X,Y)-\frac{c}{w+1}(X^{w+1}-Y^{w+1}) & V_{w}^{\pm}=V_{w}^{-}.
\end{cases}
\]
Thus by Lemma \ref{lem:zagier_lemma}, we have $p(X,Y)=0$ and $\bar{q}(X,Y)=0$.
Hence the lemma is proved.
\end{proof}
\begin{lem}
\label{lem:Li_Liu_keyLemma}Let $w\in1+2\mathbb{Z}_{\geq0}$ and $C(X,Y)\in V_{w}^{\pm}$.
Put $D(X,Y)=\frac{\partial}{\partial X}C(X,Y)\pm\frac{\partial}{\partial Y}C(Y,X)$,
$L(X,Y)=\pm C(X+Y,X)+C(X+Y,Y)-C(X,Y)$, $S(X,Y)=\frac{1}{2}\left(L(X,Y)\pm L(-X,Y)\right)$,
$S_{1}(X,Y)=\frac{\partial}{\partial X}\left(S(X,Y)\pm S(Y,X)\right)$,
$S_{2}(X,Y)=\frac{\partial}{\partial X}\left(S(X,Y)\mp S(Y,X)\right)$.
Then
\[
\pm D(X+Y,X)+D(X+Y,Y)-D(X,Y)=S_{1}(X,Y)-S_{2}(X,X+Y).
\]
\end{lem}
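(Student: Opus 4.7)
The identity is a polynomial identity in $C$, so the plan is to verify it by direct calculation, using the $V_w^\pm$ hypothesis on $C$ to streamline the bookkeeping. First I would apply $C(-X,Y) = \pm C(X,Y)$ to each of the three summands of $L(-X,Y)$, obtaining an expression in which only the arguments $X \pm Y$ and $\pm X$ (and $Y$) appear. Averaging with $L(X,Y)$ then yields $S(X,Y)$ as an explicit $\mathbb{Q}$-linear combination of $C$ evaluated at points of the form $(X+Y, X)$, $(X+Y, Y)$, $(X-Y, -X)$, $(X-Y, Y)$, $(X,Y)$, with signs dictated by the choice of $\pm$. Swapping $X \leftrightarrow Y$ then gives $S(Y,X)$ in a comparable form.

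Next I would compute $S_1(X,Y)$ and $S_2(X, Y)$ by forming the combinations $S(X,Y) \pm S(Y,X)$ and $S(X,Y) \mp S(Y,X)$ and differentiating in $X$; then I would substitute $Y \mapsto X+Y$ in the expression for $S_2$. The order matters: one differentiates first, treating $X$ and $Y$ as independent, and only then substitutes, so that the chain-rule contributions from $X$ appearing in both slots after the substitution are handled correctly.

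Separately, I would rewrite the left-hand side in terms of $A := \partial_1 C$. Since $\partial_Y C(Y,X) = (\partial_1 C)(Y,X)$, we have $D(X,Y) = A(X,Y) \pm A(Y,X)$, and a direct substitution yields
\[
\pm D(X+Y,X) + D(X+Y,Y) - D(X,Y) = \pm A(X+Y,X) + A(X, X+Y) + A(X+Y,Y) \pm A(Y, X+Y) - A(X,Y) \mp A(Y,X).
\]
Both sides of the desired identity are thus $\mathbb{Q}$-linear combinations of values of $A = \partial_1 C$ at explicit linear substitutions of $X,Y$, and the proof reduces to matching coefficients of each such term.

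The main obstacle is pure bookkeeping: the expressions for $S_1(X,Y)$ and $S_2(X, X+Y)$ each unfold into on the order of a dozen terms in $A$, and the two sign cases $\pm$ must be treated uniformly. No new ideas beyond the $V_w^\pm$ symmetry and the chain rule are needed; the challenge is simply organizing the computation cleanly enough to make the term-by-term match transparent.
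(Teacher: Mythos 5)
Your proposal is correct and follows essentially the same route as the paper, which likewise verifies the identity by direct expansion (the paper first reduces to monomials $C=X^{a}Y^{b}$ by linearity, where the $V_{w}^{\pm}$ symmetry becomes the parity condition $(-1)^{a}=\pm1$, $(-1)^{b}=\mp1$, while you keep $C$ general and use the functional equation $C(-X,Y)=\pm C(X,Y)$). One small correction to your bookkeeping: the raw expansions of $S_{1}(X,Y)$ and $S_{2}(X,X+Y)$ are \emph{not} linear combinations of values of $A=\frac{\partial}{\partial X}C$ alone --- differentiating a term such as $C(X+Y,X)$ in $X$ also produces values of the second partial $\frac{\partial}{\partial Y}C$ --- and it is only after checking that these second-partial contributions cancel in the difference $S_{1}(X,Y)-S_{2}(X,X+Y)$ (in the paper's monomial computation, the cancellation of all terms carrying the coefficient $b$) that both sides become combinations of $A$-values that can be matched term by term.
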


\begin{proof}
By linearity, it is enough to only consider the case $C(X,Y)=X^{a}Y^{b}$
where $(-1)^{a}=\pm1$ and $(-1)^{b}=\mp1$. Then
\[
p(X,Y)=a(X^{a-1}Y^{b}\pm Y^{a-1}X^{b}),
\]
and thus
\begin{align*}
 & \pm D(X+Y,X)+D(X+Y,Y)-D(X,Y)\\
 & =a\left(\pm(X+Y)^{a-1}X^{b}+X^{a-1}(X+Y)^{b}+(X+Y)^{a-1}Y^{b}\pm Y^{a-1}(X+Y)^{b}-X^{a-1}Y^{b}\mp Y^{a-1}X^{b}\right).
\end{align*}
On the other hand,
\[
L(X,Y)=\pm(X+Y)^{a}X^{b}+(X+Y)^{a}Y^{b}-X^{a}Y^{b},
\]
\begin{align*}
S(X,Y) & =\frac{1}{2}\left(\pm(X+Y)^{a}X^{b}+(X+Y)^{a}Y^{b}-2X^{a}Y^{b}-(X-Y)^{a}X^{b}+(X-Y)^{a}Y^{b}\right),
\end{align*}
\begin{align*}
S(X,Y)\pm S(Y,X) & =\pm(X+Y)^{a}X^{b}+(X+Y)^{a}Y^{b}-X^{a}Y^{b}\mp Y^{a}X^{b},
\end{align*}
\begin{align*}
S(X,Y)\mp S(Y,X) & =-X^{a}Y^{b}\pm Y^{a}X^{b}-(X-Y)^{a}X^{b}+(X-Y)^{a}Y^{b}.
\end{align*}
\[
S_{1}(X,Y)=\frac{\partial}{\partial X}\left(S(X,Y)\pm S(Y,X)\right)=\pm(X+Y)^{a-1}X^{b-1}(aX+bX+bY)+a(X+Y)^{a-1}Y^{b}-aX^{a-1}Y^{b}\mp bY^{a}X^{b-1}.
\]
\[
S_{2}(X,Y)=-(X-Y)^{a-1}X^{b-1}(aX+bX-bY)+a(X-Y)^{a-1}Y^{b}-aX^{a-1}Y^{b}\pm bY^{a}X^{b-1}.
\]
\[
S_{2}(X,X+Y)=-(-Y)^{a-1}X^{b-1}(aX-bY)+a(-Y)^{a-1}(X+Y)^{b}-aX^{a-1}(X+Y)^{b}\pm b(X+Y)^{a}X^{b-1},
\]
\begin{align*}
S_{1}(X,Y)-S_{2}(X,X+Y) & =a\left(\pm(X+Y)^{a-1}X^{b}+X^{a-1}(X+Y)^{b}+(X+Y)^{a-1}Y^{b}\pm Y^{a-1}(X+Y)^{b}-X^{a-1}Y^{b}\mp Y^{a-1}X^{b}\right).
\end{align*}
Thus the lemma is proved.
\end{proof}
\begin{lem}
\label{lem:Li-Liu_1}Let $w\in1+2\mathbb{Z}_{\geq0}$ and $q(X,Y)\in V_{w+1}^{\pm}$.
Assume that $q(X,Y)=\mp q(Y,X)$ and
\[
\left(\frac{\partial}{\partial Y}-\frac{\partial}{\partial X}\right)\left(q(X,Y)-q(X+Y,Y)\pm q(X+Y,X)\right)\in V_{w}^{\mp}.
\]
Then $q(X,Y)\in\hat{W}_{w+1}^{\pm}$ where
\[
\hat{W}_{w+1}^{+}=W_{w+1}^{+},\ \hat{W}_{w+1}^{-}=W_{w+1}^{-}\oplus\mathbb{Q}(X^{w}+Y^{w}).
\]
\end{lem}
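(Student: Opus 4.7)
The plan is to reduce the differential hypothesis to the hypothesis of Lemma \ref{lem:gkz_type_l1} by differentiating, using the preceding Lemma \ref{lem:Li_Liu_keyLemma} to bridge a sign discrepancy, and then to lift the resulting period polynomial property back to $q$ itself. To start, observe that $q(X,Y) = \mp q(Y,X)$ already yields $\left.q\right|_{1+S} = 0$: since $q \in V_{w+1}^{\pm}$ we have $\left.q\right|_{S}(X,Y) = q(-Y,X) = \pm q(Y,X) = -q(X,Y)$, so one half of the period polynomial condition for $q$ is secured from the symmetry alone.

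Next, I would compute the action of $\frac{\partial}{\partial Y}-\frac{\partial}{\partial X}$ on $R(X,Y) := q(X,Y) - q(X+Y,Y) \pm q(X+Y,X)$. Writing $q_1$ and $q_2$ for the partial derivatives of $q$ with respect to its first and second arguments, the chain rule yields
\[
\left(\frac{\partial}{\partial Y}-\frac{\partial}{\partial X}\right) R = q_2(X,Y) - q_1(X,Y) - q_2(X+Y, Y) \mp q_2(X+Y, X).
\]
Since $q_1 \in V_w^{\mp}$ automatically, the hypothesis $\left(\frac{\partial}{\partial Y}-\frac{\partial}{\partial X}\right)R \in V_w^{\mp}$ collapses to a constraint purely on $f := q_2 \in V_w^{\pm}$, namely $f(X,Y) - f(X+Y,Y) \mp f(X+Y,X) \in V_w^{\mp}$. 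This resembles the hypothesis of Lemma \ref{lem:gkz_type_l1} but with a sign flip on the $f(X+Y,X)$ term. The preceding Lemma \ref{lem:Li_Liu_keyLemma}, applied with a suitable choice of $C$, supplies the polynomial identity needed to bridge this sign discrepancy; combined with Lemma \ref{lem:gkz_type_l1} it yields $f = \frac{\partial}{\partial Y} q \in W_w^{\pm}$.

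Finally, I would lift this period polynomial property back to $q$. Because $\frac{\partial}{\partial Y}$ does not commute with the $\mathrm{SL}_2(\mathbb{Z})$-action, this step is non-trivial; the plan is to combine $\left.f\right|_{1+U+U^2} = 0$ with the already-established $\left.q\right|_{1+S} = 0$, together with the differential identities of Lemma \ref{lem:Li_Liu_keyLemma}, in order to conclude that $\left.q\right|_{1+U+U^2}$ vanishes modulo the allowed correction. In the $+$ case the correction is trivial, yielding $q \in W_{w+1}^{+}$; in the $-$ case the correction accounts for the one-dimensional extension defining $\hat{W}_{w+1}^{-}$.

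The principal obstacle is the sign reconciliation in the middle step: a direct application of Lemma \ref{lem:gkz_type_l1} fails because $f(X+Y,X)$ generally has mixed $\epsilon$-parity, and the sign mismatch cannot be absorbed into $V_w^{\mp}$ by elementary parity considerations. Lemma \ref{lem:Li_Liu_keyLemma} is tailored to exactly this situation and constitutes the only non-routine ingredient of the argument.
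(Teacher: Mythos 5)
Your opening observation (that $\left.q\right|_{1+S}=0$ follows from the antisymmetry) and your computation of $\left(\frac{\partial}{\partial Y}-\frac{\partial}{\partial X}\right)R$ are correct, but the middle of the argument has a genuine gap that I do not see how to repair along these lines. After differentiating, the polynomial $f=\frac{\partial}{\partial Y}q$ is homogeneous of degree $w$, which is \emph{odd} in this lemma, whereas Lemma \ref{lem:gkz_type_l1} is stated only for even degree (and $W_{w}^{\pm}$ is defined via weight $w+2$ modular forms for ${\rm SL}_{2}(\mathbb{Z})$, so it has no meaning for odd $w$ other than being zero). So the intermediate conclusion ``$f\in W_{w}^{\pm}$'' is not available from the cited lemma, and in fact it is false: take the upper sign, $w=11$ and $q=X^{12}-Y^{12}\in W_{12}^{+}$; all hypotheses hold (indeed $R\equiv0$), yet $f=-12Y^{11}$ satisfies $\left.f\right|_{1+S}=-12(X^{11}+Y^{11})\neq0$, so $f$ is not a period polynomial under any reasonable reading. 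The two bridging steps you rely on --- invoking Lemma \ref{lem:Li_Liu_keyLemma} ``with a suitable choice of $C$'' to repair the sign discrepancy, and then lifting the cocycle property from $f$ back to $q$ --- are asserted rather than carried out, and the first of them is aimed at a false statement, so the strategy as a whole does not go through. (Note also that Lemma \ref{lem:Li_Liu_keyLemma} is an identity about first derivatives of a polynomial $C$ of odd degree $w$ used in Lemma \ref{lem:Li-Liu-2}; it is not designed to convert the condition $f-\left.f\right|_{T}\mp\left.f\right|_{TS}\in V_{w}^{\mp}$ into the one with the opposite sign.)

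The paper's proof avoids differentiating $q$ altogether and instead uses the assumed antisymmetry $q(X,Y)=\mp q(Y,X)$ twice: one first checks $R(Y,X)=\mp R(X,Y)$, which forces $G:=\left(\frac{\partial}{\partial Y}-\frac{\partial}{\partial X}\right)R$ to lie in $V_{w}^{\pm}$ in addition to the hypothesized $V_{w}^{\mp}$, hence $G=0$; since $\frac{\partial}{\partial Y}-\frac{\partial}{\partial X}$ annihilates only the multiples of $(X+Y)^{w+1}$ in degree $w+1$, this gives $R=c(X+Y)^{w+1}$, and a second application of the swap yields $c(1\pm1)=0$, after which the conclusion follows in degree $w+1$ (which is even, so the period-polynomial criterion of Lemma \ref{lem:gkz_type_l1} does apply to $q$ itself). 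If you want to keep your differentiation strategy, you would need a correct replacement for the step ``$\frac{\partial}{\partial Y}q\in W_{w}^{\pm}$''.
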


\begin{proof}
By changing $X$ and $Y$,
\[
\left(\frac{\partial}{\partial Y}-\frac{\partial}{\partial X}\right)\left(q(X,Y)-q(X+Y,Y)\pm q(X+Y,X)\right)\in V_{w}^{\pm}.
\]
Thus
\[
\left(\frac{\partial}{\partial Y}-\frac{\partial}{\partial X}\right)\left(q(X,Y)-q(X+Y,Y)\pm q(X+Y,X)\right)=0.
\]
Thus there exists $c\in\mathbb{Q}$ such that
\begin{equation}
q(X,Y)-q(X+Y,Y)\pm q(X+Y,X)=c(X+Y)^{w+1}.\label{eq:e1}
\end{equation}
By changing $X$ and $Y$, we have
\begin{equation}
-q(X,Y)\mp q(X+Y,X)+q(X+Y,Y)=\pm c(X+Y)^{w+1}.\label{eq:e2}
\end{equation}
By considering (\ref{eq:e1}) plus (\ref{eq:e2}), we have
\[
c(1\pm1)=0.
\]
Thus the lemma is proved.
\end{proof}
\begin{lem}
\label{lem:Li-Liu-2}Let $w\in1+2\mathbb{Z}_{\geq0}$ and $C(X,Y)\in V_{w}^{\pm}$.
Then
\[
C(X,Y)-C(X+Y,Y)\mp C(X+Y,X)\in V_{w}^{\mp}
\]
if and only if there exists $p(X,Y)\in W_{w-1}^{\mp}$ and $q(X,Y)\in W_{w+1}^{\pm}$
such that
\[
C(X,Y)=Xp(X,Y)+\frac{\partial}{\partial Y}q(X,Y).
\]
\end{lem}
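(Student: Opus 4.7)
The ($\Leftarrow$) direction reduces by linearity to the two cases $C = Xp$ and $C = \partial_Y q$. For each I substitute into $L(X,Y) := \pm C(X+Y,X) + C(X+Y,Y) - C(X,Y)$ and expand. The identity $p|_{1+U+U^2}=0$, after the substitution $Y \mapsto -Y$, gives $\pm p(X+Y,X) + p(X+Y,Y) = \mp p(X,-Y)$; combining this with $p|_{1+S}=0$ and $p\in V_{w-1}^{\mp}$, which together force the parity $p(X,-Y) = \pm p(X,Y)$, collapses $L$ for $C = Xp$ into a form manifestly in $V_w^{\mp}$. The analogous computation using the corresponding identities on $q$ handles $C = \partial_Y q$.

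For the ($\Rightarrow$) direction I combine Lemmas~\ref{lem:Li_Liu_keyLemma} and~\ref{lem:Li-Liu_1}. The hypothesis $L(-X,Y) = \mp L(X,Y)$ forces $S(X,Y) = \tfrac{1}{2}(L(X,Y) \pm L(-X,Y)) = 0$, whence $S_1 = S_2 = 0$, and Lemma~\ref{lem:Li_Liu_keyLemma} yields the vanishing cocycle
\[
\pm D(X+Y,X) + D(X+Y,Y) - D(X,Y) = 0, \qquad D(X,Y) := \tfrac{\partial C}{\partial X}(X,Y) \pm \tfrac{\partial C}{\partial Y}(Y,X).
\]
I then pick any decomposition $C = X p_0 + \partial_Y q_0$ with $p_0 \in V_{w-1}^{\mp}$ and $q_0 \in V_{w+1}^{\pm}$, always available by coefficient matching. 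Such a decomposition is ambiguous under the gauge $(p_0, q_0) \mapsto (p_0 + \partial_Y r,\, q_0 - X r)$ for $r \in V_w^{\mp}$. Expanding $D$ in terms of $p_0$ and $q_0$ via the product and chain rules, and invoking Euler's homogeneity identity together with equality of mixed partials, rewrites the cocycle as a single constraint on $p_0$ and $q_0$. I use the gauge freedom to impose $q_0(Y,X) = \mp q_0(X,Y)$; the constraint then matches the hypothesis of Lemma~\ref{lem:Li-Liu_1}, which forces $q_0 \in \hat W_{w+1}^{\pm}$. The extra generator in $\hat W_{w+1}^{-}$ (beyond $W_{w+1}^{-}$) is absorbed into $X p_0$ by a further gauge shift, so we may assume $q_0 \in W_{w+1}^{\pm}$; with this fixed, $p_0 = (C - \partial_Y q_0)/X$ is determined, and the residual content of the $D$-cocycle is exactly $p_0|_{1+S} = p_0|_{1+U+U^2} = 0$.

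The main obstacle I anticipate is the gauge-fixing step. The decomposition $C = X p_0 + \partial_Y q_0$ is far from unique, and its ambiguity must be exploited in a single stroke both to secure the $X \leftrightarrow Y$ antisymmetry required by Lemma~\ref{lem:Li-Liu_1} and to arrange that the leftover $D$-cocycle cleanly encodes the two period-polynomial relations on $p_0$. Verifying compatibility of these two uses of the gauge --- in particular, checking that the resulting auxiliary linear system on $r$ is always solvable --- is the delicate bookkeeping that ties the proof together; once this is done, Lemma~\ref{lem:Li-Liu_1} closes the argument.
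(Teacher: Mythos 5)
Your skeleton is the right one --- the hypothesis forces $S=0$, hence $S_1=S_2=0$, hence via Lemma~\ref{lem:Li_Liu_keyLemma} the vanishing cocycle $\pm D(X+Y,X)+D(X+Y,Y)-D(X,Y)=0$, with Lemma~\ref{lem:Li-Liu_1} handling the $q$-part --- but the proof is not complete, and the step you defer as ``delicate bookkeeping'' is exactly where an idea is missing. Two concrete problems with the gauge-fixing scheme. First, $D$ is built canonically from $C$, so the cocycle identity constrains $D$, not an arbitrarily chosen $p_0$; you assert that after gauge-fixing ``the residual content of the $D$-cocycle is exactly $p_0|_{1+S}=p_0|_{1+U+U^2}=0$,'' but you never exhibit the link between the gauge-fixed $p_0$ and $D$. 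Second, the gauge move $q_0\mapsto q_0-Xr$ can only remove the $\pm$-symmetric (under $X\leftrightarrow Y$) part of $q_0$ if that part is divisible by $X$; a decomposition obtained by raw coefficient matching can perfectly well put a $Y^{w+1}$ monomial there, so the solvability of your auxiliary system on $r$ is genuinely in doubt rather than routine.

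The missing idea is that no gauge-fixing is needed: Euler's homogeneity identity hands you the canonical decomposition. Set
\[
p(X,Y):=\frac{1}{w+1}\left(\frac{\partial}{\partial X}C(X,Y)\pm\frac{\partial}{\partial Y}C(Y,X)\right),\qquad q(X,Y):=\frac{1}{w+1}\left(YC(X,Y)\mp XC(Y,X)\right).
\]
Then $Xp+\frac{\partial}{\partial Y}q=C$ by Euler's identity (the cross terms cancel), $q(Y,X)=\mp q(X,Y)$ by inspection, and $D=(w+1)p$, so the vanishing cocycle from Lemma~\ref{lem:Li_Liu_keyLemma} literally says $p(X,Y)-p(X+Y,Y)\mp p(X+Y,X)=0$, i.e.\ $p\in W_{w-1}^{\mp}$. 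Substituting $C=Xp+\frac{\partial}{\partial Y}q$ back into the hypothesis and reducing modulo $V_w^{\mp}$ then isolates the constraint $\left(\frac{\partial}{\partial Y}-\frac{\partial}{\partial X}\right)\left(q(X,Y)-q(X+Y,Y)\pm q(X+Y,X)\right)\equiv 0$, which is precisely the hypothesis of Lemma~\ref{lem:Li-Liu_1}; that lemma gives $q\in\hat W_{w+1}^{\pm}$, and a short direct check upgrades this to $q\in W_{w+1}^{\pm}$. One minor correction to your $\Leftarrow$ direction: for $p\in V_{w-1}^{\mp}$ with $w-1$ even one has $p(X,-Y)=\mp p(X,Y)$, not $\pm p(X,Y)$; with your sign the computation for $C=Xp$ yields $-(2X+Y)p$ rather than $Yp$ and does not land in $V_w^{\mp}$.
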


\begin{proof}
It is enough to prove the `only if' part. Put
\[
p(X,Y):=\frac{1}{w+1}\left(\frac{\partial}{\partial X}C(X,Y)\pm\frac{\partial}{\partial Y}C(Y,X)\right)\in V_{w}^{\mp}.
\]
\[
q(X,Y):=\frac{1}{w+1}\left(YC(X,Y)\mp XC(Y,X)\right)\in V_{w}^{\pm}.
\]
Then by definition,
\begin{align}
Xp(X,Y)+\frac{\partial}{\partial Y}q(X,Y) & =C(X,Y).\label{eq:C_express}
\end{align}
By Lemma \ref{lem:Li_Liu_keyLemma}
\[
p(X,Y)-p(X+Y,Y)\mp p(X+Y,X)=0.
\]
Thus
\[
p(X,Y)\in W_{w}^{\mp}.
\]
By (\ref{eq:C_express}), by modulo $V_{w}^{\mp}$, we have
\begin{align*}
0 & \equiv C(X,Y)-C(X+Y,Y)\mp C(X+Y,X)\\
 & =Xp(X,Y)-(X+Y)p(X+Y,Y)\mp(X+Y)p(X+Y,X)\\
 & \ +\frac{\partial}{\partial Y}q(X,Y)-\left(\frac{\partial}{\partial Y}-\frac{\partial}{\partial X}\right)q(X+Y,Y)\mp\left(\frac{\partial}{\partial X}-\frac{\partial}{\partial Y}\right)q(X+Y,X)\\
 & \equiv(X+Y)\left(p(X,Y)-p(X+Y,Y)\mp p(X+Y,X)\right)\\
 & \ +\left(\frac{\partial}{\partial Y}-\frac{\partial}{\partial X}\right)\left(q(X,Y)-q(X+Y,Y)\pm q(X+Y,X)\right)\\
 & \equiv\left(\frac{\partial}{\partial Y}-\frac{\partial}{\partial X}\right)\left(q(X,Y)-q(X+Y,Y)\pm q(X+Y,X)\right)\pmod{V_{w}^{\mp}}.
\end{align*}
By Lemma \ref{lem:Li-Liu_1}, $q(X,Y)\in\hat{W}_{w+1}^{\pm}$. Furthermore,
we can easily show that $q(X,Y)\in W_{w+1}^{\mp}$. Thus the lemma
is proved.
\end{proof}
\begin{thm}
The sequences (\ref{eq:exact_0_even_odd}), (\ref{eq:exact_1_even_odd}),
(\ref{eq:exact_even_1_odd}), (\ref{eq:exact_odd_0_even}), (\ref{eq:dual_0_even_odd}),
(\ref{eq:dual_1_even_odd}), (\ref{eq:dual_even_1_odd}) and (\ref{eq:dual_odd_0_even})
are exact.
\end{thm}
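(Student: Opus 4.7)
I would address the eight sequences in two stages, first proving the four dual sequences (\ref{eq:dual_0_even_odd})--(\ref{eq:dual_odd_0_even}) directly, then deducing the four $J$-sequences (\ref{eq:exact_0_even_odd})--(\ref{eq:exact_odd_0_even}) via chain-complex verification plus dimension counting. The template is exactly the one used in Section~\ref{sec:0_even_even}: apply Lemma~\ref{lem:dual_vanish_condition} to translate the vanishing of $\lambda(q)$ on the appropriate $H_w(\cdots)$ into a polynomial identity of the form ``$q(X,Y) - q(X+Y,Y) \mp q(X+Y,X) \in V_w^{\mp}$'', and then identify the kernel via the structural Lemma~\ref{lem:Li-Liu-2}.

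\textbf{Core case (\ref{eq:dual_0_even_odd}).} For odd $w$ and $q \in V_w^+$, Lemma~\ref{lem:dual_vanish_condition} (specialized by setting $x_0 = 0$ and extracting monomials $x_1^{\mathrm{even}}x_2^{\mathrm{odd}}$) reduces $\left.\lambda(q)\right|_{H_w(\bm{0};\mathbf{even},\mathbf{odd})} = 0$ to an identity saying that a certain combination of $q$ under the substitutions $(X,Y) \mapsto (X+Y,Y), (X+Y,X)$ has no even-in-$X$, odd-in-$Y$ part; after rewriting, this is precisely the hypothesis of Lemma~\ref{lem:Li-Liu-2} with the sign that produces $p \in W_{w-1}^-$ and $r \in W_{w+1}^+$ with $q = Xp + \partial_Y r$. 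Injectivity of the parametrization $(p,r) \mapsto Xp + \partial_Y r$ is exactly Lemma~\ref{lem:ding_ma_injectivity} (taking $R(X,Y) = Xp(X+Y,X) + \partial_Y r(X+Y,X)$ and checking the correct symmetry combination vanishes).

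\textbf{Symmetric case (\ref{eq:dual_odd_0_even}).} I would reduce this to the core case via Lemma~\ref{lem:antipode}: swapping $x_0 \leftrightarrow x_1$ converts the pairing controlling $H_w(\mathbf{odd};\bm{0},\mathbf{even})$ into the one controlling $H_w(\bm{0};\mathbf{odd},\mathbf{even})$ with the test polynomial transformed by $q(X,Y) \mapsto q(Y,X)$. Under this swap, the parametrization $Xp + \partial_Y q$ of the kernel in (\ref{eq:dual_0_even_odd}) becomes $Yp + \partial_X q$ with the roles of $W_{w-1}^+$ and $W_{w+1}^-$ interchanged by the parity flip of $p, q$.

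\textbf{Insertion of a $1$ (\ref{eq:dual_1_even_odd}), (\ref{eq:dual_even_1_odd}).} Here the key transfer is Lemma~\ref{lem:0_even__to_1_even__}: pairing against $J(1;a,b)$ is the same as pairing against $J(0;a,b)$ applied to $\partial_Y$ of the test polynomial (and Lemma~\ref{lem:antipode_formula} gives the analogous statement for $J(a;1,b)$ with $\partial_X$). Thus for $q \in V_{w+1}^+$, the vanishing of $\lambda(q)$ on $H_{w+1}(\bm{1};\mathbf{even},\mathbf{odd})$ is equivalent to the vanishing of $\lambda(\partial_Y q)$ on $H_w(\bm{0};\mathbf{even},\mathbf{odd})$. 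Applying the core case, $\partial_Y q = Xp + \partial_Y r$ with $p \in W_{w-1}^-$ and $r \in W_{w+1}^+$; taking the $Y$-antiderivative of this relation and tracking the constants of integration explains both the appearance of $\partial_Y^{-1} W_{w-1}^-$ and the direct summand $W_{w+1}^+$ (the kernel of $\partial_Y$ on symmetric polynomials).

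\textbf{From dual to $J$-sequences.} Each of (\ref{eq:exact_0_even_odd})--(\ref{eq:exact_odd_0_even}) is a chain complex by Lemma~\ref{lem:vanish_condition} applied to the image of $u_M$ or $u_M'$ (a direct computation analogous to the reformulation of Theorem~\ref{thm:Ma_trivial} carried out in the introduction). The maps $u_M$ and $u_M'$ are injective by Lemma~\ref{lem:ding_ma_injectivity}: the symmetrized expressions $R(X,Y) \pm R(Y,X) \mp R(-X,Y) - R(Y,-X)$ vanish only for the trivial $(p,q)$ once one mods out by the one-dimensional subspace $\mathbb{Q}(X^{w+1}-Y^{w+1}) \subset W_{w+1}^+$, which is exactly the reason for passing to $\tilde{W}_{w+1}^+$ and for the explicit $\mathbb{Q}$-summand carrying the ``Ma trivial'' relation. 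The right-hand arrows are surjective by definition of $H_w(\cdots)$, and a dimension count against the dual sequence then forces exactness in the middle.

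\textbf{Main obstacle.} The real technical work is not in any single application of Lemma~\ref{lem:Li-Liu-2} or Lemma~\ref{lem:ding_ma_injectivity} but in the bookkeeping that aligns the natural parametrization produced by Lemma~\ref{lem:Li-Liu-2} with the specific symmetrized map $u_M$ (respectively $u_M'$) appearing in the statement: one must check that the pair $(p,r)$ produced abstractly matches, up to an explicit linear isomorphism of parameter spaces, the triple $(p,q,c)$ in the definition of $u_M$, with the scalar $c$ absorbing the trivial Ma relation and the quotient $\tilde{W}_{w+1}^+ = W_{w+1}^+/\mathbb{Q}(X^{w+1}-Y^{w+1})$ ensuring injectivity. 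Carrying out this identification uniformly in all four cases (each with its own parity and symmetry conventions) is where the delicate sign tracking is concentrated.
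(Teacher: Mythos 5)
Your overall strategy coincides with the paper's: establish the four dual sequences first by translating $\ker\lambda$ into a polynomial condition via Lemma~\ref{lem:dual_vanish_condition} and identifying it via Lemma~\ref{lem:Li-Liu-2}, then obtain the four $J$-sequences by checking they are chain complexes, proving injectivity of $u_{\rm M}$ and $u_{\rm M}'$ with Lemma~\ref{lem:ding_ma_injectivity}, and counting dimensions against the duals; the transfers via Lemma~\ref{lem:0_even__to_1_even__} and the antipode for the sequences with a $\bm{1}$ inserted are also as in the paper. One small misattribution: the injectivity of $(p,q)\mapsto Xp+\frac{\partial}{\partial Y}q$ needed for the left exactness of (\ref{eq:dual_0_even_odd}) is Lemma~\ref{lem:zagier_lemma}, not Lemma~\ref{lem:ding_ma_injectivity}, which is the symmetrized version needed later for $u_{\rm M}$.

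The genuine gap is your treatment of (\ref{eq:dual_odd_0_even}). You propose to reduce it to the core case by the antipode swap $x_{0}\leftrightarrow x_{1}$. This fails for two reasons. First, Lemma~\ref{lem:antipode} is stated, and only holds in that form, for even $w$, whereas here $w$ is odd; for $q\in V_{w}^{+}$ with $w$ odd the swapped polynomial $q(Y,X)$ lies in $V_{w}^{-}$, where $\lambda$ has not been defined or analyzed. Second, even granting some extension, the swap of the first two slots sends $H_{w}({\bf odd};{\bf 0},{\bf even})$ to $H_{w}({\bf 0};{\bf odd},{\bf even})$ --- the Zagier case (\ref{eq:dual_0_odd_even}), where $\lambda$ restricted to $V_{w}^{+}$ is injective --- and \emph{not} to the core case $H_{w}({\bf 0};{\bf even},{\bf odd})$; if your reduction closed as stated it would force the kernel in (\ref{eq:dual_odd_0_even}) to vanish, contradicting the nonzero summand $W_{w-1}^{+}\oplus W_{w+1}^{-}$. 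The correct route (the paper's) is to apply Lemma~\ref{lem:dual_vanish_condition} directly: writing the test polynomial as $C(Y,X)$ with $C\in V_{w}^{-}$, the kernel condition becomes $C(X,Y)-C(X+Y,Y)+C(X+Y,X)\in V_{w}^{+}$, which is exactly the other sign branch of Lemma~\ref{lem:Li-Liu-2} --- this is why that lemma is stated with $\pm$ throughout. Since your dimension count for (\ref{eq:exact_odd_0_even}) rests on (\ref{eq:dual_odd_0_even}), that case is not yet secured either; the remaining six sequences are handled correctly.
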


\begin{rem}
The exactness of (\ref{eq:exact_0_even_odd}) and (\ref{eq:dual_0_even_odd})
is not new (and others are new results).
\end{rem}

\begin{proof}
Let us check the exactness of (\ref{eq:dual_0_even_odd}) and (\ref{eq:dual_odd_0_even}):
\[
\begin{split}0\to W_{w-1}^{-}\oplus W_{w+1}^{+}\xrightarrow{(p,q)\mapsto Xp+\frac{\partial}{\partial Y}q}V_{w}^{+}\xrightarrow{\lambda_{1}}H_{w}({\bf 0};{\bf even};{\bf odd})^{\vee}\to0\\
0\to W_{w-1}^{+}\oplus W_{w+1}^{-}\xrightarrow{(p,q)\mapsto Yp+\frac{\partial}{\partial X}q}V_{w}^{+}\xrightarrow{\lambda_{2}}H_{w}({\bf odd};{\bf 0};{\bf even})^{\vee}\to0
\end{split}
\]
where $\lambda_{1}$ and $\lambda_{2}$ are linear maps induced from
$\lambda:V_{w}^{+}\to H_{w}^{\vee}$. By Lemma \ref{lem:zagier_lemma},
the left parts $0\to W_{w-1}^{-}\oplus W_{w+1}^{+}\to V_{w}^{+}$
and $0\to W_{w-1}^{+}\oplus W_{w+1}^{-}\to V_{w}^{+}$ are exact.
The exactness of the right parts is obvious. By Lemma \ref{lem:dual_vanish_condition},
$C(X,Y)\in\ker(\lambda_{1})$ if and only if
\[
C(X,Y)-C(X+Y,Y)-C(X+Y,X)\in V_{w}^{-}.
\]
Thus (\ref{eq:dual_0_even_odd}) is exact by Lemma \ref{lem:Li-Liu-2}.
On the other hand, by Lemma \ref{lem:dual_vanish_condition}, $C(Y,X)\in\ker(\lambda_{2})$
if and only if
\[
C(X,Y)-C(X+Y,Y)+C(X+Y,X)\in V_{w}^{+}.
\]
Thus (\ref{eq:dual_odd_0_even}) is exact by Lemma \ref{lem:Li-Liu-2}.

The exactness of (\ref{eq:dual_1_even_odd}) and (\ref{eq:dual_even_1_odd})
follows from the exactness of (\ref{eq:dual_0_even_odd}) and Lemmas
\ref{lem:0_even__to_1_even__} and \ref{lem:antipode}.

Let us check the exactness of (\ref{eq:exact_0_even_odd}) and (\ref{eq:exact_odd_0_even}):
\[
\begin{split}0\to W_{w-1}^{-}\times\tilde{W}_{w+1}^{+}\times\mathbb{Q}\xrightarrow{u_{{\rm M}}}V_{w}^{+}\xrightarrow{X^{a}Y^{b}\mapsto a!b!J_{\mathfrak{D}}(0;a,b)}H_{w}({\bf 0};{\bf even},{\bf odd})\to0\\
0\to W_{w-1}^{+}\times W_{w+1}^{-}\xrightarrow{u_{{\rm M}}'}V_{w}^{-}\xrightarrow{X^{a}Y^{b}\mapsto a!b!J_{\mathfrak{D}}(a;0,b)}H_{w}({\bf odd};{\bf 0},{\bf even})\to0.
\end{split}
\]
By Lemma \ref{lem:vanish_condition} and direct calculation, we can
show that these sequences are chain complex. By Lemma \ref{lem:ding_ma_injectivity},
the left parts of these sequences are exact. The exactness of the
right parts is obvious. Furthermore, by exactness of (\ref{eq:dual_0_even_odd})
and (\ref{eq:dual_odd_0_even}), 
\[
\dim H_{w}({\bf 0};{\bf even};{\bf odd})=\frac{w+1}{2}-\dim W_{w-1}^{-}-\dim W_{w+1}^{+},\ \ \dim H_{w}({\bf odd};{\bf 0};{\bf even})=\frac{w+1}{2}-\dim W_{w-1}^{+}-\dim W_{w+1}^{-}.
\]
Thus (\ref{eq:exact_0_even_even}) and (\ref{eq:exact_odd_1_odd})
are exact.

By Lemmas \ref{lem:0_even__to_1_even__} and \ref{lem:antipode},
the exactness of (\ref{eq:exact_1_even_even}) and (\ref{eq:exact_1_even_even})
follows from that of (\ref{eq:exact_0_even_even}).
\end{proof}

\section{\label{sec:0_odd_odd}The cases $J(0,{\rm odd},{\rm odd})$, $J({\rm odd};0,{\rm odd})$}
\begin{thm}[Tasaka]
\label{thm:Tasaka_exact}The sequences (\ref{eq:exact_0_odd_odd})
and (\ref{eq:dual_0_odd_odd}) are exact.
\end{thm}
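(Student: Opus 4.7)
The two sequences both concern the space $H_w(\mathbf{0};\mathbf{odd},\mathbf{odd})$ and are essentially dual. My plan is to establish the dual sequence (\ref{eq:dual_0_odd_odd}) first via Lemma \ref{lem:dual_vanish_condition}, which determines the dimension of $H_w(\mathbf{0};\mathbf{odd},\mathbf{odd})$, and then derive the primal sequence (\ref{eq:exact_0_odd_odd}) by combining a harmonic-product inclusion with this dimension count.

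For (\ref{eq:dual_0_odd_odd}), I apply Lemma \ref{lem:dual_vanish_condition} with $N_0 = \{0\}$, $N_1 = N_2 = \mathbf{odd}$: for $q \in V_w^+$, the functional $\lambda(q)|_{H_w(\mathbf{0};\mathbf{odd};\mathbf{odd})}$ vanishes iff every coefficient of $x_1^{n_1} x_2^{n_2}$ with $n_1, n_2$ odd in
\[
q(x_1, x_2) + q(x_2 - x_1, x_1) - q(x_2 - x_1, x_2)
\]
vanishes. Since $q \in V_w^+$ (with $w$ even) contains only monomials with both exponents even, $q(x_1, x_2)$ contributes nothing to the odd-odd part, and the remaining two terms give a concrete linear system on the coefficients of $q$. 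The crux of the argument is to verify that this system is equivalent to $\tilde{q}(X, Y) := q(Y, X)$ satisfying $\tilde{q}|_{(1-T)(1+M)} = 0$, the defining relation of $W_w^{+,\Gamma_0(2)}$. A change of variables $u = x_2 - x_1$ together with the symmetrization $X \leftrightarrow Y$ should align the combinatorial condition with this $\Gamma_0(2)$-functional equation, yielding $\ker \lambda = \{q(Y,X) : q \in W_w^{+,\Gamma_0(2)}\}$ as asserted.

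For (\ref{eq:exact_0_odd_odd}), surjectivity of $\phi$ is immediate. The harmonic product
\[
\zeta^{\mathfrak{m}}(a+1)\zeta^{\mathfrak{m}}(b+1) = \zeta^{\mathfrak{m}}(a+1, b+1) + \zeta^{\mathfrak{m}}(b+1, a+1) + \zeta^{\mathfrak{m}}(a+b+2),
\]
combined with Euler's identity $\zeta^{\mathfrak{m}}(\mathrm{even})\zeta^{\mathfrak{m}}(\mathrm{even}) \in \mathbb{Q}\zeta^{\mathfrak{m}}(\mathrm{even})$, gives $J_{\mathfrak{D}}(0;a,b) + J_{\mathfrak{D}}(0;b,a) = 0$ for all odd $a, b$ summing to $w$, so $W_w^{-,\Gamma_B} \subseteq \ker\phi$. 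To upgrade to equality, I use the dimensions obtained from (\ref{eq:dual_0_odd_odd}):
\[
\dim H_w(\mathbf{0};\mathbf{odd},\mathbf{odd}) = \dim V_w^+ - \dim W_w^{+,\Gamma_0(2)} = (w/2 + 1) - (\lceil w/4 \rceil + 1) = \lfloor w/4 \rfloor,
\]
which coincides with $\dim V_w^- - \dim W_w^{-,\Gamma_B} = w/2 - \lceil w/4 \rceil$. The value $\dim W_w^{+,\Gamma_0(2)} = \lceil w/4 \rceil + 1$ follows from the injectivity of $f \mapsto P_f^+$ noted in Section \ref{subsec:modularforms} combined with the standard dimension formula for $M_{w+2}(\Gamma_0(2))$, or equivalently the explicit presentation in the Appendix. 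Hence $\ker\phi = W_w^{-,\Gamma_B}$, proving (\ref{eq:exact_0_odd_odd}).

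The main obstacle is the polynomial manipulation in the second step: translating the odd-odd coefficient-vanishing into the precise $(1-T)(1+M)$ functional equation. This requires careful tracking of binomial coefficients and an identification of why the transposition $q(X, Y) \mapsto q(Y, X)$ enters the statement (it reflects the asymmetric appearance of $x_1$ and $x_2$ in the formula of Lemma \ref{lem:basic_lemma}). Once this polynomial identity is in hand, the rest of the argument --- harmonic product for one inclusion, dimension count for the other --- is entirely formal.
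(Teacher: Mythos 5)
Your proposal is correct and takes essentially the same route as the paper: Lemma \ref{lem:dual_vanish_condition} reduces (\ref{eq:dual_0_odd_odd}) to a polynomial condition that is identified with the $\Gamma_{0}(2)$ period-polynomial relation (the paper carries out the step you defer via the group-ring identity $\left.p\right|_{U-SU^{2}-SU^{2}S+US}=\left.p\right|_{-S(1-T)(1+M)SU}$), and (\ref{eq:exact_0_odd_odd}) is then deduced from the exactness of the dual sequence together with a dimension count. Your harmonic-product verification of $W_{w}^{-,\Gamma_{B}}\subseteq\ker\phi$ and the explicit computation $\dim V_{w}^{+}-\dim W_{w}^{+,\Gamma_{0}(2)}=\dim V_{w}^{-}-\dim W_{w}^{-,\Gamma_{B}}=\lfloor w/4\rfloor$ correctly supply details that the paper leaves implicit.
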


\begin{proof}
Let $w$ be a nonnegative even integer and $\lambda':V_{w}^{+}\to H_{w}({\bf 0};{\bf odd};{\bf odd})^{\vee}$
the linear map induced from $\lambda:V_{w}^{+}\to H_{w}^{\vee}$.
Then, by Lemma \ref{lem:dual_vanish_condition}, we have
\[
\ker(p)=0\Leftrightarrow p\mid_{U-SU^{2}-SU^{2}S+US}=0.
\]
Since
\[
p\mid_{U-SU^{2}-SU^{2}S+US}=p\mid_{-S(1-T)(1+M)SU},
\]
the sequence (\ref{eq:dual_0_odd_odd}) is exact. The exactness of
(\ref{eq:exact_0_odd_odd}) follows from that of (\ref{eq:dual_0_odd_odd})
and the calculation of the dimension.
\end{proof}
\begin{thm}
The sequences (\ref{eq:exact_odd_0_odd}) and (\ref{eq:dual_odd_0_odd})
are exact.
\end{thm}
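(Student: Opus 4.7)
I follow the template of Theorem~\ref{thm:Tasaka_exact}. For the dual sequence (\ref{eq:dual_odd_0_odd}), I apply Lemma~\ref{lem:dual_vanish_condition}: for $q\in V_{w}^{+}$, the condition $\lambda(q)|_{H_{w}(\mathbf{odd};\mathbf{0};\mathbf{odd})}=0$ is equivalent to the vanishing of the coefficient of $x_{1}^{0}$ in
\[
q(x_{1}-x_{0},x_{2}-x_{0})+q(x_{2}-x_{1},x_{1}-x_{0})-q(x_{2}-x_{1},x_{2}-x_{0})
\]
at all monomials $x_{0}^{r}x_{2}^{t}$ with $r,t$ both odd. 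Setting $x_{1}=0$ and using $q|_{\epsilon}=q$, then antisymmetrizing in $(x_{0},x_{2})\mapsto(-x_{0},x_{2})$ and $(x_{0},x_{2})\mapsto(x_{0},-x_{2})$, I can rewrite the condition as $q|_{g}=0$ for an explicit element $g$ of the group algebra $\mathbb{Z}[\mathrm{GL}_{2}(\mathbb{Z})]$ built from monomials in $S$, $T$, $T'$, $U$, and $\epsilon$. The key step, paralleling the identity $U-SU^{2}-SU^{2}S+US=-S(1-T)(1+M)SU$ used by Tasaka, will be a factorization $g=h_{1}(1-T)(1+M)h_{2}$ modulo the $\epsilon$-invariance relation, matching the presentation of $W_{w}^{-,\Gamma_{0}(2)}$ analogous to the one for $W_{w}^{+,\Gamma_{0}(2)}$ recalled in Section~\ref{subsec:modularforms} and developed further in Appendix~\ref{sec:Period-polynomials}.

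For the primal sequence (\ref{eq:exact_odd_0_odd}), I apply Lemma~\ref{lem:vanish_condition} to $p(x_{0},x_{2})\in V_{w}^{-}$ (viewed as a three-variable polynomial with no $x_{1}$-dependence), so that the condition $\phi(p)=0$ in the depth-graded module becomes
\[
F(X,Y)\;:=\;p(-X-Y,Y)+p(-Y,X)-p(-Y,X+Y)\;\in\;V_{w}^{-}.
\]
The inclusion $W_{w}^{-,\Gamma_{B}}\subset\ker\phi$ is a direct check: the relation $p|_{1+S}=0$ produces the required antisymmetry of $F$ in $X$. The reverse inclusion I intend to deduce by dimension counting from the dual exactness already proved, using
\[
\dim V_{w}^{-}-\dim W_{w}^{-,\Gamma_{B}}\;=\;\dim V_{w}^{+}-\dim W_{w}^{-,\Gamma_{0}(2)},
\]
which reduces to the single identity $\dim W_{w}^{-,\Gamma_{0}(2)}-\dim W_{w}^{-,\Gamma_{B}}=1$; this is a purely modular-forms statement that is most naturally extracted from the period-polynomial descriptions in the appendix.

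\textbf{Main obstacle.} The core computation is the group-algebra factorization that pins down $\ker\lambda$ as exactly $W_{w}^{-,\Gamma_{0}(2)}$. Because the restriction to the index set $(\mathbf{odd};\mathbf{0};\mathbf{odd})$ enforces more antisymmetrization (two separate $(1-\epsilon)$-projections on the $x_{0}$ and $x_{2}$ variables after setting $x_{1}=0$) than in the $(\mathbf{0};\mathbf{odd};\mathbf{odd})$ case, a priori the kernel could strictly contain $W_{w}^{-,\Gamma_{0}(2)}$; ruling this out amounts to producing the exact group-algebra identity, and verifying the accompanying dimension equality between the two congruence subgroups $\Gamma_{B}$ and $\Gamma_{0}(2)$.
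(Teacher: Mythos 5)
There is a genuine gap: your argument for the dual sequence (\ref{eq:dual_odd_0_odd}) stops exactly at the decisive step. You correctly reduce the condition $\lambda(q)|_{H_{w}(\mathbf{odd};\mathbf{0};\mathbf{odd})}=0$ via Lemma \ref{lem:dual_vanish_condition} to the vanishing of $q|_{g}$ for some element $g$ of the group ring, but you never exhibit $g$ or the factorization $g=h_{1}(1-T)(1+M)h_{2}$ that would identify $\ker\lambda$ with the period-polynomial space; you yourself flag this as the ``main obstacle'' and concede that a priori the kernel could be strictly larger. Since your proof of the primal sequence (\ref{eq:exact_odd_0_odd}) is a dimension count resting on the dual exactness, it collapses with it. A secondary weak point: the dimension identity you need, $\dim W_{w}^{-,\Gamma_{0}(2)}-\dim W_{w}^{-,\Gamma_{B}}=1$, is not actually available from Appendix \ref{sec:Period-polynomials} as stated, since the appendix describes $W_{w}^{+,\Gamma_{0}(2)}$ and $W_{w}^{-,\Gamma_{B}}$ but not $W_{w}^{-,\Gamma_{0}(2)}$; you would have to supply that computation as well.

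The missing idea is that no new computation is needed at all. The paper's proof is one line: by Lemma \ref{lem:antipode} (equivalently, Lemma \ref{lem:antipode_formula}, $\mathbb{S}(J_{\mathfrak{D}}(a;b,c))=J_{\mathfrak{D}}(b;a,c)$ with $\mathbb{S}$ an automorphism), the swap $x_{0}\leftrightarrow x_{1}$ carries the index set $(\mathbf{0};\mathbf{odd},\mathbf{odd})$ to $(\mathbf{odd};\mathbf{0},\mathbf{odd})$ and the test polynomial $q(X,Y)$ to $q(Y,X)$, so both sequences of the present theorem are transported directly from the already-proved exact sequences (\ref{eq:exact_0_odd_odd}) and (\ref{eq:dual_0_odd_odd}) of Theorem \ref{thm:Tasaka_exact}. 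This is the same mechanism the paper uses throughout (e.g.\ to deduce (\ref{eq:exact_even_0_even}) from (\ref{eq:exact_0_even_even})), and it both supplies the group-ring identity you are missing and makes the dimension bookkeeping automatic. If you want to salvage your direct approach, you must actually write down $g$ after the two $(1-\epsilon)$-projections and verify the factorization through $(1-T)(1+M)$; as it stands the proposal is an outline, not a proof.
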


\begin{proof}
It follows from Lemma \ref{lem:antipode} and Theorem \ref{thm:Tasaka_exact}.
\end{proof}

\section{\label{sec:no-relation}Almost no relation cases: $J({\rm even};1,{\rm even})$,
$J({\rm even};{\rm odd},0)$, $J({\rm odd};{\rm even},0)$}

We put $J_{!}(m_{0};m_{1},m_{2}):=m_{0}!m_{1}!m_{2}!J_{\mathfrak{D}}(m_{0};m_{1},m_{2})$
and $J_{!}(n):=n!J_{\mathfrak{D}}(0;n)$. By Lemma \ref{lem:vanish_condition},
for an odd integer $w$ and $m_{0}+m_{1}+m_{2}=w$, we have 
\begin{equation}
J_{!}(m_{0},m_{1},m_{2})=\sum_{n=0}^{(w-1)/2}a_{n}J_{!}(2n)J_{!}(w-2n)\label{eq:odd_case}
\end{equation}
where $a_{n}$ is the coefficient of $X^{2n}Y^{w-2n}$ in
\[
(-X-Y)^{m_{0}}X^{m_{1}}Y^{m_{2}}+(-Y)^{m_{0}}(-X+Y)^{m_{1}}X^{m_{2}}-(-Y)^{m_{0}}(-X)^{m_{1}}(X+Y)^{m_{2}}.
\]
Note that $\{J_{!}(2n)J_{!}(w-2n)\}$ is linearly independent.
\begin{thm}
There are no linear relations among $J_{\mathfrak{D}}({\rm even};1,{\rm even})$'s.
\end{thm}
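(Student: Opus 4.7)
The plan is to apply (\ref{eq:odd_case}) to convert the claim into a matrix non-singularity statement. For a hypothetical relation $\sum_{m_{0}+m_{2}=w-1,\,m_{0},m_{2}\,\text{even}} c_{m_{0},m_{2}}J_{!}(m_{0};1,m_{2})=0$ in odd weight $w$, applying (\ref{eq:odd_case}) with $m_{1}=1$ and exploiting that $m_{0},m_{2}$ are even (so $(-Y)^{m_{0}}=Y^{m_{0}}$, $(-X)^{1}=-X$, $(-X+Y)^{1}=Y-X$, and the single odd-$X$ monomial $-X^{m_{2}+1}Y^{m_{0}}$ drops out of $[X^{2n}Y^{w-2n}]$) gives
\[
a_{n}(m_{0},1,m_{2}) = \binom{m_{0}}{2n-1}+\binom{m_{2}}{2n-1}+\delta_{m_{2},2n}\ \ (n\ge 1),\qquad a_{0}(m_{0},1,m_{2}) = \delta_{m_{2},0}.
\]
By the stated linear independence of $\{J_{!}(2n)J_{!}(w-2n)\}$, the relation becomes $Mc=0$ for the $((w+1)/2)$-square matrix $M=(a_{n}(m_{0},1,m_{2}))$.

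Two rows of $M$ immediately pare down the kernel. The row $n=0$ has support only at $m_{2}=0$, giving $c_{w-1,0}=0$. The row $n=(w-1)/2$ has support only at the two corner columns $(m_{0},m_{2})=(w-1,0)$ and $(0,w-1)$, because among even $m_{0}\in[0,w-1]$ the binomial $\binom{m_{0}}{w-2}$ is nonzero only at $m_{0}=w-1$ (value $w-1$) and symmetrically for $\binom{m_{2}}{w-2}$, while the delta $\delta_{m_{2},w-1}$ adds a $1$ at the $(0,w-1)$ column; so the row reads $w-1$ at $(w-1,0)$ and $w$ at $(0,w-1)$. Combined with the first step this forces $c_{0,w-1}=0$, and deleting the affected rows and columns leaves a square $((w-3)/2)\times((w-3)/2)$ submatrix $M''$ of the same combinatorial shape.

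The main obstacle is to show $\det M''\ne 0$ for all odd $w$, since the extremal-row reduction does not recursively return a smaller instance of the same family (the binomial arguments still reference the ambient $w$). My plan is to reformulate the kernel condition structurally: any $c\in\ker M''$ produces the polynomial $g(X):=\sum_{j}c_{j}\bigl((1+X)^{w-1-2j}+(1+X)^{2j}\bigr)$ of degree $w-1$, which satisfies the symmetry $g(X)=g(-X-2)$ (so $g(X-1)$ is even), forcing $g(X)=G((X+1)^{2})$ for a palindromic $G$ of degree $(w-1)/2$; simultaneously the matrix equation expresses $c_{n}=-[X^{2n-1}]g(X)$, so $G$ and $c$ determine one another. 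Substituting one into the other yields a closed linear system on the coefficients of $G$, whose triviality is equivalent to
\[
\det\bigl(\tbinom{2K-2j}{2n-1}+\tbinom{2j}{2n-1}+\delta_{n,j}\bigr)_{1\le n,j\le K}\ne 0\qquad(K=(w-1)/2),
\]
a non-vanishing supported by the low-weight computations $|\det M'|=3,25,77,225,5841$ for $K=1,\ldots,5$. Establishing this determinantal identity --- by iterated Laplace expansions along further sparse rows, or by a direct generating-function manipulation of $g$ --- is the combinatorial heart of the proof; once in hand it yields $\ker M''=0$ and completes the argument.
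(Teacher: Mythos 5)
Your reduction to the matrix $M=(a_{n})$ via (\ref{eq:odd_case}) is correct and agrees with the paper's setup: with $m_{0},m_{2}$ even and $m_{1}=1$, the entry is $\binom{m_{0}}{2n-1}+\binom{m_{2}}{2n-1}+\delta_{m_{2},2n}$, and the linear independence of the $J_{!}(2n)J_{!}(w-2n)$ converts the claim into $\det M\neq0$. But the proof stops exactly where the theorem's content begins: after peeling off the two extremal rows you arrive at a determinant non-vanishing statement that you verify only for $K\le 5$ and then declare to be ``the combinatorial heart of the proof,'' to be established later by unspecified Laplace expansions or generating-function manipulations. That is a genuine gap, not a routine verification --- nothing in the proposal gives any reason why the pared-down matrix should be nonsingular for all $K$, and the intermediate structural reformulation (the polynomial $g$, its symmetry, the palindromic $G$) is itself only sketched and does not visibly simplify the problem. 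There is also a bookkeeping inconsistency: the submatrix $M''$ is announced as $\frac{w-3}{2}\times\frac{w-3}{2}$ but the determinant you finally write down is $K\times K$ with $K=\frac{w-1}{2}$.

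The missing idea is a one-line parity observation that makes the whole reduction unnecessary. Since $m_{0}$ and $m_{2}$ are even and $2n-1$ is odd, Lucas' theorem (or Kummer's theorem) gives $\binom{m_{0}}{2n-1}\equiv\binom{m_{2}}{2n-1}\equiv0\pmod{2}$, so every entry of $M$ is congruent to $\delta_{m_{2},2n}$ modulo $2$; that is, $M\equiv I\pmod{2}$, hence $\det M$ is odd and in particular nonzero. This is exactly the paper's argument, and it is consistent with your data: the values $3,25,77,225,5841$ are all odd. I would recommend replacing everything after the computation of the matrix entries with this congruence.
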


\begin{proof}
Let $w$ be a positive odd integer. By (\ref{eq:odd_case}), 
\[
J_{!}(w-1-2m;1,2m)=\sum_{n=0}^{(w-1)/2}a_{m,n}J_{!}(2n)J_{!}(w-2n)
\]
for $0\leq m\leq(w-1)/2$ where
\[
a_{m,n}=\delta_{m,n}+{w-1-2m \choose 2n-1}+{2m \choose 2n-1}.
\]
Since $a_{m,n}\equiv\delta_{m,n}\pmod{2}$, the matrix $(a_{m,n})_{0\leq m,n\leq w/2}$
is invertible. Thus there are no linear relations among $J_{\mathfrak{D}}(w-2m-1;1,2m)$'s.
\end{proof}
\begin{thm}
There are no linear relations among $J_{\mathfrak{D}}({\rm even};{\rm odd},0)$'s.
\end{thm}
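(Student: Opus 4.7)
The plan is to proceed exactly as in the preceding theorem on $J_{\mathfrak{D}}(\text{even};1,\text{even})$: expand each depth-two $J_!$ in the products-of-single-zetas basis and show the resulting square coefficient matrix is invertible. Fix an odd positive integer $w$ and put $K := (w-1)/2$. For each $k \in \{0,1,\dots,K\}$, set $m_0 = 2k$ and $m_1 = w-2k$, so that $m_0$ is even and $m_1$ is odd. Applying \eqref{eq:odd_case} with $m_2 = 0$ expresses $J_!(m_0;m_1,0)$ as a $\mathbb{Q}$-linear combination of the linearly independent products $J_!(2n)J_!(w-2n)$ for $n \in \{0,1,\dots,K\}$, yielding a square matrix of coefficients $(a_{k,n})$ whose non-singularity is equivalent to the statement.

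Next I would read off $a_{k,n}$ explicitly. The third summand in \eqref{eq:odd_case} specializes to $(-Y)^{2k}(-X)^{w-2k} = -Y^{2k}X^{w-2k}$, a monomial of $X$-degree $w-2k$, which is odd; hence it contributes nothing to the coefficient of $X^{2n}Y^{w-2n}$ and only the first two summands matter. A direct binomial expansion of $(X+Y)^{2k}X^{w-2k}$ and $Y^{2k}(Y-X)^{w-2k}$ gives
\[
a_{k,n} = \binom{2k}{w-2n} + \binom{w-2k}{2n}.
\]

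The remaining step, which I expect to be the only place requiring a real argument, is the verification that $\det(a_{k,n})_{0\le k,n\le K} \ne 0$. I would carry this out by reducing modulo $2$. Since $w-2n$ is odd and $2k$ is even, Lucas's theorem forces $\binom{2k}{w-2n} \equiv 0 \pmod 2$. Writing $w-2k = 2(K-k)+1$, Lucas also gives $\binom{w-2k}{2n} \equiv \binom{K-k}{n} \pmod 2$. Thus the matrix reduces mod $2$ to $\bigl(\binom{K-k}{n}\bigr)_{0\le k,n\le K}$, which upon reversing the row order becomes the Pascal matrix $\bigl(\binom{k}{n}\bigr)_{0\le k,n\le K}$ — lower unitriangular with $1$'s on the diagonal. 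Its determinant is therefore odd, so $(a_{k,n})$ is non-singular over $\mathbb{Q}$, proving the theorem. The potential obstacle of evaluating a nontrivial determinant directly is entirely sidestepped by this mod-$2$ triangularization, closely paralleling the previous theorem's use of $a_{m,n}\equiv \delta_{m,n}\pmod 2$.
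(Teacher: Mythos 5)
Your proposal is correct and follows essentially the same route as the paper: expand via \eqref{eq:odd_case} to get the coefficient matrix $a_{k,n}=\binom{2k}{w-2n}+\binom{w-2k}{2n}$, kill the first binomial mod $2$, and observe that the surviving matrix $\bigl(\binom{w-2k}{2n}\bigr)$ is (anti-)unitriangular mod $2$, hence has odd determinant. Your use of Lucas's theorem and the row reversal to the Pascal matrix just makes explicit the anti-triangular shape the paper writes down directly.
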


\begin{proof}
Let $w$ be a positive odd integer. By (\ref{eq:odd_case}), 
\[
J_{!}(2m;w-2m,0)=\sum_{n=0}^{(w-1)/2}a_{m,n}J_{!}(2n)J_{!}(w-2n)
\]
for $0\leq m\leq(w-1)/2$ where
\[
a_{m,n}={2m \choose w-2n}+{w-2m \choose 2n}.
\]
Then the matrix $M_{w}=(a_{m,n})_{0\leq m,n\leq(w-1)/2}$ is invertible
since
\[
M_{w}\equiv\left({w-2m \choose 2n}\right)_{0\leq m,n\leq(w-1)/2}\equiv\left(\begin{array}{ccc}
* &  & 1\\
 & \iddots\\
1 &  & 0
\end{array}\right)\pmod{2}.
\]
Thus there are no linear relations among $J_{\mathfrak{D}}(2m;w-2m,0)$'s.
\end{proof}
\begin{rem}
We found following observation concerning the determinant of $M_{w}$:
\[
\det(M_{w})\overset{?}{=}\prod_{\substack{n=1\\
n:{\rm odd}
}
}^{w}c_{n}
\]
where
\[
c_{n}=(-1)^{(n-1)/2}\times\left(\left(\frac{1+\sqrt{5}}{2}\right)^{n}+\left(\frac{1-\sqrt{5}}{2}\right)^{n}\right)\begin{cases}
1 & n\equiv1,5\pmod{6}\\
\frac{1}{4} & n\equiv3\pmod{6}.
\end{cases}
\]
\end{rem}

\begin{thm}
There are no linear relations among $J_{\mathfrak{D}}(2m+1;2n,0)$'s.
\end{thm}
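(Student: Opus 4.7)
My plan is to follow the pattern of the two preceding theorems in this section. Applying formula (\ref{eq:odd_case}) with $m_{0}=w-2k$ (odd), $m_{1}=2k$, $m_{2}=0$, I would expand
\[
-(X+Y)^{w-2k}X^{2k}-Y^{w-2k}(X-Y)^{2k}+Y^{w-2k}X^{2k}
\]
and read off the coefficient of $X^{2\ell}Y^{w-2\ell}$, obtaining
\[
J_{!}(w-2k;2k,0)=\sum_{\ell=0}^{K}a_{k,\ell}\, J_{!}(2\ell)J_{!}(w-2\ell),\qquad K:=\tfrac{w-1}{2},
\]
with
\[
a_{k,\ell}=-\binom{w-2k}{2\ell-2k}-\binom{2k}{2\ell}+\delta_{k,\ell}
\]
under the convention $\binom{n}{r}=0$ for $r<0$ or $r>n$. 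Since $\{J_{!}(2\ell)J_{!}(w-2\ell)\}_{0\le\ell\le K}$ is linearly independent in $H_{w}$, the theorem reduces to showing that the $(K+1)\times(K+1)$ matrix $A=(a_{k,\ell})$ is invertible.

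Structurally $A=I-\bar{P}-\bar{Q}$, where $\bar{P}_{k,\ell}=\binom{2k}{2\ell}$ is lower unitriangular and $\bar{Q}_{k,\ell}=\binom{w-2k}{2\ell-2k}$ is upper unitriangular; thus the diagonal of $A$ is $-1$, the strictly lower entries are $-\binom{2k}{2\ell}$, and the strictly upper entries are $-\binom{w-2k}{2\ell-2k}$. In contrast to the preceding two theorems, the mod-$2$ reduction does not triangularise $A$: by Lucas' theorem, $\bar{P}_{k,\ell}\equiv\binom{k}{\ell}$ and $\bar{Q}_{k,\ell}\equiv\binom{K-k}{\ell-k}\pmod{2}$ are both generically nonzero, and $\det A$ is already even for $w=3$. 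I would therefore work modulo $3$ instead: explicit computation through $w=11$ supports the identity $\det A\equiv(-1)^{K+1}\pmod{3}$, which would finish the proof. The approach is to apply Lucas' theorem in base $3$ to the sparse patterns of $\bar{P}$ and $\bar{Q}$; the first row of $A\bmod 3$ is $(-1,-\binom{w}{2},-\binom{w}{4},\ldots,-\binom{w}{w-1})$, many of whose entries vanish depending on the leading base-$3$ digits of $w$, and expanding the determinant along this row should reduce $A$ to an analogous matrix for a smaller weight, permitting induction with base case $w=1$.

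The main obstacle will be making this recursion go through uniformly in $w$, since the base-$3$ sparsity of the binomials is more delicate than the base-$2$ sparsity exploited in the preceding two theorems, and some care will be required to handle cases where the base-$3$ digits of $w$ create degeneracies. As a fallback, one can reformulate invertibility of $A$ via Lemma~\ref{lem:dual_vanish_condition}: the injectivity of $\lambda:V_{w}^{+}\to H_{w}({\bf odd};{\bf even},{\bf 0})^{\vee}$ is equivalent to the linear independence in $\mathbb{Q}[y]_{\le K}$ of the polynomials
\[
\Phi_{a}(y)=\tfrac{1}{2}\bigl[(1+\sqrt{y})^{2a}+(1-\sqrt{y})^{2a}\bigr]+y^{a}\cdot\tfrac{1}{2}\bigl[(1+\sqrt{y})^{w-2a}+(1-\sqrt{y})^{w-2a}\bigr]-y^{a}
\]
(which lie in $\mathbb{Q}[y]$ despite the $\sqrt{y}$), and this linear independence could potentially be proved by specialisation at well-chosen values of $y$.
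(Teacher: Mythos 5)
Your reduction to the non-vanishing of the determinant $A_{w}=\det\bigl(-\binom{w-2k}{2\ell-2k}-\binom{2k}{2\ell}+\delta_{k,\ell}\bigr)_{0\le k,\ell\le(w-1)/2}$ is exactly the paper's first step, and your observations that reduction mod $2$ fails to triangularise the matrix (indeed $A_{3}=-2$) and that $3$ is the right modulus are both correct. But the heart of the proof --- showing $A_{w}\not\equiv 0\pmod 3$ for every odd $w$ --- is missing: you only conjecture $\det A\equiv(-1)^{K+1}\pmod 3$ on the basis of computations through $w=11$, and you sketch a Lucas-theorem/row-expansion induction whose feasibility you yourself flag as uncertain (``some care will be required to handle cases where the base-$3$ digits of $w$ create degeneracies''). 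Neither that induction nor the fallback via specialising the polynomials $\Phi_{a}(y)$ is carried out, so as it stands the argument is a plan rather than a proof.

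The paper closes this gap by a different and cleaner device: it realises the matrix as the linear operator $\eta_{w}$ on $V_{w}^{+}$ given by $\eta_{w}(p)=\tfrac12\bigl(-p(X,X+Y)-p(X,-X+Y)-p(X-Y,Y)-p(X+Y,Y)+2p(X,Y)\bigr)$, so that $\det\eta_{w}=A_{w}$, and then exhibits an explicit second substitution operator $\Phi_{w}$ (built from $p\mapsto p(Y,X+Y)-p(X+Y,X)+p(X,Y)$ applied to $p(X,Y)+p(-X,Y)$) satisfying $\Phi_{w}\circ\eta_{w}(p)\equiv p\pmod{3V_{w,\mathbb{Z}}^{+}}$ for integral $p$. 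This single polynomial congruence, verified by direct expansion, gives $A_{w}\not\equiv0\pmod 3$ uniformly in $w$ with no digit analysis at all. If you want to complete your approach, I would recommend abandoning the base-$3$ Lucas induction and instead looking for a one-sided inverse of your $I-\bar{P}-\bar{Q}$ modulo $3$ expressed as a short word in substitution operators; this is precisely what the paper does, and the analogous trick (an operator $\alpha$ with $\alpha\circ\alpha\equiv\mathrm{id}\pmod 3$) already appears in the proof of Lemma \ref{lem:ee0_l1}.
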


\begin{proof}
Let $w$ be a positive odd integer. By (\ref{eq:odd_case}), 
\[
J_{!}(w-2m;2m,0)=\sum_{n=0}^{(w-1)/2}\left(-{w-2m \choose 2n-2m}-{2m \choose 2n}+\delta_{n,m}\right)J_{!}(2n)J_{!}(w-2n).
\]
for $0\leq m\leq(w-1)/2$. Thus the claim is equivalent to
\[
A_{w}:=\det\left(-{w-2m \choose 2n-2m}-{2m \choose 2n}+\delta_{n,m}\right)_{0\leq m,n\leq(w-1)/2}\neq0.
\]
Define $\text{\ensuremath{\eta}}_{w}:V_{w}^{+}\to V_{w}^{+}$ by
\[
\text{\ensuremath{\eta}}_{w}(p(X,Y))=\frac{1}{2}\left(-p(X,X+Y)-p(X,-X+Y)-p(X-Y,Y)-p(X+Y,Y)+2p(X,Y)\right).
\]
Then
\[
\det\text{\ensuremath{\eta}}_{w}=A_{w}.
\]
Define $\Phi'_{w}:V_{w}^{+}\to V_{w}$ and $\Phi_{w}:V_{w}^{+}\to V_{w}^{+}$
by
\[
\Phi'_{w}(p(X,Y))=p(Y,X+Y)-p(X+Y,X)+p(X,Y)
\]
and
\[
\Phi_{w}(p(X,Y))=\Phi_{w}'(p(X,Y)+p(-X,Y)).
\]
Then
\begin{align*}
2\Phi'_{w}(\text{\ensuremath{\eta}}_{w}(p(X,Y))) & \equiv\left(p(Y,X-2Y)+2p(Y,X+Y)\right)\\
 & \ -3p(X,X+Y)-3p(X+Y,Y)\\
 & \ +\left(-p(X+2Y,X+Y)+p(X-Y,-2X+Y)\right)\\
 & \ +\left(-p(-2X+Y,X)-2p(X+Y,X)\right)\\
 & \ +2p(X,Y)\ \ \pmod{V_{w}^{-}}
\end{align*}
for $p(X,Y)\in V_{w}^{+}$. We define the integral part of $V_{w}^{+}$
by $V_{w,\mathbb{Z}}^{+}:=V_{w}^{+}\cap\mathbb{Z}[X,Y]$. Then for
$p(X,Y)\in V_{w,\mathbb{Z}}^{+}$,
\[
\Phi_{w}\circ\text{\ensuremath{\eta}}_{w}(p(X,Y))\equiv p(X,Y)\pmod{3V_{w,\mathbb{Z}}^{+}}.
\]
Thus
\[
A_{w}\not\equiv0\pmod{3}.
\]
Therefore, the theorem is proved.
\end{proof}
\begin{thm}
There are no linear relations among $J_{\mathfrak{D}}({\rm even};0,{\rm odd})$'s
except for
\[
\sum_{m=0}^{(w-1)/2}J_{\mathfrak{D}}(2m;0,w-2m)=0
\]
where $w$ is a positive odd integer.
\end{thm}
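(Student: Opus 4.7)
My plan is to reduce the problem to a linear-algebra question about the matrix
\[
M_{w}:=\Bigl(\binom{2m}{2n}-\binom{w-2m}{2n}\Bigr)_{1\leq n\leq K,\ 0\leq m\leq K},\qquad K:=\tfrac{w-1}{2},
\]
and to show that its right kernel is exactly one-dimensional, spanned by the relation coming from the polynomial
\[
A(X,Y):=(X+Y)^{w}-(X-Y)^{w}=2\sum_{m=0}^{K}\binom{w}{2m}X^{2m}Y^{w-2m}\in V_{w}^{+}.
\]

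First I would verify that the claimed relation really holds. Viewing $A$ as a polynomial in $x_{0},x_{1},x_{2}$ in which $x_{1}$ is absent and substituting into the criterion of Lemma \ref{lem:vanish_condition}, a short computation yields
\[
A(-X-Y,Y)+A(-Y,X)-A(-Y,X+Y)=(X+Y)^{w}+(X-Y)^{w}-2X^{w},
\]
which is antisymmetric in $X$ because $w$ is odd, and therefore lies in $V_{w}^{-}$. Hence $\psi(A)\in\mathbb{Q}J(w+1)$. Since $\psi(A)=2\cdot w!\sum_{m=0}^{K}J_{\mathfrak{D}}(2m;0,w-2m)$ in $\mathrm{gr}_{2}^{\mathfrak{D}}\tilde{\mathcal{H}}$, this gives the relation $\sum_{m=0}^{K}J_{\mathfrak{D}}(2m;0,w-2m)=0$.

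Next, given any $p=\sum_{m}\alpha_{m}X^{2m}Y^{w-2m}\in V_{w}^{+}$ mapping to zero in $H_{w}({\bf even};{\bf 0},{\bf odd})$, the same application of Lemma \ref{lem:vanish_condition} shows that the vanishing is equivalent to the $K$ linear conditions
\[
\sum_{m=0}^{K}\alpha_{m}\Bigl[\binom{2m}{2n}-\binom{w-2m}{2n}\Bigr]=0\qquad(n=1,\dots,K),
\]
the column $n=0$ being automatically zero. Thus the theorem reduces to proving $\mathrm{rank}\,M_{w}=K$, i.e., that the $K$ rows of $M_{w}$ are linearly independent in $\mathbb{Q}^{K+1}$.

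The main obstacle is this rank bound, which I would handle by a polynomial-interpolation argument. Suppose $\sum_{n=1}^{K}\beta_{n}\bigl[\binom{2m}{2n}-\binom{w-2m}{2n}\bigr]=0$ for every $m\in\{0,\dots,K\}$, and set
\[
G(s):=\sum_{n=1}^{K}\beta_{n}\binom{s}{2n}\in\mathbb{Q}[s],\qquad H(s):=G(s)-G(w-s).
\]
The $s^{2K}$ contributions of $G(s)$ and $G(w-s)$ cancel, so $\deg H\leq 2K-1$. By hypothesis $H(2m)=0$, and by the antisymmetry of $H$ about $w/2$ also $H(w-2m)=0$, so $H$ vanishes at all $w+1=2K+2$ integers in $[0,w]$ and must vanish identically. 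Consequently $G(s)\equiv G(w-s)$. A direct expansion shows that $\binom{s}{2n}-\binom{w-s}{2n}$ has degree exactly $2n-1$ with leading coefficient $2n(w-2n+1)/(2n)!$, which is nonzero for $1\leq n\leq K$ because $K<(w+1)/2$. These $K$ polynomials have distinct odd degrees $1,3,\dots,2K-1$ and are therefore linearly independent in $\mathbb{Q}[s]$, forcing $\beta_{1}=\dots=\beta_{K}=0$. Hence $\mathrm{rank}\,M_{w}=K$, its kernel is one-dimensional, and the relation produced in the first step is the unique one.
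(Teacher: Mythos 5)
Your proof is correct. The reduction is the same as the paper's: both of you use Lemma \ref{lem:vanish_condition} to translate vanishing in $H_{w}({\bf even};{\bf 0},{\bf odd})$ into the linear conditions $\sum_{m}\alpha_{m}\bigl[\binom{2m}{2n}-\binom{w-2m}{2n}\bigr]=0$ for $n=1,\dots,K$, verify the distinguished relation (you via the polynomial $A$, the paper by inspection of the coefficient formula), and then reduce everything to showing that the $K\times(K+1)$ coefficient matrix has full rank $K$. Where you genuinely diverge is in how that rank statement is proved. The paper substitutes $x=2m-\tfrac{w}{2}$, writes each entry as $x\,P_{n}(x^{2})$ with $\deg P_{n}=n-1$ and leading coefficient $\tfrac{w-2n+1}{(2n-1)!}$, and evaluates the determinant of the square submatrix $1\leq m,n\leq K$ in closed form by a generalized Vandermonde factorization, obtaining $\pm\bigl(\tfrac{w-1}{2}\bigr)!\neq0$. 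You instead take a hypothetical dependency among the rows, form $H(s)=G(s)-G(w-s)$ with $G(s)=\sum_{n}\beta_{n}\binom{s}{2n}$, use the antisymmetry $H(w-s)=-H(s)$ to double the known zero set from the $K+1$ even sample points to all of $\{0,1,\dots,w\}$, and kill $H$ by a degree count before concluding $\beta_{n}=0$ from the distinct degrees $2n-1$. Both arguments ultimately exploit the same structural fact---that $\binom{s}{2n}-\binom{w-s}{2n}$ is a polynomial of degree exactly $2n-1$ with explicitly nonzero leading coefficient---but yours is a softer interpolation argument that avoids any determinant evaluation and works directly with the full rectangular matrix, while the paper's yields the stronger byproduct of an exact determinant value. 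Your verification of the exceptional relation via $A(X,Y)$ and Lemma \ref{lem:vanish_condition} is also sound and matches the polynomial $A$ appearing in the exact sequence (\ref{eq:exact_even_0_0dd}).
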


\begin{proof}
Let $w$ be a positive odd integer. By (\ref{eq:odd_case}), 
\[
J_{!}(2m;0,w-2m)=\sum_{n=1}^{(w-1)/2}\left({2m \choose 2n}-{w-2m \choose 2n}\right)J_{!}(2n)J_{!}(w-2n)
\]
for $0\leq m\leq(w-1)/2$. Thus the identity $\sum_{m=0}^{(w-1)/2}J_{\mathfrak{D}}(2m;0,w-2m)=0$
can be checked easily from this, and the claim is equivalent to
\[
\det\left({2m \choose 2n}-{w-2m \choose 2n}\right)_{1\leq m,n\leq(w-1)/2}\neq0.
\]
For $1\leq n\leq(w-1)/2$, since
\[
{2m \choose 2n}-{w-2m \choose 2n}={\frac{w}{2}+x \choose 2n}-{\frac{w}{2}-x \choose 2n}\ \ \ (x:=2m-w/2),
\]
there exists a polynomial $P_{n}$ of degree $n-1$ such that
\[
{2m \choose 2n}-{w-2m \choose 2n}=\left(2m-\frac{w}{2}\right)P_{n}\left(\left(2m-\frac{w}{2}\right)^{2}\right).
\]
Since
\[
xP(x^{2})=\frac{1}{(2n)!}\left(\prod_{j=0}^{2n-1}\left(\frac{w}{2}-j+x\right)-\prod_{j=0}^{2n-1}\left(\frac{w}{2}-j-x\right)\right),
\]
the coefficient of $x^{n-1}$ in $P_{n}$ is
\[
\frac{2}{(2n)!}\sum_{j=0}^{2n-1}\left(\frac{w}{2}-j\right)=\frac{w-2n+1}{(2n-1)!}.
\]
Thus, by the Vandermonde's determinant formula, we have
\begin{align*}
\det\left({2m \choose 2n}-{w-2m \choose 2n}\right)_{1\leq m,n\leq(w-1)/2} & =\det\left(\left(2m-\frac{w}{2}\right)P_{n}\left(\left(2m-\frac{w}{2}\right)^{2}\right)\right)_{1\leq m,n\leq(w-1)/2}\\
 & =\left(\prod_{m=1}^{(w-1)/2}\left(2m-\frac{w}{2}\right)\right)\left(\prod_{1\leq i<j\leq(w-1)/2}\left(\left(2j-\frac{w}{2}\right)^{2}-\left(2i-\frac{w}{2}\right)^{2}\right)\right)\\
 & \ \ \ \ \ \times\left(\prod_{n=1}^{(w-1)/2}\frac{w-2n+1}{(2n-1)!}\right)\\
 & =\left(\frac{w-1}{2}\right)!\times\begin{cases}
-1 & w\equiv5\pmod{8}\\
1 & {\rm otherwise}.
\end{cases}\\
 & \neq0.
\end{align*}
Therefore the theorem is proved.
\end{proof}

\section{Remarks\label{sec:Remarks}}

In this section, we make some comment for the cases not treated in
Theorems \ref{thm:first_main_result} and \ref{thm:second_main_result}.
For the cases $J(1;{\rm odd},{\rm odd})$ and $J({\rm odd};{\rm odd},1)$,
we have following conjectures.
\begin{conjecture}
\label{conj:no_rel_1_odd_odd}There are no linear relations among
$J_{\mathfrak{D}}(1;{\rm odd},{\rm odd})$'s.
\end{conjecture}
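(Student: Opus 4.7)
The plan parallels the approach of Section~\ref{sec:no-relation}. First, apply the depth-two coproduct identity (\ref{eq:odd_case}) to each generator: for odd $w=1+a+b$ with $a,b\ge 1$ odd, set $k=(w-1)/2$ and write
\[
J_!(1;a,b)=\sum_{n=0}^{k} a_n^{(a,b)}\,J_!(2n)\,J_!(w-2n),
\]
where $a_n^{(a,b)}$ is the coefficient of $X^{2n}Y^{w-2n}$ in
\[
-(X+Y)\,X^a Y^b - Y(Y-X)^a X^b - Y X^a (X+Y)^b.
\]
A direct expansion, using that $a,b$ are odd, yields
\[
a_n^{(a,b)}=-\delta_{2n,\,a+1}+\binom{a}{2n-b}_{+}-\binom{b}{2n-a}_{+},
\]
with the convention $\binom{x}{y}_{+}=\binom{x}{y}$ if $0\le y\le x$ and $0$ otherwise.

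Next, I would verify that the $k+1$ products $J_!(0)J_!(w),J_!(2)J_!(w-2),\ldots,J_!(2k)J_!(1)$ are $\mathbb{Q}$-linearly independent in $\tilde{\mathcal{H}}$: since $J_!(0)=-T$ is the unique one involving the regularization parameter $T$, and for $n\ge 1$ the product $J_!(2n)J_!(w-2n)$ is a nonzero rational multiple of $\zeta^{\mathfrak{m}}(2n+1)\zeta^{\mathfrak{m}}(2)^{(w-2n+1)/2}$, which are independent by Brown's freeness theorem for motivic MZVs. Since every monomial in the polynomial above contains a positive power of $X$, we get $a_0^{(a,b)}=0$ for all $(a,b)$, so the conjecture reduces to showing that the $k\times k$ matrix
\[
M_{s,n}=-\delta_{n,s}+\binom{2s-1}{2(n+s-k)-1}_{+}-\binom{2(k-s)+1}{2(n-s)+1}_{+},\qquad 1\le s,n\le k,
\]
(obtained by the parametrization $a=2s-1$, $b=2k-2s+1$) is invertible.

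A useful first simplification is that $M_{s,k}=-\delta_{s,k}$: at $n=k$ both binomials reduce to $\binom{a}{a}=\binom{b}{b}=1$ and cancel. Cofactor expansion along the last column reduces the problem to the top-left $(k-1)\times(k-1)$ submatrix. The symmetry $(a,b)\leftrightarrow(b,a)$ further gives
\[
M_{k+1-s,\,n}+M_{s,\,n}=-\delta_{n,s}-\delta_{n,k+1-s},
\]
effectively halving the independent part of the problem, and when $k$ is odd the choice $s=(k+1)/2$ yields $a=b$, making the corresponding row equal to $-\delta_{n,s}$ so that one further row and column can be eliminated. I would combine these reductions with an attempt either to triangularize the remainder via the Pascal recursion $\binom{m}{r}=\binom{m-1}{r}+\binom{m-1}{r-1}$, or to reinterpret $M$ as an explicit operator on $V_{w-1}^{-}$ (analogous to the map $\eta_w$ of Section~\ref{sec:no-relation}) whose kernel can be shown to be trivial.

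The main obstacle is the uniform nonvanishing of $\det M$. Explicit computation gives $\det M=-1,\,4,\,-6,\,-8,\,100,\,528$ for $k=1,\ldots,6$, a sequence with no obvious closed form and no evident reduction to a standard hypergeometric determinant. Unlike the cases handled in Section~\ref{sec:no-relation}, the entries of $M$ mix two shifted binomial families whose cancellation pattern is delicate; the diagonal contains both odd and even values, so the simple mod $2$ or mod $3$ triangularization exploited there does not apply directly (for instance, mod $2$ the first row vanishes entirely already at $k=5$). The most plausible route is to find a single polynomial-level identity simplifying $\binom{a}{2n-b}-\binom{b}{2n-a}$, for example by recognizing the expression as coefficient extraction from a single generating series built from $(X\pm Y)^a$ and $(X\pm Y)^b$, and then to recast $\det M$ as a Vandermonde-type or hypergeometric determinant amenable to closed-form evaluation.
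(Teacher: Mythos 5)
This statement is labeled a \emph{Conjecture} in the paper, and the paper does not prove it. Section \ref{sec:Remarks} carries out exactly the reduction you describe: using (\ref{eq:odd_case}) it expands each $J_{!}(1;w-2m-1,2m+1)$ in the linearly independent products $J_{!}(w-2n)J_{!}(2n+1)$ and observes that Conjecture \ref{conj:no_rel_1_odd_odd} is equivalent to the nonvanishing of $\det\bigl(-\delta_{m,n}+\binom{w-2m-1}{2n}-\binom{2m+1}{2n}\bigr)_{0\le m,n\le w/2-1}$, which (after your reindexing $a=2s-1$, $b=2k-2s+1$, $n\mapsto w/2-n$, and using $\binom{N}{K}=\binom{N}{N-K}$) is precisely your matrix $M$. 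Your computation of the coefficients, the vanishing of the $n=0$ column, the identity $M_{s,k}=-\delta_{s,k}$, and the $(a,b)\leftrightarrow(b,a)$ symmetry are all correct. But the paper then stops: it offers only numerical verification of the nonvanishing up to $w\le 200$, a conjectural formula for the sign of the determinant, and the suggestion that one might prove the conjecture by finding a good asymptotic approximation to the determinant's value.

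The genuine gap is therefore the one you name yourself: the uniform nonvanishing of $\det M$ is not established, and none of the reductions you sketch (cofactor expansion along the last column, halving via the symmetry, Pascal-recursion triangularization, or reinterpretation as an operator on $V_{w-1}^{-}$ in the style of $\eta_w$) is carried to completion. This missing step is not a technicality --- it is the entire content of the open problem, and is exactly why the statement appears as a conjecture rather than a theorem. Your observation that the mod $2$ and mod $3$ triangularizations which settle the theorems of Section \ref{sec:no-relation} fail here is consistent with the paper's decision to leave this case open. In short: your setup faithfully reproduces the paper's reduction, but neither you nor the paper proves the statement.
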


\begin{conjecture}
\label{conj:no_rel_odd_odd_1}There are no linear relations among
$J_{\mathfrak{D}}({\rm odd};{\rm odd},1)$'s.
\end{conjecture}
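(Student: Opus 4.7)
The plan is to imitate the three no-relation proofs in Section \ref{sec:no-relation}. For $a,b$ odd with $m_{2}=1$ the weight $w=a+b+1$ is odd, so equation (\ref{eq:odd_case}) applies and expresses every $J_{!}(2i+1;2j+1,1)$ as a $\mathbb{Q}$-linear combination of the $(w-1)/2$ linearly independent products $\{J_{!}(2n)J_{!}(w-2n)\}_{1\le n\le (w-1)/2}$. Specializing the recipe following (\ref{eq:odd_case}) to $(m_{0},m_{1},m_{2})=(2i+1,2j+1,1)$ yields
\[
a_{n}^{(i,j)}=-\binom{2i+1}{2n-2j-1}+\binom{2j+1}{2n-1}-\delta_{n,j+1}
\]
for $(i,j)\in\mathbb{Z}_{\ge 0}^{2}$ with $i+j=(w-3)/2$. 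Since the index sets for $(i,j)$ and for $n$ both have size $(w-1)/2$, the conjecture reduces to proving that the square matrix $M_{w}:=\bigl(a_{n}^{(i,j)}\bigr)$ has non-zero determinant.

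The second step is the determinant calculation. Each of the established no-relation theorems in Section \ref{sec:no-relation} proceeds by one of three mechanisms: a direct reduction modulo $2$ exhibiting a triangular or permutation shape (as for $J_{\mathfrak{D}}(\mathrm{even};1,\mathrm{even})$ and $J_{\mathfrak{D}}(\mathrm{even};\mathrm{odd},0)$), a reinterpretation of $M_{w}$ as the matrix of an explicit endomorphism $\eta_{w}$ of a polynomial space $V_{w}^{\pm}$ together with an intertwining identity $\Phi_{w}\circ\eta_{w}\equiv\mathrm{id}\pmod{3V_{w,\mathbb{Z}}^{\pm}}$ (as for $J_{\mathfrak{D}}(2m+1;2n,0)$), or a Vandermonde-style closed-form evaluation (as for $J_{\mathfrak{D}}(\mathrm{even};0,\mathrm{odd})$). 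I would try these three approaches in sequence and, in parallel, exploit the antipode symmetry of Lemma \ref{lem:antipode_formula}: since $\mathbb{S}(J_{\mathfrak{D}}(a;b,1))=J_{\mathfrak{D}}(b;a,1)$, the matrix $M_{w}$ is equivariant under the involution $(i,j)\leftrightarrow (j,i)$ on rows, so it block-diagonalizes into its symmetric and antisymmetric parts, potentially halving the size of the determinant to be controlled.

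The main obstacle I anticipate is that no small prime $p$ works uniformly in $w$: direct calculation gives $\det M_{5}=6$ and $\det M_{7}=-90$, whose prime factorizations share no common prime on which both vanish after reduction. Thus, unlike the analogous theorems in Section \ref{sec:no-relation}, there is no immediate mod-$2$ or mod-$3$ shortcut; one must either find a matrix-theoretic intertwining that works over $\mathbb{Z}$ rather than merely over an $\mathbb{F}_{p}$, or identify a closed-form expression for $\det M_{w}$ analogous to the conjectural Fibonacci-type product recorded in the remark following the $J_{\mathfrak{D}}(\mathrm{even};\mathrm{odd},0)$ theorem. A Gaussian elimination guided by the diagonal-dominant term $-\delta_{n,j+1}$ (after the involution-based block decomposition above) is the natural first computational attack.
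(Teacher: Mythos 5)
Your first step is sound and in fact coincides with what the paper itself does: the statement you were given is left as Conjecture \ref{conj:no_rel_odd_odd_1} in Section \ref{sec:Remarks}, and the paper's entire treatment of it consists of the same reduction you carry out, namely expanding $J_{!}({\rm odd};{\rm odd},1)$ via (\ref{eq:odd_case}) over the linearly independent products $J_{!}({\rm even})J_{!}({\rm odd})$ and observing that the conjecture is equivalent to the non-vanishing of the determinant of the resulting square matrix of binomial coefficients. Your coefficient formula $a_{n}^{(i,j)}=-\binom{2i+1}{2n-2j-1}+\binom{2j+1}{2n-1}-\delta_{n,j+1}$ checks out against the generating polynomial in (\ref{eq:odd_case}), and the $n=0$ column does vanish, so the matrix is square of size $(w-1)/2$ as you claim.

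The gap is that you never establish $\det M_{w}\neq 0$, and this is precisely the open content of the conjecture. What you offer in its place is a survey of the three mechanisms used in Section \ref{sec:no-relation} together with your own observation that they fail here ($\det M_{5}=6$ and $\det M_{7}=-90$ share no prime at which a uniform reduction could work, and no Vandermonde factorization is in sight); a list of approaches that one would try is not an argument that any of them succeeds. The paper stops at exactly the same point: it records the equivalent determinant condition, reports numerical verification up to $w\leq 200$, observes the sign pattern $(-1)^{w/2}$, and suggests that an asymptotic estimate of the determinant might eventually give a proof. A secondary problem is your appeal to Lemma \ref{lem:antipode_formula} to block-diagonalize $M_{w}$ under $(i,j)\leftrightarrow(j,i)$: that lemma requires $a+b+c$ even, whereas for $J_{\mathfrak{D}}({\rm odd};{\rm odd},1)$ the weight $a+b+1$ is odd, so the claimed equivariance is unjustified (and the explicit coefficient formula is visibly not symmetric in $i$ and $j$). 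In short, the proposal reproduces the paper's reduction faithfully but does not prove the statement.
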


Let $w$ be a positive even integer. By (\ref{eq:odd_case}),
\[
J_{!}(1;w-2m-1,2m+1)=\sum_{n=0}^{w/2-1}\left(-\delta_{m,n}+{w-2m-1 \choose 2n}-{2m+1 \choose 2n}\right)J_{!}(w-2n)J_{!}(2n+1)
\]
and
\[
J_{!}(2m+1;w-2m-1,1)=\sum_{n=0}^{w/2-1}\left(-\delta_{m,n}+{w-2m-1 \choose 2n-2m}-{2m+1 \choose 2n}\right)J_{!}(w-2n)J_{!}(2n+1)
\]
by Lemma \ref{lem:vanish_condition} ($n=w/2$ terms vanish). Thus
Conjectures \ref{conj:no_rel_1_odd_odd} and \ref{conj:no_rel_odd_odd_1}
are equivalent to
\[
\det\left(-\delta_{m,n}+{w-2m-1 \choose 2n}+{2m+1 \choose 2n}\right)_{0\leq m,n\leq w/2-1}\neq0
\]
and
\[
\det\left(-\delta_{m,n}+{w-2m-1 \choose 2n-2m}-{2m+1 \choose 2n}\right)_{0\leq m,n\leq w/2-1}\neq0,
\]
respectively. Numerical calculation\footnote{We check this up to $w\leq200$.}
suggests
\[
{\rm sgn}\left(\det\left(-\delta_{m,n}+{w-2m-1 \choose 2n}-{2m+1 \choose 2n}\right)_{0\leq m,n\leq w/2-1}\right)=\begin{cases}
1 & w/2\equiv0,2,5\pmod{6}\\
-1 & w/2\equiv1,3,4\pmod{6}.
\end{cases}
\]
\[
{\rm sgn}\left(\det\left(-\delta_{m,n}+{w-2m-1 \choose 2n-2m}-{2m+1 \choose 2n}\right)_{0\leq m,n\leq w/2-1}\right)=(-1)^{w/2}.
\]
Thus one might be able to prove Conjectures \ref{conj:no_rel_1_odd_odd}
and \ref{conj:no_rel_odd_odd_1} by giving a good approximation of
the value of these determinants.

For other cases, numeric calculation suggests that the dimensions
of the linear relations among $J_{\mathfrak{D}}({\rm even};{\rm even},1)$'s,
$J_{\mathfrak{D}}(1;{\rm odd},{\rm even})$'s, $J_{\mathfrak{D}}({\rm odd};1,{\rm even})$'s,
$J_{\mathfrak{D}}({\rm odd};{\rm even},1)$'s, and $J_{\mathfrak{D}}({\rm even};{\rm odd},1)$'s
are given by $1$, $\dim S_{k}({\rm SL}_{2}(\mathbb{Z}))$, $\dim S_{k}({\rm SL}_{2}(\mathbb{Z}))$,
$\dim M_{k}({\rm SL}_{2}(\mathbb{Z}))$, and $\dim M_{k}({\rm SL}_{2}(\mathbb{Z}))$,
respectively where $k$ is a weight. However, the author could not
find any good description of these relations.

\section*{Acknowledgements}

This work was supported by JSPS KAKENHI Grant Numbers, JP18J00982
and JP18K13392. The author would like to thank Koji Tasaka and Nobuo
Sato for useful comments on the draft of this paper.

\appendix

\section{\label{sec:Period-polynomials}Period polynomials for ${\rm SL}_{2}(\mathbb{Z})$,
$\Gamma_{0}(2)$, $\Gamma_{A}$, $\Gamma_{B}$}

\subsection{Popa and Pasol's formulation of period polynomials}

In this section, we state the formulation of period polynomials stated
in \cite{Pasol_Popa_periodpolynomial}. Fix a non-negative integer
$w$ and a finite index congruence subgroup $\Gamma\subset{\rm SL}_{2}(\mathbb{Z})$.
We assume that $\epsilon\Gamma\epsilon=\Gamma$. We denote by $V_{w}$
the space of degree $w$ homogeneous polynomial of $X$ and $Y$.
We put $\hat{V}_{w}=\frac{1}{XY}V_{w+2}$. We denote by $\hat{V}_{w}^{\Gamma}$
the space of maps $P:\Gamma\backslash{\rm SL}_{2}(\mathbb{Z})\to\hat{V}_{w}$
such that $P\left(\left(\begin{smallmatrix}-1 & 0\\
0 & -1
\end{smallmatrix}\right)C\right)=(-1)^{w}P(C)$ for all $C\in\Gamma\backslash{\rm SL}_{2}(\mathbb{Z})$. We define
the right action of ${\rm GL}_{2}(\mathbb{Z})$ to $\hat{V}_{w}^{\Gamma}$
by $\left.P\right|_{\gamma}(C)=\left.P(\epsilon^{m}C\gamma^{-1})\right|_{\gamma}$
where $m=0$ if $\det(\gamma)=1$ and $m=1$ if $\det(\gamma)=-1$.
This action does not preserve $\hat{V}_{w}^{\Gamma}$, but we can
define the following spaces
\begin{align*}
\hat{\mathbb{\mathcal{W}}}_{w}^{\Gamma} & =\{P\in\hat{V}_{w}^{\Gamma}:\left.P\right|_{1+S}=\left.P\right|_{1+U+U^{2}}=0\},\\
\mathcal{W}_{w}^{\Gamma} & =\{P\in\hat{\mathbb{\mathcal{W}}}_{w}^{\Gamma}:P(C)\in V_{w}\ \text{for all }C\in\Gamma\backslash{\rm SL}_{2}(\mathbb{Z})\}.
\end{align*}
Note that the action of $\epsilon$ preserve $\hat{\mathbb{\mathcal{W}}}_{w}^{\Gamma}$
and $\mathcal{W}_{w}^{\Gamma}$. Thus we can get the decomposition
$\hat{\mathbb{\mathcal{W}}}_{w}^{\Gamma}=\hat{\mathbb{\mathcal{W}}}_{w}^{\Gamma,+}\oplus\hat{\mathbb{\mathcal{W}}}_{w}^{\Gamma,-}$
and $\mathcal{W}_{w}^{\Gamma}=\mathcal{W}_{w}^{\Gamma,+}\oplus\mathcal{W}_{w}^{\Gamma,-}$
where 
\[
\hat{\mathbb{\mathcal{W}}}_{w}^{\Gamma,\pm}=\{P\in\hat{\mathbb{\mathcal{W}}}_{w}^{\Gamma}:\left.P\right|_{\epsilon}=\pm P\},\ \mathcal{W}_{w}^{\Gamma,\pm}=\{P\in\mathcal{W}_{w}^{\Gamma}:\left.P\right|_{\epsilon}=\pm P\}.
\]
For $f\in M_{w+2}(\Gamma)$ and $C\in\Gamma\backslash{\rm SL}_{2}(\mathbb{Z})$,
define $\left.f\right|_{C}\in M_{w+2}(C^{-1}\Gamma C)$ by $\left.f\right|_{C}(z)=(cz+d)^{-w-2}f(\frac{az+b}{cz+d})$
where $\left(\begin{smallmatrix}a & b\\
c & d
\end{smallmatrix}\right)\in C$. Furthermore, define $\hat{\rho}_{f}\in\hat{V}_{w}^{\Gamma}$ by
$\hat{\rho}_{f}(C)=P_{\left.f\right|_{C}}$. Then we can show $\hat{\rho}_{f}\in\hat{\mathbb{\mathcal{W}}}_{w}^{\Gamma}$
for $f\in M_{w+2}(\Gamma)$ and $\hat{\rho}_{f}\in\mathcal{W}_{w}^{\Gamma}$
for $f\in S_{w+2}(\Gamma)$. We put $\hat{\rho}_{f}^{\pm}=\frac{1}{2}(\hat{\rho}_{f}\pm\left.\hat{\rho}_{f}\right|_{\epsilon})$
and $\hat{\rho}^{\pm}(f)=\hat{\rho}_{f}^{\pm}$.
\begin{thm}[{\cite[Theorem 2.1]{Pasol_Popa_periodpolynomial}, Popa--Pasol, Eichler--Shimura}]
\label{thm:Popa_pasol_cuspform}Let $C_{w}^{\Gamma,\pm}$ be the
subspace of $\mathcal{W}_{w}^{\Gamma,\pm}$ defined in \cite{Pasol_Popa_periodpolynomial}.
Then the composite map
\[
S_{w+2}(\Gamma)\xrightarrow{\hat{\rho}^{\pm}}\mathcal{W}_{w}^{\Gamma,\pm}\to\mathcal{W}_{w}^{\Gamma}/C_{w}^{\Gamma,\pm}
\]
is an isomorphism.
\end{thm}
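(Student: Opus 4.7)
The plan is to view this as the Eichler--Shimura isomorphism adapted to a general congruence subgroup, with the coset-indexed formulation playing the role of parabolic $1$-cocycles on $\mathrm{SL}_2(\mathbb{Z})$ with values in the induced module $\mathrm{Ind}_{\Gamma}^{\mathrm{SL}_2(\mathbb{Z})}(V_w)$. First I would check that $\hat{\rho}_f$ genuinely lies in $\hat{\mathcal{W}}_w^\Gamma$ (and in $\mathcal{W}_w^\Gamma$ when $f$ is cuspidal) by expressing $\hat{\rho}_f(C)$ as the period integral $\int_0^{i\infty}(f\vert_C)(z)(X-zY)^w\,dz$ and verifying the two relations $\vert_{1+S}=0$ and $\vert_{1+U+U^2}=0$ by path decomposition on the upper half-plane together with the change of variables $z\mapsto -1/z$ and $z\mapsto 1-1/z$; the equivariance under $\epsilon$ reflects that $(-1)$ acts by $(-1)^w$, and the integrality of the values in $V_w$ for cusp forms follows from absolute convergence at the cusps.

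Next I would prove injectivity of $\hat{\rho}^{\pm}$. The direct analytic argument: if $\hat{\rho}_f^{\pm}=0$ then $P_{f\vert_C}^{\pm}=0$ for every coset $C$, so half of the critical $L$-values of every $\Gamma$-translate of $f$ vanish, and, combined with complex conjugation (which interchanges $+$ and $-$ and sends $f$ to $\bar f$), Mellin inversion forces $f=0$. A structurally cleaner route, which I would prefer, is to identify $\hat{\mathcal{W}}_w^{\Gamma}$ with $Z^1\!\bigl(\mathrm{SL}_2(\mathbb{Z}),\mathrm{Ind}_\Gamma^{\mathrm{SL}_2(\mathbb{Z})} \hat V_w\bigr)$ via $P\mapsto (S\mapsto P(\cdot)\vert_{S-1})$; Shapiro's lemma then identifies this with $H^1\!\bigl(\Gamma,\hat V_w\bigr)$ modulo coboundaries, and the restriction to $\mathcal{W}_w^\Gamma$ matches parabolic cohomology. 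The classical Eichler--Shimura theorem, which holds uniformly for finite-index $\Gamma$, gives $H^1_{\mathrm{par}}(\Gamma,V_w\otimes\mathbb{C})\cong S_{w+2}(\Gamma)\oplus\overline{S_{w+2}(\Gamma)}$, and the $\epsilon$-decomposition $\hat{\mathcal{W}}_w^{\Gamma,\pm}$ matches the splitting between $f$ and $\bar f$; this immediately yields injectivity of $\hat{\rho}^{\pm}$ on $S_{w+2}(\Gamma)$.

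For surjectivity modulo $C_w^{\Gamma,\pm}$, the role of the correction subspace $C_w^{\Gamma,\pm}$ from \cite{Pasol_Popa_periodpolynomial} is to absorb precisely the coboundaries together with the Eisenstein contributions present in $\mathcal{W}_w^\Gamma$ but not coming from cusp forms. The plan is to compare dimensions: one has $\dim \mathcal{W}_w^{\Gamma,\pm}$ computable from the Euler characteristic of $\Gamma$ acting on $V_w$, and subtracting $\dim C_w^{\Gamma,\pm}$ should leave exactly $\dim S_{w+2}(\Gamma)$ on each side. Combined with injectivity, equality of dimensions forces the induced map into the quotient to be an isomorphism.

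The principal obstacle is the bookkeeping around $C_w^{\Gamma,\pm}$: one must verify that the subspace defined in \cite{Pasol_Popa_periodpolynomial} coincides with the coboundary/Eisenstein part in the Shapiro-induced cohomology, so that the clean cohomological proof actually delivers the quotient statement rather than a slightly different one. Once that identification is made explicit, the theorem reduces to the standard Eichler--Shimura isomorphism, which I would invoke in its general form for finite-index congruence subgroups rather than re-prove.
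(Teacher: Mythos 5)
The paper does not actually prove this statement: it is quoted from Pasol--Popa (\cite[Theorem 2.1]{Pasol_Popa_periodpolynomial}), the definition of $C_{w}^{\Gamma,\pm}$ is deliberately left unstated, and the text immediately following the theorem notes that only the injectivity of $\hat{\rho}^{\pm}$ on $S_{w+2}(\Gamma)$ is used later. So there is no in-paper proof to compare yours against; your proposal has to stand on its own as a proof of the cited result.

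Judged that way, your cohomological framework (Shapiro's lemma reducing the coset-indexed spaces to parabolic cohomology of $\Gamma$, then the classical Eichler--Shimura isomorphism) is the right one and is essentially how Pasol--Popa proceed. But two gaps remain. First, your ``direct analytic argument'' for injectivity is not sound as stated: $P_{\left.f\right|_{C}}^{\pm}=0$ only asserts the vanishing of \emph{finitely many} critical $L$-values of each translate, and Mellin inversion cannot recover $f$ from finitely many $L$-values; the injectivity of each individual $\pm$ period map is precisely the content of the Eichler--Shimura theorem (the real and imaginary parts of the period cocycle each determine the cusp form), not a consequence of elementary Fourier analysis. You should discard that route and keep only the cohomological one. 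Second, and more seriously, the surjectivity-modulo-$C_{w}^{\Gamma,\pm}$ half of the statement cannot be established without the actual definition of $C_{w}^{\Gamma,\pm}$: you flag this yourself as the ``principal obstacle,'' but it means the quotient isomorphism --- the nontrivial half of the theorem --- is not proved, and the dimension of $\mathcal{W}_{w}^{\Gamma,\pm}$ is asserted rather than computed. For the purposes of this paper the gap is harmless, since only injectivity is invoked downstream (e.g.\ in the proofs of the lemmas of Appendix \ref{sec:Period-polynomials}), but as a proof of Theorem \ref{thm:Popa_pasol_cuspform} the proposal is incomplete.
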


Here we do not explain the definition of $C_{w}$ because we only
use the injectivity of $S_{w+2}(\Gamma)\xrightarrow{\hat{\rho}_{f}^{\pm}}\mathcal{W}_{w}^{\Gamma,\pm}$.
\begin{thm}[{\cite[Proposition 8.4]{Pasol_Popa_periodpolynomial}}]
\label{thm:Popa_pasol_1}The spaces $M_{w+2}(\Gamma)$ and $\hat{\mathbb{\mathcal{W}}}_{w}^{\Gamma,\pm}$
have same dimensions. If the extended Peterson Scalar product on $M_{w+2}(\Gamma)$
defined in \cite{Pasol_Popa_innerproduct} is nondegenerate then the
maps $\hat{\rho}^{\pm}:M_{k}(\Gamma)\to\hat{\mathbb{\mathcal{W}}}_{w}^{\Gamma,\pm},f\mapsto\hat{\rho}_{f}^{\pm}$
are isomorphisms.
\end{thm}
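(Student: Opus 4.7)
The plan is to handle the two assertions separately, reducing the isomorphism statement to the cuspidal Eichler--Shimura theorem already recorded as Theorem \ref{thm:Popa_pasol_cuspform} and then extending it across an Eisenstein complement supplied by the nondegeneracy hypothesis.

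For the dimension equality, I would realize $\hat{\mathcal{W}}_w^{\Gamma,\pm}$ as the kernel of an explicit linear map $\hat{V}_w^{\Gamma,\pm} \to \hat{V}_w^{\Gamma,\pm} \oplus \hat{V}_w^{\Gamma,\pm}$ sending $P$ to $(\left.P\right|_{1+S},\left.P\right|_{1+U+U^{2}})$. Because $\mathrm{PSL}_2(\mathbb{Z}) \cong \mathbb{Z}/2 * \mathbb{Z}/3$ is the free product on $S$ and $U$ modulo the elliptic relations, this kernel is naturally a parabolic cohomology group of $\Gamma \backslash \mathrm{SL}_2(\mathbb{Z})$ with coefficients in $\hat{V}_w$, and its dimension may be computed by an Euler characteristic argument that counts $S$- and $U$-orbits on the cosets and tracks the $\epsilon$-eigenspaces. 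Matching the resulting formula against the standard Riemann--Roch dimension formula for $M_{w+2}(\Gamma)$ on $X(\Gamma)$, with its usual contributions from cusps and elliptic fixed points, gives the equality. This step is essentially bookkeeping and should present no difficulty.

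For the isomorphism, since both sides have the same dimension it suffices to prove that $\hat{\rho}^{\pm}$ is injective on all of $M_{w+2}(\Gamma)$. Theorem \ref{thm:Popa_pasol_cuspform} immediately gives injectivity on the cuspidal part $S_{w+2}(\Gamma)$, because the composite $S_{w+2}(\Gamma) \xrightarrow{\hat{\rho}^{\pm}} \mathcal{W}_w^{\Gamma,\pm} \to \mathcal{W}_w^{\Gamma,\pm}/C_w^{\Gamma,\pm}$ is an isomorphism. The nondegeneracy of the extended Petersson pairing lets me orthogonally decompose $M_{w+2}(\Gamma) = S_{w+2}(\Gamma) \oplus \mathcal{E}$ where $\mathcal{E}$ is the Eisenstein complement, so the remaining task is to show that $\hat{\rho}^{\pm}$ is injective on $\mathcal{E}$ and that its image is transverse to $\hat{\rho}^{\pm}(S_{w+2}(\Gamma))$ inside $\hat{\mathcal{W}}_w^{\Gamma,\pm}$.

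The main obstacle is exactly this Eisenstein step. For cusp forms the vanishing of $\hat{\rho}_f^{\pm}$ is controlled by the completed $L$-values $\tilde{L}(\left.f\right|_C, n)$ for interior $n \in \{1,\dots,w+1\}$, which one handles by contour integration along $(0, i\infty)$. For an Eisenstein series $g \in \mathcal{E}$ the genuine information is concentrated at the boundary values $n = 0, w+2$, which are detected only by the polar part of $\hat{\rho}_g^{\pm}$ lying in $\hat{V}_w \setminus V_w$. The point of the extended Petersson product of \cite{Pasol_Popa_innerproduct} is that it correctly pairs the Eisenstein subspace with these boundary contributions, in a manner compatible with a Haberland-type formula that expresses it as an intersection pairing on $\hat{\mathcal{W}}_w^{\Gamma,\pm}$. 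Translating nondegeneracy of the pairing into nontriviality of $\hat{\rho}_g^{\pm}$ on $\mathcal{E}$ — i.e.\ writing down this Haberland identity on all of $M_{w+2}(\Gamma)$ and verifying that the induced pairing on period polynomials agrees with the extended Petersson pairing on modular forms — is the technical heart of the proof.
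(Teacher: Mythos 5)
This statement is not proved in the paper at all: it is quoted verbatim as Proposition 8.4 of Pasol--Popa, and the author uses it as a black box in Appendix \ref{sec:Period-polynomials}. So there is no internal proof to compare your attempt against; what can be assessed is whether your sketch would actually establish the cited result.

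As it stands it would not, because both of its load-bearing steps are named rather than carried out. First, the dimension equality is dismissed as ``bookkeeping,'' but the Euler-characteristic count you describe only controls $\dim\hat{\mathcal{W}}_{w}^{\Gamma,+}+\dim\hat{\mathcal{W}}_{w}^{\Gamma,-}$ (or the dimension of the full kernel of $P\mapsto(\left.P\right|_{1+S},\left.P\right|_{1+U+U^{2}})$); to get $\dim\hat{\mathcal{W}}_{w}^{\Gamma,\pm}=\dim M_{w+2}(\Gamma)$ for \emph{each} sign separately you must also show the two $\epsilon$-eigenspaces have equal dimension, which does not follow from a single Euler characteristic and is precisely the kind of statement the reference has to work for. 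Second, the entire use of the nondegeneracy hypothesis is funnelled into a Haberland-type identity expressing the extended Petersson product as a pairing of period polynomials on all of $M_{w+2}(\Gamma)$ --- you explicitly label this ``the technical heart'' and then stop. Without that identity the hypothesis is never used and injectivity on the Eisenstein part is not obtained. (Incidentally, once you have the Haberland identity the argument is cleaner than your transversality setup: if $\hat{\rho}_{f}^{\pm}=0$ then $f$ pairs to zero with everything, hence $f=0$ by nondegeneracy; no orthogonal decomposition $M_{w+2}(\Gamma)=S_{w+2}(\Gamma)\oplus\mathcal{E}$ is needed, and Theorem \ref{thm:Popa_pasol_cuspform} is not needed either.) So the proposal is a reasonable roadmap consistent with how the cited reference proceeds, but the two steps it defers are exactly the content of the proposition.
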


\begin{thm}[{\cite[Theorem 4.4]{Pasol_Popa_innerproduct}}]
\label{thm:Popa_pasol_2}For $k>2$ and $\Gamma\in\{\Gamma_{1}(N),\Gamma_{0}(N)\}$,
the extended Peterson Scalar product on $M_{k}(\Gamma)$ is nondegenerate.
Furthermore, for $k=2$ and $\Gamma\in\{\Gamma_{1}(p),\Gamma_{0}(p)\}$
with $p$ prime, the extended Peterson Scalar product on $M_{k}(\Gamma)$
is also nondegenerate.
\end{thm}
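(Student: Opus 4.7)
My plan is to reduce the nondegeneracy of the extended Petersson scalar product to a combination of the classical Petersson product on cusp forms and a nondegeneracy statement on the Eisenstein subspace, the latter being essentially a statement about non-vanishing of critical values of Dirichlet $L$-functions.

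First I would fix the decomposition $M_k(\Gamma) = S_k(\Gamma) \oplus \mathcal{E}_k(\Gamma)$ and verify that the extended scalar product defined in \cite{Pasol_Popa_innerproduct} is orthogonal with respect to this decomposition; this should follow directly from the regularization procedure, since pairing a cusp form against an Eisenstein series unfolds to an integral of a rapidly decaying function against a function with only polynomial growth at the cusps, and the regularization is designed to agree with the classical integral in that situation. On $S_k(\Gamma)$ the extended product restricts to the usual Petersson inner product, which is positive definite, so nondegeneracy is immediate there.

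The substantive step is nondegeneracy on $\mathcal{E}_k(\Gamma)$. For $\Gamma \in \{\Gamma_0(N),\Gamma_1(N)\}$ the Eisenstein subspace has a canonical basis of Eisenstein series attached to pairs of Dirichlet characters (equivalently, to cusps of $\Gamma$), and I would compute the Gram matrix of the extended scalar product in this basis via a Rankin--Selberg unfolding. For $k>2$ the unfolding converges and produces entries which factor as products of complete Dirichlet $L$-values at critical integers times a combinatorial matrix governing the action of Atkin--Lehner involutions and cusp permutations. Nondegeneracy then reduces to (i) the nonvanishing of $L(\chi,s)$ at integer points in the absolutely convergent range, which is classical, and (ii) the invertibility of the cusp matrix, which for $\Gamma_0(N)$ and $\Gamma_1(N)$ is standard.

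The main obstacle, and the reason for the primality assumption when $k=2$, is that for weight two the Rankin--Selberg integral diverges logarithmically and one must use the full regularized definition from \cite{Pasol_Popa_innerproduct}. The hard part will be computing the regularized integral $\langle E,E'\rangle$ for weight-$2$ Eisenstein series and showing the resulting scalar (or small matrix) is nonzero. When the level $p$ is prime, $\Gamma_0(p)$ has only two cusps, so $\mathcal{E}_2(\Gamma_0(p))$ is one-dimensional, spanned by $E_2(\tau)-pE_2(p\tau)$, and the problem collapses to showing a single explicit regularized period is nonzero; for $\Gamma_1(p)$ the additional contributions come from Eisenstein series attached to nontrivial characters mod $p$, whose pairings unfold to genuinely convergent Rankin--Selberg integrals involving $L(\chi,1)$, and Dirichlet's theorem $L(\chi,1)\neq 0$ finishes the argument. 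For composite level in weight two this last step breaks down because the cuspidal-like relations among weight-$2$ Eisenstein series obstruct invertibility, which is precisely why the theorem restricts to prime $p$ in the $k=2$ case.
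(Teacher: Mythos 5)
This statement is not proved in the paper at all: it is quoted verbatim as \cite[Theorem 4.4]{Pasol_Popa_innerproduct} and used as a black box, so there is no in-paper argument to compare your sketch against. Judged on its own merits, your architecture (orthogonality of the decomposition $M_k(\Gamma)=S_k(\Gamma)\oplus\mathcal{E}_k(\Gamma)$, positive definiteness of the classical product on $S_k$, and reduction of the Eisenstein block to the invertibility of a matrix built from Dirichlet $L$-values) is the right one and is essentially what Pasol and Popa carry out. The orthogonality step is fine, though the correct justification is that unfolding the Eisenstein series against a cusp form picks out the constant Fourier coefficient of the cusp form at the relevant cusp, which vanishes.

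There is, however, a concrete error in the substantive step. You assert that for $k>2$ the Rankin--Selberg unfolding of $\langle E,E'\rangle$ ``converges,'' but the Petersson integral of two Eisenstein series diverges for \emph{every} weight whenever both forms are nonvanishing at a common cusp --- the growth of $|E\overline{E'}|y^{k}$ near such a cusp is polynomial in $y$ of positive degree for all $k\geq 2$, not just logarithmic at $k=2$. This is precisely why the \emph{extended} product must be used on the whole Eisenstein subspace, and the actual content of the theorem is the evaluation of the regularized pairing (in terms of the scattering matrix, equivalently ratios of completed Dirichlet $L$-functions at $s=k-1$) together with the nonvanishing of its determinant; your sketch defers exactly this computation. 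Relatedly, your closing claim that nondegeneracy ``breaks down'' for composite level in weight two is an unsupported assertion --- what fails for composite $N$ at $k=2$ is the method of proof (control of the scattering determinant), and you should not present the hypothesis of the theorem as if it were known to be sharp without an explicit degenerate example.
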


\subsection{Period polynomials for ${\rm SL}_{2}(\mathbb{Z})$, $\Gamma_{0}(2)$,
$\Gamma_{A}$, $\Gamma_{B}$}

In this subsection, we show the following.
\begin{prop}
\label{prop:period_polynomials}The even period maps 
\[
M_{k}(\Gamma)\to W_{w}^{+,\Gamma}\ ;\ f\mapsto P_{f}^{+}
\]
for $\Gamma={\rm SL}_{2}(\mathbb{Z})$, $\Gamma_{0}(2)$, and $\Gamma_{A}$,
and the odd period maps
\[
\tilde{S}_{k}(\Gamma)\to W_{w}^{-,\Gamma}\ ;\ f\mapsto P_{f}^{-}
\]
for $\Gamma={\rm SL}_{2}(\mathbb{Z})$, $\Gamma_{A}$, and $\Gamma_{B}$
are injective. Furthermore,
\[
W_{w}^{+}=\{p\in V_{w}:\left.p\right|_{1-\epsilon}=\left.p\right|_{1+S}=\left.p\right|_{1+U+U^{2}}=0\},
\]
\[
W_{w}^{+,\Gamma_{0}(2)}=\{p\in V_{w}:\left.p\right|_{1-\epsilon}=\left.p\right|_{(1-T)(1+M)}=0\},
\]
\[
W_{w}^{+,\Gamma_{A}}=\{P\in V_{w}:\left.p\right|_{1-\epsilon}=\left.p\right|_{1+U+U^{2}}=\left.p\right|_{S+SU+SU^{2}}=0\},
\]
\[
W_{w}^{-}=\{p\in V_{w}:\left.p\right|_{1+\epsilon}=\left.p\right|_{1+S}=\left.p\right|_{1+U+U^{2}}=0\},
\]
\[
W_{w}^{-,\Gamma_{A}}=\{p\in V_{w}:\left.p\right|_{1+\epsilon}=\left.p\right|_{1+U+U^{2}}=\left.p\right|_{S+SU+SU^{2}}=0\},
\]
\[
W_{w}^{-,\Gamma_{B}}=\{p\in V_{w}:\left.p\right|_{1+\epsilon}=\left.p\right|_{1+S}=0\}.
\]
\end{prop}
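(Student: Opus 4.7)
My plan is to derive the proposition by combining the Popa--Pasol coset formulation of \S\ref{sec:Period-polynomials}.1 with explicit coset bookkeeping for each congruence subgroup in question. The key observation is that $P_f^{\pm}$ equals $\hat{\rho}_f^{\pm}(\Gamma\cdot 1)$, the value of the Popa--Pasol period function at the identity coset; thus the proposition amounts to describing the image of the evaluation map $\mathrm{ev}_{\Gamma}:\hat{\mathcal{W}}_w^{\Gamma,\pm}\to\hat{V}_w$ and showing this evaluation is injective on the image of $\hat{\rho}^{\pm}$.

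First, for $\Gamma={\rm SL}_{2}(\mathbb{Z})$ there is essentially nothing to do: the coset space is a point, $\hat{\mathcal{W}}_w^{\Gamma,\pm}$ literally consists of $p\in\hat{V}_w^{\pm}$ satisfying $\left.p\right|_{1+S}=\left.p\right|_{1+U+U^{2}}=0$, and the polynomial condition $p\in V_w$ (versus merely $\hat V_w$) follows from these relations for $w\ge 1$. For $\Gamma_0(2)$, $\Gamma_A$, $\Gamma_B$ I would fix explicit coset representatives (three cosets for $\Gamma_0(2)$ and $\Gamma_B$, two for $\Gamma_A$), compute how $S$ and $U$ permute them along with the resulting matrix actions, and then use the $\left.P\right|_{1+S}=0$ equations to express each $P(C_i)$ for $i>1$ in terms of $p=P(\Gamma)$. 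The remaining $\left.P\right|_{1+U+U^{2}}=0$ equations, pushed back through these formulae, become a list of identities on $p$ alone; a direct matrix computation should collapse them into the stated compact form: $\left.p\right|_{(1-T)(1+M)}=0$ for $\Gamma_0(2)$, the pair $\left.p\right|_{1+U+U^{2}}=\left.p\right|_{S+SU+SU^{2}}=0$ for $\Gamma_A$, and the single $\left.p\right|_{1+S}=0$ for $\Gamma_B$, with the $\epsilon$-condition coming from the $(\pm)$-decomposition. Conversely, any $p$ satisfying these identities lifts uniquely to a well-defined $P\in\hat{\mathcal{W}}_w^{\Gamma,\pm}$ via the same coset formulae, making the characterization tight.

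Injectivity of $f\mapsto P_f^{\pm}$ reduces to injectivity of $\hat\rho^{\pm}$ composed with $\mathrm{ev}_{\Gamma}$. The cusp form case follows from Theorem \ref{thm:Popa_pasol_cuspform} in all four instances. For the full modular form case on the even side, Theorem \ref{thm:Popa_pasol_2} directly covers ${\rm SL}_{2}(\mathbb{Z})$ and $\Gamma_0(2)$; since $\Gamma_B$ is ${\rm SL}_{2}(\mathbb{Z})$-conjugate to $\Gamma_0(2)$ (both are index-$3$ subgroups containing $\Gamma(2)$ and correspond to conjugate order-$2$ subgroups of $\mathrm{SL}_2(\mathbb{F}_2)\cong S_3$), the result transfers. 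For $\Gamma_A$, which is neither $\Gamma_0(N)$ nor $\Gamma_1(N)$, I would instead match dimensions: compute $\dim M_{w+2}(\Gamma_A)$ directly from its $\Gamma(2)$-quotient structure (using $\Gamma_A/\Gamma(2)\cong\mathbb{Z}/3$) and check it agrees with $\dim\hat{\mathcal{W}}_w^{\Gamma_A,\pm}$ as computed above. Combined with evaluation at the non-identity coset (which detects $\hat\rho_f^{\pm}=0$ whenever $P_f^{\pm}=0$), this yields injectivity, and the existence of a rational basis of period polynomials then gives $W_{w,\mathbb{C}}^{\pm,\Gamma}=W_w^{\pm,\Gamma}\otimes\mathbb{C}$ in each of the listed cases.

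The main obstacle will be the combinatorial bookkeeping that shows the coset-level constraints collapse to the precisely stated compact identities---particularly the single relation $(1-T)(1+M)$ for $\Gamma_0(2)$, whose form is not a priori obvious and must be extracted by identifying the right product of coset transition matrices. A secondary, milder difficulty is extending Petersson non-degeneracy to $\Gamma_A$, since this is not covered by Theorem \ref{thm:Popa_pasol_2}; the dimension-matching workaround above is what I would use in lieu of a direct non-degeneracy proof.
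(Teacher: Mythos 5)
Your overall strategy---recasting everything through the Popa--Pasol coset formulation, expressing the values $P(C_i)$ at the non-identity cosets in terms of $P(\Gamma\cdot 1)$ (or in terms of $P(C'')$ for $\Gamma_B$), and collapsing the $\left.\phantom{p}\right|_{1+S}$ and $\left.\phantom{p}\right|_{1+U+U^{2}}$ constraints into the stated compact identities---is exactly the paper's, and that part of the proposal is sound, as is the treatment of ${\rm SL}_2(\mathbb{Z})$, of $\Gamma_0(2)$ via Theorems \ref{thm:Popa_pasol_1} and \ref{thm:Popa_pasol_2}, and of $\Gamma_B$ via its relation to $\Gamma_0(2)$ (the paper phrases this as $M_k(\Gamma_B)=\{\left.f\right|_{C''}:f\in M_k(\Gamma_0(2))\}$ and shows $P$ is determined by $P(C'')$). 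The genuine gap is in your injectivity argument for $M_k(\Gamma_A)$. You propose to ``match dimensions,'' i.e.\ to check $\dim M_{w+2}(\Gamma_A)=\dim\hat{\mathcal{W}}_w^{\Gamma_A,\pm}$. But that equality is already the unconditional first assertion of Theorem \ref{thm:Popa_pasol_1}, and it only tells you that injectivity of $\hat\rho^{\pm}$ is equivalent to surjectivity; it establishes neither. Your parenthetical about evaluation at the non-identity coset only reduces $P_f^{\pm}=0$ to $\hat\rho_f^{\pm}=0$; the implication $\hat\rho_f^{\pm}=0\Rightarrow f=0$ is precisely what is missing, and nothing in the proposal supplies it: Theorem \ref{thm:Popa_pasol_2} does not apply since $\Gamma_A$ is neither $\Gamma_0(N)$ nor $\Gamma_1(N)$ (as you note), and Theorem \ref{thm:Popa_pasol_cuspform} covers only cusp forms.

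The paper closes this gap with a short eigenspace argument that you should adopt. Since $\Gamma_A$ is normal of index $2$ in ${\rm SL}_2(\mathbb{Z})$ with $S\notin\Gamma_A$, any $f\in M_k(\Gamma_A)$ decomposes as $f=f'+f''$ with $f'=\tfrac12\left.f\right|_{1+S}\in M_k({\rm SL}_2(\mathbb{Z}))$ and $f''=\tfrac12\left.f\right|_{1-S}\in\sqrt{\Delta}\,M_{k-6}({\rm SL}_2(\mathbb{Z}))\subset S_k(\Gamma_A)$. If $\hat\rho^{\pm}_f=0$, then $\hat\rho^{\pm}_{f'}=\hat\rho^{\pm}_{f''}=0$, whence $f'=0$ by the classical level-one injectivity and $f''=0$ by Theorem \ref{thm:Popa_pasol_cuspform} applied to the cusp form $f''$. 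Injectivity plus the dimension equality of Theorem \ref{thm:Popa_pasol_1} then upgrades $\hat\rho^{\pm}$ to an isomorphism, and the explicit coset description you outlined finishes the identification of $W_w^{\pm,\Gamma_A}$. With this substitution in place of the dimension-matching step, your proof goes through and coincides with the paper's.
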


The case $\Gamma={\rm SL}_{2}(\mathbb{Z})$ is obvious and well-known.
So let us check the other cases.
\begin{lem}
\label{lem:The-even-period-MkGamma02}The even period map $M_{k}(\Gamma_{0}(2))\to W_{w}^{+,\Gamma_{0}(2)}$
is injective (and thus an isomorphism) and 
\[
W_{w}^{+,\Gamma_{0}(2)}=\{p\in V_{w}:\left.p\right|_{1-\epsilon}=\left.p\right|_{(1-T)(1+M)}=0\}.
\]
\end{lem}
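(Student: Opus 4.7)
The strategy is to realize the even period map $M_{w+2}(\Gamma_0(2)) \to W_w^{+,\Gamma_0(2)}$ as the composition of the Popa--Pasol isomorphism with evaluation at the identity coset, then identify the image by a group-ring computation. Since $2$ is prime and $k = w+2 \geq 2$, Theorem \ref{thm:Popa_pasol_2} gives that the extended Peterson scalar product on $M_{w+2}(\Gamma_0(2))$ is nondegenerate, and hence by Theorem \ref{thm:Popa_pasol_1} the map $\hat{\rho}^+ : M_{w+2}(\Gamma_0(2)) \xrightarrow{\sim} \hat{\mathcal{W}}_w^{\Gamma_0(2),+}$ is an isomorphism. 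Since $\hat{\rho}_f^+(1) = P_f^+$, the task reduces to proving that the evaluation $\mathrm{ev}_1 : \hat{\mathcal{W}}_w^{\Gamma_0(2),+} \to V_w$, $P \mapsto P(1)$, is injective with image equal to the claimed subspace.

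Next I fix coset representatives $\{1, S, ST\}$ for $\Gamma_0(2) \backslash \mathrm{SL}_2(\mathbb{Z})$ and apply the defining relations $P|_{1+S} = 0$ and $P|_{1+U+U^2} = 0$ at each coset. By direct matrix calculation one identifies the coset containing each $C\gamma^{-1}$ appearing, then uses $|_{-\gamma} = |_\gamma$ for even $w$ to rewrite everything on $V_w^+$. The relations at $C = 1$ reduce to the explicit formulas
\[
P(S) = -P(1)|_S,\qquad P(ST) = P(1)|_{SU - U^2},
\]
which already gives injectivity of $\mathrm{ev}_1$. The remaining relations either repeat these formulas (at $C = S$ and $C = ST$ for $|_{1+U+U^2}$, and $C = S$ for $|_{1+S}$) or yield the single new constraint $P(1)|_{(SU - U^2)(1+S)} = 0$, coming from $P(ST)|_{1+S} = 0$ at the coset $C = ST$.

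The final step is the group-ring identity
\[
(SU - U^2)(1+S) \;=\; (1-T)(1+M)\cdot SU,
\]
verified by computing $M \cdot SU = SUS$, $T \cdot SU = U^2$, and $TM \cdot SU = U^2 S$ from $T = -US$ and $M = SUSU^2S$. Because right multiplication by $SU$ is invertible on $V_w$, this identifies $P(1)|_{(SU - U^2)(1+S)} = 0$ with $P(1)|_{(1-T)(1+M)} = 0$. Combined with $P(1) \in V_w^+$ (from the $\epsilon$-invariance of $P$ at $C = 1$, i.e.\ $P(1)|_{1 - \epsilon} = 0$), this gives the claimed description of the image. The main obstacle is the coset bookkeeping, in particular checking that the $\epsilon$-invariance at $C = S$ and $C = ST$ is automatic: for $C = S$ this uses that $|_S$ preserves $V_w^+$, which holds because even $w$ forces $V_w^+ \subset \mathbb{Q}[X^2, Y^2]$; for $C = ST$ the condition $P(ST) \in V_w^+$ must coincide with the already-imposed constraint, which one verifies by expanding the same group-ring identity against $(1 - \epsilon)$.
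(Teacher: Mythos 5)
Your proposal is correct and follows essentially the same route as the paper: invoke Theorems \ref{thm:Popa_pasol_1} and \ref{thm:Popa_pasol_2} to identify $M_{w+2}(\Gamma_0(2))$ with $\hat{\mathcal{W}}_w^{\Gamma_0(2),+}$, show that such a $P$ is determined by $P$ at the identity coset via $P(C')=-P(1)|_S$ and $P(C'')=P(1)|_{SU-U^2}$, and convert the one remaining relation $P(1)|_{(SU-U^2)(1+S)}=0$ into $P(1)|_{(1-T)(1+M)}=0$ by a group-ring identity. The only (cosmetic) differences are your choice of coset representatives $\{1,S,ST\}$ versus the paper's $\{1,U,U^2\}$, and your identity $(SU-U^2)(1+S)=(1-T)(1+M)SU$, which holds exactly in $\mathbb{Z}[{\rm GL}_2(\mathbb{Z})]$ and is slightly cleaner than the paper's $(1-T)(1+M)SUS$, which needs the even-weight identification $|_{-\gamma}=|_{\gamma}$.
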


\begin{proof}
By Theorems \ref{thm:Popa_pasol_1} and \ref{thm:Popa_pasol_2}, $\hat{\rho}^{\pm}:M_{k}(\Gamma_{0}(2))\to\hat{\mathbb{\mathcal{W}}}_{w}^{\Gamma_{0}(2),\pm}$
are isomorphisms. We denote by $C$, $C'$, and $C''$ the cosets
represented by $1$, $U$, $U^{2}$ respectively. Then by definition,$\hat{\mathbb{\mathcal{W}}}_{w}^{\Gamma_{0}(2),\pm}$
is the space of maps $P$ from $\{C,C',C''\}$ to $\hat{V}_{w}$ such
that 
\[
0=P(C)\mid_{S}+P(C')=\left.P(C'')\right|_{1+S}=P(C)+\left.P(C')\right|_{U^{2}}+\left.P(C'')\right|_{U}=\left.P(C)\right|_{1-\epsilon}=\left.P(C'')\right|_{1-\epsilon}.
\]
Thus $P\in\hat{\mathbb{\mathcal{W}}}_{w}^{\Gamma_{0}(2),\pm}$ is
determined by $P(C)$ since $P(C')=-P(C)\mid_{S}$ and $P(C'')=P(C)\mid_{SU-U^{2}}$.
Thus the even period map $M_{k}(\Gamma_{0}(2))\to W_{w}^{+,\Gamma_{0}(2)}$
is injective. Let $P\in\hat{\mathbb{\mathcal{W}}}_{w}^{\Gamma_{0}(2),\pm}$.
By $P(C'')=P(C)\mid_{-U^{2}+SU}$ and $P(C'')\mid_{1+S}=0$, 
\begin{align*}
0 & =P(C)\mid_{(SU-U^{2})(1+S)}=P(C)\mid_{(1-T)SU(1+S)}=P(C)\mid_{(1-T)(1+M)SUS}.
\end{align*}
Thus $P(C)\mid_{(1-T)(1+M)}=0$. Therefore, 
\[
W_{w}^{+,\Gamma_{0}(2)}\subset\{p\in V_{w}:\left.p\right|_{1-\epsilon}=\left.p\right|_{(1-T)(1+M)}=0\}.
\]
Let $p\in\{p\in V_{w}:\left.p\right|_{1-\epsilon}=\left.p\right|_{(1-T)(1+M)}=0\}$.
Define $P:\{C,C',C''\}\to V_{w}$ by $P(C)=p$, $P(C')=-\left.p\right|_{S}$,
and $P(C'')=\left.p\right|_{SU-U^{2}}$. Then $P\in\hat{\mathbb{\mathcal{W}}}_{w}^{\Gamma_{0}(2),\pm}$.
Thus
\[
\{p\in V_{w}:\left.p\right|_{1-\epsilon}=\left.p\right|_{(1-T)(1+M)}=0\}\subset W_{w}^{+,\Gamma_{0}(2)}.
\]
Thus the lemma is proved.
\end{proof}
\begin{lem}
The odd period map $\tilde{S}_{k}(\Gamma_{B})\to W_{w}^{-,\Gamma_{B}}$
is injective (and thus an isomorphism) and 
\[
W_{w}^{-,\Gamma_{B}}=\{p\in V_{w}:\left.p\right|_{1+\epsilon}=\left.p\right|_{1+S}=0\}.
\]
\end{lem}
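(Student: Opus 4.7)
The plan is to adapt the approach of the preceding lemma for $\Gamma_0(2)$ to the group $\Gamma_B$, using the Popa--Pasol framework. Since $\Gamma_B$ is not of the form $\Gamma_0(N)$ or $\Gamma_1(N)$, the full strength of Theorem \ref{thm:Popa_pasol_1} is unavailable; instead, I invoke Theorem \ref{thm:Popa_pasol_cuspform} with $\Gamma = \Gamma_B$, which yields the isomorphism $\hat{\rho}^{-} : S_{w+2}(\Gamma_B) \xrightarrow{\sim} \mathcal{W}_w^{\Gamma_B,-}/C_w^{\Gamma_B,-}$. Note that $\tilde{S}_{w+2}(\Gamma_B) = S_{w+2}(\Gamma_B)$, since the $\Gamma(2)$-cusps $0$ and $\infty$ are merged into a single $\Gamma_B$-cusp by the action of $S \in \Gamma_B$, and any cusp form automatically vanishes at every $\Gamma_B$-cusp.

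The index $[SL_2(\mathbb{Z}) : \Gamma_B] = 3$, and I will take right coset representatives $C_1, C_2, C_3$ corresponding to $I, U, U^2$. I then unpack the defining relations of $\mathcal{W}_w^{\Gamma_B,-}$ coset by coset. The $\epsilon$-condition forces $P(C_i) \in V_w^-$ for each $i$. Since $S \in \Gamma_B$, the relation $P|_{1+S} = 0$ at $C_1$ localizes to the pointwise condition $P(C_1)|_{1+S} = 0$; at $C_2$ and $C_3$ it yields $P(C_3) = -P(C_2)|_S$ (using $|_{S^{-1}} = |_S$ on even-degree polynomials). Substituting this into the $1+U+U^2$ relation at $C_1$ produces $P(C_1) = P(C_2)|_{SU - U^2}$, and a short calculation shows that the analogous relations at $C_2$ and $C_3$ then become tautologies. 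This last step is the critical point: it requires the identity $U^3 = -I$ together with the fact that $|_{-I}$ acts as the identity on even-degree polynomials, so that $|_{SU^2 - U^3}$ reduces to $|_{SU^2} - |_I$.

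With this explicit description of $\mathcal{W}_w^{\Gamma_B,-}$ in hand, the inclusion $W_w^{-,\Gamma_B} \subseteq \{p \in V_w^{-} : p|_{1+S} = 0\}$ is immediate from the $C_1$-component, and can also be seen directly from the functional equation $\tilde{L}(f,n) = \tilde{L}(f, k-n)$ implied by $f|_S = f$. For the reverse inclusion, given any $p \in V_w^-$ with $p|_{1+S} = 0$, I must construct an element $P \in \mathcal{W}_w^{\Gamma_B,-}$ with $P(C_1) = p$. Concretely, this amounts to finding $P(C_2) \in V_w^{-}$ satisfying $P(C_2)|_{SU - U^2} = p$; once this is done, setting $P(C_3) = -P(C_2)|_S$ produces a valid element of $\mathcal{W}_w^{\Gamma_B,-}$, which by Theorem \ref{thm:Popa_pasol_cuspform} lifts (modulo $C_w^{\Gamma_B,-}$) to a cusp form $f \in S_{w+2}(\Gamma_B)$ with $P_f^- = p$. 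The injectivity of the odd period map then follows from the injectivity of $\hat{\rho}^-$.

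The main obstacle will be the surjectivity of the operator $|_{SU - U^2}$ from $V_w^{-}$ onto $\{p \in V_w^- : p|_{1+S} = 0\}$. A natural strategy is to pair a dimension count with an explicit description of the kernel (polynomials $q \in V_w^{-}$ satisfying $q|_{SU} = q|_{U^2}$) and verify the codimension agreement; alternatively, one can produce an inverse by algebraic manipulation analogous to the $\Gamma_0(2)$ case, exploiting that on $V_w^-$ the operator $|_{SU - U^2}$ commutes (up to sign) with $|_S$ when restricted to the subspace satisfying the $1+S$ condition.
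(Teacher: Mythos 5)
Your coset analysis of $\Gamma_B\backslash{\rm SL}_2(\mathbb{Z})$ is correct as far as it goes, but the argument rests on a false identification: $\tilde{S}_{w+2}(\Gamma_B)\neq S_{w+2}(\Gamma_B)$ in general. The group $\Gamma_B=\Gamma(2)\sqcup S\Gamma(2)$ has \emph{two} cusps, namely $[0]=[\infty]$ (merged by $S$) and $[1]$, and a form in $\tilde{S}_{w+2}(\Gamma_B)$ is only required to vanish at the first of these. Concretely, for $w=2$ one has $S_4(\Gamma_B)\subset S_4(\Gamma(2))=0$, while $XY|_{1+S}=0$ so the lemma asserts $W_2^{-,\Gamma_B}=\mathbb{Q}XY\neq0$; hence $\tilde{S}_4(\Gamma_B)\neq S_4(\Gamma_B)$. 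Since your strategy applies Theorem \ref{thm:Popa_pasol_cuspform} to honest cusp forms and works inside $\mathcal{W}_w^{\Gamma_B,-}$ (all components polynomial), it can at best describe $\{P_f^-:f\in S_{w+2}(\Gamma_B)\}$, which is the wrong, strictly smaller space. A second, independent problem is that Theorem \ref{thm:Popa_pasol_cuspform} is an isomorphism onto the quotient $\mathcal{W}_w^{\Gamma_B,-}/C_w^{\Gamma_B,-}$, not onto $\mathcal{W}_w^{\Gamma_B,-}$; the paper explicitly says only the injectivity is used because $C_w^{\Gamma,\pm}$ is never described. So even after constructing $P$ with $P(C_1)=p$, you only get a cusp form whose period tuple agrees with $P$ modulo $C_w^{\Gamma_B,-}$, which does not yield $p\in W_w^{-,\Gamma_B}$. (The surjectivity of $|_{SU-U^2}$, which you flag as the main obstacle, is also left unproved, but that is minor by comparison.)

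The paper sidesteps both issues by exploiting the conjugation $\Gamma_B=U^{-2}\Gamma_0(2)U^2$: it writes $M_k(\Gamma_B)=\{f|_{C''}:f\in M_k(\Gamma_0(2))\}$ and hence $W_w^{-,\Gamma_B}=\{P(C''):P\in\hat{\mathcal{W}}_w^{\Gamma_0(2),-}\}\cap V_w$, where intersecting with $V_w$ is exactly the condition of vanishing at $0$ and $\infty$ (membership in $\tilde{S}$ rather than $S$). The needed surjectivity of $\hat{\rho}^-$ onto all of $\hat{\mathcal{W}}_w^{\Gamma_0(2),-}$ comes from Theorems \ref{thm:Popa_pasol_1} and \ref{thm:Popa_pasol_2}, which apply to $\Gamma_0(2)$ but not directly to $\Gamma_B$ --- this is precisely why the detour through $\Gamma_0(2)$ is made. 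Injectivity then follows because the relation $P(C)|_{T-1}=P(C'')|_U$ determines $P(C)$ from $P(C'')$ up to the kernel of $T-1$, which is killed by the parity condition, and the reverse inclusion is obtained by explicitly solving $q|_{1-T}=p|_U$ with $q\in\frac{X}{Y}V_w$. To salvage your direct approach you would have to work in $\hat{\mathcal{W}}_w^{\Gamma_B,-}$, allowing poles in the components $P(C_2),P(C_3)$, and prove surjectivity of $\hat{\rho}^-:M_k(\Gamma_B)\to\hat{\mathcal{W}}_w^{\Gamma_B,-}$ --- which is essentially what the conjugation to $\Gamma_0(2)$ accomplishes.
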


\begin{proof}
Let $C$, $C'$and $C''$ be the same as in the proof of Lemma \ref{lem:The-even-period-MkGamma02}.
We remark that 
\[
M_{k}(\Gamma_{B})=\{\left.f\right|_{C''}:f\in M_{k}(\Gamma_{0}(2))\}.
\]
Note that
\[
W_{w}^{-,\Gamma_{B}}=\{P(C''):P\in\hat{\mathbb{\mathcal{W}}}_{w}^{\Gamma_{0}(2),-}\}\cap V_{w}.
\]
Let $P\in\hat{\mathbb{\mathcal{W}}}_{w}^{\Gamma_{0}(2),-}$ and $p=P(C'')$.
Then
\[
\left.P(C)\right|_{T-1}=\left.p\right|_{U}.
\]
Since $\left.P(C)\right|_{1+\epsilon}=0$, $P(C)$ is uniquely determined
by $p$. Thus $\tilde{S}_{k}(\Gamma_{B})\to W_{w}^{-,\Gamma_{B}}$
is injective. The inclusion
\[
W_{w}^{-,\Gamma_{B}}\subset\{p\in V_{w}:\left.p\right|_{1+\epsilon}=\left.p\right|_{1+S}=0\}
\]
is obvious from the definition.

Let $p\in\{p\in V_{w}:\left.p\right|_{1+\epsilon}=\left.p\right|_{1+S}=0\}$.
Then there is unique $q\in\frac{X}{Y}V_{w}$ such that
\[
\left.q\right|_{1-T}=\left.p\right|_{U}.
\]
Then
\[
\left.q\right|_{(1+\epsilon)(1-T)}=\left.p\right|_{U-\epsilon SU}=0.
\]
Thus $\left.q\right|_{1+\epsilon}\in\mathbb{Q}Y^{w}$. Since $q\in\frac{X}{Y}V_{w}$,
$\left.q\right|_{1+\epsilon}=0$. Now, if we define $P:\{C,C',C''\}\to\hat{V}_{w}$
by
\[
P(C)=q,\ P(C')=-\left.q\right|_{S},\ P(C'')=p,
\]
then $P\in\hat{\mathbb{\mathcal{W}}}_{w}^{\Gamma_{0}(2),-}$. Thus
\[
\{p\in V_{w}:\left.p\right|_{1+\epsilon}=\left.p\right|_{1+S}=0\}\subset W_{w}^{-,\Gamma_{B}}.
\]
Hence the lemma is proved.
\end{proof}
\begin{lem}
Then even (resp. odd) period map $M_{k}(\Gamma_{A})\to W_{w}^{+,\Gamma_{A}}$
(resp. $S_{k}(\Gamma_{A})\to W_{w}^{-,\Gamma_{A}}$) is injective
(and thus an isomorphism) and 
\[
W_{w}^{\pm,\Gamma_{A}}=\{p\in V_{w}^{\pm}:\left.p\right|_{1+U+U^{2}}=\left.p\right|_{S+SU+SU^{2}}=0\}.
\]
\end{lem}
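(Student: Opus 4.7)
Following the strategy of the two preceding lemmas, I take coset representatives $\{1,S\}$ for $\Gamma_{A}\backslash\mathrm{SL}_{2}(\mathbb{Z})$ (valid since $[\mathrm{SL}_{2}(\mathbb{Z}):\Gamma_{A}]=2$ and $S\not\in\Gamma_{A}$), and write $C=\Gamma_{A}$, $C'=\Gamma_{A}S$. Using $-I,\ U^{\pm 1}\in\Gamma_{A}$, $SU^{-1}=T^{-1}\equiv\bigl(\begin{smallmatrix}1&1\\0&1\end{smallmatrix}\bigr)\pmod 2$ (so $SU^{-1}\in\Gamma_{A}S$), and $\epsilon S\epsilon^{-1}=-S$, a direct calculation yields the component-wise action of the operators $|_{1+S}$, $|_{1+U+U^{2}}$ and $|_{\epsilon}$ on a function $P\colon\{C,C'\}\to\hat{V}_{w}$.

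From these formulae, the defining relations of $\hat{\mathcal{W}}_{w}^{\Gamma_{A},\pm}$ collapse to $P(C')=-P(C)|_{S}$ together with $P(C)|_{1+U+U^{2}}=0$ and $P(C)|_{S+SU+SU^{2}}=0$ (the latter arising from $(P|_{1+U+U^{2}})(C')=0$ rewritten via the first), plus $P(C)\in\hat{V}_{w}^{\pm}$. Setting $p:=P(C)$, the assignment $P\mapsto p$ gives an isomorphism $\hat{\mathcal{W}}_{w}^{\Gamma_{A},\pm}\cong\{p\in\hat{V}_{w}^{\pm}\colon p|_{1+U+U^{2}}=p|_{S+SU+SU^{2}}=0\}$, restricting to $\mathcal{W}_{w}^{\Gamma_{A},\pm}\cong\{p\in V_{w}^{\pm}\colon p|_{1+U+U^{2}}=p|_{S+SU+SU^{2}}=0\}$. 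Under this identification, $\hat{\rho}_{f}^{\pm}(C)=P_{f}^{\pm}$, so the period maps of the lemma are just $\hat{\rho}^{\pm}$ in disguise.

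For the odd period map, the observation $U\cdot 0=\infty$ shows that $0$ and $\infty$ lie in a single $\Gamma_{A}$-orbit of cusps (in fact $\Gamma_{A}$ has a unique cusp), so $\tilde{S}_{k}(\Gamma_{A})=S_{k}(\Gamma_{A})$, and injectivity of $f\mapsto P_{f}^{-}$ follows at once from Theorem \ref{thm:Popa_pasol_cuspform}, which asserts that the composite $S_{k}(\Gamma_{A})\xrightarrow{\hat{\rho}^{-}}\mathcal{W}_{w}^{\Gamma_{A},-}\twoheadrightarrow\mathcal{W}_{w}^{\Gamma_{A},-}/C_{w}^{\Gamma_{A},-}$ is an isomorphism.

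The main obstacle is injectivity of the even period map, since Theorem \ref{thm:Popa_pasol_2} does not cover $\Gamma_{A}$. The plan is to exploit the decomposition $M_{k}(\Gamma_{A})=M_{k}(\mathrm{SL}_{2}(\mathbb{Z}))\oplus S_{k}^{-}$ corresponding to the $\pm 1$-eigenspaces of $|_{S}$, where $S_{k}^{-}:=\{f\in M_{k}(\Gamma_{A})\colon f|_{S}=-f\}$ consists entirely of cusp forms: the nontrivial character $\chi$ of $\mathrm{SL}_{2}(\mathbb{Z})/\Gamma_{A}$ satisfies $\chi(T)=-1$, forcing $f|_{T}=-f$ and hence a $q^{1/2}$-expansion with no constant term. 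The target $W_{w}^{+,\Gamma_{A}}$ splits compatibly via $p\mapsto\tfrac{1}{2}(p\pm p|_{S})$: on each $|_{S}$-eigenspace, the identity $p|_{S+SU+SU^{2}}=(p|_{S})|_{1+U+U^{2}}$ makes the second defining relation redundant given $p|_{1+U+U^{2}}=0$, so the two pieces of the splitting are independently characterized. Injectivity on $M_{k}(\mathrm{SL}_{2}(\mathbb{Z}))$ is classical (or follows from Theorems \ref{thm:Popa_pasol_1} and \ref{thm:Popa_pasol_2} for $\Gamma_{0}(1)$), while injectivity on $S_{k}^{-}\subset S_{k}(\Gamma_{A})$ follows from the injectivity of $\hat{\rho}^{+}$ on all of $S_{k}(\Gamma_{A})$ given by Theorem \ref{thm:Popa_pasol_cuspform}. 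Finally, $W_{w,\mathbb{C}}^{\pm,\Gamma_{A}}=W_{w}^{\pm,\Gamma_{A}}\otimes\mathbb{C}$ is obtained by combining this injectivity with the dimension equality in Theorem \ref{thm:Popa_pasol_1}.
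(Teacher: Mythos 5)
Your proof is correct and follows essentially the same route as the paper: the decisive step, injectivity of the even period map, is obtained in both by splitting $M_{k}(\Gamma_{A})$ into the two $|_{S}$-eigenspaces, namely $M_{k}({\rm SL}_{2}(\mathbb{Z}))$ and a space of cusp forms (which the paper identifies as $\sqrt{\Delta}M_{k-6}({\rm SL}_{2}(\mathbb{Z}))$ and you identify via the character argument $\chi(T)=-1$), treating the first piece by the classical level-one result and the second by Theorem \ref{thm:Popa_pasol_cuspform}. Your explicit coset computation, the reduction of $\hat{\rho}^{\pm}$ to its value $P(C)$ on the identity coset, and the separate treatment of the odd case via the single cusp of $\Gamma_{A}$ are details the paper compresses into ``the claim easily follows,'' but they agree with its implicit argument.
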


\begin{proof}
First, let us prove the injectivity of $\hat{\rho}^{\pm}:M_{k}(\Gamma_{A})\to\hat{\mathbb{\mathcal{W}}}_{w}^{\Gamma_{A},\pm}$.
Let $f\in M_{k}(\Gamma_{A})$ and assume that $\hat{\rho}^{\pm}(f)=0$.
Then $f$ can be written as $f=f^{'}+f''$ where $f'\in M_{k}({\rm SL}_{2}(\mathbb{Z}))$
and $f''\in\sqrt{\Delta}M_{k-6}({\rm SL}_{2}(\mathbb{Z}))$. Then
$\hat{\rho}^{\pm}(f')=\hat{\rho}^{\pm}(f'')=0$ since $\hat{\rho}^{\pm}(f)=0$,
$f'=\frac{1}{2}\left.f\right|_{1+S}$, and $f''=\frac{1}{2}\left.f\right|_{1-S}$.
Thus $f'=0$ by the injectivity of the period map for ${\rm SL}_{2}(\mathbb{Z})$,
and $f''=0$ by the injectivity of $\hat{\rho}^{\pm}:S_{k}(\Gamma_{A})\to\hat{\mathbb{\mathcal{W}}}_{w}^{\Gamma_{A},\pm}$
(Theorem \ref{thm:Popa_pasol_cuspform}). Thus $\hat{\rho}^{\pm}:M_{k}(\Gamma_{A})\to\hat{\mathbb{\mathcal{W}}}_{w}^{\Gamma_{A},\pm}$
is injective and therefore an isomorphism. The claim of the lemma
easily follows from this isomorphism.
\end{proof}
\bibliographystyle{plain}
\bibliography{reference}

\end{document}